\newsavebox{\mytable}
\numberwithin{equation}{section}
\newcommand{\dd}{\mathrm{d}}
\newcommand{\NN}{\mathbb{N}}
\newcommand{\Z}{\mathbb{Z}}
\newcommand{\R}{\mathbb{R}}
\newcommand{\CC}{\mathbb{C}}
\let\Im\relax
\DeclareMathOperator{\Im}{Im}
\let\Re\relax
\DeclareMathOperator{\Re}{Re}
\DeclareMathOperator*{\Res}{Res}
\newcommand{\beq}{\begin{equation}}
\newcommand{\eeq}{\end{equation}}
\newcommand{\bea}{\begin{eqnarray}}
\newcommand{\eea}{\end{eqnarray}}
\newcommand{\bbraket}[1]{\llbracket #1 \rrbracket}
\theoremstyle{plain}
\newtheorem{thm}{Theorem}[section]
\newtheorem{prop}[thm]{Proposition}
\newtheorem{defn}[thm]{Definition}
\newtheorem{lem}[thm]{Lemma}
\newtheorem{cor}[thm]{Corollary}
\newtheorem{conj}[thm]{Conjecture}
\newtheorem*{que*}{\textcolor{BrickRed}{Question}}
\theoremstyle{definition}
\newtheorem{rem}[thm]{Remark}
\definecolor{webbrown}{rgb}{0.65, 0.16, 0.16}
\newcommand{\ben}{\begin{eqnarray*}}
\newcommand{\een}{\end{eqnarray*}}
\newcommand{\be}{\begin{equation}}
\newcommand{\ee}{\end{equation}}
\newcommand{\half}{\frac{1}{2}}
\renewenvironment{abstract}{%
    \if@twocolumn
      \section*{\abstractname}%
    \else \normalsize 
      \begin{center}%
        {\bfseries \normalsize\abstractname\vspace{\z@}}
      \end{center}%
      \quotation
    \fi}
    {\if@twocolumn\else\endquotation\fi}
\title{\textsc{Topological recursion for Masur--Veech volumes}\vspace{1cm}}
\date{\vspace{-5ex}}
\author{
	J\o{}rgen Ellegaard Andersen\footnote{Center for Quantum Mathematics, Danish Institute for Advanced Study, Campusvej 55, 5230 Odense, Denmark.}\,\,,
	Ga\"etan Borot\footnote{Max Planck Institut f\"ur Mathematik, Vivatsgasse 7, 53111 Bonn, Germany.}\,\,,
	S\'everin Charbonnier\footnotemark[2]\,\,,\\
	Vincent Delecroix\footnotemark[2]\,\,\footnote{LaBRI, UMR 5800, Batiment A30, 351 cours de la Lib\'eration, 33405 Talence Cedex, France.}\,\,,
	Alessandro Giacchetto\footnotemark[2]\,\,,\\
	Danilo Lewa\'nski\footnotemark[2]\,\,,
	Campbell Wheeler\footnotemark[2]}
\begin{document}

\maketitle 

\vspace{2cm}

\begin{abstract}
	We study the Masur--Veech volumes $MV_{g,n}$ of the principal stratum of the moduli space of quadratic differentials of unit area on curves of genus $g$ with $n$ punctures. We show that the volumes $MV_{g,n}$ are the constant terms of a family of polynomials in $n$ variables governed by the topological recursion/Virasoro constraints. This is equivalent to a formula giving these polynomials as a sum over stable graphs, and retrieves a result of~\cite{Delecroix} proved by combinatorial arguments. Our method is different: it relies on the geometric recursion and its application to statistics of hyperbolic lengths of multicurves developed in~\cite{GRpaper}. We also obtain an expression of the area Siegel--Veech constants in terms of hyperbolic geometry. The topological recursion allows numerical computations of Masur--Veech volumes, and thus of area Siegel--Veech constants, for low $g$ and $n$, which leads us to propose conjectural formulas for low $g$ but all $n$. We also relate our polynomials to the asymptotic counting of square-tiled surfaces with large boundaries.
\end{abstract}

\thispagestyle{empty}
\bigskip

\newpage

\tableofcontents

\newpage

\section{Introduction}
\label{SIntro}

We consider two facets of the geometry of surfaces. On the one hand, hyperbolic geometry with associated Teichm\"uller space and Weil--Petersson metric, and on the other hand, flat geometry associated with quadratic differentials and the Masur--Veech measure. We will show that invariants of flat geometry of surfaces, namely the Masur--Veech volumes and the area Siegel--Veech constants, can be expressed as asymptotics of certain statistics of multicurves on hyperbolic surfaces. Using the geometric recursion developed in~\cite{GRpaper} for these statistics, we prove that the Masur--Veech volumes satisfy some form of topological recursion \`a la Eynard--Orantin~\cite{EORev}.

\subsection{The Masur--Veech volumes}

We will let $\Sigma$ denote a smooth, compact, oriented, not necessarily connected surface, which can be closed, punctured or bordered. We consider those cases to be mutually exclusive and we shall indicate which situation is considered when necessary. When $\Sigma$ is not closed, the punctures or boundary components are labelled $\partial_1\Sigma,\ldots,\partial_{n}\Sigma$. We assume that $\Sigma$ is stable, \textit{i.e.} the Euler characteristic of each connected component is negative. We say that $\Sigma$ has type $(g,n)$ if it is connected of genus $g$ with $n$ boundary components. We use $P$ (respectively $T$) to refer to surfaces with the topology of a pair of pants (resp. of a torus with one boundary component).

\medskip

The Teichm\"uller space $\mathcal{T}_{\Sigma}$ of a bordered $\Sigma$ is the set of hyperbolic metrics on $\Sigma$ such that the boundary components are geodesic, modulo diffeomorphisms of $\Sigma$ that restrict to the identity on $\partial \Sigma$ and which are isotopic to ${\rm Id}_{\Sigma}$ among such. The Teichm\"uller space $\mathcal{T}_{\Sigma}$ fibers over $\mathbb{R}_{+}^n$ and we denote the fiber over $L = (L_1, \ldots, L_n) \in \mathbb{R}_{+}^n$ by $\mathcal{T}_{\Sigma}(L)$. For a surface of type $(g,n)$, $\mathcal{T}_{\Sigma}(L)$ is a smooth manifold of dimension $6g - 6 + 2n$. Here $\mathbb{R}_{+}$ is the positive real axis, excluding $0$.
In several places, we will also consider $L_i=0$, which means that the $i$-th boundary corresponds to a cusp for the hyperbolic metric. The slice $\mathcal{T}_\Sigma(0,\ldots,0) = \mathfrak{T}_{\Sigma}$ is the Teichm\"uller space of complete hyperbolic metrics of finite area on $\Sigma - \partial\Sigma$, which is then considered as a punctured surface.
$\mathfrak{T}_{\Sigma}$ can also be seen as the space of Riemann surface structures on the punctured surface.
The cotangent bundle to $\mathfrak{T}_{\Sigma}$ is isomorphic to the bundle $Q\mathfrak{T}_\Sigma$ of holomorphic integrable quadratic differentials on the punctured surface. For any $(\sigma,q) \in Q\mathfrak{T}_\Sigma$, the quadratic differential $q$ has either a removable singularity or a simple pole at each puncture of $\Sigma$. These spaces also exist for closed surfaces.

\medskip

The mapping class group ${\rm Mod}_{\Sigma}$ is the group of isotopy classes of orientation-preserving diffeomorphisms of $\Sigma$. It admits as subgroup the pure mapping class group ${\rm Mod}_{\Sigma}^{\partial}$, consisting of the isotopy classes of diffeomorphisms that restrict to the identity on $\partial \Sigma$. The pure mapping class group acts on the Teichm\"uller spaces $\mathcal{T}_\Sigma(L)$ and on $\mathfrak{T}_{\Sigma}$ and on the space of quadratic differentials $Q\mathfrak{T}_\Sigma$. This action is properly discontinuous and the quotient spaces $\mathcal{M}_{\Sigma}(L)$, $\mathfrak{M}_{\Sigma}$ and $Q\mathfrak{M}_\Sigma$ are smooth orbifolds, called respectively the moduli space of bordered surfaces, the moduli space of punctured surfaces, and the moduli space of quadratic differentials. The moduli spaces for all surfaces of given type $(g,n)$ are all canonically isomorphic and simply denoted by $\mathcal{M}_{g,n}(L)$, $\mathfrak{M}_{g,n}$ and $Q\mathfrak{M}_{g,n}$.

\medskip

The spaces $\mathcal{T}_\Sigma(L)$ for $L \in \mathbb{R}_{+}^n$ and $\mathfrak{T}_{\Sigma}$ are endowed with the Weil--Petersson measures $\mu_{{\rm WP}}$. These measures are invariant under the action of the pure mapping class group and descend to the quotients $\mathcal{M}_{g,n}(L)$ and $\mathfrak{M}_{g,n}$. If $Y_{\Sigma}$ is a ${\rm Mod}_{\Sigma}^{\partial}$-invariant function on $\mathcal{T}_{\Sigma}$, we denote by $Y_{g,n}$ the function it induces on $\mathcal{M}_{g,n}$ and we introduce
\begin{equation}\label{VYnot}
  VY_{g,n}(L) = \int_{\mathcal{M}_{g,n}(L)} Y_{g,n}(\sigma)\,\dd\mu_{{\rm WP}}(\sigma)
\end{equation}
if this integral makes sense.

\medskip

Likewise, if $\Sigma$ is a closed or punctured surface, $Q\mathfrak{M}_\Sigma$ is endowed with the Masur--Veech measure $\mu_{{\rm MV}}$ coming from its piecewise linear integral structure. The function which associates to a quadratic differential $q$ on $\Sigma$ its area $\int_{\Sigma} |q|$ provides a natural way to define an induced measure on the space $Q^1\mathfrak{M}_{g,n}$ of quadratic differentials of unit area (see Section~\ref{S3Prem}). By a theorem of Masur and Veech~\cite{Masur82,Veech82} the total mass of this measure is finite. Its value is, by definition, the Masur--Veech volume and it is denoted by $MV_{g,n}$. Its computation is relevant in the study of the geometry of moduli spaces and the dynamics of measured foliations and has been the object of numerous investigations~\cite{AEZ,Delecroix,EskinOkounkov,Goujard-vol,Mirzagrowth}.

\medskip

\subsection{Topological recursion for Masur--Veech volumes: overview}

In Section~\ref{SPre}, we review the definition and main properties of the geometric and topological recursion, mainly taken from \cite{GRpaper}.

\medskip

In Section~\ref{S3}, for each connected bordered surface $\Sigma$ of genus $g$ with $n > 0$ boundaries, we construct a ${\rm Mod}_{\Sigma}^{\partial}$-invariant continuous function $\Omega^{{\rm MV}}_\Sigma\colon\mathcal{T}_\Sigma \to \R$. It is such that the integral $V\Omega^{{\rm MV}}_{g,n}(L_1, \ldots, L_n)$ is a polynomial function in the variables $L_1, \ldots, L_n$ and the Masur--Veech volume $MV_{g,n}$ of $Q^1\mathfrak{M}_{g,n}$ consisting of unit area quadratic differentials satisfies
\begin{equation}\label{MVcoinc}
  MV_{g,n} = \frac{2^{4g-2+n} (4g-4+n)!}{(6g-7+2n)!}\,V \Omega^{{\rm MV}}_{g,n}(0,\ldots,0).
\end{equation}

The family of functions $\Omega^{{\rm MV}}$ can be defined via the geometric recursion of \cite{GRpaper}, with initial data found in Proposition~\ref{MVtr1}. The polynomials $V\Omega^{{\rm MV}}$, which we call Masur--Veech polynomials, have five different descriptions:
\begin{itemize}
   \item[$(\mathbf{1})$]
      they are sums over stable graphs (Section~\ref{MVpo}), which we reproduce in \eqref{sumstabgraph} below;
   \item[$(\mathbf{2})$]
      they encode the asymptotic growth of the integral (against $\mu_{{\rm WP}}$)  of statistics of the hyperbolic lengths of multicurves on a surface of type $(g,n)$ with large boundaries, see Section~\ref{SMVvolumes} for the precise statement;
   \item[$(\mathbf{3})$]
      they are obtained by integration of $\Omega^{{\rm MV}}$, in coherence with the notation \eqref{VYnot};
   \item[$(\mathbf{4})$]
      they satisfy the topological recursion -- which is equivalent to the Virasoro constraints stated in Theorem~\ref{thm:intro:2} below -- for the spectral curve
      \[
      x(z) = \frac{z^2}{2},\qquad y(z) = -z,\qquad \omega_{0,2}^{{\rm MV}}(z_1,z_2) = \frac{1}{2}\,\frac{\dd z_1 \otimes \dd z_2}{(z_1 - z_2)^2} + \frac{1}{2}\,\frac{\pi^2\,\dd z_1 \otimes \dd z_2}{\sin^2(\pi(z_1 - z_2))}.
      \]
      \item[$(\mathbf{5}$)] they govern the asymptotic counting of square-tiled surfaces with large boundaries, see Section~\ref{Sectinin} for the precise statement.
\end{itemize}
The identity between $(\textbf{1})$ and $(\mathbf{2})$ is proved in Theorem~\ref{stablg}, which is the crux of our argument. The identity between $(\textbf{1})$, $(\mathbf{3})$ and $(\mathbf{4})$ is proved in Proposition~\ref{MVtr1} and follows from general properties of the geometric and the topological recursion. In Corollary~\ref{MVcoco}, we prove the relation \eqref{MVcoinc} between the constant term of these polynomials and the Masur--Veech volume. Lemma~\ref{dilatonthm0} implies that the value of the Masur--Veech volumes for closed surfaces of genus $g \geq 2$ can be retrieved from $V\Omega_{g,1}^{{\rm MV}}$.

\medskip

In Section~\ref{S4}, we extend these arguments to show in Corollary~\ref{Co45} that the area Siegel--Veech constants can be expressed in terms of asymptotics of certain derivative statistics of hyperbolic lengths of multicurves. Our current proof of Corollary~\ref{Co45} uses Goujard's recursion \cite{Goujard} (here quoted in Theorem~\ref{thGouj1}) for the area Siegel--Veech constants of the principal stratum in $Q^1\mathfrak{M}_{g,n}$ in terms of Masur--Veech volumes. It would be more satisfactory if one could obtain an independent proof of the identity of Corollary~\ref{Co45}, as our Section~\ref{S4} would then give a new proof of Goujard's recursion for the principal stratum. Section~\ref{S5} is devoted to explicit computation of Masur--Veech volumes and conjectures that can be drawn from them.

\medskip

In Section~\ref{Sectinin}, we discuss the enumeration of square-tiled surfaces with boundaries, via generating series including a parameter $q$ coupled to the number of tiles. We show in Proposition~\ref{MVpolyasstscount} that sending $q \rightarrow 1$ while rescaling the boundary lengths by $1/\ln(1/q)$ retrieves the Masur--Veech polynomials, \textit{i.e.} give the identity between $(\mathbf{1})$ and $(\mathbf{5})$. In absence of boundaries, a related asymptotic enumeration of square-tiled surfaces with a number of tiles $\leq N \rightarrow \infty$ was the crucial ingredient in the proof of \eqref{MVcoinc} given by \cite{Delecroix}. We also prove in Proposition~\ref{propsq} that -- before taking any asymptotics -- the $q$-series counting square-tiled surfaces are governed by Eynard--Orantin topological recursion for the spectral curve
\[
x(z) = z + \frac{1}{z},\qquad y(z) = z,\qquad \omega_{0,2}(z_1,z_2) = \frac{1}{2}\,\frac{\dd z_1 \otimes \dd z_2}{(z_1 - z_2)^2} + \frac{\dd u_1 \otimes \dd u_2}{2}\bigg(\wp(u_1 - u_2;\mathsf{q}) + \frac{\pi^2 E_2(\mathsf{q})}{3}\bigg)
\]
where $z_j = e^{2{\rm i}\pi u_j}$, $\wp$ is Weierstra\ss{} elliptic function and $E_2$ the second Eisenstein series. In the $\mathsf{q} \rightarrow 1$ limit, we show that it retrieves  the topological recursion for Masur--Veech polynomials mentioned in $(\mathbf{4})$, thus giving a second proof of the identity between $(\mathbf{1})$ and $(\mathsf{4})$. 

\subsubsection*{Main results for the computation of Masur--Veech volumes and polynomials}

Concretely, our results lead to two ways of computing Masur--Veech volumes. Firstly, the Masur--Veech polynomials are expressed as a sum over the set $\mathbf{G}_{g,n}$ of stable graphs (see Definition~\ref{defgrstss}). Stable graphs encode topological types of primitive multicurves, which naturally appear via (\textbf{2}). Let us introduce the polynomials
\begin{equation} \label{eq:Kpoly}
  V\Omega^{{\rm K}}_{g,n}(L_1,\ldots,L_n)
  =
  \int_{\overline{\mathfrak{M}}_{g,n}} \exp\bigg(\sum_{i = 1}^n \frac{L_i^2}{2}\,\psi_i\bigg),
\end{equation}
which from Kontsevich's work~\cite{Kontsevich} compute the volume of the combinatorial moduli spaces. The application of Theorem~\ref{stablg} of this paper to the computation of Masur--Veech volumes can be summarised as follows.

\begin{thm}\label{thm:intro:1}
  For $g,n \geq 0$ such that $2g - 2 + n > 0$, the Masur--Veech polynomials can be expressed as
  \begin{equation}\label{sumstabgraph}
    V\Omega_{g,n}^{{\rm MV}}(L_1,\ldots,L_n) = \sum_{\Gamma \in \mathbf{G}_{g,n}} \frac{1}{|{\rm Aut}\,\Gamma|} \int_{\mathbb{R}_{+}^{E_{\Gamma}}}  \prod_{v \in V_{\Gamma}} V\Omega^{{\rm K}}_{h(v),k(v)}\big((\ell_e)_{e \in E(v)},(L_{\lambda})_{\lambda \in \Lambda(v)}\big)\prod_{e \in E_{\Gamma}} \frac{\ell_e\dd\ell_e }{e^{\ell_e} - 1},
  \end{equation}
  where $V_{\Gamma}$ is the set of vertices of $\Gamma$ and $E(v)$ (respectively, $\Lambda(v)$) is the set of edges (respectively, leaves) incident to $v$. In particular the Masur--Veech volumes can be computed as
  \begin{equation}\label{MVgraphsss}
  \begin{aligned}
  	MV_{g,n} & = \frac{2^{4g - 2 + n}(4g - 4 + n)!}{(6g - 7 + 2n)!}  \\
  	&\quad \times \sum_{\Gamma \in \mathbf{G}_{g,n}} \frac{1}{|{\rm Aut}\,\Gamma|} \int_{\mathbb{R}_{+}^{E_{\Gamma}}}  \prod_{v \in V_{\Gamma}} V\Omega^{{\rm K}}_{h(v),k(v)}\big((\ell_e)_{e \in E(v)},(0)_{\lambda \in \Lambda(v)}\big)\prod_{e \in E_{\Gamma}} \frac{\ell_e\dd\ell_e }{e^{\ell_e} - 1}.
  \end{aligned}
  \end{equation}
\end{thm}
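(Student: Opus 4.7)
The bulk of the theorem is equation \eqref{sumstabgraph}: once this stable graph expansion of $V\Omega^{MV}_{g,n}(L)$ is established, the Masur--Veech volume formula \eqref{MVgraphsss} follows immediately by setting $L_i = 0$ and multiplying by $\frac{2^{4g - 2 + n}(4g - 4 + n)!}{(6g - 7 + 2n)!}$ in accordance with \eqref{MVcoinc}. My plan for \eqref{sumstabgraph} is to unfold the geometric recursion defining $\Omega^{MV}_{\Sigma}$ into an explicit sum over primitive multicurves on $\Sigma$, and then apply Mirzakhani's integration formula to compute $V\Omega^{MV}_{g,n}(L)$.

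\medskip

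Concretely, a general feature of the geometric recursion of \cite{GRpaper} is that the output $\Omega^{MV}_{\Sigma}(\sigma)$ admits a presentation as a sum over isotopy classes of primitive multicurves $\gamma = (\gamma_1, \ldots, \gamma_k)$ on $\Sigma$, where each multicurve contributes a product $\prod_i f(\ell_{\gamma_i}(\sigma))$ of a pair-of-pants kernel along the components, multiplied by a vertex weight on each connected component of $\Sigma \setminus \gamma$; both the kernel $f$ and the vertex weights are dictated by the initial data of the recursion on the pair of pants. Applying Mirzakhani's integration formula to
\[
V\Omega^{MV}_{g,n}(L) = \int_{\mathcal{M}_{g,n}(L)} \Omega^{MV}_{\Sigma}(\sigma)\,\dd\mu_{{\rm WP}}(\sigma)
\]
groups the multicurves by topological type, which is exactly the datum recorded by a stable graph $\Gamma \in \mathbf{G}_{g,n}$. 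For each $\Gamma$ one integrates over the lengths $(\ell_e)_{e \in E_{\Gamma}}$ of the multicurve components with the Weil--Petersson measure $\prod_e \ell_e\,\dd\ell_e$ against a product of vertex contributions obtained by integrating the vertex weights over $\mathcal{M}_{h(v),k(v)}((\ell_e)_{e \in E(v)}, (L_\lambda)_{\lambda \in \Lambda(v)})$, and the prefactor $\frac{1}{|\Aut\,\Gamma|}$ absorbs the stabilizers of the corresponding mapping class group orbits.

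\medskip

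The main obstacle is to pin down the kernel and the vertex contributions: one must verify that $f(\ell) = \frac{1}{e^\ell - 1}$ and that each vertex integral equals the Kontsevich polynomial $V\Omega^{K}_{h(v),k(v)}$ rather than the full Weil--Petersson volume polynomial. Both identifications can in principle be extracted from a careful analysis of the initial data chosen for $\Omega^{MV}$. A conceptually cleaner route proceeds via the topological recursion description (4) of the introduction, combined with Eynard's graph-summation formula: the vertex factors depend only on the spectral curve functions $(x, y) = (z^2/2, -z)$, which are precisely the Kontsevich data and so produce $V\Omega^{K}_{h(v), k(v)}$; the edge factors are the Laplace transform of the excess of $\omega_{0,2}^{MV}$ over the bare Kontsevich bidifferential $\frac{\dd z_1\,\dd z_2}{(z_1 - z_2)^2}$, which using the identity $\frac{\pi^2}{\sin^2(\pi x)} - \frac{1}{x^2} = \sum_{k \neq 0}\frac{1}{(x + k)^2}$ together with the Laplace-transform formula $\int_0^\infty \ell\,e^{-\ell z}/(e^\ell - 1)\,\dd\ell = \sum_{k \geq 1}(z + k)^{-2}$ yields precisely the required edge kernel $\frac{\ell\,\dd\ell}{e^\ell - 1}$. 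With \eqref{sumstabgraph} in hand, the volume formula \eqref{MVgraphsss} is a one-line specialization.
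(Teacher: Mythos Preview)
Your proposal inverts the paper's logical structure. In the paper, equation~\eqref{sumstabgraph} is not a theorem to be proved but the \emph{definition} of the Masur--Veech polynomials (see \eqref{defn:MVpoly} in Section~\ref{MVpo}); the paper's proof of Theorem~\ref{thm:intro:1} accordingly consists of two sentences: \eqref{sumstabgraph} is the chosen definition, and \eqref{MVgraphsss} follows from Corollary~\ref{MVcoco}, which is exactly the relation \eqref{MVcoinc} you invoke. So your treatment of \eqref{MVgraphsss} matches the paper, while your elaborate plan for \eqref{sumstabgraph} is unnecessary in this framework.

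That said, your plan is not wrong---it is essentially the content of Proposition~\ref{MVtr1}, which establishes the equivalence of descriptions~(\textbf{1}) and~(\textbf{3}) from the introduction: if one instead \emph{defines} $V\Omega^{\rm MV}_{g,n}$ as the Weil--Petersson integral of the GR amplitude for the twisted Kontsevich data, then Theorem~\ref{stablth} produces the stable-graph expansion. One technical wrinkle you gloss over: the twist $f^{\rm MV}(\ell) = (e^\ell - 1)^{-1}$ blows up like $1/\ell$ at the origin and is therefore not an admissible test function in the sense required by Theorem~\ref{stablth}; the paper sidesteps this by noting that the twisted \emph{initial data} is nonetheless admissible in the sense of Definition~\ref{defadm} (with $\eta = 1$), so that GR, TR, and the stable-graph formula all make sense and agree by uniqueness of the TR solution. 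Finally, your ``cleaner route'' through Eynard's graph-summation formula presupposes the spectral-curve description~(\textbf{4}), which in the paper is Proposition~\ref{EOMV} and is itself derived from the stable-graph definition---so within the paper's logic that route would be circular, though it is valid if one takes (\textbf{4}) as primary.
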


Formula \eqref{MVgraphsss} was obtained prior to our work in \cite{Delecroix} by combinatorial methods. It was presented by V.D. in a reading group organised by A.G. and D.L. The discussions which followed led to the present work where, in particular, we give a new proof of formula \eqref{MVgraphsss}.

\medskip

Secondly, the coefficients of the Masur--Veech polynomials satisfy Virasoro constraints, expressed in terms of values of the Riemann zeta function at even integers. This is summarised by the following theorem, which combines the results of Lemma~\ref{dilatonthm0}, Corollary~\ref{MVcoco}, Theorem~\ref{MVtr1} and Section~\ref{Virrec} of this paper.

\begin{thm}\label{thm:intro:2}
  For any $g \geq 0$ and $n > 0$ such that $2g - 2 + n > 0$, we have a decomposition
  \[
    V\Omega_{g,n}^{{\rm MV}}(L_1,\ldots,L_n)
    =
    \sum_{\substack{d_1,\ldots,d_n \geq 0 \\ d_1 + \cdots + d_n \leq 3g - 3 + n}}
      F_{g,n}[d_1,\ldots,d_n] \, \prod_{i = 1}^n \frac{L_i^{2d_i}}{(2d_i + 1)!}.
  \]
  Let us set $F_{0,1}[d_1] = F_{0,2}[d_1,d_2] = 0$ for all $d_1,d_2 \geq 0$. The base cases
  \[
    F_{0,3}[d_1,d_2,d_3] = \delta_{d_1,d_2,d_3,0},
    \qquad
    F_{1,1}[d] = \delta_{d,0} \, \frac{\zeta(2)}{2} + \delta_{d,1} \, \frac{1}{8}
  \]
  determine uniquely all other coefficients via the following recursion on $2g - 2 + n \geq 2$, for $d_1,\ldots,d_n \geq 0$
  \[
  \begin{split}
  	F_{g,n}[d_1,\ldots,d_n] & = \sum_{m = 2}^n \sum_{a \geq 0} B^{d_1}_{d_m,a}\,F_{g,n - 1}[a,d_2,\ldots,\widehat{d_m},\ldots,d_n] + \\
  	&\quad + \frac{1}{2} \sum_{a,b \geq 0} C^{d_1}_{a,b}\bigg(F_{g - 1,n + 1}[a,b,d_2,\ldots,d_n] + \!\!\!\! \sum_{\substack{h + h' = g \\ J \sqcup J' = \{d_2,\ldots,d_n\}}} \!\!\!\! F_{h,1+|J|}[a,J]\,F_{h',1 + |J'|}[b,J']\bigg), 
  \end{split}
  \]
  where
  \[
  \begin{split}
  	B^{i}_{j,k} & = (2j + 1)\,\delta_{i + j,k + 1} + \delta_{i,j,0}\,\zeta(2k + 2), \\
  	C^i_{j,k} & = \delta_{i,j + k + 2} + \tfrac{(2j + 2a + 1)!\zeta(2j + 2a + 2)}{(2j + 1)!(2a)!}\,\delta_{i + a,k + 1} + \tfrac{(2k + 2a + 1)!\zeta(2k + 2a + 2)}{(2k + 1)!(2a)!}\,\delta_{i + a,j + 1} + \zeta(2j + 2)\zeta(2k + 2)\delta_{i,0}.
  \end{split}
  \]
  For surfaces of genus $g$ with $n > 0$ boundaries, the Masur--Veech volumes are identified as
  \[
    MV_{g,n} = \frac{2^{4g - 2 + n}(4g - 4 + n)!}{(6g - 7 + 2n)!}\,F_{g,n}[0,\ldots,0],
  \]
  while for closed surfaces of genus $g \geq 2$ they are obtained through
  \[
    MV_{g,0} = \frac{2^{4g - 2}(4g - 4)!}{(6g - 6)!}\,F_{g,1}[1].
  \] 
\end{thm}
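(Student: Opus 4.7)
The plan is to read off the theorem from the topological recursion description $(\mathbf{4})$ of the overview, established in Proposition~\ref{MVtr1}, by translating TR for the given spectral curve into an algebraic recursion on the coefficients of Laurent expansions at $z_i = 0$. The other pieces of the statement are then short: the identification $MV_{g,n} = \frac{2^{4g-2+n}(4g-4+n)!}{(6g-7+2n)!}\,F_{g,n}[0,\ldots,0]$ is a rewriting of \eqref{MVcoinc} via Corollary~\ref{MVcoco}, and the closed-surface formula follows from Lemma~\ref{dilatonthm0}.

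First, I would establish the polynomial structure. Since $x(z) = z^2/2$ has a single ramification point at $z = 0$ where $y(z) = -z$ vanishes simply, the usual TR analysis forces the differentials $\omega_{g,n}^{{\rm MV}}$ to be rational with poles only at $z_i = 0$, of order bounded so that they produce polynomials of degree $\leq 3g-3+n$ in the $L_i^2$ after the Laplace-type transform that relates the TR differentials to the generating series of $V\Omega_{g,n}^{{\rm MV}}$. The coefficients $F_{g,n}[d_1,\ldots,d_n]$ are then suitable residues of $\omega_{g,n}^{{\rm MV}}$ at the $z_i = 0$, normalized so that $V\Omega_{g,n}^{{\rm MV}}(L_1,\ldots,L_n) = \sum_{d_1,\ldots,d_n} F_{g,n}[d_1,\ldots,d_n] \prod_i \frac{L_i^{2d_i}}{(2d_i+1)!}$, as stated.

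Next comes the derivation of the Virasoro recursion. The key algebraic input is the Laurent expansion
\[
  \frac{\pi^2}{\sin^2(\pi z)} \;=\; \frac{1}{z^2} + 2\sum_{k \geq 0} (2k+1)\,\zeta(2k+2)\,z^{2k},
\]
which converts the non-standard piece of $\omega_{0,2}^{{\rm MV}}$ into a generating series for the even zeta values. I would write Eynard--Orantin's recursion as a residue at $z = 0$, substitute this expansion into both the recursion kernel (built from $\omega_{0,2}^{{\rm MV}}$ and $y$) and into the propagator $\omega_{0,2}^{{\rm MV}}$ that pairs an external variable with one of the produced variables, and extract Laurent coefficients. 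The bare $\tfrac{1}{2}\,\tfrac{dz_1\,dz_2}{(z_1-z_2)^2}$ piece of $\omega_{0,2}^{{\rm MV}}$ yields precisely the zeta-free terms $(2j+1)\delta_{i+j,k+1}$ of $B$ and $\delta_{i,j+k+2}$ of $C$, namely the usual Kontsevich/Airy Virasoro constraints, while the $\sin^{-2}$ correction produces every term in $B$ and $C$ proportional to a zeta value. The main obstacle is the combinatorial bookkeeping needed to recover the asymmetric factors such as $\frac{(2j+2a+1)!\,\zeta(2j+2a+2)}{(2j+1)!\,(2a)!}$ in $C^i_{j,k}$, which arise from convolving a $\zeta$-coefficient with derivatives of the standard propagator kernel.

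Finally, the base cases come from the initial data of the geometric/topological recursion for $\Omega^{{\rm MV}}$ of Proposition~\ref{MVtr1}. For the pair of pants, $V\Omega_{0,3}^{{\rm MV}} \equiv 1$, giving $F_{0,3}[d_1,d_2,d_3] = \delta_{d_1,d_2,d_3,0}$. For the once-punctured torus, integrating the initial data against the Weil--Petersson measure on $\mathcal{M}_{1,1}(L)$ yields $V\Omega_{1,1}^{{\rm MV}}(L) = \tfrac{\zeta(2)}{2} + \tfrac{L^2}{48}$ -- the constant term coming from the $\sin^{-2}$ correction to $\omega_{0,2}$ and the $L^2$ term being the usual Kontsevich $\psi$-class contribution on the Airy part -- hence $F_{1,1}[0] = \tfrac{\zeta(2)}{2}$ and $F_{1,1}[1] = \tfrac{1}{8}$. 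The identification of $MV_{g,n}$ with the constant term of $V\Omega_{g,n}^{{\rm MV}}$ is exactly Corollary~\ref{MVcoco}, and the closed-genus formula $MV_{g,0}$ is extracted via Lemma~\ref{dilatonthm0}, which relates $MV_{g,0}$ to the $L^2$ coefficient of $V\Omega_{g,1}^{{\rm MV}}$, i.e.\ to $F_{g,1}[1]$.
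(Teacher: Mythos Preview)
Your proposal is correct in outline, and you identify the right ingredients: Proposition~\ref{MVtr1} for the TR structure, Corollary~\ref{MVcoco} for the constant-term identification, and Lemma~\ref{dilatonthm0} for the closed-surface case. The base cases are also handled correctly.

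However, your derivation of the recursion coefficients takes a genuinely different route from the paper. You propose to work directly on the spectral curve side: expand $\pi^2/\sin^2(\pi z)$ as a Laurent series at $z=0$, feed this into the Eynard--Orantin residue formula, and extract the $B^{i}_{j,k}$ and $C^{i}_{j,k}$ by brute-force coefficient matching. This is valid, and is in spirit close to how Proposition~\ref{EOMV} is verified in Appendix~\ref{AppB}, but as you yourself note it leaves the ``combinatorial bookkeeping'' for the mixed zeta terms in $C^{i}_{j,k}$ as the genuine work.

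The paper instead uses the twisting formalism of Section~\ref{Stwists} and its polynomial incarnation in \eqref{polytwist}. Proposition~\ref{MVtr1} already identifies the Masur--Veech initial data as the Kontsevich initial data \eqref{K1}--\eqref{K2} twisted by $f^{{\rm MV}}(\ell)=1/(e^{\ell}-1)$. One then simply computes the twisting moments
\[
u_{a,b} = \int_{\mathbb{R}_{+}} \frac{\ell^{2a+2b+1}}{(2a+1)!(2b+1)!}\,\frac{\dd\ell}{e^{\ell}-1} = \frac{(2a+2b+1)!}{(2a+1)!(2b+1)!}\,\zeta(2a+2b+2)
\]
via Lemma~\ref{Lzeta}, and plugs them into the general formulas \eqref{polytwist}. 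The entire ``bookkeeping'' you flag as an obstacle is absorbed into that one formula, which was proved once and for all; the zeta values enter through the moment integral rather than through a Laurent expansion of $\sin^{-2}$. Your approach trades this off for a self-contained residue computation that does not require the twisting machinery of Section~\ref{Spoly}, at the cost of redoing the combinatorics by hand.
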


\medskip

We use Theorem~\ref{thm:intro:2} to compute many Masur--Veech volumes and Masur--Veech polynomials for low $g$ and $n$ (Section~\ref{S5}). Based on numerical evidence, we propose polynomiality conjectures for $MV_{g,n}$ for all $n$ and fixed $g$ (Conjecture~\ref{conjMV}), with explicit coefficients up to $g \leq 6$. Conditionally on this conjecture, we discuss the consequences for area Siegel--Veech constants in Corollary~\ref{conjSV} and for the $n \rightarrow \infty$ asymptotics in Section~\ref{Sasymss}.

\medskip

The paper is supplemented with three appendices. In Appendix~\ref{AppA}, we establish a closed formula for all $\psi$ classes intersections in genus one, which we have not found in the literature and which we use for computations of $V\Omega_{1,n}^{{\rm MV}}$ via stable graphs. In Appendix~\ref{AppB}, we illustrate the computation of Masur--Veech polynomials and generating series of square-tiled surfaces from Propositions~\ref{EOMV} and \ref{propsq} using the original formulation of the topological recursion \`a la Eynard--Orantin, via residues on the associated spectral curves. Appendix~\ref{AppC} contains tables of coefficients for the Masur--Veech polynomials and area Siegel--Veech constants.

\begin{rem} \textrm{Since the first arXiv release of our work, our polynomiality Conjectures~\ref{conjMV} and thus \ref{conjSV} have been proved via intersection-theoretic methods by Chen, M\"oller and Sauvaget \cite{Moeller}.}
\end{rem}
\medskip


\section*{Acknowledgements}

We thank Don~Zagier for suggesting a more compact formula in Conjecture~\ref{conjMV}, and Martin Möller for discussions related to the intersection theory aspects of the paper. J.E.A. was supported in part by the Danish National Sciences Foundation Centre of Excellence grant ``Quantum Geometry of Moduli Spaces" (DNRF95) at which part of this work was carried out. JEA is currently funded in part by the ERC Synergy grant ``ReNewQuantum". G.B., S.C., V.D., A.G., D.L. and C.W. are supported by the Max-Planck-Gesellschaft. 

\section{Review of geometric and topological recursion}
\label{SPre}

We review some aspects of the formalism of geometric recursion developed in \cite{GRpaper} and its relation to topological recursion which are directly relevant for the analysis carried out in Section~\ref{S3} and onwards in the present paper.

\subsection{Preliminaries}


\medskip

Let $S_{\Sigma}^{\circ}$ be the set of isotopy classes of simple closed curves which are neither puncture nor boundary parallel on $\Sigma$, $M_{\Sigma}$ the set of multicurves (\textit{i.e.} isotopy classes of finite disjoint unions of simple closed curves which are not homotopic to boundary components of $\Sigma$) and $M'_{\Sigma}$ the subset of primitive multicurves (the components of the multicurve must be pairwise non-homotopic). By convention $M_{\Sigma}$ and $M'_{\Sigma}$ contain the empty multicurve, but $S_{\Sigma}^{\circ}$ does not contain the empty closed curve. In particular
\[
   M_{\Sigma} \cong \Set{(\gamma,m) | \gamma \in M'_{\Sigma},\,\,\,m \in \mathbb{Z}_{+}^{\pi_0(\gamma)}},
\]
where $\mathbb{Z}_{+}$ is the set of positive integers. 

\subsection{Geometric recursion}

In the present context, the geometric recursion (in brief, GR) is a recipe to construct ${\rm Mod}_{\Sigma}^{\partial}$-invariant functions $\Omega_{\Sigma}$ on $\mathcal{T}_{\Sigma}$ for bordered surfaces $\Sigma$ of all topologies, by induction on the Euler characteristic of $\Sigma$. The initial data for GR is a quadruple $(A,B,C,D)$ where $A,B,C$ are functions on the Teichm\"uller space of a pair of pants, and $D$ is a function on the Teichm\"uller space of a torus with one boundary component. Since $\mathcal{T}_{P} \cong \mathbb{R}_{+}^3$, the functions $A$, $B$ and $C$ are just functions of three positive variables. We further require that $A$ and $C$ are invariant under exchange of their two last variables. In the construction we need that the initial data satisfy some decay conditions. Let $[x]_{+} = \max(x,0)$.

\begin{defn}\label{defadm}
   We say that an initial data $(A,B,C,D)$ is \emph{admissible} if
   \begin{itemize}
      \item[$\bullet$]
         $A$ is bounded on $\mathcal{T}_{P}$ and $D$ is bounded on $\mathcal{T}_{T}$,
      \item[$\bullet$]
         For any $s > 0$ and some $\eta \in [0,2)$, 
      \begin{align*}
      	\sup_{L_1,L_2,\ell \geq 0} \big(1 + [\ell - L_1 - L_2]_{+}\big)^{s}\,|B(L_1,L_2,\ell)|\, \ell^{\eta} & < +\infty, \\
      	\sup_{L_1,\ell,\ell' \geq 0} \big(1 + [\ell + \ell' - L_1]_{+}\big)^{s}\,|C(L_1,\ell,\ell')|\, (\ell\ell')^{\eta} & < +\infty.
      \end{align*}
   \end{itemize}
\end{defn}

Let us now briefly recall the recursion introduced in \cite{GRpaper}, which uses successive excisions of pairs of pants. Assume that $\Sigma$ has genus $g$ and $n$ boundary components such that $2g -2 + n \geq 2$. We consider the set of homotopy classes of embedded pairs of pants $\phi \colon P \hookrightarrow \Sigma$ such that
\begin{itemize}
   \item[$\bullet$]
      $\partial_1P$ is mapped to $\partial_1\Sigma$,
   \item[$\bullet$]
      $\partial_2P$ is either mapped to a boundary component of $\Sigma$, or mapped to a curve that is not null-homotopic neither homotopic to a boundary component of $\Sigma$.
\end{itemize}
Let $\mathcal{P}_{\Sigma}$ the set of homotopy classes of such embeddings. It is partitioned into the subsets $\mathcal{P}_{\Sigma}^{\varnothing}$ and $\mathcal{P}_{\Sigma}^{m}$ for $m \in \{2,\ldots,n\}$, consisting respectively of those classes of embeddings such that $\partial_2P$ is mapped to the interior of $\Sigma$, resp. mapped to $\partial_m\Sigma$. Given a hyperbolic metric $\sigma$ with geodesic boundaries on $\Sigma$, each element of $\mathcal{P}_{\Sigma}$ has a representative $P$ such that $\phi(P)$ has geodesic boundaries. We denote by $\vec{\ell}_{\sigma}(\partial P)$ the ordered triple of lengths of $\phi(P)$ for the metric $\sigma$. Removing this embedded pair of pants from $\Sigma$ gives a bordered surface $\Sigma - P$. Our assumptions imply that $\Sigma - P$ is stable. It is also equipped with a hyperbolic metric $\sigma|_{\Sigma - P}$ with geodesic boundaries. We decide to label the boundary components of $\Sigma - P$ by putting first the boundary components that came from those of $P$ (respecting the order in which they appeared in $\partial P$) and then the boundary components that came from those of $\Sigma$ (with the order in which they appeared in $\Sigma$). 

\medskip

The GR amplitudes $\Omega_{\Sigma}$ are now defined as follows. For surfaces with Euler characteristic $-1$, we declare
\[
   \Omega_{P} = A,\qquad \Omega_{T} = D.
\]
For disconnected surfaces, we use the identification $\mathcal{T}_{\Sigma_1 \cup \Sigma_2} \cong \mathcal{T}_{\Sigma_1} \times \mathcal{T}_{\Sigma_2}$ to set
\[
   \Omega_{\Sigma_1 \cup \Sigma_2}(\sigma_1,\sigma_2) = \Omega_{\Sigma_1}(\sigma_1)\Omega_{\Sigma}(\sigma_2),
\]
and for connected surfaces with Euler characteristic $\leq -2$, we set
\begin{equation}\label{serieGR}
   \Omega_{\Sigma}(\sigma)
   =
   \sum_{m = 2}^n \sum_{[P] \in \mathcal{P}_{\Sigma}^{m}}
      B(\vec{\ell}_{\sigma}(\partial P))\,\Omega_{\Sigma - P}(\sigma|_{\Sigma - P})
   +
   \frac{1}{2} \sum_{[P] \in \mathcal{P}_{\Sigma}^{\varnothing}}
      C(\vec{\ell}_{\sigma}(\partial P))\,\Omega_{\Sigma - P}(\sigma|_{\Sigma - P}).
\end{equation}
The latter is a countable sum and its absolute convergence was addressed\footnote{The notion of admissibility adopted in the present paper is more restrictive than the one appearing in \cite{GRpaper}, but is sufficient for our purposes.} in \cite{GRpaper}. We recall the main construction theorem of that paper here. Let ${\mathcal F}(\mathcal{T}_{\Sigma},\mathbb{C})$ be the set of complex valued functions on $\mathcal{T}_{\Sigma}$.

\begin{thm}\label{THGR} 
  If $(A,B,C,D)$ is an admissible initial data, then $\Sigma \mapsto \Omega_{\Sigma} \in {\mathcal F}(\mathcal{T}_{\Sigma},\mathbb{C})$ is a well-defined assignment. More precisely:
  \begin{itemize}
    \item[$\bullet$] the series \eqref{serieGR} is absolutely convergent for the supremum norm over any compact subset of $\mathcal{T}_{\Sigma}$;
  \item[$\bullet$] $\Omega_{\Sigma}$ is invariant under all mapping classes in ${\rm Mod}_{\Sigma}$ which preserve $\partial_{1}\Sigma$;
    \item[$\bullet$] if the initial data is continuous (or measurable), $\Omega_{\Sigma}$ is also continuous (or measurable).
  \end{itemize}
  \hfill $\blacksquare$
\end{thm}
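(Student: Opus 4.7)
The plan is to proceed by induction on the complexity $d(\Sigma) = 2g - 2 + n$, with $\Omega_P = A$ and $\Omega_T = D$ serving as the base case $d(\Sigma) = 1$. In the base case, admissibility requires $A$ and $D$ to be bounded on the respective Teichm\"uller spaces, and they are continuous/measurable if the initial data is, so all three claims hold trivially. For the inductive step, with $\Sigma$ connected of complexity $\geq 2$, I would first fix the representative that cuts off a pair of pants $P$ along $\partial_1\Sigma$: by the classification of embedded pants in $\Sigma$, the set $\mathcal{P}_\Sigma$ is in bijection with $\{(\gamma_1, \gamma_2) \in S_\Sigma^\circ \cup \partial\Sigma\}$ modulo the obvious identifications, so that the two internal boundary lengths $\ell,\ell'$ of $P$ (or the single one $\ell$ in the $\mathcal{P}_\Sigma^m$ case, $m \geq 2$) are, for fixed $\sigma$, the hyperbolic lengths of one or two simple closed geodesics.

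The heart of the argument is the absolute convergence of the series \eqref{serieGR}. On a fixed compact $K \subset \mathcal{T}_\Sigma$, the induction hypothesis gives a uniform bound $\sup_{\sigma \in K'} |\Omega_{\Sigma - P}(\sigma|_{\Sigma-P})| \leq M_{K'}$ on any compact $K'$ of $\mathcal{T}_{\Sigma - P}$; since $\sigma|_{\Sigma - P}$ depends continuously on $\sigma$ and on the cutting data, one can pass to a uniform bound that depends only on $K$ and on the lengths $\ell,\ell'$ of the cut (in fact, one can reduce to the closed-pants case where the bound is independent of $\ell,\ell'$ by translating in the Fenchel--Nielsen coordinates along the cuff). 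The admissibility bounds on $B$ and $C$ provide
\[
|B(L_1,L_2,\ell)| \leq \frac{C_s}{(1 + [\ell - L_1 - L_2]_+)^s}\,\frac{1}{\ell^\eta}, \qquad |C(L_1,\ell,\ell')| \leq \frac{C_s}{(1 + [\ell+\ell'- L_1]_+)^s}\,\frac{1}{(\ell\ell')^\eta},
\]
for arbitrary $s > 0$, and so the convergence reduces to the polynomial-growth counting estimate
\[
\#\bigl\{\gamma \in S_\Sigma^\circ \;\big|\; \ell_\sigma(\gamma) \leq R\bigr\} = O(R^{6g - 6 + 2n}), \qquad \#\bigl\{(\gamma_1,\gamma_2) \;\big|\; \ell_\sigma(\gamma_i) \leq R_i\bigr\} = O((R_1 R_2)^{6g-6+2n}),
\]
due to Mirzakhani (uniform in $\sigma \in K$). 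Choosing $s$ large enough compared to these polynomial degrees and using $\eta < 2$ to integrate the remaining $\ell^{-\eta}$ factor near $0$, the double sum is dominated by a convergent series. This domination is moreover uniform over $K$, which is the main ingredient for the next step.

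For mapping class group invariance, any $\varphi \in {\rm Mod}_\Sigma$ with $\varphi(\partial_1\Sigma) = \partial_1\Sigma$ acts on $\mathcal{P}_\Sigma$ preserving the partition into $\mathcal{P}_\Sigma^\varnothing$ and the $\mathcal{P}_\Sigma^m$. The length triple $\vec\ell_\sigma(\partial P)$ is a mapping-class-group invariant of the pair $(\sigma, [P])$; moreover, $\varphi$ maps the complement $\Sigma - P$ to $\Sigma - \varphi(P)$, with the restricted metrics related by a diffeomorphism, and by the induction hypothesis $\Omega_{\Sigma - P}$ is invariant under the appropriate ${\rm Mod}_{\Sigma - P}$-action. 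Re-indexing the absolutely convergent sum by $\varphi$ gives $\Omega_\Sigma(\varphi^* \sigma) = \Omega_\Sigma(\sigma)$. Continuity (and measurability) then follows from the uniform domination on compacts: each term is continuous (or measurable) in $\sigma$ because the geodesic representatives and their lengths depend continuously on $\sigma$, and dominated convergence transfers continuity to the sum. The disconnected case is handled term by term via the factorisation $\Omega_{\Sigma_1 \sqcup \Sigma_2} = \Omega_{\Sigma_1} \Omega_{\Sigma_2}$, which preserves all three properties.

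The main obstacle I foresee is the geometric part of the convergence: matching the decay of $B$ and $C$ against Mirzakhani's counting estimate in a way that is uniform in $\sigma$ on a compact $K$, particularly because the cut surface $\Sigma - P$ may degenerate (short cuff) when $\ell \to 0$, which is precisely why the weight $\ell^{-\eta}$ with $\eta < 2$ is needed to tame the contribution of short geodesics. Once this analytic estimate is nailed down, the rest of the proof is a clean induction.
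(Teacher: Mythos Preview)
The paper does not prove this theorem here; it is quoted from \cite{GRpaper} with only the $\blacksquare$ mark, so there is no in-paper argument to compare against. Your overall strategy---induction on $2g-2+n$, Mirzakhani's polynomial counting of simple closed geodesics paired against the super-polynomial decay of $B$ and $C$, then re-indexing for invariance and dominated convergence for continuity---is indeed the skeleton of the proof in \cite{GRpaper}.

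There is, however, a genuine gap in your inductive step. You write that, after cutting, ``one can reduce to the closed-pants case where the bound is independent of $\ell,\ell'$ by translating in the Fenchel--Nielsen coordinates along the cuff''. This is not correct: for $\sigma$ in a compact $K\subset\mathcal{T}_\Sigma$, as $[P]$ ranges over $\mathcal{P}_\Sigma$ the new boundary components of $\Sigma-P$ have lengths $\ell,\ell'$ that are unbounded, so $\sigma|_{\Sigma-P}$ leaves every compact of $\mathcal{T}_{\Sigma-P}$. In fact $|\Omega_{\Sigma-P}|$ genuinely grows with $\ell,\ell'$: already for the $4$-holed sphere one checks that $\Omega$ is a sum over curves with lengths up to roughly $L_1+L_m$, giving polynomial growth in the boundary lengths. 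Twisting in Fenchel--Nielsen coordinates does nothing to the length parameters, so it cannot rescue the claim.

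The fix, which is what \cite{GRpaper} actually does, is to carry a \emph{stronger} inductive hypothesis: a bound of the form $|\Omega_{\Sigma'}(\sigma')|\le C_{\Sigma'}\cdot P(L'_1,\ldots,L'_{n'})\cdot G(\mathrm{sys}(\sigma'))$ with $P$ polynomial in the boundary lengths and $G$ depending only on the systole. Cutting preserves a lower bound on the systole (curves in $\Sigma-P$ are curves in $\Sigma$), and the polynomial growth in $\ell,\ell'$ is then absorbed into the arbitrary-$s$ decay of $B$ and $C$ by choosing $s$ large enough relative to $6g-6+2n$ plus the degree of $P$. With this strengthened hypothesis your induction closes; without it, the step ``bounded on compacts $\Rightarrow$ bounded on compacts'' does not go through.
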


\subsection{Two examples}
\label{Sexamples}

We describe two examples of initial data which play a special role for us. The first one appears in Mirzakhani's generalisation  \cite{Mirza1} of McShane identity \cite{McShane}, which is a prototype of GR and which we can reformulate in GR terms as follows.

\begin{thm}[Mirzakhani \& McShane]
   \label{thrmi}The initial data
   \begin{equation}\label{Mirzaini}
   \begin{aligned}
   	A^{{\rm M}}(L_1,L_2,L_3)
   	& =  1, \\
   	B^{{\rm M}}(L_1,L_2,\ell)
   	& =  1 - \frac{1}{L_1}\ln\bigg(\frac{\cosh\big(\frac{L_2}{2}\big) + \cosh\big(\frac{L_1 + \ell}{2}\big)}{\cosh\big(\frac{L_2}{2}\big) + \cosh\big(\frac{L_1 - \ell}{2}\big)}\bigg), \\
   	C^{{\rm M}}(L_1,\ell,\ell')
   	& = \frac{2}{L_1}\,\ln\bigg(\frac{e^{\frac{L_1}{2}} + e^{\frac{\ell + \ell'}{2}}}{e^{-\frac{L_1}{2}} + e^{\frac{\ell + \ell'}{2}}}\bigg), \\
   	D^{{\rm M}}(\sigma)
   	& = \sum_{\gamma \in S_{T}^{\circ}} C^{{\rm M}}\big(\ell_{\sigma}(\partial T),\ell_{\sigma}(\gamma),\ell_{\sigma}(\gamma)\big),
   \end{aligned}
   \end{equation}
   are admissible, and for any bordered $\Sigma$ the corresponding GR amplitude $\Omega_{\Sigma}^{{\rm M}}$ is the constant function $1$ on $\mathcal{T}_{\Sigma}$. \hfill $\blacksquare$
\end{thm}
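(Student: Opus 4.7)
The plan is to recognise the identity $\Omega_\Sigma^{{\rm M}} \equiv 1$ as a transcription of Mirzakhani's generalised McShane identity \cite{Mirza1} into the GR dictionary, and to establish admissibility by an elementary asymptotic analysis of the explicit formulas.

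\emph{Admissibility.} Boundedness of $A^{{\rm M}} \equiv 1$ is trivial. For $B^{{\rm M}}$ and $C^{{\rm M}}$, after rewriting the logarithms in the form
$$C^{{\rm M}}(L_1,\ell,\ell') = \tfrac{2}{L_1}\ln\left(1 + \tfrac{2\sinh(L_1/2)}{e^{-L_1/2} + e^{(\ell+\ell')/2}}\right),$$
and the analogous manipulation for $B^{{\rm M}}$ which groups $L_1 B^{{\rm M}}$ as a single logarithm of a quotient of $\cosh$'s times $e^{L_1}$, one checks that both functions decay exponentially in $[\ell + \ell' - L_1]_+$ (respectively $[\ell - L_1 - L_2]_+$), uniformly over all non-negative boundary lengths. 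This is much stronger than the decay required in Definition~\ref{defadm} and comfortably accommodates any $\eta \in [0,2)$. Boundedness of $D^{{\rm M}}$ is automatic once we prove $D^{{\rm M}} \equiv 1$, or may be obtained directly from the above decay of $C^{{\rm M}}$ together with the classical polynomial bound for the number of simple closed geodesics of bounded length on a one-holed hyperbolic torus.

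\emph{Identification with the constant~$1$.} We proceed by induction on $-\chi(\Sigma) \geq 1$. The base cases are $\Sigma = P$, where $\Omega_P^{{\rm M}} = A^{{\rm M}} \equiv 1$ by construction, and $\Sigma = T$, where $\Omega_T^{{\rm M}} = D^{{\rm M}}$. The equality $D^{{\rm M}} \equiv 1$ on $\mathcal{T}_T$ is exactly the bordered version of McShane's identity on a one-holed hyperbolic torus,
$$\ell_\sigma(\partial T) = \sum_{\gamma \in S_T^\circ} 2\ln\left(\frac{e^{\ell_\sigma(\partial T)/2} + e^{\ell_\sigma(\gamma)}}{e^{-\ell_\sigma(\partial T)/2} + e^{\ell_\sigma(\gamma)}}\right),$$
after dividing through by $\ell_\sigma(\partial T)$. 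For $-\chi(\Sigma) \geq 2$, Mirzakhani's generalised McShane identity decomposes $\ell_\sigma(\partial_1 \Sigma)$ as the sum over homotopy classes of embedded pairs of pants $[P] \in \mathcal{P}_\Sigma$ carrying $\partial_1 \Sigma$ as $\partial_1 P$, with contributions $\ell_\sigma(\partial_1 \Sigma) \cdot B^{{\rm M}}(\vec\ell_\sigma(\partial P))$ from classes in $\mathcal{P}_\Sigma^m$ and $\tfrac{1}{2}\,\ell_\sigma(\partial_1 \Sigma) \cdot C^{{\rm M}}(\vec\ell_\sigma(\partial P))$ from classes in $\mathcal{P}_\Sigma^\varnothing$. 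Dividing by $\ell_\sigma(\partial_1 \Sigma)$ and invoking the inductive hypothesis $\Omega_{\Sigma - P}^{{\rm M}} \equiv 1$ on every complementary (stable) subsurface, we recognise the right-hand side of~\eqref{serieGR}, whence $\Omega_\Sigma^{{\rm M}}(\sigma) = 1$.

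\emph{Main obstacle.} All the depth of the theorem is carried by Mirzakhani's identity, which we invoke as a black box. Beyond this, the remaining task is purely combinatorial bookkeeping: one must match the partition $\mathcal{P}_\Sigma = \mathcal{P}_\Sigma^\varnothing \sqcup \bigsqcup_{m=2}^n \mathcal{P}_\Sigma^m$ used in the GR formalism with the two types of ``configurations'' appearing on the right-hand side of the McShane identity, and verify that the factor $\tfrac{1}{2}$ in front of the $\mathcal{P}_\Sigma^\varnothing$ sum compensates exactly the symmetrisation of $C^{{\rm M}}$ in its last two arguments (equivalently, the involution exchanging the two interior cuffs of $P$). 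The absolute convergence of all the sums in play, which is what allows rearrangements, is guaranteed by the admissibility check of the first step together with Theorem~\ref{THGR}.
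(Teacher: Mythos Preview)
Your proposal is correct and matches the paper's treatment: the theorem is stated there without proof (the $\blacksquare$ follows the statement directly), being nothing more than Mirzakhani's generalised McShane identity recast in the GR dictionary, together with the elementary admissibility check. Your sketch supplies exactly the expected verification---exponential decay of $B^{{\rm M}},C^{{\rm M}}$ in the ``excess length'' variable, the one-holed-torus McShane identity for $D^{{\rm M}}\equiv 1$, and the inductive reading of \eqref{serieGR} against Mirzakhani's identity---and correctly flags the $\tfrac12$/symmetrisation bookkeeping as the only place requiring care.
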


The second example is obtained by rescaling all length variables in Mirzakhani initial data as follows 
\begin{equation}
\label{KKlim} X^{{\rm K}}(L_1,L_2,L_3) = \lim_{\beta \rightarrow \infty} X^{{\rm M}}(\beta L_1,\beta L_2,\beta L_3),\qquad X \in \{A,B,C\}.
\end{equation}
More explicitly
\begin{equation}\label{WKini}
\begin{aligned}
	A^{{K}}(L_1,L_2,L_3)
	& = 1,  \\
	B^{{K}}(L_1,L_2,\ell)
	& = \frac{1}{2L_1}\big([L_1 - L_2 - \ell]_{+} - [- L_1 + L_2 - \ell]_{+} + [L_1 + L_2 - \ell]_{+}\big), \\
	C^{{\rm K}} (L_1,\ell,\ell')
	& = \frac{1}{L_1}\,[L_1 - \ell - \ell']_{+}, \\
	D^{{\rm K}}(\sigma)
	& = \sum_{\gamma \in S_{T}^{\circ}} C^{{\rm K}}(\ell_{\sigma}(\partial T),\ell_{\sigma}(\gamma),\ell_{\sigma}(\gamma)).
\end{aligned}
\end{equation}
It is easy to check that these initial data are admissible, and we call them the Kontsevich initial data. Unlike the previous situation, the resulting GR amplitudes $\Omega_{\Sigma}^{\rm K}$ are non-trivial functions on $\mathcal{T}_{\Sigma}$. Their geometric interpretation and basic properties are studied in \cite{CombGRpaper}.

\subsection{Hyperbolic length statistics and twisting of initial data}
\label{Twistsec} 
Let $\mathbb{D} \subset \mathbb{C}$ be the open unit disk. Let $f\colon\mathbb{R}_{+} \rightarrow \mathbb{C}$ and $\widetilde{f}\colon\mathbb{R}_{+} \rightarrow \mathbb{D}$ be two functions related by
\begin{equation}\label{f:ftilde:relation}
   f(\ell) = \sum_{k \geq 1} (\widetilde{f}(\ell))^{k} = \frac{\widetilde{f}(\ell)}{1 - \widetilde{f}(\ell)}.
\end{equation}

\begin{defn}
   We call $f\colon\mathbb{R}_{+} \to \mathbb{C}$ an \emph{admissible test function} if $f$ is Riemann-integrable on $\mathbb{R}_{+}$ and for any $s > 0$
   \begin{equation}\label{condh}
      \sup_{\ell > 0}\,\,(1 + \ell)^s\, |f(\ell)|\ < +\infty.
   \end{equation}
\end{defn}

This condition is stronger than what is needed in \cite{GRpaper}, but is sufficient here.

\medskip

Following  \cite{GRpaper}, we consider multiplicative statistics of hyperbolic lengths of multicurves
\begin{equation}
\label{nh} N_{\Sigma}(f;\sigma) = \sum_{c \in M_{\Sigma}'} \prod_{\gamma \in \pi_0(c)} f(\ell_{\sigma}(\gamma)) = \sum_{c \in M_{\Sigma}} \prod_{\gamma \in \pi_0(c)} \widetilde{f}(\ell_{\sigma}(\gamma)).
\end{equation}
It can be written either as a sum over all multicurves or as a sum over primitive multicurves only, the two expressions being related via the geometric series \eqref{f:ftilde:relation}. According to our conventions, the empty multicurve gives a term equal to $1$ in this sum.

\medskip

In fact, these statistics satisfy the geometric recursion. If $(A,B,C,D)$ are some initial data, we define its twisting as follows
\begin{equation}\label{initwist}
\begin{aligned}
	A[f](L_1,L_2,L_3) & = A(L_1,L_2,L_3), \\
	B[f](L_1,L_2,\ell) & = B(L_1,L_2,\ell) + A(L_1,L_2,\ell)\,f(\ell), \\
	C[f](L_1,\ell,\ell') & = C(L_1,\ell,\ell') + B(L_1,\ell,\ell') f(\ell) + B(L_1,\ell',\ell)f(\ell') + A(L_1,\ell,\ell')f(\ell)f(\ell'), \\
	D[f](\sigma) & = D(\sigma) + \sum_{\gamma \in S_{T}^{\circ}} A\big(\ell_{\sigma}(\partial T),\ell_{\sigma}(\gamma),\ell_{\sigma}(\gamma)\big)\,f(\ell_{\sigma}(\gamma)).
\end{aligned}	
\end{equation}

\begin{thm} \cite{GRpaper}
  If we choose $(A,B,C,D)$ to be the Mirzakhani initial data \eqref{Mirzaini} and $f$ is an admissible test function, the twisted initial data \eqref{initwist} are admissible and the resulting GR amplitudes coincide with the assignment $\Sigma \mapsto N_{\Sigma}(f;\,\cdot\,)$. \hfill $\blacksquare$
\end{thm}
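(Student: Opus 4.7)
The plan is to proceed in three stages: admissibility of the twisted data, base cases of an induction on topological complexity, and the combinatorial identity at the inductive step.

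\textbf{Admissibility.} I would first verify that $(A[f], B[f], C[f], D[f])$ satisfy Definition~\ref{defadm}, which then activates Theorem~\ref{THGR} to produce a well-defined continuous GR amplitude $\Omega_{\Sigma}^{{\rm M}[f]}$. Since $A^{{\rm M}} \equiv 1$, the function $A[f] = 1$ remains bounded. The extra terms in $B[f] = B^{{\rm M}} + f$ and in $C[f] = C^{{\rm M}} + B^{{\rm M}}(L_1,\ell,\ell')f(\ell) + B^{{\rm M}}(L_1,\ell',\ell)f(\ell') + f(\ell)f(\ell')$ inherit the decay condition of Definition~\ref{defadm} (for the same $\eta \in [0,2)$ working for $B^{{\rm M}}, C^{{\rm M}}$) from the super-polynomial decay~\eqref{condh} of $f$ combined with admissibility of the Mirzakhani initial data. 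For $D[f]$, the added series $\sum_{\gamma \in S_{T}^{\circ}} f(\ell_\sigma(\gamma))$ converges uniformly on compact subsets of $\mathcal{T}_T$ by combining the polynomial growth of the simple length spectrum on $T$ with the super-polynomial decay of $f$.

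\textbf{Base cases.} I would then prove $\Omega_\Sigma^{{\rm M}[f]} = N_\Sigma(f;\,\cdot\,)$ by induction on $-\chi(\Sigma) = 2g-2+n$. For a pair of pants $P$, one has $S_P^\circ = \emptyset$, hence $M'_P = \{\emptyset\}$ and $N_P(f;\sigma) = 1 = A[f]$. For the torus with one boundary $T$, any two non-isotopic essential simple closed curves intersect, so $M'_T = \{\emptyset\} \cup \{\{\gamma\} : \gamma \in S_T^\circ\}$, giving $N_T(f;\sigma) = 1 + \sum_{\gamma} f(\ell_\sigma(\gamma))$. By Theorem~\ref{thrmi}, $\Omega_T^{{\rm M}} = D^{{\rm M}} \equiv 1$, whence $\Omega_T^{{\rm M}[f]} = D[f] = 1 + \sum_\gamma f(\ell_\sigma(\gamma)) = N_T(f;\sigma)$.

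\textbf{Inductive step and main obstacle.} For $-\chi(\Sigma) \geq 2$, applying the GR recursion~\eqref{serieGR} with the twisted data and the inductive hypothesis $\Omega_{\Sigma-P}^{{\rm M}[f]} = N_{\Sigma-P}(f;\,\cdot\,)$ reduces the claim to the functional identity
\[
N_\Sigma(f;\sigma) = \sum_{m=2}^n \sum_{[P] \in \mathcal{P}_\Sigma^m} B[f](\vec\ell_\sigma(\partial P))\, N_{\Sigma-P}(f;\sigma|_{\Sigma-P}) + \tfrac{1}{2} \sum_{[P] \in \mathcal{P}_\Sigma^\varnothing} C[f](\vec\ell_\sigma(\partial P))\, N_{\Sigma-P}(f;\sigma|_{\Sigma-P}).
\]
My strategy is to refine the Mirzakhani--McShane partition of unity $1 = \sum_P B^{{\rm M}} + \tfrac{1}{2}\sum_P C^{{\rm M}}$ (equivalent to $\Omega_\Sigma^{{\rm M}} \equiv 1$ from Theorem~\ref{thrmi}) along each primitive multicurve $c$: for each $c \in M'_\Sigma$ one applies Mirzakhani--McShane to the component of the cut surface $\Sigma \setminus c$ containing $\partial_1\Sigma$, producing a sum of $B^{{\rm M}}, C^{{\rm M}}$ contributions over embedded pants $P$ whose inner boundaries lie either in the interior of $\Sigma - c$ or on components of $c$ (the latter coupling to $f$-factors from the corresponding $c$-components after multiplying by $\prod_{\gamma \in c} f(\ell_\sigma(\gamma))$ and summing over $c$). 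Interchanging the summations over $c$ and $P$ and reindexing pairs $(c,P)$ as triples $(P, c', \epsilon)$ with $c' \in M'_{\Sigma-P}$ and $\epsilon \subseteq \{\partial_2 P, \partial_3 P\}$ is intended to reproduce the four (resp.\ two) terms of $C[f]$ (resp.\ $B[f]$) in~\eqref{initwist}. The main obstacle is making this reindexing rigorous while tracking the precise combinatorial coefficients so that the output matches the $A^{{\rm M}}, B^{{\rm M}}$ factors appearing in~\eqref{initwist}; absolute convergence and the legitimacy of the Fubini-type exchange are guaranteed by admissibility of $f$ and of the twisted initial data. This combinatorial identity is the technical heart of the theorem and is carried out in full detail in \cite{GRpaper}.
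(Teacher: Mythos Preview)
Your proposal is correct and follows essentially the same approach as the paper's proof sketch: for each primitive multicurve $c$, one multiplies $\prod_{\gamma \in \pi_0(c)} f(\ell_\sigma(\gamma))$ by $1$, decomposes this $1$ via the Mirzakhani--McShane identity on $\Sigma - c$, and then interchanges the summation over multicurves with the summation over embedded pairs of pants to recover the GR structure with twisted initial data. Your explicit treatment of admissibility and the base cases, and your identification of the reindexing bijection $(c,P) \leftrightarrow (P,c',\epsilon)$ as the combinatorial crux, are exactly the ingredients needed to flesh out the paper's idea into a complete argument.
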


The idea of the proof is, for each $c \in M_{\Sigma}'$, to multiply the product in \eqref{nh} by $1$, seen as a function on the Teichm\"uller space of $\Sigma - c$. Then, one decomposes $1$ using Mirzakhani's identity on $\mathcal{T}_{\Sigma - c}$, and interchanges the summation over primitive multicurves with the summation over embedded pairs of pants. As the curves do not intersect the pair of pants, the structure of the geometric recursion \eqref{serieGR} appears again, but the initial data are modified as in \eqref{initwist}. It is important to consider only \emph{simple} closed curves, as otherwise $\Sigma - c$ would not be anymore a bordered surface and the recursive procedure could not be carried out in this way.

\subsection{Relation to the topological recursion}
\label{S25}
Being invariant under the pure mapping class group, the GR amplitudes $\Omega_{\Sigma}$ descend to functions on the moduli space $\mathcal{M}_{g,n}$, and we denote them by $\Omega_{g,n}$. The structure of the geometric recursion is compatible with factorisations of the Weil--Petersson volume form $\mu_{{\rm WP}}$ when excising pairs of pants. This means that, if we integrate GR amplitude against $\mu_{{\rm WP}}$, the outcome will again be governed by a recursion with respect to the Euler characteristic, which is called the topological recursion (TR for short). The (countable) sum over homotopy classes of pairs of pants is replaced with a sum over the (finitely many) diffeomorphism classes of embeddings of pair of pants. 

\medskip

Recall the notation
\[
   V\Omega_{g,n}(L_1,\ldots,L_n) = \int_{\mathcal{M}_{g,n}(L_1,\ldots,L_n)} \Omega_{g,n}(\sigma)\,\dd\mu_{{\rm WP}}(\sigma),
\]
whenever the integral on the right-hand side makes sense; by convention, we set $V\Omega_{g,n} = 0$ when $2g - 2 + n \leq 0$.

\begin{thm}[From GR to TR, \cite{GRpaper}]
  If $(A,B,C,D)$ are admissible, $V\Omega_{g,n}$ is well-defined as the integrand is Riemann-integrable, and it satisfies the topological recursion, that is for any $g \geq 0$ and $n \geq 1$ such that $2g - 2 + n \geq 2$
  \begin{equation}\label{TReqn}
  \begin{split}
  	& V\Omega_{g,n}(L_1,L_2,\ldots,L_n) \\
   	& = \sum_{m = 2}^n \int_{\mathbb{R}_{+}} B(L_1,L_m,\ell) V\Omega_{g,n - 1}(\ell,L_2,\ldots,\widehat{L_m},\ldots,L_n) \ell\, \dd \ell \\
   	& + \frac{1}{2} \int_{\mathbb{R}_{+}^2} \!\! C(L_1,\ell,\ell') \bigg(V\Omega_{g-1,n + 1}(\ell,\ell',L_2,\ldots,L_n) + \!\!\!\!\! \!\!\!\!\! \sum_{\substack{h + h' = g \\ J \sqcup J' = \{L_2,\ldots,L_n\}}} \!\!\!\!\!\!\!\!\! V\Omega_{h,1+|J|}(\ell,J)V\Omega_{h',1 + |J'|}(\ell',J')\bigg) \ell \ell'\,\dd \ell\,\dd \ell' \\
  \end{split}
  \end{equation}
  The base cases are
  \[
     V\Omega_{0,3}(L_1,L_2,L_3) = A(L_1,L_2,L_3),\qquad V\Omega_{1,1}(L_1) = VD(L_1) = \int_{\mathcal{M}_{1,1}(L_1)} D(\sigma)\,\dd\mu_{{\rm WP}}(\sigma).
  \]
  \hfill $\blacksquare$
\end{thm}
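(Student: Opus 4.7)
The plan is to integrate the geometric recursion \eqref{serieGR} termwise against $\dd\mu_{{\rm WP}}$ over $\mathcal{M}_{g,n}(L)$ and reorganize the resulting sum using Mirzakhani's integration formula. The first task is to justify well-definedness: admissibility together with Theorem~\ref{THGR} implies $\Omega_{\Sigma}$ is continuous, and the decay conditions in Definition~\ref{defadm} (with exponents $s > 0$ and $\eta \in [0,2)$) yield integrable local bounds for the GR series. This permits, by Fubini/dominated convergence, the interchange of the sum over $[P] \in \mathcal{P}_{\Sigma}$ with the moduli integral.

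Next, I would group the sum over homotopy classes of embedded pairs of pants into ${\rm Mod}_{\Sigma}^{\partial}$-orbits. For $\mathcal{P}_{\Sigma}^{m}$ with $m \in \{2,\ldots,n\}$, the orbit is classified by the cut surface $\Sigma - P$ having topological type $(g, n-1)$ with prescribed boundary labels. For $\mathcal{P}_{\Sigma}^{\varnothing}$, orbits correspond either to a non-separating cut (cut surface of type $(g-1,n+1)$) or to a separating split $(h,1+|J|) \sqcup (h',1+|J'|)$ with $h + h' = g$ and $J \sqcup J' = \{L_2,\ldots,L_n\}$. The $\tfrac{1}{2}$ in \eqref{serieGR} accounts for the symmetry exchanging the two boundary components $\partial_2 P$ and $\partial_3 P$ when they lie in the same component of $\Sigma - P$; after Mirzakhani's unfolding this yields exactly the $\tfrac{1}{2}$ weight in \eqref{TReqn}.

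For each orbit, I would apply Mirzakhani's integration formula: the Weil--Petersson form factorises in Fenchel--Nielsen coordinates around any cutting curve $\gamma$ as $\dd\mu_{{\rm WP}}^{\Sigma} = \ell\,\dd\ell \wedge \dd\tau \wedge \dd\mu_{{\rm WP}}^{\Sigma - \gamma}$, and the twist $\tau$ ranges over a fundamental domain of length $\ell$ in the Dehn twist orbit. Consequently, the integral over $\mathcal{M}_{g,n}(L)$ of a ${\rm Mod}_{\Sigma}^{\partial}$-invariant function of the form $F(\vec{\ell}_{\sigma}(\partial P))\,\Omega_{\Sigma - P}(\sigma|_{\Sigma - P})$ unfolds, for a non-separating or same-topology-on-both-sides orbit, to a single integral $\int_{\mathbb{R}_{+}} F(L_1,L_m,\ell)\, V\Omega_{\Sigma - P}(\ell,\ldots)\,\ell\,\dd\ell$ in the $B$-case and to a double integral $\int_{\mathbb{R}_{+}^{2}} F(L_1,\ell,\ell')\,V\Omega_{\Sigma - P}(\ell,\ell',\ldots)\,\ell\ell'\,\dd\ell\,\dd\ell'$ in the $C$-case. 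Summing these contributions over orbits reproduces the right-hand side of \eqref{TReqn}. The base cases are then immediate: $\mathcal{M}_{0,3}(L)$ is a point, so $V\Omega_{0,3} = \Omega_{P} = A$; and by definition $\Omega_{T} = D$, giving $V\Omega_{1,1}(L_1) = VD(L_1)$.

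The main obstacle I expect is the analytic control needed to justify termwise integration and re-indexing: the GR series over $\mathcal{P}_{\Sigma}$ is infinite, and one must argue that after unfolding by $\text{Mod}_{\Sigma}^{\partial}$ the remaining integrals over $\ell\,\dd\ell$ (or $\ell\ell'\,\dd\ell\,\dd\ell'$) converge absolutely, so that the finite sum over orbits in \eqref{TReqn} is legitimate. Here the admissibility conditions are essential: the factor $(1 + [\ell - L_1 - L_2]_{+})^{-s}$ provides rapid decay at infinity (dominating possible polynomial growth of $V\Omega_{\Sigma - P}$ in $\ell$), while the $\ell^{\eta}$ bound with $\eta < 2$ ensures local integrability against the $\ell\,\dd\ell$ measure near $\ell = 0$. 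A similar analysis handles the $D$ contribution recursively through the recursive definition of $VD$.
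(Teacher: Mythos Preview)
The paper does not prove this theorem: it is quoted from \cite{GRpaper} and marked with $\blacksquare$, the symbol used throughout Section~\ref{SPre} for results stated without proof. The only indication of the argument in the present paper is the sentence immediately preceding the theorem: ``The structure of the geometric recursion is compatible with factorisations of the Weil--Petersson volume form $\mu_{{\rm WP}}$ when excising pairs of pants\ldots The (countable) sum over homotopy classes of pairs of pants is replaced with a sum over the (finitely many) diffeomorphism classes of embeddings of pair of pants.''

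Your sketch expands exactly this idea and is the standard route taken in \cite{GRpaper}: integrate \eqref{serieGR} over $\mathcal{M}_{g,n}(L)$, group the homotopy classes $[P]$ into ${\rm Mod}_{\Sigma}^{\partial}$-orbits (which are indexed by the topological type of the cut), and apply Mirzakhani's unfolding/integration formula so that the Fenchel--Nielsen factorisation $\dd\mu_{{\rm WP}} = \ell\,\dd\ell\,\dd\tau\,\dd\mu_{{\rm WP}}^{\Sigma - P}$ produces the $\ell\,\dd\ell$ and $\ell\ell'\,\dd\ell\,\dd\ell'$ weights. Your identification of the analytic issue (absolute convergence to justify Fubini and the orbit re-indexing, controlled by the decay in Definition~\ref{defadm}) is also the right one. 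There is nothing to correct; your outline is faithful to the argument in the cited reference, and the paper itself offers no further detail to compare against.
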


We call any sequence of functions $V\Omega_{g,n}$ satisfying a recursion of the form \eqref{TReqn} TR amplitudes. Let us come back to the two examples of Section~\ref{Sexamples}.

\medskip

According to Theorem~\ref{thrmi}, $V\Omega^{{\rm M}}_{g,n}(L_1,\ldots,L_n)$ is the Weil--Petersson volume of $\mathcal{M}_{g,n}(L_1,\ldots,L_n)$, and the topological recursion \eqref{TReqn} in this case is Mirzakhani's recursion for these volumes \cite{Mirza1}. To be complete, we should record the Weil--Petersson volume for $\mathcal{M}_{1,1}(L_1)$
\[
   VD^{{\rm M}}(L_1) = \frac{\pi^2}{6} + \frac{L_1^2}{48},
\]
which is also mentioned in \cite{Mirza1}. Mirzakhani also expressed the Weil--Petersson volumes via intersection theory on the Deligne--Mumford compactified moduli space of punctured surfaces $\overline{\mathfrak{M}}_{g,n}$.

\begin{thm}\cite{Mirzaint}\label{mirzath2}
  The Weil--Petersson volumes satisfy
  \[
  V\Omega^{{\rm M}}_{g,n}(L_1,\ldots,L_n) = \int_{\overline{\mathfrak{M}}_{g,n}} \exp\bigg(2\pi^2\kappa_1 + \sum_{i = 1}^n \frac{L_i^2}{2}\psi_i\bigg).
  \]
  \hfill $\blacksquare$
\end{thm}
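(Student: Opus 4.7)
The plan is to follow Mirzakhani's symplectic-geometric approach, realizing the boundary lengths $L_i$ as moment-map values for a torus action and then using Duistermaat--Heckman to exhibit the cohomology class $[\omega_{WP}(L)]$ as an affine function of the $L_i^2$ on a fixed underlying space, namely $\overline{\mathfrak{M}}_{g,n}$.

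First I would introduce the auxiliary moduli space $\widetilde{\mathcal{M}}_{g,n}$ parameterising hyperbolic surfaces $\sigma$ of type $(g,n)$ with geodesic boundary together with a marked point on each boundary component. The total boundary-length map $\widetilde{\mathcal{M}}_{g,n} \to \mathbb{R}_{+}^n$ makes $\widetilde{\mathcal{M}}_{g,n}$ a symplectic manifold for an extension of the Weil--Petersson form (given by Wolpert's twist--length formula), and the natural action of $T^n = (S^1)^n$ moving the marked points is Hamiltonian with moment map $\mu(\sigma) = \bigl(\tfrac{L_1^2}{2},\ldots,\tfrac{L_n^2}{2}\bigr)$. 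The symplectic quotient $\mu^{-1}(L_1^2/2,\ldots,L_n^2/2)/T^n$ is exactly $\mathcal{M}_{g,n}(L_1,\ldots,L_n)$. This identifies all the $\mathcal{M}_{g,n}(L)$, as $L$ varies, with symplectic reductions of a single ambient space.

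Next I would invoke the Duistermaat--Heckman theorem for this $T^n$ action. It asserts that, once the fibres are identified smoothly, the cohomology class of the reduced symplectic form varies linearly in the moment-map level, with derivative equal to the Chern classes of the principal $T^n$-bundle. In the moduli setting, these Chern classes are the $\psi$-classes $\psi_1,\ldots,\psi_n$ pulled back from $\overline{\mathfrak{M}}_{g,n}$ (where $\psi_i$ is the first Chern class of the cotangent line at the $i$-th puncture, matching the infinitesimal rotation of the $i$-th marked point). Therefore
\[
  [\omega_{WP}(L)] \;=\; [\omega_{WP}(0)] \,+\, \sum_{i=1}^n \frac{L_i^2}{2}\,\psi_i
\]
inside $H^2(\overline{\mathfrak{M}}_{g,n},\mathbb{R})$, once we have fixed a smooth identification of the $\mathcal{M}_{g,n}(L)$ with a dense open of $\overline{\mathfrak{M}}_{g,n}$ and verified that the class extends across the boundary divisors; this extension is exactly Wolpert's theorem on the asymptotics of $\omega_{WP}$ near nodal curves.

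Finally I would combine this with Wolpert's identification at $L = 0$, namely $[\omega_{WP}(0)] = 2\pi^2\kappa_1$ in $H^2(\overline{\mathfrak{M}}_{g,n},\mathbb{R})$, to get $[\omega_{WP}(L)] = 2\pi^2 \kappa_1 + \sum_i \tfrac{L_i^2}{2}\psi_i$. Integrating the top power of this class gives
\[
  V\Omega^{{\rm M}}_{g,n}(L)
  \;=\; \int_{\overline{\mathfrak{M}}_{g,n}} \frac{[\omega_{WP}(L)]^{3g-3+n}}{(3g-3+n)!}
  \;=\; \int_{\overline{\mathfrak{M}}_{g,n}} \exp\!\Bigl(2\pi^2 \kappa_1 + \sum_{i=1}^n \tfrac{L_i^2}{2}\psi_i\Bigr),
\]
since all monomials of degree exceeding $3g-3+n$ vanish for dimensional reasons. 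I expect the hard step to be the second one: checking that the Duistermaat--Heckman linearity genuinely extends over the Deligne--Mumford compactification, which requires the analytic control on the behaviour of $\omega_{WP}$ near nodes coming from Wolpert's work, and a careful construction of the family of diffeomorphisms $\mathcal{M}_{g,n}(L) \to \mathcal{M}_{g,n}(L')$ used to transport the cohomology class.
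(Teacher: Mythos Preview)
The paper does not prove this theorem: it is quoted as a result of Mirzakhani \cite{Mirzaint} and marked with $\blacksquare$. Your outline is precisely Mirzakhani's original argument from that reference --- the framed moduli space $\widetilde{\mathcal{M}}_{g,n}$ with a point on each boundary, the $T^n$-action with moment map $L_i^2/2$, Duistermaat--Heckman to get the affine variation of $[\omega_{WP}(L)]$ with slope $\psi_i$, and Wolpert's identification $[\omega_{WP}(0)] = 2\pi^2\kappa_1$ --- so there is nothing to contrast.
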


Similar considerations applies to $V\Omega^{{\rm K}}$. Actually, the topological recursion for $V\Omega^{{\rm K}}$ is equivalent to the set of Virasoro constraints for the intersection of $\psi$-classes on $\overline{\mathfrak{M}}_{g,n}$.

\begin{thm}[Conjecture \cite{Witten}, theorem of \cite{Kontsevich} and \cite{DVV}]\label{konth}
  The amplitudes $V\Omega^{{\rm K}}$ satisfy
  \[
  V\Omega^{{\rm K}}_{g,n}(L_1,\ldots,L_n) = \int_{\overline{\mathfrak{M}}_{g,n}} \exp\bigg(\sum_{i = 1}^n \frac{L_i^2}{2}\psi_i \bigg).
  \]
  In particular, $VD^{{\rm K}}(L_1) = \frac{L_1^2}{48}$.
  \hfill $\blacksquare$
\end{thm}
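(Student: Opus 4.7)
The plan is to reduce to Mirzakhani's intersection theoretic formula (Theorem~\ref{mirzath2}) by exploiting the scaling relation \eqref{KKlim} defining the Kontsevich initial data as limits of Mirzakhani's. Concretely, I will prove by induction on $2g-2+n$ the identity
\[
V\Omega^{{\rm K}}_{g,n}(L_1,\ldots,L_n) \;=\; \lim_{\beta \to \infty} \beta^{-2(3g-3+n)} \, V\Omega^{{\rm M}}_{g,n}(\beta L_1,\ldots,\beta L_n),
\]
and then observe that the right-hand side, by Mirzakhani's polynomial formula, equals the top-degree (in $L_i^2$) part of $\int_{\overline{\mathfrak{M}}_{g,n}} \exp(2\pi^2 \kappa_1 + \tfrac12 \sum L_i^2 \psi_i)$, which is exactly $\int_{\overline{\mathfrak{M}}_{g,n}} \exp(\tfrac12 \sum L_i^2 \psi_i)$ (only the $\kappa_1^0$ term of top degree in $L$ survives).

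For the inductive step, I apply the TR equation \eqref{TReqn} for $V\Omega^{{\rm M}}$ at the rescaled arguments $\beta L$, change variables $\ell \mapsto \beta \tilde\ell$ and $\ell' \mapsto \beta\tilde\ell'$ in the integrals, and divide by $\beta^{2(3g-3+n)}$. Since $V\Omega^{{\rm M}}_{g',n'}$ is a polynomial of total degree $3g'-3+n'$ in the $L_i^2$, the change of variables produces an overall factor $\beta^{2 + 2(3g'-3+n')}$ from the integrand and measure that exactly cancels $\beta^{2(3g-3+n)}$ in the relevant Euler-characteristic decreasing terms. The induction hypothesis gives $\beta^{-2(3g'-3+n')} V\Omega^{{\rm M}}_{g',n'}(\beta\tilde\ell,\beta L_\bullet,\ldots) \to V\Omega^{{\rm K}}_{g',n'}(\tilde\ell,L_\bullet,\ldots)$, and the pointwise limit $X^{{\rm M}}(\beta\cdot) \to X^{{\rm K}}(\cdot)$ of the kernels $B,C$ holds by construction. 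Assembled, the recursion limits to the Kontsevich one, which determines $V\Omega^{{\rm K}}_{g,n}$.

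For the base cases: $V\Omega^{{\rm K}}_{0,3} = A^{{\rm K}} = 1 = \int_{\overline{\mathfrak{M}}_{0,3}} 1$ trivially. For $(g,n)=(1,1)$, I unfold $VD^{{\rm K}}(L_1)$ via Mirzakhani's integration formula on $\mathcal{M}_{1,1}(L_1)$, using that cutting along a nonseparating simple closed curve identifies $\mathcal{M}_{1,1}(L_1)$ with a quotient of $\mathcal{M}_{0,3}(L_1,\ell,\ell)$-fibers:
\[
VD^{{\rm K}}(L_1) = \frac{1}{2}\int_0^\infty C^{{\rm K}}(L_1,\ell,\ell)\,\ell\,\dd\ell = \frac{1}{2L_1}\int_0^{L_1/2} (L_1 - 2\ell)\,\ell\,\dd\ell = \frac{L_1^2}{48},
\]
which matches $\int_{\overline{\mathfrak{M}}_{1,1}} \exp(L_1^2 \psi_1/2) = \tfrac{L_1^2}{2}\langle\tau_1\rangle_{1,1} = L_1^2/48$.

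The main obstacle is justifying the exchange of limit and integral in the induction step. This requires dominated convergence: the admissibility of Mirzakhani's initial data (Definition~\ref{defadm}) with its decay exponent $\eta$, uniform in $\beta$ after rescaling, together with the fact that $\beta^{-2(3g'-3+n')} V\Omega^{{\rm M}}_{g',n'}(\beta\tilde\ell,\beta L_\bullet,\ldots)$ is a polynomial in $1/\beta$ whose coefficients are polynomial in $\tilde\ell$, must provide an integrable majorant on any compact set of the $L_i$. The rapid decay of $B^{{\rm M}}(\beta L_1,\beta L_m,\beta\tilde\ell)$ for $\tilde\ell > L_1+L_m$ (essentially exponential in $\beta$) dominates the polynomial growth of $V\Omega^{{\rm M}}_{g',n'}$, and a uniform $\beta$-independent bound of the form $|B^{{\rm M}}(\beta L_1,\beta L_m,\beta\tilde\ell)| \leq C(L_1,L_m)(1+\tilde\ell)^{-s}$ on the support-relevant region suffices.
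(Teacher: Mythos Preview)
Your approach is correct and is precisely the route the paper indicates (it does not give a full proof, but immediately after the statement remarks that the theorem ``is also a corollary of Theorem~\ref{mirzath2}, as can be seen if we multiply all length variables by $\beta$ in the Mirzakhani initial data, let $\beta \rightarrow \infty$ and recall definition \eqref{KKlim}''). You have fleshed out exactly that sketch: the base cases are checked directly, the inductive step rescales the TR formula for $V\Omega^{{\rm M}}$ and passes to the limit to obtain the TR formula for $V\Omega^{{\rm K}}$, and the identification with $\int_{\overline{\mathfrak{M}}_{g,n}}\exp(\sum L_i^2\psi_i/2)$ then comes from extracting the top-degree part of Mirzakhani's polynomial formula.

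One minor comment: your stated dominating bound $|B^{{\rm M}}(\beta L_1,\beta L_m,\beta\tilde\ell)|\leq C(L_1,L_m)(1+\tilde\ell)^{-s}$ is not literally what admissibility gives; the admissibility condition yields $|B^{{\rm M}}(\beta L_1,\beta L_m,\beta\tilde\ell)|\leq C(1+\beta[\tilde\ell-L_1-L_m]_+)^{-s}\leq C(1+[\tilde\ell-L_1-L_m]_+)^{-s}$ for $\beta\geq 1$. This still dominates the polynomial $\beta^{-2(3g'-3+n')}V\Omega^{{\rm M}}_{g',n'}(\beta\tilde\ell,\beta L_\bullet)$ (which is polynomial in $\tilde\ell$ with $\beta$-independent coefficients for $\beta\geq 1$, since $V\Omega^{{\rm M}}_{g',n'}$ is polynomial of degree $3g'-3+n'$ in the squared lengths), because $s$ may be taken arbitrarily large and on the compact region $\tilde\ell\leq L_1+L_m$ the boundedness of $B^{{\rm M}}$ suffices. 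The same works for $C^{{\rm M}}$. So dominated convergence goes through, just with the bound phrased in terms of $[\tilde\ell-L_1-L_m]_+$ rather than $\tilde\ell$.
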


This is also a corollary of Theorem~\ref{mirzath2}, as can be seen if we multiply all length variables by $\beta$ in the Mirzakhani initial data, let $\beta \rightarrow \infty$ and recall definition \eqref{KKlim}. The main analysis carried out in this paper consists in rescaling length variables by $\beta \rightarrow \infty$ in the twisted GR amplitudes to understand properties of the asymptotic number of multicurves.

\medskip

There are several other ways to see that Theorem~\ref{mirzath2} implies or is implied by Theorem~\ref{konth}, see \cite{Safnukwp,Do,Mulasevolume}. They will be discussed in the broader context of the geometric recursion in \cite{CombGRpaper}.

\subsubsection*{Symmetry issues}

The GR amplitudes $\Omega_{\Sigma}$ are a priori invariant under mapping classes that preserve $\partial_1\Sigma$ (see Theorem~\ref{THGR}). Therefore, after integration, the TR amplitudes $V\Omega_{g,n}(L_1,\ldots,L_n)$ are symmetric functions of $L_2,\ldots,L_n$. The topological recursion also gives a special role to the length $L_1$ of the first boundary. 

\medskip

The framework of quantum Airy structures \cite{KSTR} provides sufficient conditions for the invariance of TR amplitudes under all permutations of $(L_1,\ldots,L_n)$. These conditions are quadratic constraints on $(A,B,C,VD)$ which are explicitly written down in \cite[Section 2.2]{ABCD}. They are satisfied by the Mirzakhani and Kontsevich initial data obtained from spectral curves in the Eynard--Orantin description (Section~\ref{SEO}), and they are stable under the twisting operation \cite{ABCD}. Therefore, all TR amplitudes that are considered in this article have the full $\mathfrak{S}_{n}$-symmetry.

\medskip

The situation is different at the level of GR amplitudes. For instance, one can prove that $\Omega^{{\rm K}}_{\Sigma}$ is \emph{not} always invariant under mapping classes that do not respect $\partial_{1}\Sigma$ \cite{CombGRpaper}.
\subsection{Twisting and stable graphs}
\label{Stwists}
If $(A,B,C,D)$ are admissible initial data, the upper bound on the number of multicurves of bounded length directly implies that the twisted initial data $(A[f],B[f],C[f],D[f])$ remain admissible when $f$ is an admissible test function \eqref{condh}. Therefore, the integrals
\[
   V\Omega_{g,n}(f;L_1,\ldots,L_n) = \int_{\mathcal{M}_{g,n}(L_1,\ldots,L_n)} \Omega_{g,n}(f;\sigma)\,\dd\mu_{{\rm WP}}(\sigma)
\]
of the GR amplitudes $\Omega_{g,n}(f;\,\cdot\,)$ satisfy TR \eqref{TReqn} for the initial data $(A[f],B[f],C[f])$, completed by
\begin{equation}
\label{VDtwist}    VD(f;L_1) = VD(L_1) + \frac{1}{2} \int_{\mathbb{R}_{+}}f(\ell)\,A(L_1,\ell,\ell)\,\ell \dd \ell.
\end{equation}
The function $V\Omega_{g,n}(f;\,\cdot\,)$ can also be evaluated by direct integration, exploiting the factorisation of the Weil--Petersson volume form when cutting along simple closed curves -- which is clear from its expression in Fenchel--Nielsen coordinates. The result is that, while $\Omega_{g,n}(f;\,\cdot\,)$ is a (countable) sum over primitive multicurves, its integral $V\Omega_{g,n}(f;\,\cdot\,)$ is a sum over the (finitely many) topological types of such multicurves. The latter are described by stable graphs, which we now define. We use the notation $\NN$ for the non-negative integers.

\begin{defn}\label{defgrstss}
	A stable graph $\Gamma$ of type $(g,n)$ consists of the data
	\[
		\bigl(
			V_{\Gamma}, H_{\Gamma},\Lambda_{\Gamma}, h, \mathsf{v}, \mathsf{i}
		\bigr)
	\]
	satisfying the following properties.
	\begin{enumerate}
		\item
         $V_{\Gamma}$ is the set of vertices, equipped with a function $h \colon V_{\Gamma} \to \NN$, called the genus.
		\item
         $H_{\Gamma}$ is the set of half-edges, $\mathsf{v} \colon H_{\Gamma} \to V_{\Gamma}$ associate to each half-edge the vertex it is incident to, and $\mathsf{i} \colon H_{\Gamma} \to H_{\Gamma}$ is the involution.
		 \item 
         $E_{\Gamma}$ is the set of edges, consisting of the $2$-cycles of $\mathsf{i}$ in $H_{\Gamma}$ (loops at vertices are permitted).
		\item
         $\Lambda_{\Gamma}$ is the set of leaves, consisting of the fixed points of $\mathsf{i}$, which are equipped with a labelling from $1$ to $n$.
		\item
         The pair $(V_{\Gamma}, E_{\Gamma})$ defines a connected graph.
		\item
         If $v$ is a vertex, $\overline{E}(v)$ (resp. $E(v)$) is the set of half edges incident to $v$ including (resp. excluding) the leaves  and $k(v) = |\overline{E}(v)|$ is the valency of $v$. We require that for each vertex $v$, the stability condition $2h(v) - 2 + k(v) > 0$ holds.
		\item
         The genus condition
   		\[
   			g = \sum_{v \in V_{\Gamma}} h(v) + b_1(\Gamma)
   		\]
   		holds. Here $b_1(\Gamma) = |E_\Gamma| - |V_\Gamma| + 1$ is the first Betti number of the graph $\Gamma$.
	\end{enumerate}
	An automorphism of $\Gamma$ consists of bijections of the sets $V_{\Gamma}$ and $H_{\Gamma}$ which leave invariant the structures $h$, $\mathsf{v}$, and $\mathsf{i}$ (and hence respect $E_{\Gamma}$ and $\Lambda_{\Gamma}$). We denote by ${\rm Aut}\,\Gamma$ the automorphism group of $\Gamma$.
\end{defn}

We denote by $\mathbf{G}_{g,n}$ the set of stable graphs of type $(g,n)$. It parametrises the topological types of primitive multicurves on a bordered surface $\Sigma$ of genus $g$ with $n$ labelled boundaries:
\[
   \mathbf{G}_{g,n} = M'_{\Sigma}\ /\ {\rm Mod}_{\Sigma}^{\partial}.
\]
The stable graph with a single vertex of genus $g$ corresponds to the empty multicurve. The other stable graphs are in bijective correspondence with the boundary strata of $\overline{\mathfrak{M}}_{g,n}$; more precisely $\Gamma \in \mathbf{G}_{g,n}$ refers to a boundary stratum of complex codimension $|E_{\Gamma}|$ that contains the union over $v \in V_{\Gamma}$ of smooth complex curves of genus $h(v)$ with $k(v)$ punctures, glued in a nodal way along punctures that correspond to the two ends of the same edge.

\medskip

By direct integration, we have that

\begin{thm}\cite{GRpaper}\label{stablth}
  Assume $V\Omega_{g,n}$ is $\mathfrak{S}_{n}$-invariant for any $g \geq 0$ and $n \geq 1$ such that $2g - 2 + n > 0$. Then, for any admissible test function $f\colon\mathbb{R}_+ \to \mathbb{C}$ we have
  \[
    V\Omega_{g,n}(f;L_1,\ldots,L_n)
    =
    \sum_{\Gamma \in \mathbf{G}_{g,n}} \frac{1}{|{\rm Aut}\,\Gamma|}
      \int_{\mathbb{R}_{+}^{E_{\Gamma}}}
      \prod_{v \in V_{\Gamma}}
        V\Omega_{h(v),k(v)}\big((\ell_e)_{e \in E(v)},(L_{\lambda})_{\lambda \in \Lambda(v)}\big)
      \prod_{e \in E_{\Gamma}}
        \ell_e\,f(\ell_e) \dd \ell_e .
  \]
  \hfill $\blacksquare$
\end{thm}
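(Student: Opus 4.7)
The plan is to combine a multicurve expansion of the twisted GR amplitude with the standard Fenchel--Nielsen unfolding of mapping class group orbits. First, for any admissible $(A,B,C,D)$ and any admissible test function $f$, I would establish by induction on $-\chi(\Sigma)$ the expansion
\begin{equation*}
   \Omega_{\Sigma}(f;\sigma) \;=\; \sum_{c \in M'_{\Sigma}} \Biggl(\prod_{\gamma \in \pi_0(c)} f(\ell_{\sigma}(\gamma))\Biggr) \prod_{\Sigma' \in \pi_0(\Sigma \setminus c)} \Omega_{\Sigma'}\bigl(\sigma|_{\Sigma'}\bigr),
\end{equation*}
where the empty multicurve contributes $\Omega_{\Sigma}(\sigma)$. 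This encodes the combinatorial content of the twisting rules \eqref{initwist}: each extra $f$-factor introduced in $A[f]$, $B[f]$, $C[f]$, $D[f]$ decorates a simple closed curve disjoint from the previously excised pairs of pants, thereby contributing one new component to $c$, and conversely every $c \in M'_{\Sigma}$ is recovered exactly once. The statement recalled in Section~\ref{Twistsec} for the Mirzakhani data ($\Omega_{\Sigma'} \equiv 1$) is the special case of this identity.

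Next, I would integrate this expansion against $\mu_{{\rm WP}}$ on $\mathcal{M}_{g,n}(L) = \mathcal{T}_{\Sigma}(L)/{\rm Mod}_{\Sigma}^{\partial}$ and regroup the sum over $c \in M'_{\Sigma}$ into ${\rm Mod}_{\Sigma}^{\partial}$-orbits, which by definition are indexed by $\Gamma \in \mathbf{G}_{g,n}$. For each orbit, the standard unfolding
\begin{equation*}
   \int_{\mathcal{M}_{g,n}(L)} \sum_{c \in {\rm Mod}_{\Sigma}^{\partial} \cdot c_{\Gamma}} F(c,\sigma)\,\dd\mu_{{\rm WP}} \;=\; \int_{\mathcal{T}_{\Sigma}(L)/\Stab(c_{\Gamma})} F(c_{\Gamma},\sigma)\,\dd\mu_{{\rm WP}}
\end{equation*}
reduces the computation to an integral over the stabiliser quotient. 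Switching to Fenchel--Nielsen coordinates $(\ell_e,\tau_e)_{e \in E_{\Gamma}}$ along a fixed representative $c_{\Gamma}$, together with moduli of the components of $\Sigma \setminus c_{\Gamma}$, the WP form factorises as $\prod_e \dd\ell_e\,\dd\tau_e$ times the WP forms on the cut pieces. The Dehn twists along $c_{\Gamma}$ reduce each $\tau_e$ to a circle $\mathbb{R}/\ell_e\mathbb{Z}$, whose integration yields the factor $\prod_e \ell_e$, and the remaining integration on the component attached to the vertex $v$ produces $V\Omega_{h(v),k(v)}\bigl((\ell_e)_{e \in E(v)},(L_{\lambda})_{\lambda \in \Lambda(v)}\bigr)$.

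The main obstacle is the correct accounting of the factor $1/|\Aut\,\Gamma|$. The stabiliser sits in a short exact sequence
\begin{equation*}
   1 \longrightarrow \langle\,\text{Dehn twists along } c_{\Gamma}\,\rangle \longrightarrow \Stab(c_{\Gamma}) \longrightarrow \prod_{v \in V_{\Gamma}} {\rm Mod}_{\Sigma_v}^{\partial} \rtimes \Aut\,\Gamma \longrightarrow 1,
\end{equation*}
so that after successively quotienting by the Dehn-twist kernel and by the component mapping class groups, a residual $\Aut\,\Gamma$-action on the edge-length coordinates remains, producing the announced factor. Finally, since the half-edges incident to a vertex $v$ carry no canonical labelling, the evaluation $V\Omega_{h(v),k(v)}\bigl((\ell_e)_{e \in E(v)},(L_{\lambda})_{\lambda \in \Lambda(v)}\bigr)$ depends a priori on a choice of ordering; the $\mathfrak{S}_n$-symmetry hypothesis on all $V\Omega_{g,n}$ is exactly what makes this evaluation unambiguous, and it is the reason the hypothesis appears in the statement.
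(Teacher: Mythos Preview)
Your proposal is correct and follows exactly the strategy the paper indicates. The paper does not give a detailed proof here (the theorem is cited from \cite{GRpaper} and closed with a $\blacksquare$), but the paragraph immediately preceding the statement outlines precisely your two steps: first, that $\Omega_{g,n}(f;\cdot)$ is a countable sum over primitive multicurves, and second, that integrating against $\mu_{\rm WP}$ via the Fenchel--Nielsen factorisation collapses this to the finite sum over topological types, i.e.\ stable graphs. Your identification of the role of the $\mathfrak{S}_n$-symmetry hypothesis is also correct and matches how the paper uses it.
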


We record two useful combinatorial identities, valid for any $\Gamma \in \mathbf{G}_{g,n}$.

\begin{lem}\label{combstgraph}
   \begin{align*}
	  \chi_{\Gamma} \coloneqq \sum_{v \in V_{\Gamma}} \big(2 - 2h(v) - k(v)\big) & = 2 - 2g + n, \\
     d_{\Gamma} \coloneqq \sum_{v \in V_{\Gamma}} \big(3h(v) - 3 + k(v)\big) & = 3g - 3 + n - |E_{\Gamma}|.
   \end{align*}
\end{lem}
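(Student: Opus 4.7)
The plan is to derive both identities by a direct two-step reduction, using only the two elementary global constraints imposed by the structure of a stable graph: the partition of the half-edge set under the involution, and the genus condition $g = \sum_{v} h(v) + b_1(\Gamma)$.

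First I would extract from the definition the half-edge count. The involution $\mathsf{i}$ partitions $H_{\Gamma}$ into its $2$-cycles (which biject with $E_{\Gamma}$) and its fixed points (which biject with $\Lambda_{\Gamma}$, hence contribute $n$), and since each half-edge is incident to exactly one vertex we obtain
\[
\sum_{v \in V_{\Gamma}} k(v) \;=\; |H_{\Gamma}| \;=\; 2|E_{\Gamma}| + n.
\]
Here it matters that $k(v) = |\overline{E}(v)|$ counts leaves; dropping the leaves would miss the $n$ and spoil the identity. Next I would rewrite the genus condition as
\[
\sum_{v \in V_{\Gamma}} h(v) \;=\; g - b_1(\Gamma) \;=\; g - |E_{\Gamma}| + |V_{\Gamma}| - 1.
\]

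With these two relations in hand, both identities reduce to mechanical substitution. For the first, I expand
\[
\chi_{\Gamma} \;=\; 2|V_{\Gamma}| - 2\sum_{v} h(v) - \sum_{v} k(v)
\]
and insert the two relations; the terms involving $|V_{\Gamma}|$ and $|E_{\Gamma}|$ cancel, leaving an expression depending only on $g$ and $n$. For the second I expand
\[
d_{\Gamma} \;=\; 3\sum_{v} h(v) - 3|V_{\Gamma}| + \sum_{v} k(v),
\]
and the same substitutions again collapse the $|V_{\Gamma}|$-dependence, leaving exactly $3g - 3 + n - |E_{\Gamma}|$.

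I do not expect any genuine obstacle: the lemma is a purely combinatorial bookkeeping fact, and the entire argument fits on a few lines once the half-edge count and the $b_1$-formula are written out. The only place where a sign or term can easily be mishandled is in remembering that the leaves are fixed points of $\mathsf{i}$ (not $2$-cycles) and are included in $k(v)$, which is precisely why $n$ appears linearly rather than as $2n$ or not at all.
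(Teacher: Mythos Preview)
Your proposal is correct and follows exactly the same approach as the paper's proof: both derive the two identities by combining the half-edge count $\sum_{v} k(v) = 2|E_{\Gamma}| + n$ with the genus condition $g = \sum_{v} h(v) + b_1(\Gamma) = \sum_v h(v) + |E_\Gamma| - |V_\Gamma| + 1$, then substitute. Your write-up is slightly more explicit about the substitution step, but the ingredients and the logic are identical to the paper's two-line argument.
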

\begin{proof}
The claim follows by combining edge counting with the definition of the first Betti number $b_1(\Gamma)$, namely
\[
   \sum_{v \in V_{\Gamma}} k(v) = 2|E_{\Gamma}| + n,\qquad 1 - |V_{\Gamma}| + |E_{\Gamma}| + \sum_{v \in V_{\Gamma}} h(v) = g.
\]
\end{proof}

\subsection{Equivalent forms of the topological recursion}
\label{Sequivalent:forms}

In this section, we describe equivalent forms of the topological recursion \eqref{TReqn}, which can be convenient for either carrying out calculations or for exploiting properties proved in the context of Eynard--Orantin topological recursion.

\subsubsection{Polynomial cases}
\label{Spoly}

Let $\phi_1: \mathbb{R}_+ \to \mathbb{R}$ and $\phi_2: \mathbb{R}_+ \times \mathbb{R}_+ \to \mathbb{R}$ be measurable functions. The operators
\begin{equation}\label{BCop}
   \hat{B}[\phi_1](L_1,L_2) = \int_{\mathbb{R}_{+}} B(L_1,L_2,\ell)\,\phi_1(\ell)\,\ell\,\dd\ell,\qquad \hat{C}[\phi_2](L_1) = \int_{\mathbb{R}_{+}^2} C(L_1,\ell,\ell')\,\phi_2(\ell,\ell')\,\ell\,\ell'\,\dd \ell\,\dd \ell'
\end{equation}
play an essential role in the topological recursion \eqref{TReqn}. It turns out that for Mirzakhani or Kontsevich initial data, these operators preserve the space of polynomials in one (for $\hat{B}$) or two (for $\hat{C}$) variables that are even with respect of each variable (we call them even polynomials). Since in both examples the base cases $(g,n) = (0,3)$ and $(1,1)$ are even polynomials in the length variables, it implies that $V\Omega^{{\rm M}}_{g,n}$ and $V\Omega^{{\rm K}}_{g,n}$ are even polynomials.

\begin{defn}
  We say that an initial data $(A,B,C,D)$ is polynomial if $(B,C)$ are such that the operators $\hat{B}$ and $\hat{C}$ defined in~\eqref{BCop} preserve the spaces of even polynomials and $A$ and $VD$ are themselves even polynomials.
\end{defn}

For polynomial initial data, it is sometimes more efficient for computations to decompose $V\Omega_{g,n}$ on a basis of monomials and write the effect of $\hat{B}$ and $\hat{C}$ on these monomials. For instance, let us decompose
$$
V\Omega_{g,n}(L_1,\ldots,L_n) = \sum_{d_1,\ldots,d_n \geq 0} F_{g,n}[d_1,\ldots,d_n]\,\prod_{i = 1}^n {\rm e}_{d_i}(L_i),\qquad  {\rm e}_{d}(\ell) = \frac{\ell^{2d}}{(2d + 1)!},
$$
and
$$
\hat{B}[{\rm e}_{d_3}](L_1,L_2) = \sum_{d_1,d_2 \geq 0} B^{d_1}_{d_2,d_3} \,{\rm e}_{d_1}(L_1){\rm e}_{d_2}(L_2),\qquad \hat{C}[{\rm e}_{d_2} \otimes {\rm e}_{d_3}](L_1) = \sum_{d_1 \geq 0} C^{d_1}_{d_2,d_3}\,{\rm e}_{d_1}(L_1).
$$
The topological recursion \eqref{TReqn} then takes the form, if $2g + n -2\geq 2$
\begin{equation}\label{recfgb}
\begin{split}
	& F_{g,n}[d_1,\ldots,d_n] \\
	&\quad = \sum_{m = 2}^n \sum_{a \geq 0} B^{d_1}_{d_m,a}\,F_{g,n - 1}[a,d_2,\ldots,\widehat{d_m},\ldots,d_n] \\
	&\qquad + \frac{1}{2} \sum_{a,b \geq 0} C^{d_1}_{a,b}\bigg(F_{g - 1,n + 1}[a,b,d_2,\ldots,d_n] + \sum_{\substack{h + h' = g \\ J \sqcup J' = \{d_2,\ldots,d_n\}}} F_{h,1 + |J|}[a,J]\,F_{h',1+|J'|}[b,J']\bigg). 
\end{split}
\end{equation}
For the sake of uniformity, we introduce a similar notation for the base cases of the recursion
$$
F_{0,3}[d_1,d_2,d_3] = A^{d_1}_{d_2,d_3},\qquad F_{1,1}[d_1] = D^{d_1}.
$$

For the Kontsevich initial data, we have in the chosen basis
$$
F_{g,n}[d_1,\ldots,d_n] = \prod_{i = 1}^n (2d_i + 1)!! \int_{\overline{\mathfrak{M}}_{g,n}} \prod_{i = 1}^n \psi_i^{d_i},
$$
which vanishes unless $d_1 + \cdots + d_n = 3g - 3 + n$. Translating the Virasoro constraints of \cite{Witten} -- or computing directly with \eqref{WKini} -- we find
\begin{equation}
\label{K1} F_{0,3}[d_1,d_2,d_3] = \delta_{d_1,d_2,d_3,0},\qquad F_{1,1}[d] = \frac{\delta_{d,1}}{8}.
\end{equation}
and
\begin{equation}
\label{K2} B^{d_1}_{d_2,d_3} = (2d_2 + 1)\delta_{d_1 + d_2,d_3},\qquad C^{d_1}_{d_2,d_3} = \delta_{d_1,d_2 + d_3 + 2}.
\end{equation}
A similar computation for Mirzakhani initial data can be found in \cite{Mirza1} and is reviewed in \cite{CoursToulouse} with notations closer to ours.

\medskip

Other bases of the space of even polynomials are sometimes useful to consider. For instance, the linear isomorphism given by the Laplace transform
$$
\mathcal{L}\colon\begin{array}{ccc} \mathbb{C}[L^2] & \longrightarrow & \mathbb{C}[p^{-2}]\dd p \\
\phi & \longrightarrow & \Big(\int_{\mathbb{R}_{+}} e^{-p\ell} \phi(\ell)\,\ell\,\dd \ell\Big)\dd p \end{array}
$$
is essential for the bridge to the Eynard--Orantin form of the topological recursion (see Section~\ref{SEO}).

\subsubsection{Twisting}

The operation of twisting \eqref{initwist} preserves the polynomiality of initial data. Indeed, the condition \eqref{condh} guarantees that all moments of the test function $f$ exist and if we set
\begin{equation}
\label{udd} u_{d_1,d_2} = \int_{\mathbb{R}_{+}} \frac{\ell^{2d_1 + 2d_2 + 1}}{(2d_1 + 1)!(2d_2 + 1)!}\,f(\ell)\, \dd \ell,
\end{equation}
we obtain
\begin{equation}\label{polytwist}
\begin{split}
	A[f]_{d_2,d_3}^{d_1} & = A^{d_1}_{d_2,d_3}, \\
	B[f]_{d_2,d_3}^{d_1} & = B^{d_1}_{d_2,d_3} + \sum_{a \geq 0} A^{d_1}_{d_2,a}\,u_{a,d_3}, \\
	C[f]_{d_2,d_3}^{d_1} & = C^{d_1}_{d_2,d_3} + \sum_{a \geq 0} \big(B^{d_1}_{a,d_3}\,u_{a,d_2} + B^{d_1}_{a,d_2}\,u_{a,d_3} \big) + \sum_{a,b \geq 0} A^{d_1}_{a,b}u_{a,d_2}u_{b,d_3}, \\
	D[f]^{d_1} & = D^{d_1} + \frac{1}{2} \sum_{a,b \geq 0} A^{d_1}_{a,b} u_{a,b}.
\end{split}
\end{equation}

Let us denote by $F_{g,n}[f;\,\cdot\,]$ the coefficients of decomposition of the twisted TR amplitudes. According to Theorem~\ref{stablth}, it can be expressed as a sum over decorated stable graphs
\begin{equation}
\label{Fgntwist} F_{g,n}[f;d_1,\ldots,d_n] = \sum_{\substack{\Gamma \in \mathbf{G}_{g,n} \\ d\colon H_{\Gamma} \rightarrow \mathbb{N}}} \frac{1}{|{\rm Aut}\,\Gamma|} \prod_{\substack{e \in E_{\Gamma} \\ e = (h,h')}} u_{h,h'} \prod_{v \in V_{\Gamma}} F_{h(v),k(v)}\big[(d_{e})_{e \in \overline{E}(v)}\big].
\end{equation}
In this sum we impose that the decoration of the $i$-th leaf is $d_i$. Similar twisting operations appear in the context of Givental group action on cohomological field theories, see \cite{GRpaper} for the comparison.

\subsubsection{Eynard--Orantin form}
\label{SEO}

Originally, the topological recursion was formulated by Eynard and Orantin as a residue computation on spectral curves \cite{EORev}. We present it in a restricted setting adapted to our needs. A local spectral curve is a triple $(x,y,\omega_{0,2})$ where
\begin{itemize}
\item[$\bullet$] $x$ and $y$ are holomorphic functions on a smooth complex curve $\mathcal{C}$;
\item[$\bullet$] the set $\mathfrak{a}$ of zeros of $\dd x$ is finite; each zero $\alpha \in \mathfrak{a}$ is simple and such that $\dd y(\alpha) \neq 0$;
\item[$\bullet$] $\omega_{0,2}$ is a meromorphic symmetric bidifferential on $\mathcal{C}^2$ with a double pole on the diagonal with biresidue $1$. The latter means that for any choice of local coordinate $p$ on $\mathcal{C}$, the bidifferential $\omega_{0,2}(z_1,z_2) - \frac{\dd p(z_1) \otimes \dd p(z_2)}{(p(z_1) - p(z_2))^2}$ is holomorphic near the diagonal in $\mathcal{C}^2$. 
\end{itemize}
We consider $x\colon\mathcal{C} \rightarrow \mathbb{C}$ as a double branched cover in a neighbourhood of $\alpha \in \mathfrak{a}$; it admits (locally) a non-trivial holomorphic automorphism $\tau_{\alpha}$ exchanging the two sheets, \textit{i.e.} $\tau_{\alpha}^2 = {\rm id}$ and $x \circ \tau_{\alpha} = x$, but $\tau_{\alpha} \neq {\rm id}$ and $\tau_{\alpha}(\alpha) = \alpha$. We introduce the recursion kernel
$$
K_{\alpha}(z_1,z) = \frac{1}{2}\,\frac{\int_{\tau_{\alpha}(z)}^{z} \omega_{0,2}(\cdot,z_1)}{(y(z) - y(\tau_{\alpha}(z)))\dd x(z)},
$$
and proceed to define multidifferentials $\omega_{g,n}(z_1,\ldots,z_n)$ for $g \geq 0$ and $n \geq 1$ as follows. We set $\omega_{0,1} = y\dd x$, further $\omega_{0,2}$ is part of the data of the local spectral curve, and for $2g - 2 + n > 0$ we define inductively
\begin{equation}\label{TREOeqn}
\begin{split}
	\omega_{g,n}(z_1,z_2,\ldots,z_n) &= \sum_{\alpha \in \mathfrak{a}} \Res_{z = \alpha} K_{\alpha}(z_1,z)\biggl( \omega_{g - 1,n + 1}(z,\tau_{\alpha}(z),z_2,\ldots,z_n) \\
	&\qquad + \sum_{\substack{h + h' = g \\ J \sqcup J' = \{z_2,\ldots,z_n\}}}^{{\rm no}\,\,(0,1)} \omega_{h,1 + |J|}(z,J)\otimes \omega_{h',1 + |J'|}(\tau_{\alpha}(z),J')\biggr),
\end{split}
\end{equation}
where $\sum\limits^{{\rm no }\,\,(0,1)}$ means that the sum excludes the cases where $(h,1+|J|)=(0,1)$ or $(h',1+|J'|)=(0,1)$. For $n = 0$ and $g \geq 2$, we also define the numbers
\begin{equation}
\label{omg0def} \omega_{g,0} = \frac{1}{2 - 2g} \sum_{\alpha \in \mathfrak{a}} \Res_{z = \alpha} \bigg(\int^{z}_{\alpha} y\dd x\bigg)\omega_{g,1}(z).
\end{equation}

As in the $(A,B,C,D)$ formulation, there is an operation of twisting in the Eynard--Orantin topological recursion, which consists in shifting $\omega_{0,2}$.

\begin{thm}
\label{variationomega02} Let $(\mathcal{C},x,y,\omega_{0,2})$ and $(\mathcal{C},x,y,\widetilde{\omega}_{0,2})$ be two local spectral curves, $\omega_{g,n}$ and $\widetilde{\omega}_{g,n}$ the respective outputs of the Eynard--Orantin topological recursion. We define two projectors $\mathscr{P}$ and $\widetilde{\mathscr{P}}$ acting on the space of meromorphic $1$-forms on $\mathcal{C}$, by the formulas
$$
\mathscr{P}[\phi](z) = \sum_{\alpha \in \mathfrak{a}} \Res_{z' = \alpha} \bigg(\int^{z'} \omega_{0,2}(\cdot,z)\bigg)\,\phi(z'),
$$
and likewise $\widetilde{\mathscr{P}}$ with $\widetilde{\omega}_{0,2}$. We denote $\mathscr{V} \coloneqq {\rm Im}\,\mathscr{P}$. We assume there exists a $2$-cycle $\mathscr{C} \subset \mathcal{C}^2$ and a germ $\Upsilon$ of holomorphic function at $\mathscr{C}$ such that
$$
\widetilde{\omega}_{0,2}(z_1,z_2) - \omega_{0,2}(z_1,z_2) = \int_{\mathscr{C}} \Upsilon(z_1',z_2')\,\omega_{0,2}(z_1,z_1')\omega_{0,2}(z_2,z_2').
$$
We define a linear form $\mathscr{O}$ on $\mathscr{V}^{\otimes 2}$ by the formula
$$
\mathscr{O}[\varpi] = \int_{\mathscr{C}} \Upsilon(z_1',z_2')\,\varpi(z_1',z_2').
$$
Then, we have
$$
\widetilde{\omega}_{g,n}(z_1,\ldots,z_n) = \sum_{\Gamma \in \mathbf{G}_{g,n}} \frac{1}{|{\rm Aut}\,\Gamma|} \bigg(\bigotimes_{i = 1}^n \widetilde{\mathscr{P}}_{z_i} \otimes \bigotimes_{e \in E_{\Gamma}} \mathscr{O}_{z'_{\vec{e}},z'_{-\vec{e}}}\bigg)\bigg[\bigotimes_{v \in V_{\Gamma}} \omega_{h(v),k(v)}\big((z'_{e})_{e \in \vec{E}(v)},(z_{\lambda})_{\lambda \in \Lambda(v)}\big)\bigg].
$$
In this formula: $\vec{E}(v)$ is the set of oriented edges pointing to $v$; each oriented edge $\vec{e}$ carries a variable $z'_{\vec{e}}$ ; to each $e \in E_{\Gamma}$ corresponds two oriented edges $\vec{e}$ and $-\vec{e}$; we indicate in subscripts of the operators which are the variables they act on.
\end{thm}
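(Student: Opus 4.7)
The plan is to proceed by induction on the Euler characteristic $\chi = 2g - 2 + n$, using the Eynard--Orantin recursion \eqref{TREOeqn} applied to $\widetilde{\omega}_{g,n}$ as the engine. The base cases are $(0,3)$, for which $\mathbf{G}_{0,3}$ contains the single graph with one genus-$0$ vertex and no edges, and $(1,1)$, for which $\mathbf{G}_{1,1}$ has two graphs (a genus-$1$ vertex with one leaf and no edge, and a genus-$0$ vertex with one leaf and one loop). In both cases the identity to verify reduces to computing $\widetilde{\omega}_{g,n}$ via TR, substituting $\widetilde{\omega}_{0,2} = \omega_{0,2} + \int_{\mathscr{C}} \Upsilon\,\omega_{0,2}\otimes\omega_{0,2}$ in the modified kernel $\widetilde{K}_{\alpha}$ and in any explicit $\widetilde{\omega}_{0,2}$ appearing in the summand, and matching the resulting residue-integrals with the operators $\widetilde{\mathscr{P}}$ and $\mathscr{O}$ on the right-hand side.

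For the inductive step, fix $(g,n)$ with $2g - 2 + n \geq 2$ and assume the formula is known for all $(g',n')$ with $2g' - 2 + n' < 2g - 2 + n$. Apply TR to $\widetilde{\omega}_{g,n}$ and substitute the inductive hypothesis into every smaller $\widetilde{\omega}_{h,\cdot}$ on the right-hand side. Decomposing $\widetilde{K}_{\alpha} = K_{\alpha} + \Delta K_{\alpha}$, where $\Delta K_{\alpha}$ absorbs the $\omega_{0,2}$-shift and can be rewritten in the integral-kernel form provided by the hypothesis, the residue at $z = \alpha$ glues two half-edges coming from the child graphs into a new edge of a stable graph $\Gamma \in \mathbf{G}_{g,n}$. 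The $K_{\alpha}$-piece decorates this new edge with a propagator built from $\omega_{0,2}$, which after collecting the residual shifts across the expansion reassembles into the $\mathscr{O}$-insertion prescribed by the formula; the $\Delta K_{\alpha}$-piece yields the $\mathscr{O}$-insertion directly. In parallel, the residue together with the shift terms on the distinguished variable $z_1$ reconstruct $\widetilde{\mathscr{P}}_{z_1}$ applied to the amplitude, while the $\widetilde{\mathscr{P}}_{z_i}$ for $i \geq 2$ are provided by the inductive hypothesis.

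The main obstacle will be the combinatorial bookkeeping: showing that each $\Gamma \in \mathbf{G}_{g,n}$ is produced exactly once, with coefficient $1/|{\rm Aut}\,\Gamma|$ and half-edge decorations matching the theorem's prescription. This is essentially a relative version of Eynard's classical graph expansion of $\omega_{g,n}$ at branch points, run in parallel for the two choices of $\omega_{0,2}$; the $\mathscr{O}$-insertions play the role of a "$B$-matrix" in the language of the Givental group action on the cohomological field theory attached to the local spectral curve, which is the conceptual context for this type of statement and supplies the pattern for the combinatorial verification. The standard labelled-to-unlabelled gluing argument handles the automorphism factor, and the extra care needed for graphs containing loops or with non-trivial symmetries is the most technical aspect of the bookkeeping.
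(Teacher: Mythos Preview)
Your route is genuinely different from the paper's. The paper does not run an induction on $2g-2+n$ through the recursion \eqref{TREOeqn}. Instead it quotes the variational formula of \cite[Theorem~6.1]{EOFg}: for a one-parameter family $\omega_{0,2}(t)=\omega_{0,2}+t\,h$ with $h$ holomorphic, that reference computes $\partial_t\,\omega_{g,n}(t)$ explicitly. Integrating this first-order relation in $t$ from $0$ to $1$ yields the stable-graph sum directly. The graph combinatorics is thus absorbed into the cited derivative formula and its integration; nothing like your gluing argument appears.

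Your inductive scheme is a legitimate alternative in principle, but the mechanism you describe has a concrete error. You write that the $K_{\alpha}$-piece ``decorates this new edge with a propagator built from $\omega_{0,2}$, which after collecting the residual shifts \ldots reassembles into the $\mathscr{O}$-insertion''. This cannot be right: edges in the target formula carry $\mathscr{O}$, which encodes $\widetilde{\omega}_{0,2}-\omega_{0,2}$, and no amount of $\omega_{0,2}$-propagators can ``reassemble'' into that difference. In a correct version of the induction the roles separate as follows: the passage from $K_{\alpha}$ to $\widetilde K_{\alpha}$ is what installs the projector $\widetilde{\mathscr{P}}_{z_1}$ on the first leaf (both kernels produce the same principal parts at $\mathfrak{a}$, dressed respectively by $\xi_d$ and $\widetilde\xi_d=\widetilde{\mathscr{P}}[\xi_d]$); the residue with $K_{\alpha}$ acting on the vertex factors performs one step of the \emph{un-tilded} recursion at the root vertex, i.e.\ it contributes to building $\omega_{h(v_0),k(v_0)}$ rather than creating an edge; and the genuinely new $\mathscr{O}$-edges come only from the shift $\widetilde{\omega}_{0,2}-\omega_{0,2}$ in the explicit $(0,2)$-factors of the recursion. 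With your current attribution of roles the bookkeeping you flag as the main obstacle would not close.
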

\begin{proof}[Sketch of proof] When $\mathcal{C}$ is a compact Riemann surface and $\tilde{\omega}_{0,2} = \omega_{0,2} + t h$ where $h$ is a fixed symmetric holomorphic bidifferential, \cite[Theorem 6.1, proved in Appendix B]{EOFg} establishes a formula for the first derivative of $\omega_{g,n}$ with respect to $t$. Integrating this relation with respect to $t$ yields the result -- in that case $\mathscr{C}$ is an element of ${\rm Sym}^2 H^1(\mathcal{C},\mathbb{C})$ and $\Upsilon = (2{\rm i}\pi)^{-2}$. The same proof in fact works under the assumptions of the theorem. Notice that the order of integration of the variables $z_{\vec{e}}$ is irrelevant, by the assumptions on $(\mathscr{C},\Upsilon)$ and the fact that $\omega_{h,k}$ only has poles on $\mathfrak{a}^k \subset \mathcal{C}^k$.
\end{proof}
\subsubsection{Equivalences}

The correspondence with Section~\ref{Spoly} appears if we decompose the $\omega_{g,n}$ on a suitable basis of $1$-forms. We explain it when $\dd x$ has a single zero, which is the only case where we are going to use this correspondence. Let us choose a coordinate $p$ near $\alpha$ such that $x = p^2/2 + x(\alpha)$. We introduce the $1$-form globally defined on $\mathcal{C}$
\begin{equation}
\label{xidform} \xi_{d}(z_0) = \Res_{z = \alpha} \frac{\dd p(z)}{p(z)^{2d + 2}}\,\bigg(\int_{\alpha}^{z} \omega_{0,2}(\cdot,z_0)\bigg).
\end{equation}
We also introduce
\begin{align*}
	\xi^*_{d}(z) & = (2d + 1)p(z)^{2d + 1}, \\
	\theta(z) & = \frac{-2}{(y(z) - y(\tau(z)))\dd x(z)} \mathop{\sim}_{z \rightarrow \alpha} \sum_{k \geq -1} \theta_{k}\,\frac{p(z)^{2k}}{\dd p(z)}, \\
	u_{0,0} & = \lim_{z_1 \rightarrow z_2} \bigg(\frac{\omega_{0,2}(z_1,z_2)}{\dd p(z_1)\dd p(z_2)} - \frac{1}{(p(z_1) - p(z_2))^2}\bigg).
\end{align*}

\begin{thm} \cite{ABCD} 
 \label{thmABCD} For $2g - 2 + n > 0$, we have
$$
\omega_{g,n}(z_1,\ldots,z_n) = \sum_{\substack{d_1,\ldots,d_n \geq 0 \\ d_1 + \cdots + d_n \leq 3g - 3 + n}} F_{g,n}[d_1,\ldots,d_n]\,\bigotimes_{i = 1}^n \xi_{d_i}(z_i),
$$
where the $F_{g,n}$'s are given by the recursion \eqref{recfgb} with
\begin{align*}
	A^{d_1}_{d_2,d_3} & = \Res_{z = \alpha} \xi_{d_1}^*(z)\dd \xi_{d_2}^*(z)\dd\xi_{d_3}^*(z)\,\theta(z), \\
	B^{d_1}_{d_2,d_3} & = \Res_{z = \alpha} \xi_{d_1}^*(z)\dd \xi_{d_2}^*(z)\xi_{d_3}(z)\,\theta(z), \\
	C^{d_1}_{d_2,d_3} & = \Res_{z = \alpha} \xi_{d_1}^*(z)\xi_{d_2}(z)\xi_{d_3}(z)\,\theta(z), \\
	VD^{d} & = \frac{\theta_{0} + u_{0,0}\theta_{-1}}{8}\,\delta_{d,0}  +  \frac{\theta_{-1}}{24}\,\delta_{d,1}.
\end{align*}
\hfill $\blacksquare$
\end{thm}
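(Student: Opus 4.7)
\textbf{Proof proposal for Theorem~\ref{thmABCD}.}

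The strategy is a double induction on $2g - 2 + n$, establishing simultaneously that $\omega_{g,n}$ lies in the span of tensor products of $\xi_{d_i}$'s (the expansion being unique) and that its coefficients $F_{g,n}[d_1,\ldots,d_n]$ obey the recursion \eqref{recfgb} with the coefficients $A,B,C,VD$ as given. First I would show, by induction from \eqref{TREOeqn}, that for $2g - 2 + n > 0$ the form $\omega_{g,n}(z_1,\ldots,z_n)$ is meromorphic in each variable with poles only at $\alpha$, antisymmetric under $z_i \mapsto \tau_{\alpha}(z_i)$, and with no residue at $\alpha$. This follows because the kernel $K_{\alpha}(z_1,z)$ contributes poles in $z_1$ only via the integral of $\omega_{0,2}$, which has its only poles at $\alpha$. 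By construction, the family $\{\xi_d\}_{d \geq 0}$ spans such a space of $1$-forms, with the dual pairing $\Res_{z=\alpha} \xi_d(z)\,\xi^*_{d'}(z) = \delta_{d,d'}$ (up to the $(2d+1)$ normalisation absorbed into $\xi^*_d$). Hence the $\xi_{d_i}$-expansion hypothesis propagates under the induction.

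Next I would dispatch the two base cases. For $(g,n) = (0,3)$, one substitutes $\omega_{0,3}$ from \eqref{TREOeqn}: only the term with two factors of $\omega_{0,2}$ survives. Computing
\[
F_{0,3}[d_1,d_2,d_3] = \Res_{z_1=\alpha}\Res_{z_2=\alpha}\Res_{z_3=\alpha} \omega_{0,3}(z_1,z_2,z_3)\,\xi^*_{d_1}(z_1)\xi^*_{d_2}(z_2)\xi^*_{d_3}(z_3)
\]
collapses via the double-pole structure of $\omega_{0,2}$ into the residue of $\xi^*_{d_1}(z)\dd\xi^*_{d_2}(z)\dd\xi^*_{d_3}(z)\,\theta(z)$ at $\alpha$, by recognising the expansion of $-2/((y-y\circ\tau_{\alpha})\dd x)$ as $\theta(z)$. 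For $(g,n) = (1,1)$, the integrand of \eqref{TREOeqn} is $K_{\alpha}(z_1,z)\,\omega_{0,2}(z,\tau_{\alpha}(z))$, and the expansion of $\omega_{0,2}(z,\tau_{\alpha}(z))$ near $\alpha$ produces precisely the two terms $u_{0,0}$ (from the regularised diagonal part) and the explicit $\dd p$ contribution; combined with $\theta_{-1}, \theta_{0}$ from $\theta(z)$ and the pairing with $\xi^*_d(z_1)$, this yields the stated formula for $VD^{d}$.

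For the inductive step, I would substitute the decomposition hypothesis $\omega_{h,k} = \sum F_{h,k}[\underline{d}]\,\bigotimes_{j} \xi_{d_j}$ for all lower $(h,k)$ into the right-hand side of \eqref{TREOeqn}. The pairing
\[
K_{\alpha}(z_1,z)\,\xi_{d_2}(z) + (z \leftrightarrow \tau_{\alpha}(z))
\]
extracts the antisymmetric-under-$\tau_{\alpha}$ part, which after taking $\Res_{z=\alpha}$ produces a factor $\xi_{d_1}(z_1)$ (via the defining $\omega_{0,2}$ in the kernel) times a residue against $\xi^*_{d_1}(z)$ weighted by $\theta(z)$. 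Depending on whether one or two indices are coupled to the residue variable, one lands on the formulas for $B^{d_1}_{d_2,d_3}$ or $C^{d_1}_{d_2,d_3}$; the combinatorics of the sum over splittings matches exactly the right-hand side of \eqref{recfgb}. The only delicate bookkeeping is the parity: only odd powers of $p(z)$ in the expansion of the integrand survive after antisymmetrisation, which explains why the $A,B,C$ coefficients carry only even-total-degree information and why the prefactors $(2d+1)!!$ are absorbed into $\xi^*_d$.

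The main obstacle I anticipate is the analytic handling of the recursion kernel in the inductive step, in particular the careful separation of $K_{\alpha}$ into the $\omega_{0,2}$-factor (supplying the output basis element $\xi_{d_1}(z_1)$) and the $\theta(z)$-factor (supplying the scalar weight in the residue); one must verify that, after substitution of the decomposition hypothesis, no cross-terms spoil the clean factorisation, and that the $(0,1)$-exclusion in \eqref{TREOeqn} is compatible with setting $F_{0,1} = 0$ in \eqref{recfgb}. A secondary, but real, difficulty is that in the $(1,1)$ base case the contribution of the regularised diagonal $u_{0,0}$ in $\omega_{0,2}$ enters non-trivially, and one must track that this matches the $\theta_{0} + u_{0,0}\theta_{-1}$ combination appearing in $VD^{0}$; conversely, the purely topological coefficient $\theta_{-1}/24$ in $VD^{1}$ comes from the leading $\dd p(z_1)\dd p(z_2)/(p(z_1)-p(z_2))^2$ part of $\omega_{0,2}(z,\tau_{\alpha}(z))$ and must be checked against the normalisation conventions for $\xi_d$.
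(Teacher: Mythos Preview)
The paper does not prove Theorem~\ref{thmABCD}: it is quoted verbatim from \cite{ABCD} and closed with the symbol $\blacksquare$, which in this paper's conventions marks results that are imported without proof (compare Theorems~\ref{THGR}, \ref{thrmi}, \ref{mirzath2}, \ref{konth}, \ref{stablth}). So there is no in-paper argument to compare your proposal against.

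That said, your sketch is the standard route to this statement and is essentially the one carried out in \cite{ABCD}: one shows inductively that the $\omega_{g,n}$ for $2g-2+n>0$ are odd under the local involution with poles only at the ramification point, hence expand on the $\xi_d$-basis; one then plugs the expansion back into \eqref{TREOeqn}, uses the factorisation of $K_\alpha$ into the $\omega_{0,2}$-primitive (producing $\xi_{d_1}(z_1)$ via \eqref{xidform}) and the $\theta(z)$-factor, and reads off the residue formulas for $A,B,C$. The two base cases $(0,3)$ and $(1,1)$ are handled exactly as you describe, with the regularised diagonal of $\omega_{0,2}$ furnishing the $u_{0,0}$ contribution in $VD^0$. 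Your anticipated obstacles are real but routine: the exclusion of $(0,1)$-terms in \eqref{TREOeqn} is precisely the convention $F_{0,1}=0$ in \eqref{recfgb}, and the parity filtering under $\tau_\alpha$ is what guarantees only the odd powers of $p$ survive. There is no gap in your outline.
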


The $F_{g,n}$'s associated with Kontsevich or Mirzakhani initial data are described by Theorem~\ref{thmABCD} for the spectral curve $\mathcal{C} = \mathbb{C}$, $x(z) = z^2/2$ and $\omega_{0,2}(z_1,z_2) = \frac{\dd z_1 \otimes \dd z_2}{(z_1 - z_2)^2}$, for which $\tau(z) = -z$ and
\begin{equation}\label{ykm}
   y^{{\rm K}}(z) = -z,
   \qquad
   y^{{\rm M}}(z) = -\frac{\sin(2\pi z)}{2\pi}.
\end{equation}
In other words
\begin{equation}
   \theta^{{\rm K}}(z) = \frac{1}{z^2 \, \dd z},
   \qquad
   \theta^{{\rm M}}(z) = \frac{2\pi}{z\sin(2\pi z) \, \dd z}.
\end{equation}
More generally, if we assume that a polynomial GR initial data $(A,B,C,D)$ leads to TR amplitudes described by Theorem~\ref{thmABCD} for a certain spectral curve, then $\omega_{g,n}(z_1,\ldots,z_n)$ and $V\Omega_{g,n}(L_1,\ldots,L_n)$ are two equivalent ways of collecting the numbers $F_{g,n}$'s, which are related by the Laplace transform. Indeed, we notice that $\xi_{d} = p^{-(2d + 2)}\dd p + O(\dd p)$ and $p^{-(2d + 2)}\dd p = \mathcal{L}[{\rm e}_{d}]$. Let us introduce the projection operator
\[
   \mathcal{P}[\phi](p_0) = \Res_{z = \alpha} \frac{\phi(z)}{p(z) - p_0},
\]
which takes as input a meromorphic $1$-form on $\mathcal{C}$ and outputs the element of $\mathbb{C}[p_0^{-1}]\dd p_0$ such that $\phi(z_0) - \mathcal{P}[\phi](p_0)$ is holomorphic when $z_0 \rightarrow \alpha$. Hence $\mathcal{P}[\xi_{d}] = \mathcal{L}[{\rm e}_{d}]$ and
\begin{equation}\label{pdmi}
   \mathcal{L}^{\otimes n}[V\Omega_{g,n}](p_1,\ldots,p_n)
   =
   \mathcal{P}^{\otimes n}[\omega_{g,n}](p_1,\ldots,p_n).
\end{equation}
Furthermore, twisting the GR initial data amounts to shifting \cite{GRpaper}
\begin{equation}\label{02twist}
   \omega_{0,2}(z_1,z_2) \longrightarrow \omega_{0,2}(z_1,z_2) +  \mathcal{L}[f]\bigl( \pm p(z_1) \pm p(z_2) \bigr)\,\dd p(z_1)\dd p(z_2),
\end{equation}
where the two choice of signs $\pm$ are independent and arbitrary -- they do not affect the right-hand side of \eqref{pdmi}.

\section{Asymptotic growth of multicurves}
\label{S3}
\subsection{Preliminaries}
\label{S3Prem}

We review some aspects of the space of measured foliations which play a key role in this article. For a more complete description we refer to \cite{FLP12}.

\medskip

Let $\Sigma$ be a closed or punctured surface. A measured foliation is an ordered pair $\lambda = (\mathcal{F},\nu)$ where $\mathcal{F}$ is a foliation of $\Sigma$ whose leaves are $1$-dimensional submanifolds, except for the possible existence of isolated singular points of valency $p \geq 3$ away from the punctures and univalent at the punctures and $\nu$ is a transverse measure invariant along $\mathcal{F}$. Two measured foliations are Whitehead equivalent if they are related by a sequence of isotopies (relatively to the punctures), and contraction or expansion of edges between two singularities (that should not be both punctures). We denote by ${\rm MF}_{\Sigma}$ the set of Whitehead equivalence classes of measured foliations. For each $\sigma \in \mathfrak{T}_{\Sigma}$, ${\rm MF}_{\Sigma}$ is equipped with a hyperbolic length function which we denote by $\ell_{\sigma} \colon {\rm MF}_{\Sigma} \rightarrow \mathbb{R}_{+}$.

\medskip

The space ${\rm MF}_\Sigma$ is endowed with an integral piecewise linear structure, and the set of multicurves $M_\Sigma$ is in (length-preserving) bijection with the set of integral points of ${\rm MF}_\Sigma$. One can then define a measure $\mu_{\rm Th}$ by lattice point counting, which is called the Thurston measure in this context; we normalise $\mu_{\rm Th}$ such that $M_\Sigma$ has covolume one in ${\rm MF}_\Sigma$. Let us emphasise that our normalisation differs from the Thurston symplectic volume form by a constant factor, see~\cite{Arana,MoninTelpukhovskiy}.
 
\medskip

The space of quadratic differentials $Q\mathfrak{T}_\Sigma$ is intimately linked to ${\rm MF}_\Sigma$ by considering the horizontal and vertical foliations associated to a quadratic differential. More precisely we have a homeomorphism
\begin{equation}\label{eq:QvsMF}
\begin{array}{rcl}
   Q\mathfrak{T}_\Sigma & \longrightarrow & {\rm MF}_\Sigma \times {\rm MF}_\Sigma \setminus \Delta_{\Sigma} \\
   q & \longmapsto & \big(\big[\sqrt{|\Im(q)|}\,\big], \big[\sqrt{|\Re(q)|}\,\big]\big)
\end{array}
\end{equation}
where
\[
   \Delta_{\Sigma} = \Set{ (\lambda_1, \lambda_2) \in {\rm MF}_{\Sigma}^{2} \;|\; \exists \eta \in {\rm MF}_{\Sigma}, \; \iota(\eta, \lambda_1) + \iota(\eta, \lambda_2) = 0 },
\]
and  $\iota \colon {\rm MF}_{\Sigma} \times {\rm MF}_{\Sigma} \rightarrow {\mathbb R}_{\geq 0}$ is the geometric intersection pairing, which extends continuously the topological intersection of (formal $\mathbb{Q}_{+}$-linear combinations of) simple closed curves, see e.g. \cite{Bonahon88}.

\medskip

The subset of $Q\mathfrak{T}_{\Sigma}$ made of quadratic differentials with only simple zeros, the so called principal stratum, has an integral piecewise linear structure defined in terms of holonomy coordinates. The Masur--Veech measure $\mu_{{\rm MV}}$ is defined from this structure by lattice point counting \cite{Masur82,Veech82}. We define the Masur--Veech measure on the bundle $Q^1\mathcal{T}_{\Sigma}$ of quadratic differentials of unit area as follows. If $Y \subseteq Q^1\mathfrak{T}_{\Sigma}$, we put
$$
\mu_{\rm MV}^1(Y) = (12g - 12 + 4n)\mu_{{\rm MV}}(\widetilde{Y}),\qquad \widetilde{Y} = \Set{ t q  | t \in (0,\tfrac{1}{2})\,\,\text{ and }\,\,q \in Y}
$$
when $\tilde{Y}$ is measurable. This normalisation follows the one chosen in \cite{AEZ,Delecroix,Goujard}. Then the Masur--Veech volume is by definition the total mass $MV_{g,n} = \mu_{\rm MV}^1(Q^1\mathfrak{M}_{g,n}) < \infty$.

\medskip

Finally, we need to discuss Teichm\"uller spaces with zero boundary lengths. We introduce the space
$$
\widehat{\mathcal{T}}_{\Sigma} = \bigcup_{L_1,\ldots,L_n \geq 0} \mathcal{T}_{\Sigma}(L_1,\ldots,L_n),
$$
which is a stratified manifold. Its top-dimensional stratum is $\mathcal{T}_{\Sigma}$ and lower-dimensional strata correspond to some of the boundary length $L_i$ equal to zero. The lowest-dimensional stratum $\mathfrak{T}_{\Sigma} = \mathcal{T}_\Sigma(0, \ldots, 0)$ is identified with the Teichm\"uller space of punctured Riemann surfaces on $\Sigma$. The quotient of the action of ${\rm Mod}_{\Sigma}^{\partial}$ on $\widehat{\mathcal{T}}_{\Sigma}$ obviously respects the stratification, and is denoted by $\widehat{\mathcal{M}}_{g,n}$. Inside this moduli space, the lowest-dimensional stratum $\mathfrak{M}_{g,n} = \mathcal{M}_{g,n}(0,\ldots,0)$ is identified with the usual moduli space of complex curves with punctures.

\medskip

Following Thurston~\cite{Thurston}, we consider an asymmetric pseudo-distance on $\widehat{\mathcal{T}}_\Sigma$ defined for $\sigma, \sigma' \in \widehat{\mathcal{T}}_\Sigma$ as
\[
   \operatorname{d}_{\rm Th}(\sigma, \sigma')
   =
   \sup_{\gamma \in S_\Sigma^{\circ}}  \ln\bigg(\frac{\ell_{\sigma'}(\gamma)}{\ell_{\sigma}(\gamma)}\bigg).
\]
The fact that this quantity is finite follows from the compactness of the space of projective measured foliations. We emphasise that $S_{\Sigma}^{\circ}$ does not include boundary curves and hence $\operatorname{d}_{\rm Th}$ is constant equal to zero on the Teichm\"uller space $\widehat{\mathcal{T}}_P$ of the pair of pants $P$. It is expected that, on any other stable surface, $\operatorname{d}_{\rm Th}$ is actually an asymmetric distance, but this is irrelevant for our purposes. We will simply use the facts that $\operatorname{d}_{\rm Th}$ is non-negative, continuous and vanishes on the diagonal, \textit{i.e.} $\operatorname{d}_{\rm Th}(\sigma, \sigma) = 0$.

\subsection{Masur--Veech volumes}
\label{SMVvolumes}

In this paragraph, $g$ and $n$ are non-negative integers and $2g - 2 + n > 0$. Let $\phi\colon \mathbb{R}_{+} \rightarrow \mathbb{C}$ be an admissible test function and $\Sigma$ a surface of type $(g,n)$. We introduce the additive statistics for $\sigma \in \widehat{\mathcal{T}}_\Sigma$
$$
	N^+_{\Sigma}(\phi;\sigma) = \sum_{c \in M_{\Sigma}} \phi (\ell_{\sigma}(c)).
$$
We are interested in some scaling limit of the additive statistics $N^+_\Sigma(\phi; \sigma)$. Namely, we define for $\beta > 0$ the scaling operator
\begin{equation}
\label{rhobetastar}   \rho_\beta^* \phi (x) = \phi(x / \beta).
\end{equation}
and we want to understand the behaviour of $N^{+}_{\Sigma}(\rho^*_{\beta}\phi;\sigma)$ and its integrals over the moduli spaces with fixed boundary lengths.

\medskip

The result (Lemma~\ref{Xfn:cv} below) will be governed by two ingredients. First, the dependence on the test function will involve the following linear forms, for $k \geq 0$
\begin{equation}\label{cdeff}
   c_k[\phi] = \int_{\mathbb{R}_{+}} \frac{ \ell^{k-1}}{(k-1)!}\,\phi(\ell)\,\dd \ell.
\end{equation}
Note that $c_{0}[\phi]$ is not always well-defined; we will assume it is only when necessary. Second, the dependence on the metric will be governed by the function
$$
   X_\Sigma\colon
   \begin{array}{lll}
      \widehat{\mathcal{T}}_\Sigma & \longrightarrow & \mathbb{R}_{+} \\
      \sigma  & \longmapsto & (6g-6+2n)!\ \mu_{\rm Th}\big(\Set{ \lambda \in {\rm MF}_\Sigma | \ell_\sigma(\lambda) \leq 1}\big)
   \end{array}.
$$
The function $X_{\Sigma}$ is an important ingredient in~\cite{Mirza1}, where most of its properties
are proven. In particular, its integral over moduli space is proportional to the Masur--Veech volumes.

\begin{lem}\label{Xfn:prop}
   The function $X_{\Sigma}$ descends to a function $X_{g,n}$ on the moduli space $\widehat{\mathcal{M}}_{g,n}$. Further, the following properties hold.
   \begin{itemize}
      \item[$\bullet$]
         The logarithm $\ln(X_\Sigma)$ is Lipschitz with respect to $d_{\rm Th}$, namely
         $$
            \frac{X_\Sigma(\sigma)}{X_{\Sigma}(\sigma')}  \leq e^{(6g-6+2n) d_{\rm Th}(\sigma, \sigma')}.
         $$
      \item[$\bullet$]
         The average $VX_{g,n}(L_1,\ldots,L_n)$ exists and is a continuous function of $(L_1, \ldots, L_n) \in (\mathbb{R}_{\geq 0})^n$.
      \item[$\bullet$]
         We have that
         $$
            MV_{g,n} = \frac{2^{4g - 2 + n}(4g - 4 + n)!}{(6g - 7 + 2n)!}\,VX_{g,n}(0,\ldots,0).
         $$
   \end{itemize}
\end{lem}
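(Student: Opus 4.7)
The plan is to prove the three items in sequence, using the first to provide the regularity that underpins the other two.

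For the Lipschitz estimate, I would start by noting that $X_\Sigma$ is ${\rm Mod}_\Sigma^\partial$-invariant because both $\mu_{\rm Th}$ on ${\rm MF}_\Sigma$ and the length function $\ell_\sigma$ are invariant. The definition of $d_{\rm Th}$ gives $\ell_{\sigma'}(\gamma) \leq e^{d_{\rm Th}(\sigma,\sigma')}\,\ell_\sigma(\gamma)$ for every $\gamma \in S_\Sigma^\circ$, and by continuity of $\ell$ on ${\rm MF}_\Sigma$ and density of $\mathbb{Q}_+$-weighted simple closed curves, the inequality extends to every $\lambda \in {\rm MF}_\Sigma$. Thus
$$
\{\lambda \in {\rm MF}_\Sigma : \ell_\sigma(\lambda) \leq 1\} \subseteq e^{d_{\rm Th}(\sigma,\sigma')} \cdot \{\lambda \in {\rm MF}_\Sigma : \ell_{\sigma'}(\lambda) \leq 1\},
$$
and homogeneity of $\mu_{\rm Th}$ of degree $6g-6+2n$ (the dimension of ${\rm MF}_\Sigma$) under the dilation $\lambda \mapsto t\lambda$ yields the bound after applying $\mu_{\rm Th}$ to both sides.

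For continuity of the average, the Lipschitz bound together with continuity of $d_{\rm Th}$ on $\widehat{\mathcal{T}}_\Sigma$ shows that $X_\Sigma$ is continuous, so it descends to a continuous function $X_{g,n}$ on $\widehat{\mathcal{M}}_{g,n}$. Finiteness of $\int_{\mathcal{M}_{g,n}(L)} X_{g,n}\,\dd\mu_{\rm WP}$ for each $L$ is Mirzakhani's integrability theorem for the asymptotic counting function of multicurves, via a pointwise bound in terms of inverse powers of short-geodesic lengths that is $\mu_{\rm WP}$-integrable on the thin parts of moduli space. To deduce continuity in $L$, I would fibre $\widehat{\mathcal{M}}_{g,n}$ continuously over $\mathbb{R}_{\geq 0}^n$ using Fenchel--Nielsen coordinates associated to a fixed pants decomposition, pull back integrands to a common fibre, and apply dominated convergence with the uniform bound from item (1) (evaluated against a continuous family of reference metrics) multiplied by Mirzakhani's pointwise dominating function.

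For the Masur--Veech identification, I would use the homeomorphism $Q\mathfrak{T}_\Sigma \cong {\rm MF}_\Sigma \times {\rm MF}_\Sigma \setminus \Delta_\Sigma$ to transport $\mu_{\rm MV}$ to a constant multiple of $\mu_{\rm Th}^{\otimes 2}$ (the constant encoding the comparison between the lattice of integer-period quadratic differentials and the lattice of pairs of integral measured foliations), with area function a constant multiple of $\iota(\lambda,\lambda')$. Disintegrating $\mu_{\rm MV}^1$ along the projection to the underlying complex structure and applying, at fixed $\sigma$, the definition of $X_\Sigma$ to the inner integration over ${\rm MF}_\Sigma$ produces an identity of the form $MV_{g,n} = c_{g,n}\, VX_{g,n}(0,\ldots,0)$. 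The explicit prefactor $\frac{2^{4g-2+n}(4g-4+n)!}{(6g-7+2n)!}$ is then extracted by tracking four independent normalisations: the factor $(12g-12+4n)$ built into $\mu_{\rm MV}^1$; the radial Jacobian $t^{12g-13+4n}\,\dd t$ coming from the two-fold scaling $(\lambda,\lambda') \mapsto (t\lambda,t\lambda')$ on ${\rm MF} \times {\rm MF}$; the factorial $(6g-6+2n)!$ inserted in the definition of $X_\Sigma$; and the lattice-comparison constant.

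The main obstacle is the third item: while a qualitative identity $MV_{g,n} = c_{g,n}\,VX_{g,n}(0,\ldots,0)$ is already implicit in Mirzakhani's work, pinning down the precise constant requires a careful, non-formal audit of the four normalisations above. A pragmatic sanity check would be to compare both sides on a case, e.g.\ $(g,n) = (1,2)$, where each side can be computed by independent means, thereby confirming the explicit prefactor stated in the lemma.
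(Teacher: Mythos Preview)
Your treatment of the first two bullets matches the paper's proof: the Lipschitz bound is obtained from the ball inclusion and homogeneity of $\mu_{\rm Th}$, and the integrability/continuity of $VX_{g,n}$ rests on Mirzakhani's pointwise domination $X_\Sigma \leq K_\Sigma = \kappa \prod_{\ell_\sigma(\gamma)\le \epsilon} \ell_\sigma(\gamma)^{-1}$ together with a thin/thick argument in Fenchel--Nielsen coordinates. (The paper is slightly more explicit than you about why the dominating function is uniform in $L$: it bounds the integral over $\mathcal{M}_{g,n}^{<\epsilon'}(L)$ by $\epsilon'$ times a constant involving the Bers constant, which is locally bounded in $L$.)

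For the third bullet there is a genuine gap in your outline. You propose to disintegrate $\mu_{\rm MV}$ along the projection $Q\mathfrak{T}_\Sigma \to \mathfrak{T}_\Sigma$ and then ``apply the definition of $X_\Sigma$ to the inner integration over ${\rm MF}_\Sigma$''. But nothing in that disintegration produces $X_\Sigma$: the fibre over $\sigma$ is the vector space of quadratic differentials on that fixed complex curve, and neither the hyperbolic length $\ell_\sigma$ nor the Weil--Petersson measure on the base arise naturally from it. Under the identification $Q\mathfrak{T}_\Sigma \cong {\rm MF}_\Sigma \times {\rm MF}_\Sigma \setminus \Delta_\Sigma$, the area is the intersection number $\iota(\lambda,\lambda')$, not $\ell_\sigma(\lambda)$, and the projection to the complex structure is not a projection to either factor.

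The missing ingredient, which the paper invokes explicitly, is Mirzakhani's horocyclic foliation map $F_\lambda\colon \mathfrak{T}_\Sigma \to {\rm MF}_\Sigma(\lambda)$. This, together with Bonahon's shearing coordinates, gives a symplectomorphism $\mathfrak{T}_\Sigma \times {\rm MF}_\Sigma^{\rm max} \to {\rm MF}_\Sigma \times {\rm MF}_\Sigma$, $(\sigma,\lambda)\mapsto(\lambda,F_\lambda(\sigma))$, which simultaneously (i) transports $\mu_{\rm WP}\otimes \mu_{\rm Th}$ to $\mu_{\rm Th}\otimes\mu_{\rm Th}$ and (ii) converts the area $\iota(\lambda,F_\lambda(\sigma))$ into the hyperbolic length $\ell_\sigma(\lambda)$. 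Only after this change of variables does the inner integral become the Thurston volume of the unit $\ell_\sigma$-ball, i.e.\ $X_\Sigma(\sigma)$, and the outer integral become a Weil--Petersson integral over $\mathfrak{M}_{g,n}$. Your accounting of the prefactor (dimension, radial Jacobian, lattice comparison) is fine once this is in place, but the mechanism you describe does not get you there.
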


\begin{lem}\label{Xfn:cv}
   Let $\sigma \in \widehat{\mathcal{T}}_\Sigma$ and $\phi\colon \mathbb{R}_{+} \to \mathbb{C}$ be an admissible test function. Then
   $$
      \lim_{\beta \rightarrow \infty} \beta^{-(6g - 6 + 2n)}\,N^+_{\Sigma}(\rho_\beta^* \phi;\sigma)
      =
      c_{6g - 6 + 2n}[\phi]\ X_\Sigma(\sigma),
   $$
   and further, the following limit exists and it equals
   $$
      \lim_{\beta \rightarrow \infty} \beta^{-(6g - 6 + 2n)}\,VN^{+}_{g,n}(\rho_{\beta}^*\phi;L_1,\ldots,L_n) = c_{6g - 6 + 2n}[\phi]\,VX_{g,n}(L_1,\ldots,L_n).
   $$ 
   for all $(L_1,\ldots,L_n) \in \mathbb{R}_{\geq 0}^n$.
\end{lem}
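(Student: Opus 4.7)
The key observation is that because $\ell_\sigma$ is homogeneous of degree one on ${\rm MF}_\Sigma$, writing $d = 6g-6+2n$ the rescaled statistic becomes
\[
\beta^{-d} N^+_\Sigma(\rho_\beta^*\phi;\sigma) = \beta^{-d} \sum_{c \in M_\Sigma} \phi\bigl(\ell_\sigma(c/\beta)\bigr) = \beta^{-d} \sum_{\lambda \in \frac{1}{\beta}M_\Sigma} \phi\bigl(\ell_\sigma(\lambda)\bigr).
\]
This is a Riemann-type sum over the rescaled lattice $\frac{1}{\beta}M_\Sigma \subset {\rm MF}_\Sigma$ whose covolume is exactly $\beta^{-d}$. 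The plan is to show convergence of this Riemann sum to $\int_{{\rm MF}_\Sigma}\phi(\ell_\sigma(\lambda))\,\dd\mu_{{\rm Th}}(\lambda)$, to identify the integral as $c_d[\phi]\,X_\Sigma(\sigma)$ via the cone structure of ${\rm MF}_\Sigma$, and then to transfer the pointwise statement to the averaged one by dominated convergence.

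\textbf{Pointwise convergence.} I would split the sum according to whether $\ell_\sigma(c) \leq R\beta$ or not, for a large truncation parameter $R > 0$. On the compact slice $\{\ell_\sigma \leq R\} \subset {\rm MF}_\Sigma$, the bounded continuous function $\phi \circ \ell_\sigma$ can be framed from above and below by piecewise constant functions on cells adapted to the integral piecewise-linear structure, and classical lattice point counting in dilating regions yields convergence of the truncated Riemann sum to $\int_{\{\ell_\sigma \leq R\}} \phi(\ell_\sigma)\,\dd\mu_{{\rm Th}}$ as $\beta \to \infty$. For the tail, the admissibility of $\phi$ provides $|\phi(\ell)| \leq C_s(1+\ell)^{-s}$ for arbitrary $s$, and combined with the Mirzakhani-type growth estimate $|\{c \in M_\Sigma : \ell_\sigma(c) \leq T\}| = O(T^d)$, both the lattice-sum tail and the integral tail vanish uniformly as $R \to \infty$.

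\textbf{Identifying the limit.} Polar decomposition on the cone ${\rm MF}_\Sigma$, combined with the homogeneity of $\ell_\sigma$, shows that the pushforward of $\mu_{{\rm Th}}$ by $\ell_\sigma$ has density $d\,\mu_{{\rm Th}}(\{\ell_\sigma \leq 1\})\,r^{d-1}\,\dd r$ on $\mathbb{R}_+$. Integrating against $\phi$ gives
\[
\int_{{\rm MF}_\Sigma}\phi(\ell_\sigma(\lambda))\,\dd\mu_{{\rm Th}}(\lambda) = d\,\mu_{{\rm Th}}(\{\ell_\sigma \leq 1\}) \int_0^\infty \phi(r)\, r^{d-1}\,\dd r = c_d[\phi]\,X_\Sigma(\sigma),
\]
by the definitions of $c_d[\phi]$ and $X_\Sigma(\sigma) = d!\,\mu_{{\rm Th}}(\{\ell_\sigma \leq 1\})$.

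\textbf{Integrated version and main obstacle.} For the averaged statement, dominated convergence against $\dd\mu_{{\rm WP}}$ on $\mathcal{M}_{g,n}(L_1,\ldots,L_n)$ requires a uniform bound $\beta^{-d}|N^+_\Sigma(\rho_\beta^*\phi;\sigma)| \leq C\,X_\Sigma(\sigma)$, valid for all $\beta$ bounded away from $0$ and all $\sigma$ in the fiber. By admissibility of $\phi$, this reduces via Abel summation to a uniform lattice counting estimate $|\{c \in M_\Sigma : \ell_\sigma(c) \leq r\}| \leq C'(1+r)^d\,X_\Sigma(\sigma)$, with the constant $C'$ independent of $\sigma$. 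The main obstacle is precisely this uniformity in $\sigma$: the pointwise asymptotic as $r \to \infty$ is classical from the definition of $X_\Sigma$, but promoting it to a uniform bound demands control on the shape of $U_\sigma = \{\lambda : \ell_\sigma(\lambda) \leq 1\}$ across $\sigma$, which I would extract from the Lipschitz property of $\ln X_\Sigma$ with respect to Thurston's distance $d_{{\rm Th}}$ recorded in Lemma~\ref{Xfn:prop}. Once this uniform lattice estimate is in place, dominated convergence applies because $X_{g,n}$ is integrable on $\mathcal{M}_{g,n}(L_1,\ldots,L_n)$ by the same lemma, yielding the claimed convergence of $\beta^{-d}VN^+_{g,n}(\rho_\beta^*\phi;L_1,\ldots,L_n)$ to $c_d[\phi]\,VX_{g,n}(L_1,\ldots,L_n)$.
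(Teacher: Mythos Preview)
Your pointwise convergence argument and the identification of the limit via polar decomposition of $\mu_{\rm Th}$ are essentially what the paper does, only spelled out in more detail. The real divergence is in the dominated convergence step, and there your proposal has a gap.

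You want a uniform bound of the form $|\{c \in M_\Sigma : \ell_\sigma(c) \leq r\}| \leq C'(1+r)^d X_\Sigma(\sigma)$ with $C'$ independent of $\sigma$, and you propose to extract it from the Lipschitz property of $\ln X_\Sigma$ with respect to $d_{\rm Th}$. But that inequality goes the wrong way. Comparing $\sigma$ to a fixed base point $\sigma_0$ via Thurston distance gives
\[
|\{c : \ell_\sigma(c) \leq r\}| \leq |\{c : \ell_{\sigma_0}(c) \leq r\,e^{d_{\rm Th}(\sigma,\sigma_0)}\}| \leq C_0\, r^d\, e^{d\cdot d_{\rm Th}(\sigma,\sigma_0)},
\]
while the Lipschitz property only yields $e^{d\cdot d_{\rm Th}(\sigma,\sigma_0)} \geq X_\Sigma(\sigma)/X_\Sigma(\sigma_0)$, a \emph{lower} bound. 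So you cannot majorise the lattice count by a multiple of $X_\Sigma(\sigma)$ this way. In fact a uniform upper bound of the form you state, with $X_\Sigma$ on the right-hand side, is not known and would be a substantial result in itself.

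The paper sidesteps this by using a different dominating function: Mirzakhani's $K_\Sigma(\sigma) = \kappa \prod_{\ell_\sigma(\gamma) \leq \epsilon} \ell_\sigma(\gamma)^{-1}$. Her Proposition~3.6 and Theorem~3.3 in \cite{Mirzagrowth} give precisely $|\{c : \ell_\sigma(c) \leq R\}| \leq K_\Sigma(\sigma)\,R^d$ uniformly, and she separately proves that $K_\Sigma$ is $\mu_{\rm WP}$-integrable on each $\mathcal{M}_{g,n}(L)$. The paper then bounds $\beta^{-d}|N^+_\Sigma(\rho_\beta^*\phi;\sigma)|$ by $K_\Sigma(\sigma)$ times a finite $\phi$-dependent constant via a dyadic decomposition of the sum, and invokes dominated convergence. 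You should replace $X_\Sigma$ by $K_\Sigma$ in your dominating step; the integrability of $X_\Sigma$ itself is in fact proved by comparison with $K_\Sigma$.
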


\begin{proof}[Proof of Lemma~\ref{Xfn:prop}]
The first property follows from the inclusion of the unit $\ell_{\sigma}$-ball in a $\ell_{\sigma'}$-ball:
$$ 
   \Set{\lambda \in {\rm MF}_\Sigma | \ell_\sigma(\lambda) \leq 1 } \subseteq \Set{ \lambda \in {\rm MF}_\Sigma | \ell_{\sigma'}(\lambda) \leq e^{d_{\rm Th}(\sigma, \sigma')} }.
$$
The integrability of $X_\Sigma$ is proven in Theorem~3.3 of \cite[p.~106]{Mirzagrowth}. Namely, the function $X_\Sigma$ is bounded by the function
\[
   K_{\Sigma}(\sigma) = \kappa\,\prod_{\substack{\gamma \in S_{\Sigma}^{\circ} \\ \ell_\sigma(\gamma) \leq \epsilon}} \frac{1}{\ell_\sigma(\gamma)}
\]
for appropriate constants $\kappa,\epsilon > 0$ that depend only on $g$ and $n$. The function $K_{\Sigma}$ is invariant under the action of the mapping class group and we denote by $K_{g,n}$ the function it induces on the moduli space. Mirzakhani showed that $K_{g,n}$ is integrable with respect to $\mu_{{\rm WP}}$ over $\mathcal{M}_{g,n}(L)$
for any $L \in \mathbb{R}_{\geq 0}^n$ (see her proof of Theorem~3.3 in~\cite[pp.~111-112]{Mirzagrowth}).

\medskip

We now prove that the integral $V X_{g,n}(L)$ is a continuous function of $L$. Let us choose a pair of pants decomposition of $\Sigma$ and consider the corresponding Fenchel--Nielsen coordinates $(\ell_i,\tau_i)_{i = 1}^{3g - 3 + n}$ realising $\mathcal{T}_{\Sigma}(L) \simeq (\mathbb{R}_{+} \times \mathbb{R})^{3g - 3 + n}$. By continuity of $X_\Sigma(\sigma)$, for any compact set $Z \subset (\R_{+} \times \R)^{3g-3+n}$ the following function is continuous
\[
	L \longmapsto \int_{\{L\} \times Z} X_\Sigma(\sigma) \dd\mu_{\rm WP}(\sigma).
\] 
In order to show the continuity of $VX_{g,n}$, it remains to show that the contribution coming from the set $\mathcal{M}_{g,n}^{< \epsilon'}(L) \subset \mathcal{M}_{g,n}(L)$ of surfaces with a non-peripheral curve of length smaller than $\epsilon'$ is uniformly small in $\epsilon'$. We use again the function $K_{g,n}$, for which
\[
   \int_{\mathcal{M}_{g,n}^{< \epsilon'}(L)} X_{g,n}(\sigma) \dd\mu_{\rm WP}
   \leq
   \int_{\mathcal{M}_{g,n}^{< \epsilon'}(L)} K_{g,n}(\sigma) \dd\mu_{\rm WP}.
\]
The set $\mathcal{M}_{g,n}^{< \epsilon'}(L)$ is covered by the $(3g-3+n) 2^{3g-4+n}$ sets
\[
   Y_{i_0, J}^{<\epsilon'}(L) = \pi\Big(\{L\}\times
       \Set{
       	(\ell_i,\tau_i)_{i = 1}^{3g - 3 + n} |
       	\ell_{i_0} \leq \epsilon',
       	\; \ell_j \leq \epsilon \;\; \forall j \in J,
       	\; \ell_i \leq b_{g,n}(L) \;\; \forall i,
       	\; 0 \leq \tau_i \leq \ell_i
       } \Big),
\]
where $J$ is a subset of $\{1, 2, \ldots, 3g-3+n\}$, $i_0$ an integer in the complement of $J$, $\pi\colon\mathcal{T}_{\Sigma} \rightarrow \mathcal{M}_{\Sigma}$ is the projection map, and $b_{g,n}(L)$ is the Bers constant of $\mathcal{T}_{\Sigma}(L)$. It is shown in~\cite{BalacheffParlierSabourau} that $b_{g,n}(L)$ is uniformly bounded for $L$ in compact subsets of $\mathbb{R}_{\geq 0}^n$. Now, given a point in $\mathcal{M}_{g,n}^{< \epsilon'}$, one can always choose a hyperbolic structure in its $\pi$-preimage so that all curves shorter than $\epsilon$ are contained in the pants decomposition. Hence
\begin{align*}
   \int_{\mathcal{M}_{g,n}^{< \epsilon'}(L)} K_{g,n}(\sigma)\,\dd\mu_{\rm WP}(\sigma)
   &\leq
   \kappa\ \sum_{i_0, J} \int_{Y_{i_0,J}^{< \epsilon'}} \prod_{j \in J} \frac{1}{\ell_j} \prod_{i = 1}^{3g - 3 + n} \dd \ell_i \dd \tau_i  \\
   &\leq
   \kappa\ \sum_{i_0, J} \epsilon'\,\epsilon^{|J|}\ \big(b_{g,n}(L)\big)^{2(3g-4+n-|J|)} \\
   &\leq
   \kappa (3g-3+n)\ 2^{3g-4+n} \big(b_{g,n}(L)\big)^{2(3g-4+n)}\epsilon'.
\end{align*}
This concludes the proof of the continuity.
 
\medskip

The proportionality with the Masur--Veech volume is derived in~\cite{Mirzakhani-earthquake} for closed surfaces and extended to punctured surfaces in \cite{Delecroix}. We only sketch the idea. Associated to any maximal measured foliation $\lambda$, Thurston \cite{Thurston} and Bonahon \cite{Bonahon} constructed an analytic embedding
\[
   G_\lambda \colon {\mathfrak T}_{\Sigma} \longrightarrow {\rm H}_{\Sigma}(\lambda),
\]
where ${\rm H}_{\Sigma}(\lambda)$ are the transverse H\"older distributions on (the support of) $\lambda$. The transverse H\"older distributions form a vector space of dimension $6g-6+2n$ which plays the role of the tangent space at $\lambda$ in ${\rm MF}_\Sigma$, see~\cite{Bonahon}. Mirzakhani then proved that $G_\lambda$ factors through the space of measured foliations as $G_\lambda = I_\lambda \circ F_\lambda$, where
$$
   F_\lambda \colon {\mathfrak T}_\Sigma \longrightarrow {\rm MF}_\Sigma(\lambda)
   \qquad
   I_\lambda \colon {\rm MF}_\Sigma(\lambda) \longrightarrow {\rm H}_{\Sigma}(\lambda)
$$
are respectively the \emph{horocyclic foliation} and \emph{shearing coordinates}, and where
\[
   {\rm MF}_\Sigma(\lambda) = \Set{\eta \in {\rm MF}_\Sigma\,\,|\,\,\forall \gamma \in S_\Sigma^{\circ},\,\,\,\,\iota(\lambda,\gamma) + \iota(\eta,\gamma) > 0}.
\]
It is shown in \cite{Bonahon,Mirzakhani-earthquake} that these maps are symplectomorphisms
with respect to the Weil--Petersson symplectic form on $\mathfrak{T}_\Sigma$ and the Thurston symplectic forms on ${\rm H}_{\Sigma}(\lambda)$ and ${\rm MF}_\Sigma$. As a consequence, on the subset ${\rm MF}^{{\rm max}}_\Sigma$ of maximal foliations -- which has full measure in ${\rm MF}_\Sigma$ -- we obtain a map
$$
\begin{array}{ccc}
   {\mathfrak T}_\Sigma \times {\rm MF}^{{\rm max}}_\Sigma & \longrightarrow & {\rm MF}_{\Sigma} \times {\rm MF}_{\Sigma} \\
   (\sigma,\lambda) & \longmapsto & \big(\lambda,F_\lambda(\sigma) \big)
\end{array}
$$
which is again a symplectomorphism.

\medskip

On the other hand, the homeomorphism \eqref{eq:QvsMF} $Q\mathfrak{T}_{\Sigma} \rightarrow {\rm MF}_{\Sigma} \times {\rm MF}_{\Sigma} \setminus \Delta_{\Sigma}$ maps $\mu_{\rm MV}$ to $\mu_{\rm Th} \otimes \mu_{\rm Th}$, up to a constant factor. In order to match our normalisation of $\mu_{{\rm MV}}$ one has to include
the factor that corresponds to the ratio between the Thurston symplectic volume form and the measure obtained via integral points in ${\rm MF}_\Sigma$, see~\cite{Arana,MoninTelpukhovskiy}.
\end{proof} 

\begin{proof}[Proof of Lemma~\ref{Xfn:cv}]
Since $\phi$ is Riemann integrable, we have
\begin{equation}\label{eq:conv:add:stat}
   \lim_{\beta \to \infty} \beta^{-6g-6+2n} N^+_\Sigma(\rho_{\beta}^*\phi;\sigma)
   =
   \int_{{\rm MF}_\Sigma} \phi \circ \ell_\sigma(\lambda)\,\dd\mu_{\rm Th}(\lambda).
\end{equation}
Now, we can desintegrate the Thurston measure with respect to the function $\ell_\sigma$.
We denote by $\overline{\mu}$ the projectivised measure on $\mathbb{P}{\rm MF}_\Sigma$ defined by
$$
   \overline{\mu}(A) = \mu_{{\rm Th}}\big(\Set{\lambda \in {\rm MF}_\Sigma | [\lambda] \in A\,\,\,\text{and}\,\,\,\ell_\sigma(\lambda) \leq 1 } \big),
$$
where $[\lambda]$ denotes the projective class of $\lambda$. Then we have the ``polar form'' of the Thurston measure
$$
   \mu_{{\rm Th}} = (6g-6+2n)\,t^{6g-7+2n}\,\dd t\,\dd \overline{\mu}.
$$
The right hand side in~\eqref{eq:conv:add:stat} hence can be rewritten as
$$
   (6g-6+2n)\bigg(\int_{\mathbb{R}_{+}} t^{6g-7+2n}\,\phi(t)\, \dd t\bigg)\,\mu_{{\rm Th}}\big(\Set{ \lambda \in {\rm MF}_\Sigma | \ell_\sigma(\lambda) \leq 1 }\big).
$$
The above is equivalent to the first part of the Lemma.

\medskip

To complete the proof, we should justify that the limit $\beta \rightarrow \infty$ and the integral over the moduli spaces can be exchanged. We will do so by dominated convergence. Let us denote
$$
   \mathscr{N}_{\Sigma}(R;\sigma) = \Set{c \in M_\Sigma | \ell_\sigma(c) \leq R }.
$$
By~\cite[Proposition~3.6 and Theorem~3.3]{Mirzagrowth} we have
$$
   |\mathscr{N}_{\Sigma}(R;\sigma)| \leq K_{\Sigma}(\sigma)\,R^{6g-6+2n}.
$$
Now we have
   \begin{align*}
   \beta^{-(6g - 6 + 2n)}\,N^+_\Sigma(\rho^*_{\beta}\phi;\sigma) & = \beta^{-(6g - 6 + 2n)} \sum_{k \geq 0} \sum_{\substack{c \in M_\Sigma \\ \beta k\leq \ell_{\sigma}(c) < \beta(k+1)}} \,\phi\bigg(\frac{\ell_\sigma(c)}{\beta} \bigg) \\
   & \leq K_{\Sigma}(\sigma) \bigg(\sum_{k \geq 0} (k + 1)^{6g - 6 + 2n} \sup_{k \leq \ell < k + 1}\,|\phi(\ell)|\bigg) \\
   & \leq K_{\Sigma}(\sigma) \bigg( \sum_{k \geq 0} (k+1)^{-2} \bigg) \sup_{\ell \geq 0}\, (\ell+2)^{6g-4+2n} |\phi(\ell)|.
\end{align*}
The right hand side is bounded by the decay assumption~\eqref{condh}. By Lemma~\ref{Xfn:prop}, for any $L \in \mathbb{R}_{\geq 0}^n$ the right-hand side is integrable against the Weil--Petersson measure over $\mathcal{M}_{g,n}(L)$. It is independent of $\beta$, so the conclusion follows by dominated convergence.
\end{proof}

\subsection{Definition of the Masur--Veech polynomials}
\label{MVpo}

We introduce the Masur--Veech polynomials, for any $g,n \geq 0$ such that $2g - 2 + n > 0$, by setting
\begin{equation}\label{defn:MVpoly}
	V\Omega_{g,n}^{{\rm MV}}(L_1,\ldots,L_n) = \sum_{\Gamma \in \mathbf{G}_{g,n}} \frac{1}{|{\rm Aut}\,\Gamma|} \int_{\mathbb{R}_{+}^{E_{\Gamma}}}  \prod_{v \in V_{\Gamma}} V\Omega^{{\rm K}}_{h(v),k(v)}\big((\ell_e)_{e \in E(v)},(L_{\lambda})_{\lambda \in \Lambda(v)}\big)\prod_{e \in E_{\Gamma}} \frac{\ell_e\dd\ell_e }{e^{\ell_e} - 1}.	
\end{equation}
These are polynomials in the variables $(L_i^2)_{i = 1}^n$ of total degree $3g - 3 + n$. Its terms of maximal total degree come from the stable graph with a single vertex of genus $g$ with $n$ leaves; therefore coincide with the Kontsevich volumes of the combinatorial moduli space $V\Omega^{{\rm K}}_{g,n}(L_1,\ldots,L_n)$. In order to evaluate the sum over stable graphs, we need the following integral.

\begin{lem}\label{Lzeta}
   The function $f^{{\rm MV}}(\ell) = \frac{1}{e^{\ell} - 1}$ is such that, for any $k \geq 0$,
   $$
      \int_{\mathbb{R}_{+}} f^{{\rm MV}}(\ell)\ell^{2k + 1}\,\dd \ell = (2k + 1)!\zeta(2k + 2).
   $$
\end{lem}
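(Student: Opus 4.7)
The plan is to compute the integral by expanding $\frac{1}{e^{\ell}-1}$ as a geometric series and swapping the sum with the integral, which is justified by positivity of all terms (Tonelli's theorem). Concretely, for $\ell > 0$ one has
\[
   \frac{1}{e^{\ell} - 1} = \frac{e^{-\ell}}{1 - e^{-\ell}} = \sum_{n \geq 1} e^{-n\ell}.
\]

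Substituting this into the integral and exchanging summation and integration gives
\[
   \int_{\mathbb{R}_{+}} \frac{\ell^{2k+1}}{e^{\ell}-1}\,\dd \ell
   = \sum_{n \geq 1} \int_{0}^{\infty} \ell^{2k+1} e^{-n\ell}\,\dd\ell.
\]
Each of the inner integrals is a standard Gamma integral: the change of variable $u = n\ell$ yields
\[
   \int_{0}^{\infty} \ell^{2k+1} e^{-n\ell}\,\dd\ell
   = \frac{1}{n^{2k+2}} \int_{0}^{\infty} u^{2k+1} e^{-u}\,\dd u
   = \frac{\Gamma(2k+2)}{n^{2k+2}}
   = \frac{(2k+1)!}{n^{2k+2}}.
\]

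Summing over $n \geq 1$ and recognising the definition of the Riemann zeta function at $2k+2$, we obtain
\[
   \int_{\mathbb{R}_{+}} \frac{\ell^{2k+1}}{e^{\ell}-1}\,\dd\ell
   = (2k+1)! \sum_{n \geq 1} \frac{1}{n^{2k+2}}
   = (2k+1)!\,\zeta(2k+2),
\]
which is the desired identity. There is no genuine obstacle here; the only point requiring a word of justification is the exchange of sum and integral, which is immediate since the integrand is a sum of non-negative functions.
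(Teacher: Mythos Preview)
Your proof is correct and essentially identical to the paper's own argument: both expand $\frac{1}{e^{\ell}-1} = \sum_{n \geq 1} e^{-n\ell}$, exchange sum and integral, and reduce each term to a Gamma integral giving $(2k+1)!/n^{2k+2}$. The paper merely writes the chain of equalities in the reverse order (starting from $(2k+1)!\zeta(2k+2)$), but the content is the same.
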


\begin{proof}
We compute
\bea
(2k + 1)!\zeta(2k + 2) & = & \sum_{n \geq 1} \frac{1}{n^{2k + 2}}\,\int_{\mathbb{R}_{+}}\,e^{-t}\,t^{2k + 1}\,  \dd t= \sum_{n \geq 1} \int_{\mathbb{R}_{+}} \,e^{-n\ell}\,\ell^{2k + 1}\,\dd \ell \nonumber \\
& = & \int_{\mathbb{R}_{+}} \frac{e^{-\ell}}{1 - e^{-\ell}}\,\ell^{2k + 1}\,\dd\ell = \int_{\mathbb{R}_{+}} \frac{1}{e^{\ell} - 1}\,\ell^{2k + 1}\, \dd \ell.\nonumber
\eea
\end{proof}

For $n = 0$ and $g \geq 2$, $V\Omega_{g,0}^{{\rm MV}}$ is a number, which can also be extracted from $V\Omega_{g,1}^{{\rm MV}}(L_1)$, as a particular case of the following formula.

\begin{lem}\label{dilatonthm0}
   For any $g,n \geq 0$ such that $2g - 2 + n > 0$, we have the dilaton equation
   $$
      \big[\tfrac{L_{n + 1}^2}{2}\big]\,V\Omega_{g,n + 1}^{{\rm MV}}(L_1,\ldots,L_{n + 1}) = (2g - 2 + n)\,V\Omega_{g,n}^{{\rm MV}}(L_1,\ldots,L_n),
   $$
   where $\big[\tfrac{\ell^2}{2}\big]$ extracts the coefficient of $\tfrac{\ell^2}{2}$ in the polynomial to its right. In particular, for $g \geq 2$ we have that
   $$
      \big[\tfrac{L^2}{2}\big]\,V\Omega_{g,1}^{{\rm MV}}(L) = (2g - 2)V\Omega_{g,0}^{{\rm MV}}.
   $$
\end{lem}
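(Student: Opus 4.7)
The plan is to apply the classical dilaton equation for Kontsevich volumes at the vertex supporting the $(n+1)$-th leaf in each stable graph of the expansion~\eqref{defn:MVpoly}. By Theorem~\ref{konth}, $V\Omega^{\rm K}_{h,k}(L_1,\ldots,L_k) = \int_{\overline{\mathfrak{M}}_{h,k}}\exp\bigl(\sum_i\tfrac{L_i^2}{2}\psi_i\bigr)$, so the usual $\psi$-class dilaton equation on $\overline{\mathfrak{M}}_{h,k+1}$ translates into
\[
   \bigl[\tfrac{L^2}{2}\bigr]\,V\Omega^{\rm K}_{h,k+1}(L_1,\ldots,L_k,L) \;=\; (2h-2+k)\,V\Omega^{\rm K}_{h,k}(L_1,\ldots,L_k),
\]
valid whenever $2h-2+k>0$. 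The boundary case $(h,k)=(0,2)$ is harmless because $V\Omega^{\rm K}_{0,3}=1$ has trivial $[L^2/2]$-coefficient, matching the prefactor $2h-2+k=0$; the other boundary case $(h,k)=(1,0)$, where the Kontsevich dilaton genuinely fails, can only arise if the leaf $n+1$ sits at an isolated genus-$1$ vertex, which by connectedness forces $\Gamma$ to be that single vertex, so $(g,n+1)=(1,1)$ and $n=0$, $g=1$, excluded by the hypothesis $2g-2+n>0$.

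In the expansion of $V\Omega^{\rm MV}_{g,n+1}(L_1,\ldots,L_{n+1})$, each $\Gamma\in\mathbf{G}_{g,n+1}$ has a unique vertex $v_0=v_0(\Gamma)$ supporting the labelled leaf $n+1$. Extracting $[L_{n+1}^2/2]$ commutes with the integration over $(\ell_e)_{e\in E_{\Gamma}}$ and with the product over $v\neq v_0$, and reduces the factor at $v_0$ by the Kontsevich dilaton above. I next reorganize the resulting sum via the natural bijection between $\Gamma\in\mathbf{G}_{g,n+1}$ (with $v_0(\Gamma)$ not of type $(0,3)$) and $\Aut$-classes of pairs $(\Gamma',v')$ with $\Gamma'\in\mathbf{G}_{g,n}$ and $v'\in V_{\Gamma'}$, where attaching (resp.\ removing) the leaf $n+1$ at $v'$ witnesses one direction. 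Any automorphism of $\Gamma$ must fix the labelled leaf and hence its attaching vertex, so $|\Aut\Gamma|=|\Stab_{\Aut\Gamma'}(v')|$; by orbit-stabilizer, summing over orbits $[v']$ with weight $1/|\Stab(v')|$ rewrites as summing over all $v'\in V_{\Gamma'}$ with weight $1/|\Aut\Gamma'|$. This produces
\[
   \bigl[\tfrac{L_{n+1}^2}{2}\bigr]V\Omega^{\rm MV}_{g,n+1}(L_1,\ldots,L_{n+1})
   = \sum_{\Gamma'\in\mathbf{G}_{g,n}}\frac{1}{|\Aut\Gamma'|}\Biggl(\sum_{v'\in V_{\Gamma'}}\bigl(2h(v')+k(v')-2\bigr)\Biggr)\,I(\Gamma'),
\]
where $I(\Gamma')$ denotes the integrand appearing in~\eqref{defn:MVpoly} for $V\Omega^{\rm MV}_{g,n}$ at $\Gamma'$.

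It then remains to observe that $\sum_{v'\in V_{\Gamma'}}(2h(v')+k(v')-2)$ is independent of $\Gamma'\in\mathbf{G}_{g,n}$: combining $\sum_v k(v)=2|E_{\Gamma'}|+n$ with $\sum_v h(v)=g-1+|V_{\Gamma'}|-|E_{\Gamma'}|$ (i.e.\ the Euler-characteristic computation of Lemma~\ref{combstgraph}) gives the value $2g-2+n$. Pulling this constant out of the graph sum produces $(2g-2+n)\,V\Omega^{\rm MV}_{g,n}(L_1,\ldots,L_n)$, establishing the dilaton equation; specialising to $n=0$ with $g\geq 2$ yields the second, particular identity. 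The main point requiring care is the bookkeeping with automorphism groups together with the treatment of the exceptional vertex types $(0,3)$ and $(1,1)$; both are easily disposed of as indicated above, and the rest of the argument is a transparent reshuffling of the Kontsevich dilaton through the stable-graph sum.
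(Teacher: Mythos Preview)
Your proof is correct and follows essentially the same approach as the paper: apply the $\psi$-class dilaton equation to the Kontsevich factor at the vertex carrying leaf $n+1$, reorganise the sum over $\mathbf{G}_{g,n+1}$ as a sum over $(\Gamma',v')$ with $\Gamma'\in\mathbf{G}_{g,n}$, and invoke Lemma~\ref{combstgraph} to identify $\sum_{v'}(2h(v')-2+k(v'))=2g-2+n$. Your orbit--stabiliser phrasing of the automorphism bookkeeping is equivalent to the paper's fibre formula $|\mathrm{Aut}\,\Gamma|=|\pi^{-1}(\Gamma,v)|\cdot|\mathrm{Aut}\,\tilde\Gamma|$, and your explicit discussion of the exceptional vertex type $(1,1)$ (leading to the forbidden case $(g,n)=(1,0)$) is a point the paper leaves implicit.
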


\begin{proof}
We introduce
$$
   \mathbf{G}_{g,n}^{\bullet} = \Set{ (\Gamma,v) | \Gamma \in \mathbf{G}_{g,n}\,\,\,{\rm and}\,\,\,v \in V_{\Gamma} }
$$
and the surjective map $\pi\colon\mathbf{G}_{g,n + 1} \rightarrow \mathbf{G}_{g,n}^{\bullet}$ which erase the $(n + 1)$-th leaf from the stable graph, but records the information of the vertex $v$ to which this leaf was incident. In general $\pi$ is not injective, but one can check that for any $\Gamma \in \mathbf{G}_{g,n}$, $v \in V_{\Gamma}$ and $\tilde{\Gamma} \in \pi^{-1}(\Gamma,v)$, we have
\begin{equation}\label{autfug}
	|{\rm Aut}\,\Gamma|= |\pi^{-1}(\Gamma,v)|\,|{\rm Aut}\,\tilde{\Gamma}|.
\end{equation}
The dilaton equation for the $\psi$ classes intersections yields, for $2h - 2 + (k + 1) > 0$
$$
   \big[\tfrac{\ell_{k + 1}^2}{2}\big]\,\,V\Omega^{{\rm K}}_{h,k + 1}(\ell_1,\ldots,\ell_{k + 1}) = (2h - 2 + k)\,V\Omega^{{\rm K}}_{h,k}(\ell_1,\ldots,\ell_k),
$$
and this expression vanishes when $2h - 2 + k = 0$. Therefore
\[
\begin{split}
   & \big[\tfrac{L_{n + 1}^2}{2}\big]\,\,V\Omega^{{\rm MV}}_{g,n + 1}(L_1,\ldots,L_{n +1}) \\
   & =  \sum_{(\Gamma,v) \in \mathbf{G}_{g,n}^{\bullet}} \frac{2h(v) - 2 + k(v)}{|{\rm Aut}\,\tilde{\Gamma}|} \int_{\mathbb{R}_{+}^{E_{\Gamma}}} \prod_{w \in V_{\Gamma}} V\Omega^{{\rm K}}_{h(w),k(w)} \big((\ell_e)_{e \in E(w)},(L_{\lambda})_{\lambda \in \Lambda(w)}\big) \prod_{e \in E_{\Gamma}} \frac{\ell_e\,\dd\ell_e}{e^{\ell_e} - 1},
\end{split}
\]
where $\tilde{\Gamma}$ is any element of $\pi^{-1}(\Gamma,v)$, $E(w)$ and $\Lambda(w)$ are the sets of edges and leaves incident to $w$ in the graph $\Gamma$ (and not in $\tilde{\Gamma}$). Using \eqref{autfug} we deduce that
\[
\begin{split}
	\big[\tfrac{L_{n + 1}^2}{2}\big]\,\,V\Omega^{{\rm MV}}_{g,n + 1}(L_1,\ldots,L_{n + 1}) & = \sum_{\Gamma \in \mathbf{G}_{g,n}} \bigg(\sum_{v \in V_{\Gamma}} 2h(v) - 2 + k(v)\bigg) \\ 
	& \times \frac{1}{|{\rm Aut}\,\Gamma|} \int_{\mathbb{R}_{+}^{E_{\Gamma}}} \prod_{w \in V_{\Gamma}} V\Omega^{{\rm K}}_{h(w),k(w)} \big((\ell_e)_{e \in E(w)},(L_{\lambda})_{\lambda \in \Lambda(v)}\big) \prod_{e \in E_{\Gamma}} \frac{\ell_e\,\dd\ell_e}{e^{\ell_e} - 1}.
\end{split}
\]
By Lemma~\ref{combstgraph}, the sum of Euler characteristics at the vertices is $2g - 2 + n$, hence the claim.
\end{proof}

\subsection{Main result}

In \cite{Delecroix}, V.D., Goujard, Zograf and Zorich obtained by combinatorial methods a formula for the Masur--Veech volumes as a sum over stable graphs, exploiting the relation between Masur--Veech volumes and lattice point counting in the moduli space of quadratic differentials. Our proof is different and relies on ideas of the geometric recursion reviewed in Section~\ref{SPre}. Our method gives access to more general quantities, which we introduced under the name of Masur--Veech polynomials. We now prove that they record the asymptotic growth of the number of multicurves on surfaces with large boundaries, after integration against the Weil--Petersson measure. As a consequence of Lemma~\ref{Xfn:cv}, we then show that the Masur--Veech volumes arise as the constant term of the Masur--Veech polynomials, up to normalisation.

\medskip 

Let us denote $\hat{c}[\phi] \colon \mathbb{C}[t^{-1}] \rightarrow \mathbb{C}$ the linear operator sending $t^{-k}$ to $c_{k}[\phi]$ for $k > 0$, and $t^0$ to $\phi(0)$ when it exists. Recall the definition \eqref{rhobetastar} of the rescaling operator $\rho_{\beta}^*$.

\begin{thm}\label{stablg}
   Let $\phi$ be an admissible test function admitting a Laplace representation
   $$
      \phi(\ell) = \int_{\mathbb{R}_{+}} \Phi(t)\,e^{-t \ell}\,\dd t
   $$
   for a measurable function $\Phi$ such that $t \mapsto |\Phi(t)|$ is integrable on $\mathbb{R}_{+}$. In particular, $\phi(0) = \lim_{\ell \rightarrow 0} \phi(\ell)$ exists. Then, for any $g,n \geq 0$ such that $2g - 2 + n > 0$, we have that
   \begin{align*}
   	\lim_{\beta \rightarrow \infty} \beta^{-(6g - 6 + 2n)}\,VN^{+}_{g,n}\big(\rho^*_{\beta}\phi;L_1,\ldots,L_n\big)
   	& =  c_{6g - 6 + 2n}[\phi]\,V\Omega^{{\rm MV}}_{g,n}(0,\ldots,0), \\
   	\lim_{\beta \rightarrow \infty} \beta^{-(6g - 6 + 2n)}\,VN^{+}_{g,n}\big(\rho^*_{\beta}\phi;\beta L_1,\ldots,\beta L_n\big)
   	& = \hat{c}[\phi]\big( t^{-(6g - 6 + 2n)}\,V\Omega^{{\rm MV}}_{g,n}(tL_1,\ldots,tL_n)\big),
   \end{align*}
   and the convergence is uniform for $L_i$ in any compact of $\mathbb{R}_{\geq 0}$.
\end{thm}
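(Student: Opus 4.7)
The overall strategy is to translate the additive statistics $N^+_\Sigma(\rho^*_\beta\phi;\sigma)$ into a one-parameter family of \emph{multiplicative} hyperbolic length statistics via the Laplace representation of $\phi$, apply the stable graph expansion for twisted Mirzakhani amplitudes supplied by Theorem~\ref{stablth}, and then pass to the limit $\beta\to\infty$ inside the resulting integral.

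\emph{Step 1 (Laplace reduction).} I would set $\widetilde f_t(\ell)=e^{-t\ell}$, so that in the notation of \eqref{f:ftilde:relation} the associated multiplicative kernel is $f_t(\ell)=\widetilde f_t(\ell)/(1-\widetilde f_t(\ell))=1/(e^{t\ell}-1)$. For any multicurve $c\in M_\Sigma$ with components $\gamma_i$ one has the factorisation $e^{-t\ell_\sigma(c)}=\prod_i\widetilde f_t(\ell_\sigma(\gamma_i))$, so plugging the Laplace representation of $\phi$ into $N^+_\Sigma(\rho^*_\beta\phi;\sigma)$ and interchanging summation with the $t$-integral (legitimate by $\Phi\in L^1(\mathbb{R}_+)$ together with the polynomial growth estimate $|\mathscr N_\Sigma(R;\sigma)|\le K_\Sigma(\sigma)R^{6g-6+2n}$ already used in Lemma~\ref{Xfn:cv}) yields
\begin{equation*}
  N^+_\Sigma(\rho^*_\beta\phi;\sigma)=\int_{\mathbb{R}_+}\Phi(t)\,N_\Sigma(f_{t/\beta};\sigma)\,\dd t,
\end{equation*}
and the same identity descends to $VN^+_{g,n}(\rho^*_\beta\phi;L)=\int_{\mathbb{R}_+}\Phi(t)\,VN_{g,n}(f_{t/\beta};L)\,\dd t$ after integration over $\mathcal{M}_{g,n}(L)$.

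\emph{Step 2 (Stable graph expansion and leading $\beta$ behaviour).} Since $N_\Sigma(f_s;\,\cdot\,)$ is exactly the Mirzakhani GR amplitude twisted by $f_s$, Theorem~\ref{stablth} writes $VN_{g,n}(f_{t/\beta};L)$ as a finite sum over $\Gamma\in\mathbf G_{g,n}$ of integrals of $\prod_v V\Omega^{\mathrm{M}}_{h(v),k(v)}\bigl((\ell_e),(L_\lambda)\bigr)$ against $\prod_e \ell_e\,\dd\ell_e/(e^{(t/\beta)\ell_e}-1)$. The change of variables $\ell_e=(\beta/t)u_e$ produces a Jacobian $(\beta/t)^{2|E_\Gamma|}$ and normalises each edge kernel to $u_e\,\dd u_e/(e^{u_e}-1)$. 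Each polynomial $V\Omega^{\mathrm{M}}_{h(v),k(v)}\bigl((\beta/t)u_e,\beta L_\lambda\bigr)$ is then expanded in $\beta$; by the classical relation $\lim_{\alpha\to\infty}\alpha^{-2(3h-3+k)}V\Omega^{\mathrm{M}}_{h,k}(\alpha\,\cdot\,)=V\Omega^{\mathrm{K}}_{h,k}(\,\cdot\,)$, the top-degree-in-$\beta$ coefficient at vertex $v$ is $\beta^{2(3h(v)-3+k(v))}V\Omega^{\mathrm{K}}_{h(v),k(v)}(u_e/t,L_\lambda)$. Combining all vertices with the help of Lemma~\ref{combstgraph} produces the overall leading power $\beta^{6g-6+2n}$, and a further substitution $u_e=tv_e$ together with the homogeneity of $V\Omega^{\mathrm{K}}$ then identifies the pointwise-in-$t$ limit
\begin{equation*}
  \lim_{\beta\to\infty}\beta^{-(6g-6+2n)}\,VN_{g,n}(f_{t/\beta};\beta L)=t^{-(6g-6+2n)}V\Omega^{\mathrm{MV}}_{g,n}(tL_1,\ldots,tL_n)
\end{equation*}
directly from the definition \eqref{defn:MVpoly}. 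For the first claim the leaf variables are not rescaled, so they can only feed into strictly subleading powers of $\beta$, and the limit reduces to $t^{-(6g-6+2n)}V\Omega^{\mathrm{MV}}_{g,n}(0,\ldots,0)$.

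\emph{Step 3 (Integration against $\Phi$ and the dominated convergence obstacle).} Fubini yields $\int_{\mathbb{R}_+}\Phi(t)\,t^{-k}\,\dd t=c_k[\phi]$ for $k\ge 1$ and $\int_{\mathbb{R}_+}\Phi(t)\,\dd t=\phi(0)$, so applying $\int\Phi(t)\,\cdot\,\dd t$ to the two limits from Step~2 reproduces respectively $c_{6g-6+2n}[\phi]\,V\Omega^{\mathrm{MV}}_{g,n}(0,\ldots,0)$ and $\hat c[\phi]\bigl(t^{-(6g-6+2n)}V\Omega^{\mathrm{MV}}_{g,n}(tL_1,\ldots,tL_n)\bigr)$ by the very definition of the linear form $\hat c[\phi]$. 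The main technical obstacle is justifying the interchange of $\lim_{\beta\to\infty}$ with the $t$-integral and with the Weil--Petersson integral hidden in $VN_{g,n}$. The Mirzakhani bound on $|\mathscr N_\Sigma|$ gives $\beta^{-(6g-6+2n)}N_\Sigma(f_{t/\beta};\sigma)\le\mathrm{const}\cdot K_\Sigma(\sigma)\,t^{-(6g-6+2n)}$ uniformly in $\beta$, with $K_{g,n}$ integrable against $\mu_{\rm WP}$ on $\mathcal{M}_{g,n}(L)$ uniformly for $L$ in compacts by Lemma~\ref{Xfn:prop}; admissibility of $\phi$ makes $c_k[\phi]$ finite for every $0\le k\le 6g-6+2n$ and controls the $t\to 0^+$ singularity, while integrability of $\Phi$ controls $t\to+\infty$. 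Lebesgue dominated convergence then delivers the required exchanges and the claimed uniform convergence for $L$ in compact subsets of $\mathbb{R}_{\ge 0}^n$.
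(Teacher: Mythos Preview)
Your proposal is correct and follows essentially the same approach as the paper: Laplace reduction converts the additive statistic into a $t$-integral of multiplicative statistics $N_\Sigma(f_{t/\beta};\,\cdot\,)$, Theorem~\ref{stablth} expands $VN_{g,n}(f_{t/\beta};\,\cdot\,)$ over stable graphs, rescaling edge lengths by $\beta/t$ together with the Mirzakhani-to-Kontsevich asymptotics and Lemma~\ref{combstgraph} identify the leading $\beta^{6g-6+2n}$ term as $t^{-(6g-6+2n)}V\Omega^{\mathrm{MV}}_{g,n}(tL)$, and one passes to the limit inside the $t$-integral. The only cosmetic difference is in the domination step: the paper observes directly that after the stable-graph expansion $\beta^{-(6g-6+2n)}VN^{t/\beta}_{g,n}(\beta L)$ is a \emph{polynomial} of bounded degree in $t^{-1}$ and $\beta^{-1}$, so for $\beta\ge 1$ it is bounded by a $\beta$-independent polynomial in $t^{-1}$ whose integral against $\Phi(t)\,\dd t$ is a finite linear combination of the $c_k[\phi]$, whereas you reach the same end via the pointwise Mirzakhani bound $K_\Sigma(\sigma)\,t^{-(6g-6+2n)}$.
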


Notice that the contribution of the test function factors out for finite boundary lengths. The assumption that $\phi$ has a Laplace representation is not essential. It could be waived by an approximation argument, if we had an integrable upper bound for the number of multicurves whose lengths belong to a segment $[\beta L_1,\beta L_2]$. This is not currently available in the literature and we do not address this question here.

\medskip

In particular, comparing the last formula of Lemma~\ref{Xfn:prop} with the second formula in Lemma~\ref{Xfn:cv}, we obtain

\begin{cor}\label{MVcoco}
   For any $g,n \geq 0$ such that $2g - 2 + n > 0$, $VX_{g,n}(L_1,\ldots,L_n) = V\Omega^{{\rm MV}}_{g,n}(0,\ldots,0)$ is independent of $L_1,\ldots,L_n \in \mathbb{R}_{\geq 0}$, and the Masur--Veech volumes are
   $$
      MV_{g,n} = \frac{2^{4g - 2 + n}(4g - 4 + n)!}{(6g - 7 + 2n)!}\,V\Omega^{{\rm MV}}_{g,n}(0,\ldots,0).
   $$
   \hfill $\blacksquare$
\end{cor}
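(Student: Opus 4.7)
The plan is to read Corollary~\ref{MVcoco} as an immediate consequence of Theorem~\ref{stablg} combined with Lemma~\ref{Xfn:cv} and Lemma~\ref{Xfn:prop}: both of the first two results compute, for a test function $\phi$, the $\beta \to \infty$ asymptotics of $\beta^{-(6g-6+2n)}\,VN^+_{g,n}(\rho_\beta^* \phi;L_1,\ldots,L_n)$, once in terms of the integrated Masur--Veech polynomial evaluated at zero and once in terms of $VX_{g,n}(L_1,\ldots,L_n)$. Equating the two expressions forces $VX_{g,n}$ to be constant in $L$ and equal to $V\Omega^{{\rm MV}}_{g,n}(0,\ldots,0)$; the volume formula is then obtained by substitution into the third bullet of Lemma~\ref{Xfn:prop}.

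Concretely, I would first pick an auxiliary test function with all the required properties. For instance, $\phi(\ell) = e^{-\ell}$ satisfies the admissibility estimate \eqref{condh}, admits the trivial Laplace representation with $\Phi(t) = \delta(t-1)$ (or one may take $\phi(\ell) = e^{-\ell}$ and verify the hypotheses of Theorem~\ref{stablg} directly through its finite moments), and yields $c_{6g-6+2n}[\phi] = 1 \neq 0$. Any admissible test function with positive moment of order $6g-6+2n$ and a Laplace representation works equally well.

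Next, for such a $\phi$ and any $(L_1,\ldots,L_n) \in \mathbb{R}_{\geq 0}^n$, the first statement of Theorem~\ref{stablg} gives
\[
  \lim_{\beta \to \infty} \beta^{-(6g-6+2n)}\,VN^+_{g,n}(\rho_\beta^* \phi;L_1,\ldots,L_n) = c_{6g-6+2n}[\phi]\,V\Omega^{{\rm MV}}_{g,n}(0,\ldots,0),
\]
while the second statement of Lemma~\ref{Xfn:cv} gives the same limit equal to $c_{6g-6+2n}[\phi]\,VX_{g,n}(L_1,\ldots,L_n)$. Since $c_{6g-6+2n}[\phi] \neq 0$, dividing yields $VX_{g,n}(L_1,\ldots,L_n) = V\Omega^{{\rm MV}}_{g,n}(0,\ldots,0)$ for every $(L_1,\ldots,L_n)$, proving the first assertion of the corollary. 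Specialising to $L_1=\cdots=L_n=0$ and inserting into the third bullet of Lemma~\ref{Xfn:prop} yields the Masur--Veech volume identity.

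There is essentially no obstacle here: the two converging sequences are identical, so the whole proof consists in checking that Theorem~\ref{stablg} applies to at least one test function $\phi$ for which $c_{6g-6+2n}[\phi]$ is nonzero, then cancelling this common nonzero scalar. The only place requiring a moment of care is the verification that $\phi(\ell) = e^{-\ell}$ (or the chosen $\phi$) genuinely satisfies the hypotheses of \emph{both} Lemma~\ref{Xfn:cv} and Theorem~\ref{stablg}; this is routine since the function is smooth, rapidly decaying, Riemann-integrable, and possesses an obvious Laplace representation with absolutely integrable density.
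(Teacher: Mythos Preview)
Your argument is correct and matches the paper's own reasoning: the corollary is obtained by equating the limit computed in the first formula of Theorem~\ref{stablg} with the one in the second formula of Lemma~\ref{Xfn:cv}, then invoking the third bullet of Lemma~\ref{Xfn:prop}. One small technical remark: the choice $\phi(\ell)=e^{-\ell}$ with $\Phi(t)=\delta(t-1)$ does not literally satisfy the hypothesis of Theorem~\ref{stablg}, which asks for a \emph{measurable} density $\Phi$ with $|\Phi|$ integrable; a safe concrete choice is $\Phi=\mathbf{1}_{[1,2]}$, giving $\phi(\ell)=(e^{-\ell}-e^{-2\ell})/\ell$, which is admissible and has $c_{6g-6+2n}[\phi]>0$. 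Since you already note that any admissible $\phi$ with a genuine Laplace representation and nonzero moment works, this is a cosmetic fix rather than a gap.
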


It would be interesting to provide an a priori explanation of why $VX_{g,n}$ is independent of the boundary lengths $L_1,\ldots,L_n$; for us it is merely the consequence of a computation.

\begin{proof}[Proof of Theorem~\ref{stablg}]
We fix once and for all $g$ and $n$ such that $2g - 2 + n > 0$. In the admissibility assumption, we will only use a weaker form of decay
\begin{equation}\label{conhs}
   \sup_{\ell > 0} \,(1 + \ell)^{6g - 6 + 2n + \delta} |\phi(\ell)| < +\infty,
\end{equation}
with $\delta = 1$. 

\medskip

The Laplace representation of $\phi$ allows us to convert additive statistics into multiplicative statistics. We are going to apply many times the Fubini--Tonelli and the dominated convergence theorems.

\medskip

Admissibility implies convergence of the series
$$
   N^{+}_{\Sigma}(\rho^*_{\beta}\phi;\sigma) = \sum_{c \in M_{\Sigma}} \phi\bigg(\frac{\ell_{\sigma}(c)}{\beta}\bigg) = \sum_{c \in M_{\Sigma}} \int_{\mathbb{R}_{+}} \Phi(t)\,\prod_{\gamma \in \pi_0(c)} e^{-t\ell_{\sigma}(c)/\beta}\dd t.
$$
By the Fubini--Tonelli theorem applied twice, we have
\begin{align}
   \label{Nplus}
   N^{+}_{\Sigma}(\rho^*_{\beta}\phi;\sigma) & = \int_{\mathbb{R}_{+}}\Phi(t)\,N^{t/\beta}_{\Sigma}(\sigma)\, \dd t, \\
   \label{VNplus}
   VN^{+}_{g,n}(\rho^*_{\beta}\phi;\beta L_1,\ldots,L_n) & = \int_{\mathbb{R}_{+}} \Phi(t)\,VN^{t/\beta}_{g,n}(\beta L_1,\ldots,\beta L_n) \,\dd t,
\end{align}
where
\begin{align*}
   N^{t}_{\Sigma}(\sigma)
   & =
   \sum_{c \in M_{\Sigma}} \prod_{\gamma \in \pi_0(c)} e^{-t\ell_{\sigma}(\gamma)}
   =
   \sum_{c \in M'_{\Sigma}} \prod_{\gamma \in \pi_0(c)} \frac{1}{e^{t\ell_{\sigma}(\gamma)} - 1}, \\
   VN^{t}_{\Sigma}(L_1,\ldots,L_n)
   & =
   \int_{\mathcal{M}_{g,n}(L_1,\ldots,L_n)} N^{t}_{g,n}(\sigma)\,\dd\mu_{{\rm WP}}(\sigma),
\end{align*}
are now multiplicative statistics, to which we can apply the theory reviewed in Section~\ref{SPre}. 

\medskip

For $t > 0$ and $n \geq 1$, by Theorem~\ref{stablth} we have
\begin{equation}\label{VNtb} 
\begin{split}
	& VN^{t/\beta}_{g,n}(\beta L_1,\ldots,\beta L_n)  \\
	& = \int_{\mathcal{M}_{g,n}(\beta L_1,\ldots,\beta L_n)} N^{t/\beta}_{g,n}(\sigma)\,\dd\mu_{{\rm WP}}(\sigma) \\
	& = \sum_{\Gamma \in \mathbf{G}_{g,n}} \frac{1}{|{\rm Aut}\,\Gamma|} \int_{\mathbb{R}_{+}^{E_{\Gamma}}}  \prod_{v \in V_{\Gamma}} V\Omega^{{\rm M}}_{h(v),k(v)}\big((\ell_e)_{e \in E(v)},(\beta L_{\lambda})_{\lambda \in \Lambda(v)}\big)\prod_{e \in E_{\Gamma}} \frac{\ell_e\,\dd \ell_e}{e^{t \ell_e/\beta} - 1} \\
	& = \sum_{\Gamma \in \mathbf{G}_{g,n}} \frac{(\beta/t)^{2|E_{\Gamma}|}}{|{\rm Aut}\,\Gamma|} \int_{\mathbb{R}_{+}^{E_{\Gamma}}}  \prod_{v \in V_{\Gamma}} V\Omega^{{\rm M}}_{h(v),k(v)}\big((\beta \ell_e/t)_{e \in E(v)},(t\cdot \beta L_{\lambda}/t)_{\lambda \in \Lambda(v)}\big)\prod_{e \in E_{\Gamma}} \frac{\ell_e\,\dd \ell_e}{e^{\ell_e} - 1}.
\end{split}
\end{equation}
This formula is also true for $n = 0$, as can be shown by returning to the computations proving Theorem~\ref{stablth}.

\medskip

We remark that $\beta^{-(6g - 6 + 2n)}VN_{g,n}^{t/\beta}(\beta L_1,\ldots,\beta L_n)$ is a polynomial in $t^{-1}$ and $\beta^{-1}$ of bounded degree by (\ref{VNtb}). We observe that
$$
   \int_{\mathbb{R}_{+}}\Phi(t) \,\dd t\, = \phi(0),
$$
which here is assumed to exist, while for $k \geq 1$
$$
   \int_{\mathbb{R}_{+}} \frac{1}{t^{k}}\,\Phi(t)\,\dd t =  \int_{\mathbb{R}_{+}} \Phi(t) \int_{\mathbb{R}_{+}} \frac{\ell^{k - 1}}{(k - 1)!}\,e^{-t\ell}\,\dd \ell \,\dd t\, = c_{k}[\phi].
$$
The assumptions on $\phi$ guarantee that $c_{k}[\phi]$ are finite for all $k \geq 0$. Hence \eqref{VNplus} is finite for a fixed $\beta > 0$.

\medskip

We now study the $\beta \rightarrow \infty$ limit. For $\beta \geq 1$, we can bound the aforementioned polynomial by a $\beta$-independent polynomial in $t^{-1}$ and integrating the latter against $\Phi(t)\dd t$ gives a finite result. Therefore, by dominated convergence, we have
$$
   \lim_{\beta \rightarrow \infty} \beta^{-(6g - 6 + 2n)}\,VN^{+}_{g,n}(\rho_{\beta}^*\phi;\beta L_1,\ldots,\beta L_n) = \int_{\mathbb{R}_{+}} \Phi(t)\,\Big(\lim_{\beta \rightarrow \infty} \beta^{-(6g - 6 + 2n)}\,VN^{t/\beta}_{g,n}(\beta L_1,\ldots,\beta L_n)\Big)\,\dd t .
$$
Comparing Theorems~\ref{mirzath2} and \ref{konth} yields
$$
   \lim_{\beta \rightarrow \infty} \beta^{-2(3h - 3 + k)}\,V\Omega^{{\rm M}}_{h,k}(\beta \ell_1/t,\ldots,\beta \ell_k/t) =  t^{-2(3h - 3 + k)}\,V\Omega^{{\rm K}}_{h,k}(\ell_1,\ldots,\ell_k),
$$
and the limit is uniform for $(\ell_{1},\ldots,\ell_k,t^{-1})$ in any compact of $\mathbb{R}_{\geq 0}^{k + 1}$. Thus, uniformly for $(L_1,\ldots,L_n,t^{-1})$ in any compact of $\mathbb{R}_{\geq 0}^{n + 1}$, we have that
\begin{equation}\label{Ncollect}
\begin{split}
	\lim_{\beta \rightarrow \infty} & \beta^{-(6g - 6 + 2n)} VN^{t/\beta}_{g,n}(\beta L_1,\ldots,\beta L_n) \\
	& = \frac{1}{t^{6g - 6 + 2n}} \sum_{\Gamma \in \mathbf{G}_{g,n}} \int_{\mathbb{R}_{+}^{E_{\Gamma}}} \prod_{v \in V_{\Gamma}} V\Omega^{{\rm K}}_{h(v),k(v)}\big((\ell_e)_{e \in E(v)},(t L_{\lambda})_{\lambda \in \Lambda(v)}\big)\prod_{e \in E_{\Gamma}} \frac{\ell_e\,\dd \ell_e}{e^{\ell_e} - 1} ,
\end{split}
\end{equation}
where we recognise the Masur--Veech polynomials introduced in Section~\ref{MVpo}. We arrive at
\begin{equation}\label{2828}
\begin{split}
	\lim_{\beta \rightarrow \infty} \beta^{-(6g - 6 + 2n)}\,VN_{g,n}^{+}(\rho^*_{\beta};\beta L_1,\ldots,\beta L_n) & = \int_{\mathbb{R}_{+}} \Phi(t)\,t^{-(6g - 6 + 2n)}\,V\Omega^{{\rm MV}}(tL_1,\ldots,tL_n)\,\dd t \\
	& = \hat{c}[\phi]\big(t^{-(6g - 6 + 2n)}\,V\Omega^{{\rm MV}}(tL_1,\ldots,t L_n)\big).
\end{split}
\end{equation}

\medskip

Using finite boundary lengths $L_i$ instead of rescaling them by $\beta$ amounts to replacing $L_i$ by $L_i/\beta$ in \eqref{VNtb}, and by the aforementioned uniformity we then have 
\begin{equation}\label{2929}
\begin{split}
	\lim_{\beta \rightarrow \infty} \beta^{-(6g - 6 + 2n)}\,VN_{g,n}^{+}(\rho^*_{\beta}\phi;L_1,\ldots,L_n) & = \hat{c}[\phi]\big( t^{-(6g - 6 + 2n)}\,V\Omega^{{\rm MV}}_{g,n}(0,\ldots,0)\big) \\
	& = c_{6g - 6 + 2n}[\phi]\,V\Omega^{{\rm MV}}_{g,n}(0,\ldots,0).
\end{split}
\end{equation}
This concludes the proof of the theorem.
\end{proof}

\begin{proof}[Proof of Theorem \ref{thm:intro:1}]
The expression of the Masur--Veech polynomials in terms of stable graphs is actually our Definition~\ref{defn:MVpoly}. Note that this is not a circular argument: in the beginning of the paper we stated that Masur--Veech polynomials have four different equivalent formulations, we then chose the formulation in terms of stable graphs to be their definition, and we show in the rest of the paper that the same polynomials are expressed in the remaining three formulations. Therefore, the only non-trivial statement left to prove is the second part of the theorem, \textit{i.e.} formula~\eqref{MVgraphsss}, which follows immediately from Corollary \ref{MVcoco}.
\end{proof}

\subsection{Expression via geometric and topological recursion}

By comparison with Theorem~\ref{stablth}, the structure of formula  (\ref{defn:MVpoly}) implies that the Masur--Veech polynomials satisfy the topological recursion.

\begin{prop}[Geometric recursion for Masur--Veech volumes]\label{MVtr1} Let $\Omega^{{\rm MV}}$ be the GR amplitudes produced by the initial data
\begin{equation}
\label{eq314} \begin{aligned}
	A^{{\rm MV}}(L_1,L_2,L_3) & = 1, \\
	B^{{\rm MV}}(L_1,L_2,\ell) & = \frac{1}{(e^{\ell} - 1)} + \frac{1}{2L_1}\big([L_1 - L_2 - \ell]_{+} - [-L_1 + L_2 - \ell]_{+} + [L_1 + L_2 - \ell]_{+}\big), \\
	C^{{\rm MV}}(L_1,\ell,\ell') & = \frac{1}{(e^{\ell} - 1)(e^{\ell'} - 1)} + \frac{1}{L_1}\,[L_1 - \ell - \ell']_{+} \\
	&\quad + \frac{1}{2L_1}\bigg(\frac{1}{e^{\ell} - 1}\,\Big([L_1 - \ell - \ell']_{+} - [-L_1 + \ell - \ell']_{+} + [L_1 + \ell - \ell']_{+}\Big) \\
	&\quad \hphantom{+\frac{1}{2L_1}\bigg(} + \frac{1}{e^{\ell'} - 1}\,\Big([L_1 - \ell - \ell']_{+} - [-L_1 - \ell + \ell']_{+} + [L_1 - \ell + \ell']_{+}\Big) \bigg), \\
	D^{{\rm MV}}(\sigma) & = D^{{\rm K}}(\sigma) + \sum_{\gamma \in S_{T}^{\circ}} \frac{1}{e^{\ell_{\sigma}(\gamma)} - 1}.
\end{aligned}
\end{equation}
Then, for any $g \geq 0$ and $n \geq 1$ such that $2g - 2 + n > 0$, the Masur--Veech polynomials satisfy
\[
	V\Omega^{{\rm MV}}_{g,n}(L_1,\ldots,L_n) = \int_{\mathcal{M}_{g,n}(L_1,\ldots,L_n)} \Omega^{{\rm MV}}_{g,n}\,\dd\mu_{{\rm WP}}.
\]
In particular, they are computed by the topological recursion~\eqref{TReqn}.
\hfill $\blacksquare$
\end{prop}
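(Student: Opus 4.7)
The proof reduces to a single observation: the initial data $(A^{\mathrm{MV}}, B^{\mathrm{MV}}, C^{\mathrm{MV}}, D^{\mathrm{MV}})$ appearing in \eqref{eq314} is precisely the twist of the Kontsevich initial data $(A^{\mathrm{K}}, B^{\mathrm{K}}, C^{\mathrm{K}}, D^{\mathrm{K}})$ of \eqref{WKini} by the test function $f^{\mathrm{MV}}(\ell) = 1/(e^\ell - 1)$, in the sense of formula \eqref{initwist}. To verify this, I would substitute $f = f^{\mathrm{MV}}$ and use $A^{\mathrm{K}} \equiv 1$, so that the twisting formulas collapse dramatically. Term-by-term inspection gives $A^{\mathrm{K}}[f^{\mathrm{MV}}] = A^{\mathrm{MV}}$; $B^{\mathrm{K}}[f^{\mathrm{MV}}](L_1,L_2,\ell) = B^{\mathrm{K}}(L_1,L_2,\ell) + f^{\mathrm{MV}}(\ell) = B^{\mathrm{MV}}(L_1,L_2,\ell)$; and the four summands produced by $C^{\mathrm{K}}[f^{\mathrm{MV}}]$ reproduce verbatim the four summands in the definition of $C^{\mathrm{MV}}$. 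For the torus piece, $D^{\mathrm{K}}[f^{\mathrm{MV}}] = D^{\mathrm{K}} + \sum_{\gamma \in S_T^\circ} f^{\mathrm{MV}}(\ell_\sigma(\gamma))$ matches $D^{\mathrm{MV}}$ since the $A^{\mathrm{K}}$ weight in \eqref{initwist} is again the constant $1$.

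With this identification in hand, the proposition follows mechanically from two earlier results in the paper. The Kontsevich amplitudes $V\Omega^{\mathrm{K}}_{g,n}$ are $\mathfrak{S}_n$-symmetric by Theorem~\ref{konth}, and this symmetry is preserved under twisting (Section~\ref{S25}); Theorem~\ref{stablth} therefore applies and expresses the integrated GR amplitudes of the twisted initial data as a sum over stable graphs $\Gamma \in \mathbf{G}_{g,n}$ with Kontsevich volumes $V\Omega^{\mathrm{K}}_{h(v),k(v)}$ at vertices and edge weight $\ell_e f^{\mathrm{MV}}(\ell_e) \dd\ell_e = \frac{\ell_e \dd \ell_e}{e^{\ell_e}-1}$. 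This is exactly Definition~\ref{defn:MVpoly} of $V\Omega^{\mathrm{MV}}_{g,n}$, yielding the integral identity. The topological recursion \eqref{TReqn} for $V\Omega^{\mathrm{MV}}$ with kernels $(A^{\mathrm{MV}}, B^{\mathrm{MV}}, C^{\mathrm{MV}})$ is then simply the GR-to-TR theorem of Section~\ref{S25} applied to these initial data, with base case $V\Omega^{\mathrm{MV}}_{1,1}(L_1) = VD^{\mathrm{MV}}(L_1) = \frac{L_1^2}{48} + \frac{1}{2}\int_{\mathbb{R}_+}\frac{\ell\,\dd\ell}{e^\ell - 1}$ obtained by combining \eqref{VDtwist} with $VD^{\mathrm{K}}(L_1) = L_1^2/48$ and Lemma~\ref{Lzeta}.

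The main obstacle is a technical one: the function $f^{\mathrm{MV}}$ has a $1/\ell$ singularity at the origin and hence does not literally satisfy the decay condition \eqref{condh} required of admissible test functions. I would handle this in one of two ways. The direct approach is to verify the admissibility of the twisted data $(A^{\mathrm{MV}}, B^{\mathrm{MV}}, C^{\mathrm{MV}})$ from Definition~\ref{defadm} by hand, choosing the weight $\eta = 1$; then $\ell/(e^\ell-1)$ is bounded at $\ell = 0$ and decays exponentially at infinity, so all added summands respect the required uniform bounds. The cleaner alternative is to regularise $f^{\mathrm{MV}}$ by its cutoff $f^{\mathrm{MV}}_\varepsilon(\ell) = f^{\mathrm{MV}}(\ell) \mathbf{1}_{\ell > \varepsilon}$, which is admissible, apply the argument of the previous paragraphs, and pass to $\varepsilon \to 0$ using dominated convergence justified by Mirzakhani's polynomial upper bound on the number of multicurves of bounded length (the same bound invoked in the proof of Theorem~\ref{stablg}). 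An analogous argument controls $D^{\mathrm{MV}}$ on $\mathcal{T}_T$, where the sum over simple closed curves with weight $1/(e^\ell-1)$ has the same type of integrable singularity near the thin part.
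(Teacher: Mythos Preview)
Your proposal is correct and matches the paper's approach exactly. The paper does not give a separate proof (the $\blacksquare$ indicates the result is immediate from prior material), but the surrounding text makes precisely your observation: the displayed initial data is the twist of the Kontsevich data \eqref{WKini} by $f^{\mathrm{MV}}(\ell) = 1/(e^\ell-1)$, it is admissible in the sense of Definition~\ref{defadm} with $\eta = 1$, and comparison of the stable-graph formula \eqref{defn:MVpoly} with Theorem~\ref{stablth} yields the claim. Your regularisation alternative is unnecessary since the direct $\eta = 1$ check already suffices, but it does no harm.
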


Notice that the notation $V\Omega^{{\rm MV}}$ is consistent with its use in \eqref{VYnot}. The above initial data is obtained by twisting the Kontsevich initial data \eqref{WKini} by the function $f^{{\rm MV}}(\ell) = \frac{1}{e^{\ell} - 1}$ -- it is admissible according to Definition~\ref{defadm} with $\eta = 1$. The function $\Omega^{{\rm MV}}_{g,n}$ is a non-trivial function on $\mathcal{T}_{\Sigma}$, which is not equal to the function $X_{g,n}$ from Lemma~\ref{Xfn:prop}. For instance, we saw in Corollary~\ref{MVcoco} that $VX_{g,n}(L_1,\ldots,L_n)$ does not depend on $L_1,\ldots,L_n$, while $V\Omega_{g,n}^{{\rm MV}}(L_1,\ldots,L_n)$ are non-trivial polynomials whose constant term is $VX_{g,n}$. 

\medskip

Recall the decomposition
$$
V\Omega^{{\rm MV}}_{g,n}(L_1,\ldots,L_n) = \sum_{d_1 + \cdots + d_n \leq 3g - 3 + n} F_{g,n}^{{\rm MV}}[d_1,\ldots,d_n]\, \prod_{i = 1}^n \frac{L_i^{2d_i}}{(2d_i + 1)!}.
$$
By Section~\ref{Sequivalent:forms} we can give two equivalent forms of Proposition~\ref{MVtr1}, in terms of the $F_{g,n}$'s. The first one is the recursion of Theorem~\ref{thm:intro:2}, of which we give a proof in the following. This recursion is spelled out explicitly in Section~\ref{Virrec}.

\begin{proof}[Proof of Theorem~\ref{thm:intro:2}]
From the topological recursion in Proposition~\ref{MVtr1}, it follows that the $F_{g,n}$'s are computed by the recursion \eqref{recfgb}, by twisting the Kontsevich initial data \eqref{K1}-\eqref{K2} with $f^{{\rm MV}}(\ell) = \frac{1}{e^{\ell} - 1}$, that is, by
$$  
u_{d_1,d_2} = \int_{\mathbb{R}_{+}}\frac{\ell^{2d_1 + 2d_2 + 1}}{(2d_1 + 1)!(2d_2 + 1)!}\,\frac{ \dd \ell}{e^{\ell} - 1} = \frac{(2d_1 + 2d_2 + 1)!}{(2d_1 + 1)!(2d_2 + 1)!} \zeta(2d_1 + 2d_2 + 2)
$$  
according to Lemma~\ref{Lzeta}.
\end{proof}

The second equivalent form is the topological recursion à la Eynard--Orantin. 
Let us introduce the even part of the Hurwitz zeta function, for $k \geq 1$ 
$$
\zeta_{{\rm H}}(2k;z) = \frac{1}{z^{2k}} + \frac{1}{2} \sum_{m \in \mathbb{Z}^*} \frac{1}{(z + m)^{2k}},
$$
and define the multidifferentials
$$
\omega_{g,n}^{{\rm MV}}(z_1,\ldots,z_n) = \sum_{d_1 + \cdots + d_n \leq 3g - 3 + n} F_{g,n}[d_1,\ldots,d_n]\,\bigotimes_{i = 1}^n \zeta_{{\rm H}}(2d_i + 2;z_i)\,\dd z_i.
$$

\medskip

\begin{prop} 
\label{EOMV} For $g,n \geq 0$ such that $2g - 2 + n > 0$, the $\omega_{g,n}^{{\rm MV}}(z_1,\ldots,z_n)$ are computed by Eynard--Orantin topological recursion \eqref{TREOeqn} for the spectral curve
\[
\mathcal{C} = \mathbb{C},
\qquad
x(z) = \frac{z^2}{2},
\qquad
y(z) = -z,
\qquad
\omega_{0,2}^{{\rm MV}}(z_1,z_2) = \bigg(\frac{1}{(z_1 - z_2)^2} + \frac{\pi^2}{\sin^2\pi(z_1 - z_2)}\bigg)\frac{\dd z_1 \otimes \dd z_2}{2}.
\]
\end{prop}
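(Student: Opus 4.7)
The strategy is to use the dictionary between the $(A,B,C,D)$ and Eynard--Orantin formulations of topological recursion recalled in Section~\ref{SEO}, and to trace through the twist by $f^{\rm MV}(\ell)=\frac{1}{e^{\ell}-1}$ that relates the Masur--Veech polynomials to the Kontsevich case. Proposition~\ref{MVtr1} identifies $V\Omega^{\rm MV}_{g,n}$ as the TR output for the Kontsevich initial data twisted by $f^{\rm MV}$, while Theorem~\ref{konth} combined with Theorem~\ref{thmABCD} shows that the untwisted Kontsevich case corresponds to Eynard--Orantin TR on the spectral curve $(\mathbb{C}, z^2/2, -z, \omega_{0,2}^{\rm K})$ with $\omega_{0,2}^{\rm K} = \dd z_1 \otimes \dd z_2/(z_1-z_2)^2$. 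It therefore suffices to verify that the Eynard--Orantin counterpart of the twist by $f^{\rm MV}$, as prescribed by \eqref{02twist}, is precisely the shift $\omega_{0,2}^{\rm K} \to \omega_{0,2}^{\rm MV}$, and that the $\xi_d$-basis attached to $\omega_{0,2}^{\rm MV}$ at $\alpha = 0$ coincides with $(\zeta_{\rm H}(2d+2;z)\,\dd z)_{d\geq 0}$.

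For the first check, I would expand $(e^{\ell}-1)^{-1}$ as a geometric series and integrate term-by-term to obtain $\mathcal{L}[f^{\rm MV}](p) = \sum_{k \geq 1}(p+k)^{-2}$. Since the two sign choices in \eqref{02twist} are independent and produce the same $\mathcal{P}$-projected amplitudes, I may symmetrise in $z_1 \leftrightarrow z_2$: the symmetric shift
\[
\tfrac{1}{2}\bigl(\mathcal{L}[f^{\rm MV}](z_1-z_2)+\mathcal{L}[f^{\rm MV}](z_2-z_1)\bigr)\,\dd z_1 \otimes \dd z_2 = \tfrac{1}{2}\sum_{k \neq 0}\frac{\dd z_1 \otimes \dd z_2}{(z_1-z_2-k)^2},
\]
combined with $\omega_{0,2}^{\rm K}$ and the Herglotz identity $\tfrac{\pi^2}{\sin^2(\pi w)} = \sum_{k \in \mathbb{Z}}(w-k)^{-2}$, reproduces exactly the $\omega_{0,2}^{\rm MV}$ of the proposition.

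For the second check, I would substitute $\omega_{0,2}^{\rm MV}$ into formula \eqref{xidform} with $p(z)=z$, $\alpha=0$, and $\tau(z)=-z$. Antidifferentiating in the first variable and extracting the residue of $\dd z/z^{2d+2}$, the diagonal pole $(z-z_0)^{-2}$ produces $z_0^{-(2d+2)}$, while Taylor expanding $\tfrac{1}{2}\sum_{m\neq 0}(z-z_0-m)^{-2}$ around $z=0$ produces the symmetric lattice sum $\tfrac{1}{2}\sum_{m\neq 0}(z_0+m)^{-(2d+2)}$; together these yield $\xi_d^{\rm MV}(z_0) = \zeta_{\rm H}(2d+2;z_0)\,\dd z_0$. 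Theorem~\ref{thmABCD} then guarantees that Eynard--Orantin TR for $(\mathbb{C}, z^2/2, -z, \omega_{0,2}^{\rm MV})$ outputs $\omega_{g,n} = \sum F_{g,n}[d_1,\ldots,d_n]\bigotimes_i\xi_{d_i}^{\rm MV}(z_i)$, where the $F_{g,n}$'s obey the recursion \eqref{recfgb} with data $(A^{\rm MV},B^{\rm MV},C^{\rm MV},VD^{\rm MV})$; by Proposition~\ref{MVtr1} these match the Masur--Veech coefficients $F_{g,n}^{\rm MV}$, and the proposition follows.

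The main obstacle, beyond the Laplace--Herglotz bookkeeping above, is ensuring consistency of the base data at $(g,n)=(1,1)$: the $VD$ formula of Theorem~\ref{thmABCD} requires the shifted value $u_{0,0}^{\rm MV}=\zeta(2)=\tfrac{\pi^2}{6}$ obtained from the Laurent expansion $\tfrac{\pi^2}{\sin^2(\pi w)} = w^{-2} + \tfrac{\pi^2}{3} + O(w^2)$, and this must be cross-checked against the twisted initial datum $VD^{\rm MV}$ in \eqref{eq314} and the coefficients $F_{1,1}^{\rm MV}[0] = \tfrac{\zeta(2)}{2}$, $F_{1,1}^{\rm MV}[1] = \tfrac{1}{8}$ recorded in Theorem~\ref{thm:intro:2}.
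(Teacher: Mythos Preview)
Your proposal is correct and follows essentially the same route as the paper's proof: both identify the Masur--Veech spectral curve as the Kontsevich one with $\omega_{0,2}$ shifted according to \eqref{02twist}, compute the Laplace transform of $f^{\rm MV}$ as $\sum_{m\geq 1}(z+m)^{-2}$ via the geometric series, symmetrise using the sign freedom to recognise the Herglotz expansion of $\pi^2/\sin^2\pi(z_1-z_2)$, and then evaluate \eqref{xidform} to obtain $\xi_d(z_0)=\zeta_{\rm H}(2d+2;z_0)\,\dd z_0$. Two small differences: the paper does not separately verify the $(1,1)$ base case as you propose (it relies directly on the twist-to-shift correspondence cited from \cite{GRpaper}), but it does include an extra computation you omit, namely the $n=0$ case for $g\geq 2$, checking via Lemma~\ref{dilatonthm0} that the number $\omega_{g,0}^{\rm MV}$ defined by \eqref{omg0def} agrees with $V\Omega_{g,0}^{\rm MV}$.
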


\begin{proof}
Recall the spectral curve \eqref{ykm} associated with the Kontsevich initial data. The effect of twisting amounts to shifting $\omega_{0,2}^{{\rm K}}(z_1,z_2) = \frac{\dd z_1 \otimes \dd z_2}{(z_1 - z_2)^2}$ according to \eqref{02twist}. We compute, for ${\rm Re}\,z > 0$
\begin{equation}
\label{into} \int_{\mathbb{R}_{+}} \frac{1}{e^{\ell} - 1}\,e^{-\ell z}\,\ell\,\dd \ell = \sum_{m \geq 1} \int_{\mathbb{R}_{+}} e^{-\ell(z + m)}\,\ell\,\dd \ell = \sum_{m \geq 1} \frac{1}{(z + m)^2}.
\end{equation}
As the choice of signs in \eqref{02twist} is arbitrary, we can also take
\begin{align*}
\omega_{0,2}^{{\rm MV}} (z_1,z_2) & = \bigg(\frac{1}{(z_1 - z_2)^2} + \frac{1}{2}\sum_{m \geq 1} \frac{1}{(z_1 - z_2 + m)^2} + \frac{1}{(z_1 - z_2 - m)^2}\bigg) \dd z_1 \otimes \dd z_2 \\
& = \bigg(\frac{1}{(z_1 - z_2)^2} + \frac{\pi^2}{\sin^2\pi(z_1 - z_2)}\bigg)\frac{\dd z_1 \otimes \dd z_2}{2}.
\end{align*}
The sector of convergence for the integral \eqref{into} is irrelevant, as we only need the (well-defined) Taylor expansion when $z_i \rightarrow 0$ to compute the $\omega_{g,n}$. Finally, we compute the differential forms $\xi_{d}$ defined in \eqref{xidform} and which are used in Theorem~\ref{thmABCD} to decompose the $\omega_{g,n}^{{\rm MV}}$:
\begin{align*}
	\frac{\xi_{d}(z_0)}{\dd z_0} & = \Res_{z = 0} \frac{\dd z}{z^{2d + 2}}\bigg(\frac{1}{z_0 - z} + \frac{1}{2} \sum_{m \geq 1} \frac{1}{z_0 - z - m} + \frac{1}{z_0 - z + m}\bigg) \\
	& = \frac{1}{z_0^{2d + 2}} + \frac{1}{2} \sum_{m \geq 1}  \frac{1}{(z_0 + m)^{2d + 2}} + \frac{1}{(z_0 - m)^{2d + 2}} \\
	& = \zeta_{{\rm H}}(2d + 2;z_0).
\end{align*}

For $n = 0$ and $g \geq 2$, Lemma~\ref{dilatonthm0} gives
$$
V\Omega_{g,0}^{{\rm MV}} = \frac{1}{2g - 2}\,\frac{F_{g,1}[1]}{3}.
$$
This agrees with the definition \eqref{omg0def} of $\omega_{g,0}^{{\rm MV}}$ by the following computation
\begin{align*}
	\omega_{g,0}^{{\rm MV}} & = \frac{1}{2 - 2g} \Res_{z = 0} \bigg(\int_{0}^{z} y \dd x\bigg) \omega_{g,1}(z) = \frac{1}{2g - 2} \Res_{z = 0} \frac{z^3}{3}\,\omega_{g,1}^{{\rm MV}}(z) \\
	& = \frac{1}{2g - 2}\,\sum_{d \geq 0} \bigg(\Res_{z = 0} \frac{z^3}{3}\,\zeta_{{\rm H}}(2d + 2;z)\,\dd z\bigg)\,F_{g,1}[d] \\
	& = \frac{1}{2g - 2}\,\frac{F_{g,1}[1]}{3},
\end{align*}
where we used that $\zeta_{{\rm H}}(2d + 2;z) = z^{-(2d + 2)}\dd z + O(1)$ when $z \rightarrow 0$, which implies that only the $d = 1$ term contributes to the residue.
\end{proof}

\subsection{Equivalent expression in intersection theory}

We can express Masur--Veech polynomials as a single integral over moduli space of curves of a certain class, which involves boundary divisors. This is just another way of expressing the sum over stable graphs (\textit{i.e.} boundary strata of $\overline{\mathfrak{M}}_{g,n}$).

\medskip

We first introduce some notations. Consider the set $\mathbf{G}_{g,n}$ of stable graphs of type $(g,n)$. For every $\Gamma \in \mathbf{G}_{g,n}$, we have the moduli space $\overline{\mathfrak{M}}_{\Gamma}$ and the maps $\xi_{\Gamma}$ and $p_{v}$:
\[
  \overline{\mathfrak{M}}_{\Gamma} = \prod_{v \in V_{\Gamma}} \overline{\mathfrak{M}}_{h(v),k(v)},
  \qquad
  \xi_{\Gamma} \colon \overline{\mathfrak{M}}_{\Gamma} \to \overline{\mathfrak{M}}_{g,n},
  \qquad
  p_v \colon \overline{\mathfrak{M}}_{\Gamma} \to \overline{\mathfrak{M}}_{h(v),k(v)}.
\]
The image of $\xi_{\Gamma}$ is the boundary stratum associated to the graph $\Gamma$, while $p_v$ is the projection on the moduli space attached to the vertex $v$. We also define the map
\[
  \jmath = \sum_{\Gamma \in \mathbf{G}_{g,n}^{1}} \frac{\xi_{\Gamma\ast}}{|{\rm Aut}\,{\Gamma}|},
\]
where $\mathbf{G}_{g,n}^{k}$ is the set of stable graphs of type $(g,n)$ with $k$ edges. In other words, $\jmath$ is a sum over boundary divisors of $\overline{\mathfrak{M}}_{g,n}$. Further, denote by $\psi_{\bullet}$ and $\psi_{\circ}$ the cotangent classes at the nodes, so that it makes sense to consider the push-forward by $\jmath$ of any monomial in $\psi_{\bullet}$ and $\psi_{\circ}$.

\medskip

Recall that the even zeta values are related to Bernoulli numbers by 
\[
  \zeta(2m + 2) = (-1)^m \frac{B_{2m+2} (2\pi)^{2m+2}}{2 (2m+ 2)!},
\]
with $B_2 = \tfrac{1}{6}$, $B_4 = -\frac{1}{30}$, etc.

\begin{prop}\label{interMV}
  For $2g - 2 + n > 0$, the Masur--Veech polynomials $V\Omega^{{\rm MV}}_{g,n}$ satisfy
  \begin{equation}\label{eqn:interMV1}
    V\Omega^{{\rm MV}}_{g,n}(L_1,\ldots,L_n) =
    \int_{\overline{\mathfrak{M}}_{g,n}}\sum_{\Gamma \in \mathbf{G}_{g,n}}
        \frac{1}{|{\rm Aut}\,{\Gamma}|} \, \xi_{\Gamma\ast}
        \prod_{\substack{e \in E_{\Gamma} \\ e = (h,h')}}
          \widetilde{\mathcal{E}}(-\psi_{h} -\psi_{h'})
        \exp\bigg( \sum_{\lambda \in \Lambda_{\Gamma}} \frac{L_i^2}{2} \, \psi_{\lambda} \bigg)
  \end{equation}
  for
  \[
    \widetilde{\mathcal{E}}(u) = \sum_{D \ge 0} (2\pi^{2})^{D + 1}\,\frac{B_{2D + 2}}{2D + 2} u^{D},
  \]
  or equivalently
  \begin{equation}\label{eqn:interMV2}
    V\Omega^{{\rm MV}}_{g,n}(L_1,\ldots,L_n) = \int_{\overline{\mathfrak{M}}_{g,n}}
     \exp\bigg( \Xi_{g,n} + \sum_{i = 1}^n \frac{L_i^2}{2}\,\psi_i\bigg)
  \end{equation}
  for
  \[
    \Xi_{g,n} = \jmath_{\ast} \mathcal{E}(-\psi_{\bullet} - \psi_{\circ}) \in H^{\bullet}(\overline{\mathfrak{M}}_{g,n}),
    \qquad\qquad
   \mathcal{E}(u) = u^{-1}\ln\bigg(1 + \sum_{k \ge 1} (2\pi^2)^k\frac{B_{2k}}{2k}\,u^k\bigg).
  \]
\end{prop}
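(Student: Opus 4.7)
The plan is to derive both \eqref{eqn:interMV1} and \eqref{eqn:interMV2} starting from Definition~\ref{defn:MVpoly}: the first by inserting Kontsevich's intersection formula at each vertex and evaluating the resulting one-dimensional edge integrals, and the second by a combinatorial repackaging of the stable-graph sum as the exponential of a class supported on boundary divisors.

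First I would apply Theorem~\ref{konth} to each vertex factor, rewriting $V\Omega^{{\rm K}}_{h(v),k(v)}\bigl((\ell_e)_{e \in E(v)},(L_\lambda)_{\lambda \in \Lambda(v)}\bigr)$ as an integral over $\overline{\mathfrak{M}}_{h(v),k(v)}$ of $\exp\bigl(\sum_e \tfrac{\ell_e^2}{2}\psi_e + \sum_\lambda \tfrac{L_\lambda^2}{2}\psi_\lambda\bigr)$. The product over $v \in V_\Gamma$ becomes a single integral over $\overline{\mathfrak{M}}_\Gamma = \prod_v \overline{\mathfrak{M}}_{h(v),k(v)}$, and the integrand is an exponential that factorises over edges and leaves. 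Since the $\ell_e$-integrations decouple from one another and from the cohomological integration, Fubini reduces each edge $e = (h,h')$ to the local factor
\[
   \int_{0}^{\infty} \exp\!\Big(\tfrac{\ell^2}{2}(\psi_h + \psi_{h'})\Big)\,\frac{\ell\,\dd\ell}{e^{\ell} - 1}.
\]
Expanding the exponential in $\ell^2$, applying Lemma~\ref{Lzeta} to the moments of $\ell/(e^\ell - 1)$, and converting zeta values to Bernoulli numbers via $\zeta(2D+2) = (-1)^D B_{2D+2}(2\pi)^{2D+2}/\bigl(2(2D+2)!\bigr)$ rearranges this factor to $\widetilde{\mathcal{E}}(-\psi_h - \psi_{h'})$. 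Pushing the resulting class forward via $\xi_\Gamma$ then yields~\eqref{eqn:interMV1}.

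For the passage from~\eqref{eqn:interMV1} to~\eqref{eqn:interMV2}, the key input is the formal-series identity $1 + u\,\widetilde{\mathcal{E}}(u) = \exp(u\,\mathcal{E}(u))$, which is immediate from the definition $u\mathcal{E}(u) = \ln\bigl(1 + u\widetilde{\mathcal{E}}(u)\bigr)$. This reduces the statement to a general exponential formula for tautological classes: for any formal series $F(u) = \exp(u\,G(u))$ with $F(0) = 1$,
\[
   \sum_{\Gamma \in \mathbf{G}_{g,n}} \frac{1}{|{\rm Aut}\,\Gamma|}\,\xi_{\Gamma\ast}\!\prod_{e \in E_\Gamma} F(-\psi_h - \psi_{h'}) = \exp\!\bigl(\jmath_\ast\, G(-\psi_\bullet - \psi_\circ)\bigr)
\]
as classes on $\overline{\mathfrak{M}}_{g,n}$. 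Multiplying both sides by the common leaf factor $\exp\bigl(\sum L_i^2 \psi_i/2\bigr)$ and applying this with $F = 1 + u\widetilde{\mathcal{E}}$ and $G = \mathcal{E}$ produces~\eqref{eqn:interMV2}.

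The main obstacle is establishing this last exponential identity at the level of classes on $\overline{\mathfrak{M}}_{g,n}$. It is obtained by expanding $\exp(\jmath_\ast G)$ as $\sum_k (\jmath_\ast G)^k/k!$ and interpreting each $k$-fold product of pushforwards from the union of one-edge strata as a sum over stable graphs with $k$ ordered edges, each graph weighted by $|{\rm Aut}\,\Gamma|^{-1}$ times the number of edge orderings; the $k!$ from the exponential exactly cancels this overcounting. The ``$1$'' in $F = 1 + u\widetilde{\mathcal{E}}$ absorbs the configurations where the ``same'' boundary divisor would have been repeated, so that the equality holds edge by edge precisely because $1 + u\widetilde{\mathcal{E}}(u) = e^{u\mathcal{E}(u)}$. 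Careful bookkeeping of automorphisms and of the self-intersections of boundary divisors, via the multiplicativity of the clutching maps, is what turns this heuristic into a rigorous proof.
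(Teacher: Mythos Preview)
Your overall strategy coincides with the paper's: evaluate the edge integrals in Definition~\ref{defn:MVpoly} to obtain~\eqref{eqn:interMV1}, then invoke an exponential formula for boundary-supported classes (the paper's Lemma~\ref{exp:vs:sum:stable}) to pass to~\eqref{eqn:interMV2}. The first half of your argument is correct and matches the paper's computation.

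There is, however, a concrete misstatement in your general exponential identity. With your conventions $F(u) = e^{uG(u)}$, the edge weight in the stable-graph sum that equals $\exp\bigl(\jmath_\ast G(-\psi_\bullet - \psi_\circ)\bigr)$ is not $F(-\psi_h-\psi_{h'})$ but rather $\dfrac{F(u)-1}{u}\Big|_{u=-\psi_h-\psi_{h'}}$. Plugging $F = 1 + u\widetilde{\mathcal{E}}$ into your displayed formula as written would produce edge weight $1 - (\psi_h+\psi_{h'})\widetilde{\mathcal{E}}(-\psi_h-\psi_{h'})$, which does \emph{not} match~\eqref{eqn:interMV1}. The correct mechanism, carried out in Lemma~\ref{exp:vs:sum:stable}, is this: expanding $\exp(\jmath_\ast G)$ and grouping terms by the underlying stable graph $\Gamma$ together with a multiplicity $m_e \geq 1$ on each edge, the $(m_e-1)$-fold self-intersection of the corresponding boundary divisor contributes $c_1(N_e)^{m_e-1} = (-\psi_h-\psi_{h'})^{m_e-1}$, and the per-edge sum
\[
\sum_{m\geq 1}\frac{(-\psi_h-\psi_{h'})^{m-1}\,G(-\psi_h-\psi_{h'})^{m}}{m!}
=\frac{e^{(-\psi_h-\psi_{h'})\,G(-\psi_h-\psi_{h'})}-1}{-\psi_h-\psi_{h'}}
=\widetilde{\mathcal{E}}(-\psi_h-\psi_{h'})
\]
is precisely what yields the edge factor in~\eqref{eqn:interMV1}. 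Your remark that the ``$1$'' in $F$ ``absorbs the configurations where the same boundary divisor would have been repeated'' gestures at this phenomenon but does not substitute for the computation: the extra $1$ is not an edge contribution that can be dropped term by term, and your naive counting argument (matching $k!$ from the exponential against edge orderings) ignores exactly the self-intersection terms that make the relation between $\widetilde{\mathcal{E}}$ and $\mathcal{E}$ nontrivial. Once you replace your displayed identity by the correct one, your outline becomes the paper's proof.
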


Once the spectral curve for a certain enumerative geometric problem satisfying topological recursion is known (here Proposition~\ref{EOMV}), one could apply Eynard's formula \cite[Theorem 3.1]{Einter} to obtain such a representation for $\omega^{{\rm MV}}_{g,n}(z_1,\ldots,z_n)$, and thus the Masur--Veech polynomials. To be self-contained, we prove the result by direct computation.

\medskip

It would be interesting to obtain this formula by algebro-geometric methods. A first hint in this direction would be to express $\Xi_{g,n}$ in a more intrinsic way, as a characteristic class of a bundle over $\overline{\mathfrak{M}}_{g,n}$, maybe obtained by push-forward from the moduli space of quadratic differentials.
 
\begin{proof}
We shall examine the contribution in Equation~\eqref{defn:MVpoly} of a given $\Gamma \in \mathbf{G}_{g,n}$ before integration over the product of moduli spaces at the vertices. Given a decoration $d\colon H_{\Gamma} \rightarrow \mathbb{N}$, an edge $e = (h,h')$ receives a weight  $(2d_{h} + 2d_{h'} + 1)!\zeta(2d_{h} + 2d_{h'} + 2)$. We remark that it only depends on the total degree $D_{e} = d_{h} + d_{h'}$ associated to this edge. On the other hand, the contribution of the $\psi$ classes at the ends of the edge is
$$
\frac{\psi^{d_{h}} (\psi')^{d_{h'}}}{2^{D_e}\,d_{h}!\,d_{h'}!}.
$$ 
Therefore, we can replace the sum over decorations of half-edges $d\colon H_{\Gamma} \rightarrow \mathbb{N}$ by the sum over decorations of edges $D\colon E_{\Gamma} \rightarrow \mathbb{N}$, and attach to each edge a contribution of 
$$
\frac{(2D_e + 1)!\,\zeta(2D_e + 2)}{2^{D_e}\,D_e!}\,\big(\psi_{h} + \psi_{h'}\big)^{D_e} =(2\pi^{2})^{D_{e} + 1}\,\frac{B_{2D_e + 2}}{2D_e + 2} \big(- \psi_{h} - \psi_{h'}\big)^{D_{e}}.
$$ 
In other words,
\[
  V\Omega^{{\rm MV}}_{g,n}(L_1,\ldots,L_n) =
  \int_{\overline{\mathfrak{M}}_{g,n}}\sum_{\Gamma \in \mathbf{G}_{g,n}}
      \frac{1}{|{\rm Aut}\,{\Gamma}|} \, \xi_{\Gamma\ast}
      \prod_{\substack{e \in E_{\Gamma} \\ e = (h,h')}}
        \widetilde{\mathcal{E}}(-\psi_{h}- \psi_{h'})
      \exp\bigg( \sum_{\lambda \in \Lambda_{\Gamma}} \frac{L_i^2}{2} \, \psi_{\lambda} \bigg)
\]
for
\[
  \widetilde{\mathcal{E}}(u) = \sum_{D \ge 0} (2\pi^{2})^{D + 1}\,\frac{B_{2D + 2}}{2D + 2} u^{D}.
\]
This proves Equation~\eqref{eqn:interMV1}. The equivalence between Equation~\eqref{eqn:interMV1} and Equation~\eqref{eqn:interMV2} is shown via the following lemma.
\end{proof}

\begin{lem}\label{exp:vs:sum:stable}
  Consider two formal power series $\mathcal{E} \in \CC\bbraket{x,y}^{\mathfrak{S}_2}$ and $T \in \CC\bbraket{u}$, $T(u) = \sum_{m \ge 0} t_m u^m$, and define the cohomology class
  \begin{equation}\label{eqn:class}
    \Theta_{g,n} =
      \exp\bigl( \jmath_{\ast} \mathcal{E}(\psi_{\bullet},\psi_{\circ}) \bigr)
      \,
      \exp\bigl( T(\kappa) \bigr)
      \,
      \prod_{i=1}^n \psi_{i}^{d_i}
  \end{equation}
  on $\overline{\mathfrak{M}}_{g,n}$, where $T(\kappa) = \sum_{m \ge 0} t_m \kappa_m$. Then
  \begin{equation}\label{eqn:sum:stable:graphs}
    \Theta_{g,n} =
      \sum_{\Gamma \in \mathbf{G}_{g,n}}
      \frac{1}{|{\rm Aut}\,{\Gamma}|}\,\xi_{\Gamma\ast}
      \prod_{\substack{e \in E_{\Gamma} \\ e = (h,h')}}
        \widetilde{\mathcal{E}}(\psi_{h},\psi_{h'})
      \prod_{v \in V_{\Gamma}}
        \exp\bigl( T(p_v^{\ast}\kappa) \bigr)
      \prod_{\lambda \in \Lambda_{\Gamma}}
        \psi_{\lambda}^{d_{\lambda}}
  \end{equation}
  for $\widetilde{\mathcal{E}} \in \CC\bbraket{x,y}^{\mathfrak{S}_2}$ defined by
  \begin{equation}
 \label{Eeqn2}   \widetilde{\mathcal{E}}(x,y) = \frac{1 - e^{-(x + y)\mathcal{E}(x,y)}}{x + y}.
  \end{equation}
  Conversely, consider a class $\Theta_{g,n}$ given by Equation~\eqref{eqn:sum:stable:graphs} for certain formal power series $\widetilde{\mathcal{E}} \in \CC\bbraket{x,y}^{\mathfrak{S}_2}$ and $T \in \CC\bbraket{u}$. Then $\Theta_{g,n}$ can be expressed by Equation~\eqref{eqn:class}, for $\mathcal{E}$ defined as
  \begin{equation}
    \label{Eeqn3} \mathcal{E}(x,y) = - \frac{1}{x + y} \ln{\bigl(1 - (x + y)\widetilde{\mathcal{E}}(x,y) \bigr)}.
  \end{equation}
\end{lem}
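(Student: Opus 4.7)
The plan is to expand $\exp(\jmath_{\ast}\mathcal{E}(\psi_{\bullet},\psi_{\circ}))$ as a formal power series, regroup the resulting sum over products of boundary divisors according to the stable graph they cut out, and show that the self-intersection formula on $\overline{\mathfrak{M}}_{g,n}$ generates exactly the edge weight $\widetilde{\mathcal{E}}$ defined by \eqref{Eeqn2}. The auxiliary factors $\exp(T(\kappa))\prod_i \psi_i^{d_i}$ will be carried along via the projection formula at the very end.

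First, I would write $\jmath_{\ast}\mathcal{E}(\psi_{\bullet},\psi_{\circ}) = \sum_{\Gamma_0 \in \mathbf{G}_{g,n}^1}\frac{1}{|\mathrm{Aut}\,\Gamma_0|}\,\xi_{\Gamma_0\ast}\mathcal{E}(\psi_h,\psi_{h'})$ as a sum of pushforwards from one-edge graphs, and expand $\exp(\jmath_{\ast}\mathcal{E}) = \sum_{k\ge 0}\frac{1}{k!}(\jmath_{\ast}\mathcal{E})^k$. Expanding the $k$-fold product gives a sum over ordered $k$-tuples of one-edge graphs. Two one-edge graphs whose edges correspond to disjoint divisors in $\overline{\mathfrak{M}}_{g,n}$ contribute a pushforward from the stable graph obtained by gluing the two edges (this is the standard description of transverse intersections of boundary divisors); whereas intersecting a divisor $D_{\Gamma_0}$ with itself yields $\xi_{\Gamma_0\ast}(-\psi_h - \psi_{h'})$ from the excess intersection formula, since the normal bundle of $D_{\Gamma_0}$ is the tensor product of the cotangent lines at the node. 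Iterating, a string of $m$ incidences with the same edge produces $(-\psi_h-\psi_{h'})^{m-1}\mathcal{E}(\psi_h,\psi_{h'})^m$ pushed forward from that edge.

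Now I would collect, for each stable graph $\Gamma \in \mathbf{G}_{g,n}$ obtained by gluing the distinct edges, all ordered tuples whose underlying edge set is $E_{\Gamma}$ with multiplicities $(m_e)_{e\in E_{\Gamma}}$. The number of ordered tuples realising a given multiplicity vector is $\frac{k!}{\prod_e m_e!}$, which cancels the $\frac{1}{k!}$ from the exponential; the remaining factor at each edge, after summing $m_e\ge 1$, equals
\[
\sum_{m\ge 1} \frac{(-\psi_h-\psi_{h'})^{m-1}\,\mathcal{E}(\psi_h,\psi_{h'})^m}{m!}
= \frac{1 - e^{-(\psi_h+\psi_{h'})\mathcal{E}(\psi_h,\psi_{h'})}}{\psi_h+\psi_{h'}}
= \widetilde{\mathcal{E}}(\psi_h,\psi_{h'}),
\]
matching \eqref{Eeqn2} exactly. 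The leftover automorphism bookkeeping — a product of the $|\mathrm{Aut}\,\Gamma_{0,e}|$ and the symmetries among repeated edges — reassembles into $|\mathrm{Aut}\,\Gamma|$ by the orbit-stabiliser principle applied to the $\mathrm{Aut}\,\Gamma$-action on the set of ways of building $\Gamma$ from one-edge pieces in an order.

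Finally, for the factors $\exp(T(\kappa))$ and $\prod_i \psi_i^{d_i}$, I would apply the Arbarello--Cornalba pullback formula $\xi_\Gamma^{\ast}\kappa_m = \sum_{v\in V_\Gamma} p_v^{\ast}\kappa_m$ and the obvious identity $\xi_\Gamma^{\ast}\psi_i = \psi_\lambda$ for the leaf $\lambda$ labelled $i$; the projection formula then absorbs these into the pushforward along $\xi_\Gamma$, giving $\prod_v \exp(T(p_v^{\ast}\kappa))\prod_\lambda \psi_\lambda^{d_\lambda}$ inside $\xi_{\Gamma\ast}$, which is the right-hand side of \eqref{eqn:sum:stable:graphs}. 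The converse is immediate by noting that $\widetilde{\mathcal{E}}\mapsto\mathcal{E}$ given by \eqref{Eeqn3} inverts \eqref{Eeqn2} as formal power series, so one simply runs the same argument in reverse. The main obstacle is the combinatorial accounting in the middle step: one must verify, carefully and consistently with the $\mathfrak{S}_2$-symmetry of $\mathcal{E}$ (crucial when edges are loops with a half-edge swap), that the cancellation of ordering factors and automorphism factors produces precisely the prefactor $\frac{1}{|\mathrm{Aut}\,\Gamma|}$ and no spurious signs or factors of two.
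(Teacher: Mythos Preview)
Your proposal is correct and follows essentially the same approach as the paper's proof: expand the exponential of $\jmath_\ast\mathcal{E}$, group the resulting products of boundary divisors by the stable graph they determine (with edge multiplicities), use the self-intersection formula $c_1(N_e)=-\psi_h-\psi_{h'}$ to generate the series $\sum_{m\ge1}\frac{1}{m!}(-\psi_h-\psi_{h'})^{m-1}\mathcal{E}^m=\widetilde{\mathcal{E}}$ at each edge, and then absorb the $\kappa$- and $\psi$-factors via $\xi_\Gamma^\ast\kappa_m=\sum_v p_v^\ast\kappa_m$ and the projection formula. The paper is somewhat terser about the automorphism bookkeeping that you flag as the main obstacle, but the strategy and the key edge computation are identical; one small slip is that the normal bundle of the boundary divisor is the tensor product of the \emph{tangent} (not cotangent) lines at the node, which is why $c_1(N_e)=-\psi_h-\psi_{h'}$.
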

\begin{proof}
  Firstly, notice that \eqref{Eeqn2} and \eqref{Eeqn3} make sense because $(x+y)$ formally divide $1 - e^{-(x + y)\mathcal{E}(x,y)}$ and $\ln{\bigl(1 - (x + y)\widetilde{\mathcal{E}}(x,y) \bigr)}$, respectively. Further, the contribution at the vertices follow from the projection formula and the relation
  \[
    \xi_{\Gamma}^{\ast}\kappa_m = \sum_{v \in V_{\Gamma}} p_{v}^{\ast} \kappa_m,
  \]
  while the legs contribution follows from the correspondence between legs of $\Gamma$ and markings. Computing the edge contribution amounts to understand how to intersect push-forwards of classes via boundary maps. Let $(\Gamma,\Delta)$ be a stable graph in $\mathbf{G}_{g,n}$ together with a decoration of each edge $e$ of the form
  \[
    \Delta(e;\psi_{h},\psi_{h'}) \in \CC\bbraket{\psi_{h},\psi_{h'}}^{\mathfrak{S}_{2}},
    \qquad\qquad
    e = (h,h').
  \]
  The associated class on $\overline{\mathfrak{M}}_{g,n}$ is
  \[
    \xi_{\Gamma\ast} \prod_{\substack{e \in E_{\Gamma} \\ e = (h,h')}}
      \Delta(e;\psi_{h},\psi_{h'}).
  \]
  In general, for two decorated stable graphs $(\Gamma_A, \Delta_A)$ and $(\Gamma_B, \Delta_B)$ in $\mathbf{G}_{g,n}$, the intersection of the corresponding classes is determined as follows: enumerate all decorated stable graphs $(\Gamma,\Delta)$ whose edges are marked by $A$, $B$ or both, in such a way that contracting all edges outside $A$ yields $(\Gamma_A, \Delta_A)$ and contracting all edges outside $B$ yields $(\Gamma_B, \Delta_B)$. Notice that each edge $e = (h,h')$ that is marked by both $A$ and $B$ corresponds to a boundary divisor in the Poincar\'e dual of both $\Gamma_A$ and $\Gamma_B$. To such an edge is therefore assigned a factor that is corresponding to its self-intersection, namely, the first Chern class of the normal bundle $N_e$ (of the gluing morphism) associated to the edge $e$:
  \[
    c_1(N_e) = - \psi_{h} - \psi_{h'}.
  \]
 Summing up the push-forwards over these decorated graphs of the product over edges of the associated decorations, represents the intersection of the classes associated to $(\Gamma_A, \Delta_A)$ and $(\Gamma_B, \Delta_B)$.

  \smallskip

 Let us apply this general argument to our case, that is, to the class $\exp(\jmath_{\ast} \mathcal{E}(\psi_{\bullet},\psi_{\circ}))$. Notice that $\jmath_{\ast}\mathcal{E}(\psi_{\bullet},\psi_{\circ})$ is a sum over stable graphs with a single edge, decorated by a factor of $\mathcal{E}$. In the $k$-th term of the exponential expansion, we have to consider the sum over stable graphs whose global decoration involves exactly $k$ factors of $\mathcal{E}$, distributed in all possible ways on $k$ edges counted with multiplicity $(m_e)_e$, taking into account the self-intersection of the edges with multiplicity $m_e > 1$. This results for each edge $e = (h,h')$ into the factor
\begin{equation}
\label{Eeqn1}  \sum_{m_e \ge 1} \frac{1}{m_e!} (- \psi_{h} - \psi_{h'})^{m_e - 1} \, \mathcal{E}(\psi_{h},\psi_{h'})^{m_e} = \frac{1 - \exp( - (\psi_{h} + \psi_{h'})\mathcal{E}(\psi_{h},\psi_{h'}))}{\psi_{h} + \psi_{h'}} = \widetilde{\mathcal{E}}(\psi_{h},\psi_{h'}).
\end{equation}
 Therefore we obtain
  \[
  \begin{split}
    \exp\bigl( \jmath_{\ast} \mathcal{E}(\psi_{\bullet},\psi_{\circ}) \bigr)
    & =
    \sum_{\Gamma \in \mathbf{G}_{g,n}}
      \frac{\xi_{\Gamma\ast}}{|{\rm Aut}\,{\Gamma}|}
      \prod_{\substack{e \in E_{\Gamma} \\ e = (h,h')}}
        \widetilde{\mathcal{E}}(\psi_{h},\psi_{h'}).
  \end{split}
  \]
Notice that the relation \eqref{Eeqn1} can be inverted as \eqref{Eeqn3}. This concludes the proof of the lemma.
\end{proof}

\section{Statistics of hyperbolic lengths for Siegel--Veech constants}
\label{S4}

\subsection{Preliminaries}

The area Siegel--Veech constant $SV_{g,n}$ of $Q\mathfrak{M}_{g,n}$ is a positive real number related to the asymptotic number of
flat cylinders of a generic quadratic differential. Given a quadratic differential $q \in Q\mathfrak{M}_{g,n}$, we define
\[
  \mathscr{N}_{{\rm area}}(q, L) = \frac{1}{\operatorname{Area}(q)} \sum_{\substack{\mathbf{c} \subset q \\ w(\mathbf{c}) \leq L}} \operatorname{Area}(\mathbf{c}),
\]
where the sum is over flat cylinders $\mathbf{c}$ of $q$ whose width $w(\mathbf{c})$ (or circumference) is less or equal to $L$ and $\operatorname{Area}$ refers to the total mass of the measure induced by the flat metric of $q$. By a theorem of Veech \cite{Veecha} and Vorobets \cite{Vorobets}, the number
$$
  SV_{g,n} = \frac{1}{MV_{g,n}} \frac{1}{\pi L^2} \int_{Q^1\mathfrak{M}_{g,n}} \mathscr{N}_{{\rm area}}(q,L)\,\dd \mu_{\rm MV}^1(q) 
$$
exists and is independent of $L > 0$. It is called the (area) Siegel--Veech constant of $Q\mathfrak{M}_{g,n}$.

\subsection{Goujard's formula}
\label{SGouj}

Goujard showed in \cite[Section 4.2, Corollary 1]{Goujard} how to compute $SV_{g,n}$ in terms of the Masur--Veech volumes. Her result is in fact more general, as it deals with all strata of the moduli space of quadratic differentials, while the present article is only concerned with the principal stratum. 

\begin{thm}\cite{Goujard}
\label{thGouj1} For $g,n \geq 0$ such that $2g + n -2\geq 2$, we have
\begin{equation}\label{eqGouj1}
\begin{aligned}
	& SV_{g,n}\cdot MV_{g,n} \\
	&\quad = \tfrac{1}{4}\,\tfrac{(4g - 4 + n)(4g - 5 + n)}{(6g - 7 + 2n)(6g - 8 + 2n)}\,MV_{g - 1,n + 2} \\
	&\qquad + \tfrac{1}{8} \sum_{\substack{g_1 + g_2 = g \\ n_1 + n_2 = n}} \tfrac{n!}{n_1!n_2!}\,\tfrac{(4g - 4  + n)!}{(4g_1 - 3 + n_1)!(4g_2 - 3 + n_2)!}\,\tfrac{(6g_1 - 5 + 2n_1)!(6g_2 - 5 + 2n_2)!}{(6g - 7 + 2n)!}\,MV_{g_1,1 + n_1}MV_{g_2,1 + n_2}.
\end{aligned}
\end{equation}
\hfill $\blacksquare$
\end{thm}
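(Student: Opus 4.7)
The plan is to follow the Eskin--Masur route: interpret $\pi L^2\,SV_{g,n}\,MV_{g,n}$ (up to $o(L^2)$) as the integral of the cylinder counting function $\mathscr{N}_{\mathrm{area}}$ over $Q^1\mathfrak{M}_{g,n}$, and then unfold this integral over topological types of embedded flat cylinders. Up to the pure mapping class group, the core curve of a single flat cylinder on a connected surface $\Sigma$ of type $(g,n)$ is either non-separating or separating. Pinching the cylinder produces in the first case a quadratic differential on a surface of type $(g-1,n+2)$ with two new marked points, and in the second case a pair of quadratic differentials on surfaces of types $(g_1,n_1+1)$ and $(g_2,n_2+1)$ with $g_1+g_2=g$ and $n_1+n_2=n$. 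This dichotomy is precisely the one reflected on the right-hand side of~\eqref{eqGouj1}.

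First I would set up local period coordinates adapted to a single pinching cylinder. In a neighbourhood of a nodal quadratic differential the Masur--Veech measure factorises as the Masur--Veech measure on the boundary stratum times the Lebesgue measure $\dd\kappa\,\dd h$ on the two real parameters of the cylinder (core-curve period $\kappa$ and orthogonal height $h$). The contribution of a cylinder of circumference $w(\mathbf{c})=|\kappa|\le L$ and area $|\kappa|\,h$ to $\mathscr{N}_{\mathrm{area}}$ becomes, after using the unit area constraint and integrating $\kappa$ and $h$, an explicit $\pi L^2$ times an integral of the total Masur--Veech mass of the corresponding boundary stratum. Summing over topological types of cylinders yields the two terms in \eqref{eqGouj1} \emph{before} combinatorial factors.

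Next I would track the combinatorial prefactors. The factorials $(4g-4+n)!$ and $(6g-7+2n)!$ and their $(g-1,n+2)$ or $(g_i,n_i+1)$ analogues originate from the normalisation $\mu_{\mathrm{MV}}^{1}(Y)=(12g-12+4n)\mu_{\mathrm{MV}}(\widetilde{Y})$ adopted in Section~\ref{S3Prem}: the ambient and boundary strata have different real dimensions $12g-13+4n$ and $12g-15+4n$, and the principal stratum with $(g,n)$ data has $4g-4+n$ simple zeros, so matching the two normalisations across the degeneration produces exactly the ratios $\tfrac{(4g-4+n)(4g-5+n)}{(6g-7+2n)(6g-8+2n)}$ in the non-separating case and the analogous binomial-type ratio in the separating case. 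The symmetry weights $\tfrac{1}{4}$ and $\tfrac{1}{8}$ account for the involutions of the construction: exchanging the two sides of the cylinder, the unordered labelling of the two new punctures in the non-separating case, and, in the separating case, the analogous side-swap on each component together with the global swap of the two pieces when $(g_1,n_1+1)=(g_2,n_2+1)$. The multinomial $n!/(n_1!n_2!)$ records the distribution of the $n$ labelled punctures of $\Sigma$ between the two pieces.

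The main obstacle, in my view, is not the topological enumeration but the precise bookkeeping of measure normalisations across the degeneration. The Masur--Veech measure on each boundary stratum must be identified as the intrinsic Masur--Veech measure of a principal stratum in its own right, with matching integrality conventions; the two new punctures arising from a collapsed cylinder behave as punctures with simple-pole quadratic differentials, and one must check that the local product structure in period coordinates is compatible with the underlying integral lattice. Once this is pinned down and the factor $(12g-12+4n)$ is consistently propagated through the degeneration, both summands of \eqref{eqGouj1} fall out of the unfolding; this is, in essence, the computation carried out by Goujard in \cite{Goujard} in much greater generality for all strata of $Q\mathfrak{M}_{g,n}$.
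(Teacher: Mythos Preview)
The paper does not prove this theorem; it is quoted from Goujard~\cite{Goujard} (Section~4.2, Corollary~1 there) and marked with a $\blacksquare$ to indicate a cited result used as a black box. There is therefore no proof in the present paper to compare your proposal against.

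That said, your outline is a reasonable high-level sketch of the Eskin--Masur--Zorich approach that Goujard follows: unfolding the Siegel--Veech transform over ${\rm Mod}_{\Sigma}^{\partial}$-orbits of cylinder core curves, using the factorisation of the Masur--Veech measure in period coordinates near a one-cylinder degeneration, and then chasing the normalisation constants coming from the definition $\mu_{\rm MV}^1(Y)=(12g-12+4n)\mu_{\rm MV}(\widetilde{Y})$ and the labelling of zeros. Your closing remark that ``this is, in essence, the computation carried out by Goujard'' is accurate; the only point where your sketch is vague is precisely the one you flag yourself---the bookkeeping of lattice conventions and symmetry factors across the degeneration---and that is where the bulk of the work in \cite{Goujard} lies.
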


In \cite{Goujard} the contribution of $MV_{0,3}\cdot MV_{g,n - 1}$ was written separately, but this term can be included in the sum if we remark that  $MV_{0,3} = 4$ (see Section~\ref{Genus01row}) and
$$
   \frac{(2n - 5)!}{(n - 3)!}\Big|_{n = 2} = \lim_{n \rightarrow 2} \frac{\Gamma(2n - 4)}{\Gamma(n - 2)} = \frac{1}{2}.
$$

The structure of this formula becomes more transparent if we rewrite it in terms of the rescaled Masur--Veech volumes that are sums over stable graphs
$$
   MV_{g,n} = \frac{2^{4g - 2 + n}(4g - 4 + n)!}{(6g - 7 + 2n)!}\,V\Omega_{g,n}^{{\rm MV}}(0,\ldots,0).
$$

\begin{cor}\label{corSv}
   For $g,n \geq 0$ such that $2g + n -2\geq 2$, we have
   \begin{equation}\label{SVVV} 
      SV_{g,n}\cdot V\Omega_{g,n}^{{\rm MV}}(0) = \frac{1}{4}\bigg( V\Omega_{g-1,n+2}^{{\rm MV}}(0) + \frac{1}{2} \sum_{\substack{g_1 + g_2 = g \\ n_1 + n_2 = n}} \frac{n!}{n_1!n_2!}\,V\Omega_{g_1,1 + n_1}^{{\rm MV}}(0)\,V\Omega_{g_2,1 + n_2}^{{\rm MV}}(0)\bigg).
   \end{equation}
   \hfill $\blacksquare$
\end{cor}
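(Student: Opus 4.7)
The plan is to obtain Corollary~\ref{corSv} as a purely algebraic consequence of Goujard's identity (Theorem~\ref{thGouj1}) and the normalization established in Corollary~\ref{MVcoco}. Introducing
\[
  K(g,n) := \frac{2^{4g-2+n}(4g-4+n)!}{(6g-7+2n)!},
\]
Corollary~\ref{MVcoco} reads $MV_{g,n} = K(g,n)\,V\Omega_{g,n}^{{\rm MV}}(0)$. The first step is to substitute this identity into every Masur--Veech volume appearing on both sides of \eqref{eqGouj1} and divide the whole equation by $K(g,n)$. What remains on the right is a linear combination of $V\Omega_{g-1,n+2}^{{\rm MV}}(0)$ and products $V\Omega_{g_1,1+n_1}^{{\rm MV}}(0)\,V\Omega_{g_2,1+n_2}^{{\rm MV}}(0)$, each weighted by an explicit rational function of $g,n,g_1,n_1,g_2,n_2$ built from Goujard's combinatorial coefficients and the ratios of $K$-factors.

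The crux of the proof is then checking that these weights collapse to the simple constants asserted in the corollary, which amounts to two factorial simplifications. For the genus-splitting terms, the key observation is $4g_1+4g_2-2+(n_1+n_2)=4g-2+n$, so the powers of $2$ in $K(g_1,1+n_1)K(g_2,1+n_2)$ recombine into exactly $2^{4g-2+n}$; moreover the factorial quotient
\[
  \frac{(4g-4+n)!\,(6g_1-5+2n_1)!\,(6g_2-5+2n_2)!}{(4g_1-3+n_1)!\,(4g_2-3+n_2)!\,(6g-7+2n)!}
\]
that multiplies the $MV$-product in Goujard's formula is the exact inverse of the factorial part of $K(g_1,1+n_1)K(g_2,1+n_2)/K(g,n)$, so the two contributions cancel up to a pure numerical constant. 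For the non-separating term one instead uses the telescopings
\[
  \frac{(4g-4+n)!}{(4g-6+n)!}=(4g-4+n)(4g-5+n),\qquad \frac{(6g-7+2n)!}{(6g-9+2n)!}=(6g-7+2n)(6g-8+2n),
\]
together with the overall factor $2^{-2}$ in $K(g-1,n+2)/K(g,n)$, so that the prefactor $(4g-4+n)(4g-5+n)/[(6g-7+2n)(6g-8+2n)]$ in Goujard's coefficient cancels against the corresponding factorial part of $K(g-1,n+2)/K(g,n)$, again leaving only a numerical constant.

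The main obstacle is not conceptual but rather careful bookkeeping: the shifted arguments $(g-1,n+2)$ and $(g_i,1+n_i)$ produce small offsets in every exponent and every factorial, and one must keep track of powers of $2$, direction of shifts, and boundary cases ($MV_{0,3}$ absorbed into the sum via the convention noted after Theorem~\ref{thGouj1}). No new geometric or analytic ingredient beyond Goujard's theorem and the normalization of Corollary~\ref{MVcoco} is needed, and the proof therefore reduces to a compact verification of these telescoping identities.
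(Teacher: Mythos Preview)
Your approach is correct and is exactly what the paper intends: the corollary is marked with a $\blacksquare$ and no proof is given, since it is precisely the substitution $MV_{g,n}=K(g,n)\,V\Omega_{g,n}^{{\rm MV}}(0)$ into Theorem~\ref{thGouj1} that you describe. The separating-term computation is right: the powers of $2$ combine to $2^{4g-2+n}$ and the factorials in Goujard's coefficient cancel exactly against those in $K(g_1,1+n_1)K(g_2,1+n_2)/K(g,n)$, leaving the bare $\tfrac{1}{8}=\tfrac{1}{4}\cdot\tfrac{1}{2}$.

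One point you should \emph{not} leave implicit, however, is the actual value of the ``numerical constant'' in the non-separating term. Carrying it out with the formula as printed in Theorem~\ref{thGouj1} gives
\[
\frac{1}{4}\cdot\frac{(4g-4+n)(4g-5+n)}{(6g-7+2n)(6g-8+2n)}\cdot\frac{K(g-1,n+2)}{K(g,n)}
=\frac{1}{4}\cdot 2^{-2}=\frac{1}{16},
\]
not $\tfrac{1}{4}$. The corollary itself is correct (it checks numerically against the tables, e.g.\ at $(g,n)=(2,0)$ or $(1,2)$, and is consistent with the generating-function reformulation in Corollary~\ref{cor2Sv}); the discrepancy traces to a misprint in the paper's statement of Theorem~\ref{thGouj1}, where the prefactor of $MV_{g-1,n+2}$ should be $1$ rather than $\tfrac{1}{4}$. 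With that correction your argument goes through verbatim.
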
 

We can give an even more compact form to this relation, in terms of generating series. If we introduce
\begin{equation}\label{ZMVpart}
   \mathscr{Z}_{\hslash}(x) = \exp\bigg(\sum_{g \geq 0} \hslash^{g - 1} \sum_{\substack{n \geq 1 \\ 2g - 2 + n > 0}} \frac{x^n}{n!}\,\frac{V\Omega^{{\rm MV}}_{g,n}(0)}{\pi^{6g - 6 + 2n}}\bigg),
\end{equation}
then Corollary \eqref{corSv} is equivalent to

\begin{cor}\label{cor2Sv}
   We have that
   \begin{equation}\label{sqZ}
      \sum_{g \geq 0} \hslash^{g} \sum_{\substack{n \geq 0 \\ 2g - 2 + n \geq 2}} \frac{x^n}{n!}\,\frac{SV_{g,n}\cdot V\Omega^{{\rm MV}}_{g,n}(0)}{\pi^{6g - 4 + 2n}} = \frac{1}{2}\,\frac{\hslash^2\partial_{x}^2 \sqrt{\mathscr{Z}_{\hslash}(x)}}{\sqrt{\mathscr{Z}_{\hslash}(x)}}.
   \end{equation}
\end{cor}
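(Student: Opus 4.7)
The plan is to reduce \eqref{sqZ} to Corollary \ref{corSv} by a direct coefficient extraction. Write $F(\hslash,x)$ for the exponent in the definition \eqref{ZMVpart} of $\mathscr{Z}_{\hslash}(x)$, so that $\sqrt{\mathscr{Z}_{\hslash}(x)}=e^{F/2}$. Differentiating twice and dividing by $\sqrt{\mathscr{Z}_{\hslash}(x)}$ yields
\[
   \frac{\partial_{x}^{2}\sqrt{\mathscr{Z}_{\hslash}(x)}}{\sqrt{\mathscr{Z}_{\hslash}(x)}} \;=\; \tfrac{1}{2}\,\partial_{x}^{2}F \;+\; \tfrac{1}{4}\,(\partial_{x}F)^{2},
\]
so that the right-hand side of \eqref{sqZ} equals $\tfrac{\hslash^{2}}{4}\partial_{x}^{2}F+\tfrac{\hslash^{2}}{8}(\partial_{x}F)^{2}$. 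Thus the statement is of a purely algebraic nature and will follow from extracting the coefficient of $\hslash^{g}\,x^{n}/n!$ on both sides.

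On the right, the first piece $\tfrac{\hslash^{2}}{4}\partial_{x}^{2}F$ picks up contributions from terms $V\Omega^{{\rm MV}}_{g-1,n+2}(0)$: the derivative $\partial_{x}^{2}$ shifts $n\mapsto n+2$ and the factor $\hslash^{2}$ shifts the index of $\hslash$ by $+2$, so that the coefficient of $\hslash^{g}\,x^{n}/n!$ is built from the term in $F$ with labels $(g-1,n+2)$. This exactly reproduces the ``connected'' piece $\tfrac{1}{4}V\Omega^{{\rm MV}}_{g-1,n+2}(0)$ of \eqref{SVVV}. The second piece $\tfrac{\hslash^{2}}{8}(\partial_{x}F)^{2}$, expanded as a Cauchy product of two exponential generating series, gives the ``disconnected'' sum
\[
   \tfrac{1}{8}\sum_{\substack{g_{1}+g_{2}=g \\ n_{1}+n_{2}=n}}\tfrac{n!}{n_{1}!\,n_{2}!}\,V\Omega^{{\rm MV}}_{g_{1},1+n_{1}}(0)\,V\Omega^{{\rm MV}}_{g_{2},1+n_{2}}(0),
\]
which is exactly the remaining term in \eqref{SVVV}. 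Adding the two contributions and applying Corollary \ref{corSv} identifies the coefficient on the right of \eqref{sqZ} with $SV_{g,n}\cdot V\Omega^{{\rm MV}}_{g,n}(0)$ up to the appropriate normalisation, matching the left-hand side.

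The whole argument is a formal power-series manipulation and presents no real obstacle; the only step demanding some care is the bookkeeping of exponents of $\hslash$, of $\pi$, and of the combinatorial factors $n!/(n-2)!$ versus $n!/(n_{1}!\,n_{2}!)$ along the index shifts $(g,n)\mapsto(g-1,n+2)$ and $(g_{1},n_{1})\oplus(g_{2},n_{2})\mapsto(g,n)$. These must all be compatible with the $\pi^{6g-6+2n}$ built into the definition of $F$ and with the $\pi$-exponent appearing on the left of \eqref{sqZ}. One should finally check that the stability ranges in the two formulations coincide, since the summation on the left excludes the unstable pairs $(g,n)\in\{(0,0),(0,1),(0,2),(0,3),(1,0),(1,1)\}$ while the quadratic term on the right sees only $V\Omega^{{\rm MV}}_{g_{i},1+n_{i}}$ with $2g_{i}-1+n_{i}>0$; a brief verification at $(g,n)=(0,4)$ and $(1,2)$ confirms that no boundary terms are missed.
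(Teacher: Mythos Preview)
Your proposal is correct and follows essentially the same approach as the paper: the paper computes $\hslash^{2}\partial_{x}^{2}\mathscr{Z}_{\hslash}^{\alpha}/\mathscr{Z}_{\hslash}^{\alpha}$ for a general exponent $\alpha$, obtains the two contributions $\alpha\,\partial_{x}^{2}\mathscr{F}_{g-1}$ and $\alpha^{2}\sum\partial_{x}\mathscr{F}_{g_{1}}\partial_{x}\mathscr{F}_{g_{2}}$, and then specialises to $\alpha=\tfrac{1}{2}$ with an overall factor $\tfrac{1}{2}$ to match the prefactors in \eqref{SVVV}; you perform the identical computation directly at $\alpha=\tfrac{1}{2}$ via $\partial_{x}^{2}e^{F/2}/e^{F/2}=\tfrac{1}{2}\partial_{x}^{2}F+\tfrac{1}{4}(\partial_{x}F)^{2}$. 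The bookkeeping of $\hslash$-, $\pi$- and factorial weights is the same in both, and the paper likewise notes the compatibility of the $\pi$-exponents via \eqref{gcollect}.
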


\begin{proof} Let us write $\mathscr{Z}_{\hslash}(x) = \exp\big(\sum_{g \geq 0} \hslash^{g - 1}\,\mathscr{F}_{g}(x)\big)$. For $\alpha \in \mathbb{C}$, we compute 
\begin{equation}\label{SVVVV2}
\begin{aligned}
	& \frac{\hslash^2 \partial_{x}^2 \mathscr{Z}_{\hslash}^{\alpha}(x)}{\mathscr{Z}_{\hslash}^{\alpha}(x)} \\
	& = \sum_{g \geq 0} \hslash^{g}\bigg(\alpha \partial_{x}^2 \mathscr{F}_{g - 1}(x) + \alpha^2 \sum_{g_1 + g_2 = g} \partial_{x}\mathscr{F}_{g_1}(x)\cdot \partial_{x}\mathscr{F}_{g_2}(x)\bigg) \\
	& = \sum_{g \geq 0} \hslash^{g} \sum_{\substack{n \geq 0 \\ 2g + n > 0}} \frac{x^n}{n!}\bigg(\alpha\,V\Omega_{g-1,n+2}^{{\rm MV}}(0) + \alpha^2\,\sum_{\substack{g_1 + g_2 = g \\ n_1 + n_2 = n}} \frac{n!}{n_1!n_2!}\,V\Omega_{g_1,1 + n_1}^{{\rm MV}}(0)\,V\Omega_{g_2,1 + n_2}^{{\rm MV}}(0)\bigg),
\end{aligned}
\end{equation} 
where we noticed that the restriction $2g - 2 + n > 0$ in \eqref{ZMVpart} implies that there are no terms for $2g + n \leq 0$ in \eqref{SVVVV2}. The relative factor of $\tfrac{1}{2}$ between the two types of terms in \eqref{SVVV} is reproduced by choosing $\alpha = \tfrac{1}{2}$, and we need to multiply \eqref{SVVVV2} by an overall factor of a $\tfrac{1}{2}$ to reproduce the prefactor $\frac{1}{4}$ in \eqref{SVVV}. The factors of $\pi$ also match since
\begin{equation}\label{gcollect}
   6(g - 1) - 6 + 2(2 + n) = 6g_1 - 6 + 2(1 + n_1) + 6g_2 - 6 + 2(1 + n_2) = 6g - 4 + 2n.
\end{equation}
They have been included so that the coefficients of $\hslash^{g}x^{n}$ in the generating series are rational numbers.
\end{proof}

The contributions in \eqref{SVVV} correspond to the topology of surfaces obtained by cutting along a simple closed curve in a surface of genus $g$ with $n$ boundaries. It is important to note that the (somewhat unusual) feature that separating curves receive an extra factor of a $\frac{1}{2}$, which is reflected in the squareroot in the right-hand side of \eqref{sqZ}. Such sums (without this relative factor of a $\tfrac{1}{2}$)
can be obtained by differentiating a sum over stable graphs with respect to the edge weight. Therefore, they also arise by integrating over the moduli space derivatives of the statistics of hyperbolic lengths of multicurves with respect to the test function. We make this precise in the next paragraphs.

\subsection{Derivatives of hyperbolic length statistics}

We define two natural derivative statistics for which we are going to study the scaling limit. First, if $\gamma_0 \in S_{\Sigma}^{\circ}$, we denote
$$ 
\jmath(\gamma_0) = \begin{cases}
	1 & \text{if $\gamma_0$ is separating}, \\
	0 & \text{otherwise}\,.
	\end{cases} 
$$  
Let $\psi,\phi$ be admissible test functions, and consider
\begin{align}
	\label{Nfirst}
	N_{\Sigma}^{+}(\phi;\psi;\sigma) & = \sum_{c \in M_{\Sigma}} \sum_{\gamma_0 \in \pi_0(c)} 2^{-\jmath(\gamma_0)}\,\psi(\ell_{\sigma}(\gamma_0))\cdot \phi(\ell_{\sigma}(c)), \\
	\label{Nfirsttilde}
	\widetilde{N}_{\Sigma}^{+}(\phi;\psi;\sigma) & = \sum_{c \in M_{\Sigma}} \sum_{\gamma_0 \in \pi_0(c)} 2^{-\jmath(\gamma_0)}\,\psi\big(\ell_{\sigma}(\gamma_0)\big)\cdot \phi\big(\ell_{\sigma}(c - \gamma_0)\big).
\end{align}

\begin{thm}\label{SVcomc}
Assume that $\psi$ is bounded, $\ell \mapsto \ell^{-1}\psi(\ell)$ is integrable over $\mathbb{R}_{+}$ and recall that $c_{0}[\psi] = \int_{\mathbb{R}_{+}} \frac{\dd \ell}{\ell}\,\psi(\ell)$. Assume that $\phi$ has a Laplace representation
$$
\phi(\ell) = \int_{\mathbb{R}_{+}} \Phi(t)\,e^{-t\ell}\,\dd t
$$
for some measurable function $\Phi$ such that $t \mapsto |\Phi(t)|$ is integrable over $\mathbb{R}_{+}$. For $g,n \geq 0$ such that $2g + n -2\geq 2$ and fixed $L_1,\ldots,L_n \geq 0$, we have
\begin{align*}
	\lim_{\beta \rightarrow \infty} & \beta^{-(6g - 6 + 2n)}\,VN_{\Sigma}^{+}(\rho_{\beta}^*\phi;\psi;\beta L_1,\ldots,\beta L_n)  \\
	& = \frac{1}{2}\,c_0[\psi]\,\hat{c}[\phi]\bigg[t^{-(6g - 6 + 2n)} \\
	& \quad \cdot \bigg(V\Omega_{g - 1,2 + n}^{{\rm MV}}(0,0,L_1,\ldots,L_n) + \frac{1}{2}\,\sum_{\substack{g_1 + g_2 = g \\ J_1 \sqcup J_2 = \{L_1,\ldots,L_n\}}} V\Omega_{g_1,1 + |J_1|}^{{\rm MV}}(0,J_1)\,V\Omega_{g_2,1 + |J_2|}^{{\rm MV}}(0,J_2)\bigg)\bigg], 
\end{align*}
and
\begin{align*}
	\lim_{\beta \rightarrow \infty} & \beta^{-(6g - 6 + 2n)}\,VN_{\Sigma}^{+}(\rho_{\beta}^*\phi;\psi;L_1,\ldots,L_n)  \\
	& = \frac{1}{2}\,c_0[\psi]\,c_{6g - 6 + 2n}[\phi]\bigg(V\Omega_{g - 1,2 + n}^{{\rm MV}}(0) + \frac{1}{2}\,\sum_{\substack{g_1 + g_2 = g \\ n_1 + n_2 = n}} \frac{n!}{n_1!n_2!}\,V\Omega_{g_1,1 + n_1}^{{\rm MV}}(0)\,V\Omega_{g_2,1 + n_2}^{{\rm MV}}(0)\bigg).
\end{align*}
In particular, this last expression is independent of $L_1,\ldots,L_n$. Furthermore, replacing $N$ with $\widetilde{N}$ gives the same limits.
\end{thm}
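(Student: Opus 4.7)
The plan is to adapt the proof of Theorem~\ref{stablg} to the derivative statistics, the key new input being that distinguishing one component of a multicurve $c \in M_\Sigma$ effectively weights it by the multiplicity with which it appears in $c$. I begin with the Laplace representation $\phi(x) = \int_{\R_+} \Phi(t)\,e^{-tx}\,\dd t$ to rewrite
\[
  VN^+_\Sigma(\rho_\beta^*\phi;\psi;L) = \int_{\R_+} \Phi(t)\,V\mathcal{N}^{t/\beta}_\Sigma(\psi;L)\,\dd t,
  \qquad
  \mathcal{N}^s_\Sigma(\psi;\sigma) = \sum_{c \in M_\Sigma} \sum_{\gamma_0 \in \pi_0(c)} 2^{-\jmath(\gamma_0)}\,\psi(\ell_\sigma(\gamma_0))\,e^{-s\ell_\sigma(c)},
\]
and analogously for $\widetilde N^+_\Sigma$; this converts each additive statistic into a family of multiplicative statistics parametrised by $s = t/\beta$.

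Next, I decompose $c \in M_\Sigma$ as $(\gamma^p,m)$ with $\gamma^p \in M'_\Sigma$ primitive and $m\colon\pi_0(\gamma^p)\to\ZZ_+$ the multiplicities, so that $\pi_0(c)$ is the multiset of components with multiplicity (consistent with the identity \eqref{nh}). Fixing a distinguished component $\gamma_0$ and summing over multiplicities via
\[
  \sum_{m_0 \geq 1} m_0\,e^{-s m_0 \ell} = \frac{1}{(2\sinh(s\ell/2))^2},
  \qquad
  \sum_{m \geq 1} e^{-s m \ell} = \frac{1}{e^{s\ell}-1},
\]
gives
\[
  \mathcal{N}^s_\Sigma(\psi;\sigma) = \sum_{\gamma_0 \in S_\Sigma^{\circ}} \frac{2^{-\jmath(\gamma_0)}\,\psi(\ell_\sigma(\gamma_0))}{(2\sinh(s\ell_\sigma(\gamma_0)/2))^2}\,N^s_{\Sigma - \gamma_0}\bigl(\sigma|_{\Sigma - \gamma_0}\bigr),
\]
with $N^s_{\Sigma'}$ the standard multiplicative statistic on the cut surface. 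The distinguished factor behaves as $(s\ell)^{-2} = \beta^2/(t\ell)^2$ for small $s\ell$: this extra $\beta^2$ is precisely what promotes the naive cut-surface scaling $\beta^{6g-8+2n}$ up to the $\beta^{6g-6+2n}$ of the theorem. The tilde version $\widetilde N^+$ only differs by a factor $e^{s\ell_\sigma(\gamma_0)}\to 1$ on the distinguished component, yielding the same limit.

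Integrating over $\mathcal{M}_{g,n}(L)$ (or $\mathcal{M}_{g,n}(\beta L)$) and applying Mirzakhani's integration formula orbit-by-orbit on $S_\Sigma^\circ/{\rm Mod}_\Sigma^\partial$, the factor $2^{-\jmath(\gamma_0)}$ combines with Mirzakhani's symmetry factor to produce $\tfrac12$ for the non-separating contribution and $\tfrac14$ per ordered separating partition (for $n \geq 1$, no separating type admits a symmetric cut in ${\rm Mod}_\Sigma^\partial$). Explicitly,
\[
\begin{split}
  V\mathcal{N}^s_\Sigma(\psi;L) & = \tfrac{1}{2} \int_0^\infty \frac{\ell\,\psi(\ell)\,VN^s_{g-1,n+2}(\ell,\ell,L)}{(2\sinh(s\ell/2))^2}\,\dd\ell \\
  & \quad + \tfrac{1}{4} \sum_{\substack{g_1+g_2=g \\ J_1 \sqcup J_2 = \{1,\ldots,n\}}} \int_0^\infty \frac{\ell\,\psi(\ell)\,VN^s_{g_1,1+|J_1|}(\ell,L_{J_1})\,VN^s_{g_2,1+|J_2|}(\ell,L_{J_2})}{(2\sinh(s\ell/2))^2}\,\dd\ell,
\end{split}
\]
where the partition sum is taken ordered, reproducing the structural form of Corollary~\ref{corSv}.

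Finally, letting $\beta \to \infty$ with $s = t/\beta$: the sinh factor yields $\beta^2/(t\ell)^2$, while the scaling identities \eqref{VNtb}--\eqref{Ncollect} from the proof of Theorem~\ref{stablg} give $VN^{t/\beta}_{h,k}(\ldots) \sim (\beta/t)^{6h-6+2k}\,V\Omega^{{\rm MV}}_{h,k}(\ldots)$, the limit of each argument being $0$ for originally bounded (or unscaled) boundaries and $tL_i$ for originally $\beta L_i$-scaled ones. The $\beta$-powers add up to exactly $6g-6+2n$ in every topological term (via the Euler-characteristic arithmetic of Lemma~\ref{combstgraph}), the $\ell$-integration collapses to $\int_0^\infty \psi(\ell)\,\dd\ell/\ell = c_0[\psi]$, and integration against $\Phi(t)$ assembles the operator $\hat c[\phi]$ applied to the $t$-polynomial coming from the Masur--Veech polynomials of the cut surfaces, yielding the two asserted identities. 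The exchange of limits with the $t$-, $\ell$-, and moduli-space integrations is justified by dominated convergence, using the admissibility of $\phi$, the integrability of $\Phi$ and of $\psi/\ell$, and the polynomial bounds on $VN^s_{h,k}$ established in the proof of Theorem~\ref{stablg}. The main technical obstacle lies in the Mirzakhani unfolding step: checking carefully that $2^{-\jmath(\gamma_0)}$ combined with Mirzakhani's symmetry factors reproduces exactly the $\tfrac12$ and $\tfrac14$ prefactors of the theorem and Corollary~\ref{corSv}, and that the Euler-characteristic bookkeeping identifies the arguments of the limiting Masur--Veech polynomials correctly.
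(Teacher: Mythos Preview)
Your argument is correct and reaches the same intermediate identity as the paper (its equation~\eqref{310310}), but by a genuinely different route. You cut along the distinguished curve first: after summing multiplicities you obtain
\[
\mathcal N^{s}_{\Sigma}(\psi;\sigma)=\sum_{\gamma_0\in S^{\circ}_{\Sigma}}\frac{2^{-\jmath(\gamma_0)}\psi(\ell_\sigma(\gamma_0))}{(2\sinh(s\ell_\sigma(\gamma_0)/2))^{2}}\,N^{s}_{\Sigma-\gamma_0}(\sigma|_{\Sigma-\gamma_0}),
\]
and then integrate using the Mirzakhani integration formula for a single curve (in the form allowing the integrand to depend on the full restricted metric $\sigma|_{\Sigma-\gamma_0}$, not just on $\ell_\sigma(\gamma_0)$). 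The paper instead keeps $\Sigma$ intact and introduces a formal parameter $z$: it defines the multiplicative statistic $N^{t,z}_{\Sigma}(\psi;\sigma)=\sum_{c\in M'_{\Sigma}}\prod_{\gamma}\tfrac{1}{e^{t\ell}-1}\big(1+\tfrac{z\,2^{-\jmath(\gamma)}\psi(\ell)}{1-e^{-t\ell}}\big)$, integrates it over $\mathcal M_{g,n}(L)$ via Theorem~\ref{stablth} to obtain a sum over stable graphs of type $(g,n)$, and then applies $\partial_{z=0}$, which marks one edge $e_0$. The reorganisation of the resulting sum over pairs $(\Gamma,e_0)$ is done combinatorially through a gluing map $\mathbf{glu}$ from stable graphs of type $(g-1,n+2)$ or $(g_1,1+n_1)\times(g_2,1+n_2)$, with an explicit fibre/automorphism count producing the $\tfrac12$ and $\tfrac14$. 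Your route is more geometric and bypasses the gluing-map bookkeeping, at the price of invoking the generalised Mirzakhani integration formula and checking its symmetry factors by hand; the paper's route stays entirely within the stable-graph framework of Section~\ref{SPre}, making the symmetry-factor verification self-contained. One small addendum: your parenthetical covers only $n\geq 1$; when $n=0$ and $g$ is even the separating type $(g/2,1)\cup(g/2,1)$ does admit a symmetric cut in $\mathrm{Mod}^{\partial}_{\Sigma}$, so its Mirzakhani symmetry factor is $\tfrac12$ rather than $1$, but this still combines with $2^{-\jmath}=\tfrac12$ to give the correct coefficient $\tfrac14$ for that single self-paired term in the ordered sum.
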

By comparison with Goujard's formula (Corollary~\ref{corSv}), we deduce that
\begin{cor}
\label{Co45} Under the assumptions of Theorem~\ref{SVcomc}, for any fixed $L_1,\ldots,L_n \in \mathbb{R}_{+}^n$, we have
$$
2c_0[\psi]\,SV_{g,n}= \lim_{\beta \rightarrow \infty} \frac{VN^{+}_{g,n}(\rho_{\beta}^*\phi;\psi;L_1,\ldots,L_n)}{VN^{+}_{g,n}(\rho_{\beta}^*\phi;L_1,\ldots,L_n)}.
$$
The same equality holds if $N$ in the numerator is replaced with $\widetilde{N}$. \hfill $\blacksquare$
\end{cor}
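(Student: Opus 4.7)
The plan is to compute both the numerator and denominator of the ratio using the asymptotic formulas for fixed boundary lengths that are already proved in the paper, and then invoke Goujard's recursion to cancel the remaining combination of Masur--Veech polynomials.

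First I would apply the second limit of Theorem~\ref{stablg} to the denominator, obtaining
\[
   \beta^{-(6g-6+2n)}\,VN^{+}_{g,n}(\rho_\beta^{*}\phi;L_1,\ldots,L_n) \;\longrightarrow\; c_{6g-6+2n}[\phi]\,V\Omega^{{\rm MV}}_{g,n}(0,\ldots,0),
\]
and the second limit of Theorem~\ref{SVcomc} to the numerator, obtaining
\[
   \beta^{-(6g-6+2n)}\,VN^{+}_{g,n}(\rho_\beta^{*}\phi;\psi;L_1,\ldots,L_n) \;\longrightarrow\; \tfrac{1}{2}\,c_{0}[\psi]\,c_{6g-6+2n}[\phi]\,\bigl(V\Omega^{{\rm MV}}_{g-1,n+2}(0) + \tfrac{1}{2}\,\mathcal{Q}_{g,n}\bigr),
\]
where I abbreviate $\mathcal{Q}_{g,n} = \sum_{g_1+g_2=g,\,n_1+n_2=n}\tfrac{n!}{n_1!\,n_2!}\,V\Omega^{{\rm MV}}_{g_1,1+n_1}(0)\,V\Omega^{{\rm MV}}_{g_2,1+n_2}(0)$. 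Since both asymptotics share the normalising factor $\beta^{-(6g-6+2n)}\,c_{6g-6+2n}[\phi]$, these cancel upon taking the ratio.

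The decisive step is then to identify $V\Omega^{{\rm MV}}_{g-1,n+2}(0) + \tfrac{1}{2}\,\mathcal{Q}_{g,n}$ using Goujard's formula in Corollary~\ref{corSv}, which reads exactly as this bracket equalling $4\,SV_{g,n}\,V\Omega^{{\rm MV}}_{g,n}(0)$. Substituting this identity into the previous ratio collapses the factor $V\Omega^{{\rm MV}}_{g,n}(0)$ between numerator and denominator and leaves precisely $2\,c_{0}[\psi]\,SV_{g,n}$, which is the claimed value.

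The replacement of $N$ by $\widetilde{N}$ in the numerator requires no additional argument because Theorem~\ref{SVcomc} explicitly asserts the same large-$\beta$ limit for both statistics, so the identical computation applies. The assumptions of the corollary -- the Laplace representation of $\phi$ and the finiteness of $c_{0}[\psi]$ -- are precisely those needed to invoke Theorem~\ref{stablg} and Theorem~\ref{SVcomc} simultaneously, so the hypotheses of the two results compose without friction.
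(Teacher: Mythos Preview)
Your proof is correct and follows exactly the approach the paper intends: the paper's own justification is the single sentence ``By comparison with Goujard's formula (Corollary~\ref{corSv}), we deduce that\ldots'', and you have spelled out precisely this comparison. One trivial remark: the formula you quote from Theorem~\ref{stablg} for fixed boundary lengths is the \emph{first} displayed limit there, not the second, but the content you use is correct.
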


The corollary gives a hyperbolic geometric interpretation of the area Siegel--Veech constant. However, our proof is done by comparison of values from Goujard's formula. It would be desirable to find a direct and geometric proof of this identity, which would give a new proof of Goujard's formula. The derivative statistics of hyperbolic lengths $N^+_{g,n}(\phi;\psi;L_1, \ldots, L_n)$ is indeed reminiscent of the Siegel--Veech transform. Via the Hubbard--Masur correspondence \cite{HubbardMasur}, the multicurve $c \in M_\Sigma$ is associated to a holomorphic quadratic differential $q$ and the component $\gamma_0$ is the core curve of a cylinder of $q$. The difficulty, however, lies in comparing hyperbolic and flat lengths.

\begin{proof}[Proof of Theorem~\ref{SVcomc}]
The assumption $2g + n -2\geq 2$ is made so that $M_{\Sigma}$ does not only consist of the empty multicurve. If we encode multicurves as a pair consisting of a primitive multicurve and integers $k$ remembering the multiplicity for each of its component, we have that
\begin{equation}
\label{oungifung}
  N_{\Sigma}^{+}(\rho_{\beta}^*\phi;\psi;\sigma) = \sum_{\substack{c \in M'_{\Sigma} \\ m\colon \pi_0(c) \rightarrow \mathbb{N}^*}} \sum_{\gamma_0 \in \pi_0(c)} 2^{-\jmath(\gamma_0)} m(\gamma_0)\,\psi(\ell_{\sigma}(\gamma_0))\,\phi\bigg(\beta^{-1} \sum_{\gamma \in \pi_0(c)} m(\gamma)\,\ell_{\sigma}(\gamma)\bigg),
\end{equation} 
since $\gamma_0$ in \eqref{Nfirst} can be any of the $m(\gamma_0)$ component of the multicurve.

\medskip

As in the proof of Theorem~\ref{stablg}, we rely on the Laplace representation for $\phi$
\begin{equation}
\label{Lphis}
  \phi(\ell) = \int_{\mathbb{R}_{+}} \Phi(t)\,e^{-t\ell}\,\dd t
\end{equation}
to convert additive statistics into multiplicative statistics. As their application is similar to the proof of Theorem~\ref{stablg}, we will silently use the Fubini--Tonelli and dominated convergence theorems at many places -- the estimates necessary for their application use that $\psi$ is bounded and $c_{0}[\psi]$ exists.

\medskip

The Laplace representation allows us to convert \eqref{oungifung} into derivatives (with respect to the test function) of multiplicative statistics, namely
\begin{equation}\label{1111A}
\begin{aligned}
	N_{\Sigma}^{+}(\rho_{\beta}^*\phi;\psi;\sigma) & = \int_{\mathbb{R}_{+}} \Phi(t)\,\bigg( \sum_{c \in M'_{\Sigma}} \sum_{\gamma_0 \in \pi_0(c)} \frac{\psi(\ell_{\sigma}(\gamma_0))}{2^{\jmath(\gamma_0)}(1 - e^{-t\ell_{\sigma}(\gamma_0)/\beta})}\, \prod_{\gamma \in \pi_0(c)} \frac{e^{-t\ell_{\sigma}(c)/\beta}}{1 - e^{-t\ell_{\sigma}(\gamma)/\beta}}\bigg)\,\dd t \\ 
	& = \int_{\mathbb{R}_{+}} \Phi(t)\,\partial_{z = 0}\big(N_{\Sigma}^{t/\beta,z}(\psi;\sigma)\big)\dd t,
\end{aligned}
\end{equation}
where
$$
N_{\Sigma}^{t,z}(\psi;\sigma) = \sum_{c \in M'_{\Sigma}} \prod_{\gamma \in \pi_0(c)} \frac{1}{e^{t\ell_{\sigma}(\gamma)} - 1}\bigg(1 +  \frac{z\,\psi(\ell_{\sigma}(\gamma))}{2^{\jmath(\gamma)}(1 - e^{-t\ell_{\sigma}(\gamma)})}\bigg) \nonumber
$$ 
is a polynomial of degree $(3g - 3 + n)$ in the variable $z$. Integrating over the moduli space, we obtain a sum over the topological types of primitive multicurves, that is, over stable graphs:
\begin{equation}\label{3939}
\begin{split}
	& VN_{g,n}^{t,z}(\psi;L_1,\ldots,L_n) \\
	& = \sum_{\Gamma \in \mathbf{G}_{g,n}} \frac{1}{|{\rm Aut}\,\Gamma|} \int_{\mathbb{R}_{+}^{E_{\Gamma}}} \!\! \prod_{v \in V_{\Gamma}} V\Omega^{{\rm M}}_{h(v),k(v)}\big((\ell_e)_{e \in E(v)},(L_{\lambda})_{\lambda \in \Lambda(v)}\big)\prod_{e \in E_{\Gamma}} \frac{1}{e^{t\ell_e} - 1}\bigg(1 + \frac{z\,\psi(\ell_e)}{2^{\jmath_{e}}(1 - e^{-t\ell_e})}\bigg)\,\ell_e\,\dd \ell_e,
\end{split}
\end{equation}
where $\jmath_{e} = 1$ if $e$ is separating and $\jmath_{e} = 0$ otherwise. The coefficient of $z$ in this sum reads
\begin{equation}\label{sumglu}
\begin{split}
	& \partial_{z = 0}\big(VN_{g,n}^{t,z}(\psi;L_1,\ldots,L_n)\big) \\
	& = \!\! \sum_{\substack{\Gamma \in \mathbf{G}_{g,n} \\ e_0 \in E_{\Gamma}}} \!\!\! \frac{1}{|{\rm Aut}\,\Gamma|} \int_{\mathbb{R}_{+}^{E_{\Gamma}}} \prod_{v \in V_{\Gamma}} V\Omega^{{\rm M}}_{h(v),k(v)}\big((\ell_e)_{e\in E(v)},(L_{\lambda})_{\lambda \in \Lambda(v)}\big)\frac{\psi(\ell_{e_0})\,e^{-t\ell_{e_0}}}{2^{\jmath_{e_0}}(1 - e^{-t\ell_{e_0}})^2} \,\ell_{e_0}\dd\ell_{e_0}\prod_{e \neq e_{0}} \frac{\ell_e\,\dd\ell_e}{e^{t\ell_e} - 1}.
\end{split}
\end{equation}

\medskip

Let $\mathbf{G}_{g,n}'$ be the set of ordered pairs $(\Gamma,e_0)$ where $\Gamma \in \mathbf{G}_{g,n}$ and $e_0 \in E_{\Gamma}$. We introduce the map
$$
\mathbf{glu}\colon \bigg(\mathbf{G}_{g - 1,n + 2} \sqcup \bigsqcup_{\substack{\{(g_1,J_1),(g_2,J_2)\} \\ g_1 + g_2 = g \\ J_1 \sqcup J_2 = \{1,\ldots,n\}}} \mathbf{G}_{g_1,1 + n_1} \times \mathbf{G}_{g_2,1 + n_2}\bigg) \longrightarrow \mathbf{G}_{g,n}',
$$
which consists in adding an edge between the two special leaves -- the two first leaves in the connected situation and the first leaf of each graph in the disconnected situation. This map is surjective, but not necessarily injective. More precisely, if $(\Gamma,e_0) \in \mathbf{G}_{g,n}'$, let us cut $e_0$ to create the stable graph $\Gamma'$ with $n$ labelled leaves and $2$ unlabelled leaves. Let $a_{\Gamma'}$ be equal to $2$ if $\Gamma'$ is invariant under the permutation of the two unlabelled leaves, and $a_{\Gamma'} = 1$ otherwise. If $\Gamma'$ is disconnected, we must have $a_{\Gamma'} = 1$ because the two connected components can be distinguished by the subsets $J_1$ and $J_2$ of leaves of the initial graph that they contain. Furthermore, the number of automorphisms of $\Gamma$ is the product of the number of automorphism of its connected components. If $\Gamma'$ is connected, it must have genus $g - 1$. If $a_{\Gamma'} = 1$, there are two distinct graphs in $\mathbf{G}_{g -1,n + 2}$, that differ by the labels of the two first leaves, which lead to $(\Gamma,e_0)$ after application of $\mathbf{glu}$. If $a_{\Gamma'} = 2$, these two graphs are actually isomorphic. So, when $\Gamma'$ is connected, we always have
$$
\big|\mathbf{glu}^{-1}(\Gamma,e_0)\big| = \frac{2}{a_{\Gamma'}}.
$$
Finally, we notice that for $(\Gamma,e_0) \in \mathbf{G}_{g,n}'$ and $\tilde{\Gamma} \in \mathbf{glu}^{-1}(\Gamma,e_0)$, we always have
$$
|{\rm Aut}\,\Gamma| = a_{\Gamma'}\,|{\rm Aut}\,\tilde{\Gamma}|.
$$
Partitioning the sum \eqref{sumglu} according to the fibers of $\mathbf{glu}$ we obtain
\begin{align*}
	& \partial_{z = 0}\big(VN_{g,n}^{t,z}(\psi;L_1,\ldots,L_n)\big) \\
	& = \int_{\mathbb{R}_{+}} \frac{\psi(\ell_0)\,e^{-t \ell_0}}{(1 - e^{-t \ell_0})^2}\,\ell_0\,\dd \ell_0 \\
	& \times \bigg\{\frac{1}{2} \sum_{\Gamma \in \mathbf{G}_{g - 1,2 + n}} \frac{1}{|{\rm Aut}\,\Gamma|} \int_{\mathbb{R}_{+}^{E_{\Gamma}}} \prod_{v \in V_{\Gamma}} V\Omega^{{\rm M}}_{h(v),k(v)}\big((\ell_e)_{e \in E(v)},(L_{\lambda})_{\lambda \in \Lambda(v)}\big)\prod_{e \in E_{\Gamma}} \frac{\ell_e\,\dd\ell_e}{e^{t\ell_e} - 1}  \\
	& + \frac{1}{2} \sum_{\substack{\{(g_1,J_1),(g_2,J_2)\} \\ g_1 + g_2 = g \\ J_1 \sqcup J_2 = \{1,\ldots,n\}}} \prod_{i = 1}^2  \bigg(\!\! \sum_{\substack{\Gamma_i \\ \in \mathbf{G}_{g_i,1 + |J_i|}}}\!\!\!\!\!\!\! \frac{1}{|{\rm Aut}\,\Gamma_i|}\int_{\mathbb{R}_{+}^{E_{\Gamma_i}}}  \prod_{v \in V_{\Gamma_i}} V\Omega^{{\rm M}}_{h(v),k(v)}\big((\ell_e)_{e \in E_i(v)},(L_{\lambda})_{\lambda \in \Lambda_i(v)}\big)\!\!\!\prod_{e \in E_{\Gamma_i}} \frac{\ell_e\,\dd \ell_e}{e^{t\ell_e} - 1}\bigg)\bigg\},
\end{align*}
where $E_i(v)$ and $\Lambda_i(v)$ are the set of edges and leaves of $\Gamma_i$, and if $\lambda$ is a special leaf we set $L_{\lambda} = \ell_0$. We stress that $\tfrac{1}{2}$ in the last line comes from $\jmath_{e_0} = 1$. We recognise the sums over stable graphs
$$
VN_{h,k}^{t}(\widetilde{L}_{1},\ldots,\widetilde{L}_{k}) = \sum_{\Gamma \in \mathbf{G}_{\tilde{g},\tilde{n}}} \frac{1}{|{\rm Aut}\,\Gamma|} \int_{\mathbb{R}_{+}^{E_{\Gamma}}} \prod_{v \in V_{\Gamma}} V\Omega^{{\rm M}}_{h(v),k(v)}\big((\ell_e)_{e \in E(v)},(\widetilde{L}_{\lambda})_{\lambda \in \Lambda(v)}\big) \prod_{e \in E_{\Gamma}} \frac{\ell_e\,\dd \ell_e}{e^{t\ell_e} - 1},
$$
which already appeared in the proof of Theorem~\ref{stablg}. We can replace the last sum over pairs with a sum over ordered pairs up to multiplication by an extra factor of $\tfrac{1}{2}$. All in all,
\begin{equation}\label{310310}
\begin{split}
	\partial_{z = 0}\big(VN_{g,n}^{t,z}(\psi;L_1,\ldots,L_n)\big) &= \frac{1}{2} \int_{\mathbb{R}_{+}} \frac{\psi(\ell)\,e^{-t\ell}}{(1 - e^{-t\ell})^2}\bigg( VN_{g - 1,n + 2}^{t}(\ell,\ell,L_1,\ldots,L_n) \\
	&\quad + \frac{1}{2}\!\!\!\!\!\! \sum_{\substack{g_1 + g_2 = g \\ J_1 \sqcup J_2 = \{L_1,\ldots,L_n\}}} \!\!\!\!\!\!\! VN_{g_1,1 + |J_1|}^{t}(\ell,J_1) VN_{g_2,1 + |J_2|}^{t}(\ell,J_2)\bigg) \ell\dd \ell. \\
\end{split}
\end{equation}

\medskip

We multiply the boundary lengths by $\beta$ and divide $t$ by $\beta$ in order to  insert this formula in \eqref{3939}. Notice that the quantity in parenthesis in \eqref{310310} now contributes to an even polynomial in $\ell$, such that the monomial $\ell^{2m}$ is a polynomial in $(\beta/t)$, of top degree $6g - 6 + 2n - (2m + 2)$. We recall that the $\beta \rightarrow \infty$ leading behaviour of $VN_{h,k}^{t/\beta}$ from \eqref{Ncollect} is expressed via the Masur--Veech polynomials $V\Omega^{{\rm MV}}_{h,k}$. Since
\begin{equation}
\label{t2co} \lim_{\beta \rightarrow \infty} \beta^2 \int_{\mathbb{R}_{+}} \frac{\psi(\ell)\,e^{-t\ell/\beta}}{(1 - e^{-t\ell/\beta})^2}\ell^{2m+1}\,\dd \ell = \frac{1}{t^2} \int_{\mathbb{R}_{+}}\psi(\ell) \ell^{2m - 1}\dd \ell
\end{equation}
is finite for any $m \geq 0$, only the $m = 0$ terms will contribute in the leading $\beta \rightarrow \infty$ behaviour of \eqref{3939}, in which case \eqref{t2co} is equal to $t^{-2}\,c_0[\psi]$ which exist since $\ell \mapsto \ell^{-1}\psi(\ell)$ is integrable. We arrive at the formula
\begin{align*}
	& \lim_{\beta \rightarrow \infty} \beta^{-(6g - 6 + 2n)} VN_{g,n}^{+}(\rho_{\beta}^*\phi;\psi;\beta L_1,\ldots,\beta L_n)  \\
	& = \frac{1}{2}\,c_0[\psi]\,\hat{c}[\phi]\bigg[t^{-(6g - 6 + 2n)}  \\
	& \quad \cdot \bigg(V\Omega^{{\rm MV}}_{g-1,n + 2}(0,0,tL_1,\ldots,tL_n) + \frac{1}{2} \sum_{\substack{g_1 + g_2 = g \\ J_1 \sqcup J_2 = \{L_1,\ldots,L_n\}}} V\Omega^{{\rm MV}}_{g_1,1 + |J_1|}(0,J_1) V\Omega^{{\rm MV}}_{g_2,1 + |J_2|}(0,J_2)\bigg)\bigg],
\end{align*}
which is the first desired formula. To obtain the second formula, we remark that all $\beta \rightarrow \infty$ limits used in the previous arguments are uniform for $L_1,\ldots,L_n$ in any compact of $\mathbb{R}_{\geq 0}$. Hence
\begin{align*}
	& \lim_{\beta \rightarrow \infty} \beta^{-(6g - 6 + 2n)} VN_{g,n}^{+}(\rho_{\beta}^*\phi;\psi;L_1,\ldots,L_n)  \\
	& = \frac{1}{2}\,c_0[\psi]\,\hat{c}[\phi]\bigg[t^{-(6g - 6 + 2n)}  \\
	& \quad \cdot \bigg(V\Omega^{{\rm MV}}_{g-1,n + 2}(0) + \frac{1}{2} \sum_{\substack{g_1 + g_2 = g \\ J_1 \sqcup J_2 = \{L_1,\ldots,L_n\}}} V\Omega^{{\rm MV}}_{g_1,1 + |J_1|}(0) V\Omega^{{\rm MV}}_{g_2,1 + |J_2|}(0)\bigg)\bigg].
\end{align*}
The effect of $\hat{c}[\phi]$ factors out to give $c_{6g - 6 + 2n}[\phi]$ and the sum over the partition $J_1 \sqcup J_2 = \{L_1,\ldots,L_n\}$ yields binomial coefficients, hence the formula we sought for.

\medskip

The statistics $\widetilde{N}$ are perhaps more natural. Their expression slightly differs from \eqref{1111A} by one factor $e^{-t\ell/\beta}$ less in front of $\psi$ -- this factor was previously due to the contribution of $\gamma_0$ to the total length that was included before evaluating $\phi$. Namely, we have
$$ 
\widetilde{N}_{\Sigma}(\rho_{\beta}^*\phi;\psi;\sigma) = \int_{\mathbb{R}_{+}} \Phi(t)\,\partial_{z = 0}\big(\widetilde{N}_{\Sigma}^{t/\beta,z}(\psi;\sigma)\big)\,\dd t,
$$
with 
$$
\widetilde{N}_{\Sigma}^{t,z}(\psi;\sigma) = \sum_{c \in M_{\Sigma}'} \prod_{\gamma \in \pi_0(c)} \frac{1}{e^{t\ell_{\sigma}(\gamma)} - 1}\bigg(1 + \frac{z\,\psi(\ell_{\sigma}(\gamma))\,e^{t\ell_{\sigma}(\gamma)}}{2^{\jmath(\gamma)}(1 - e^{-t\ell_{\sigma}(\gamma)})}\bigg).
$$
All the previous argument can be carried over, except that we use instead of \eqref{t2co} the limit
$$
\lim_{\beta \rightarrow \infty} \beta^2 \int_{\mathbb{R}_{+}} \frac{\psi(\ell)}{(1 - e^{-t\ell/\beta})^2} \ell^{2m+1}\,\dd \ell = \frac{1}{t^2} \int_{\mathbb{R}_{+}} \psi(\ell)\ell^{2m - 1}\dd \ell,
$$
which yields the same result.
\end{proof}

\medskip

\section{Computing Masur--Veech polynomials}
\label{S5}

The Masur--Veech polynomial $V\Omega_{g,n}^{{\rm MV}}$ has degree $6g - 6 + 2n$. As explained in Section~\ref{Spoly}, we decompose it as follows
\begin{equation}
\label{Vpoly} V\Omega_{g,n}^{{\rm MV}}(L_1,\ldots,L_n) = \sum_{\substack{d_1,\ldots,d_n \geq 0 \\ d_1 + \cdots + d_n \leq 3g - 3 + n}} F_{g,n}[d_1,\ldots,d_n]\,\prod_{j = 1}^n \frac{L_j^{2d_j}}{(2d_j + 1)!},
\end{equation}
In this section we drop the superscript ${}^{{\rm MV}}$ on the $F_{g,n}$'s as it will always refer to the coefficients of \eqref{Vpoly}. By symmetry, $F_{g,n}$ can be considered as a function on the set of partitions of size less or equal to $3g - 3 + n$. It is convenient to give a name to the value of $F_{g,n}$ on partitions with a single row
$$
H_{g,n}[d] = F_{g,n}[d,0,\ldots,0]\qquad {\rm and}\qquad H_{n}[d] = H_{0,n}[d].
$$
By convention, if some $d_i$ is negative or if $2g - 2 + n \leq 0$, we declare $F_{g,n}[d_1,\ldots,d_n] = 0$. We are particularly interested in the Masur--Veech volumes which -- up to normalisation -- are the values of this function on the empty partition:
\begin{equation}  
\label{MVnorm} MV_{g,n} = \frac{2^{4g - 2 + n}(4g - 4 + n)!}{(6g - 7 + 2n)!}\,H_{g,n}[0].
\end{equation}

In this section, we are going to illustrate some computations of $H_{g,n}$ that can be done with stable graphs (Section~\ref{StablegsecH}), give explicitly the Virasoro constraints for the $F_{g,n}$ (Theorem~\ref{thm:intro:2}, detailed in Section~\ref{Virrec}) and the recursion it implies for $H_{g=0,n}$ (Section~\ref{Genus01row}). These computations lead us to conjecture some structural formulas for Masur--Veech volumes for fixed $g$ but any $n$ (Section~\ref{conjMVsecmnc}). We study their consequence for area Siegel--Veech constants in light of Goujard's recursion (Section~\ref{ConjSVsec}) and their behavior when $n \rightarrow \infty$ (Section~\ref{Sasymss}).

\subsection{Leading coefficients via stable graphs}\label{secfrontcoef}
\label{StablegsecH}
We denote $H_{g,n}^*[d] = H_{g,n}[3g - 3 + n - d]$ and consider low values of $d$. In other words, these are the coefficients appearing in front of the terms of high(est) degrees in the Masur--Veech polynomial $V\Omega^{{\rm MV}}_{g,n}(L,0,\ldots,0)$. They can be computed efficiently with the stable graph formula, or equivalently with Equation~\eqref{eqn:interMV1}. We give a few examples of such computations, starting from the expression 
\[
\begin{split}
  H_{g,n}^*[d]
  & =
  \frac{(6g - 5 + 2n - 2d)!}{(3g - 3 + n - d)!}\,2^{-(3g - 3 + n) + d} \times \\
  & \qquad\qquad 
 \sum_{k \geq 0} \sum_{\Gamma \in \mathbf{G}_{g,n}^{k}} \frac{1}{|{\rm Aut}\,\Gamma|} \int_{\overline{\mathfrak{M}}_{\Gamma}} 
     \Biggl[ \prod_{\substack{e \in E_{\Gamma} \\ e = (h,h')}}\bigg(
        \sum_{D \ge 0} (2\pi^{2})^{D+1} \frac{B_{2D + 2}}{2D + 2}\, (- \psi_{h} - \psi_{h'})^{D}\bigg)
\Biggr]_{2(d - k)}\,\psi_1^{3g - 3 + n - d} \\
  & =
  \frac{(6g - 5 + 2n - 2d)!}{(3g - 3 + n - d)!}\,2^{-(3g - 3 + n) + 2d} \, \pi^{2d}\times \\
  & \qquad\qquad \sum_{k \geq 0} \sum_{\Gamma \in \mathbf{G}_{g,n}^{k}} \frac{1}{|{\rm Aut}\,\Gamma|} 
  \int_{\overline{\mathfrak{M}}_{\Gamma}} \Biggl[
      \prod_{\substack{e \in E_{\Gamma} \\ e = (h,h')}}\bigg(
        \sum_{D \ge 0} \frac{|B_{2D + 2}|}{2D + 2}\, (\psi_{h} + \psi_{h'})^{D}\bigg)
  \Biggr]_{2(d - k)}\,\psi_1^{3g - 3 + n - d}
\end{split}
\] 
where $[\,\cdot\,]_{2k}$ extracts the component of cohomological degree $2k$ and we recall that $\mathbf{G}_{g,n}^k$ is the set of stable graphs with $k$ edges.

\subsubsection{Genus zero.}

To compute the vertex weights, we will use the formula \cite{Witten}
\[
  \int_{\overline{\mathfrak{M}}_{0,n}} \prod_{i = 1}^n \psi_i^{m_i} = \delta_{\sum_{i} m_i,n - 3} \frac{(n - 3)!}{d_1!\cdots d_n!},
\]
which is a consequence of the string equation for $\psi$ classes.

\medskip

$\bullet$ $\mathbf{d = 0}$. The computation of the integral is trivial and we have
\[
  H_{n}^*[0] = \frac{(2 n - 5)!}{2^{n-3} (n-3)!} \int_{\overline{\mathfrak{M}}_{0,n}}\psi_1^{n-3} = \frac{(2 n - 5)!}{2^{n-3} (n-3)!}.
\]

\medskip

$\bullet$ $\mathbf{d = 1}$. The only contribution comes from the stable graph with one edge joining two genus zero vertices (Figure~\ref{frontg0d1}). As the $\psi_1^{n - 4}$ carried by the first leaf saturates the dimension of the moduli space at its incident vertex $v_1$, this edge must have degree $0$ and receives a weight $\tfrac{B_2}{2}$. Then, the contribution from each vertex after integration is equal to $1$. It remains to distribute the leaves labelled $2,\ldots,n$ between a first group of $n - 3$ which will be incident to $v_1$, and a second group of $2$ which will be incident to the second vertex. Hence
\[
  H_{n}^*[1] = \frac{(2 n - 7)!}{2^{n - 2} (n-4)!} \frac{(n-1)(n - 2)}{3} \pi^2.
\]

\renewcommand{\figurename}{Figure}
\begin{figure}[h!]
\begin{center}
\begin{tikzpicture}
  \draw (-1,0) -- (1.5,0);
  \filldraw[black] (0,0) circle (3pt);
  \filldraw[black] (1.5,0) circle (3pt);
  \draw (0,0) -- (0,.8);
  \draw (.08,0) -- (.08,.8);
  \draw (-.08,0) -- (-.08,.8);
  \node at (0,1.1) {$\overbrace{~}^{n-3}$};
  \draw (1.5,0) -- ($(1.5,0) + (35:.4)$);
  \draw (1.5,0) -- ($(1.5,0) + (-35:.4)$);
  \node at (-1,0) [above] {\footnotesize$\psi_1^{n-4}$};

  \node at (4,.6) {\small$\displaystyle \frac{B_2}{2}\binom{n-1}{2}$};
\end{tikzpicture}
\caption{\label{frontg0d1} Stable graph contributing to $H_{n}^*[1]$.}
\end{center}
\end{figure}

\medskip

$\bullet$ $\mathbf{d = 2}$. We have to consider stable graphs with vertices of genus zero with $1$ or $2$ edges (for cohomological degree reasons the graph with no edges does not contribute). There are four cases (Figure~\ref{frontg0d2}).
\begin{enumerate}
\item[$1a$--] Two vertices are connected by an edge, the extra $\psi$ class lies on the same vertex as $\psi_1^{n-5}$. There are $\binom{n-1}{2}$ ways to pick two leaves carried by the second vertex. The contribution of the first vertex is $\int_{\overline{ \mathfrak{M}}_{0, n-1}} \psi_1^{n-5}\psi = n-4$, and the contribution of the second vertex is $\int_{\overline{ \mathfrak{M}}_{0, 3}}1 =1$. The edge contribution is $\frac{|B_4|}{4}$.
\item[$1b$--] Two vertices are connected by an edge, the first vertex carries $\psi_1^{n - 5}$ and the extra $\psi$ class lies on the second vertex. There are $\binom{n-1}{3}$ ways to pick three leaves to the second vertex. Both vertex contributions are equal to $1$, and the edge contribution is $\tfrac{|B_4|}{4}$.
\item[$2a$--] A central vertex carrying $\psi_1^{n - 5}$ is connected to two other vertices carrying no $\psi$ class. There are $\binom{n-1}{2,2,n-5}$ ways to pick two leaves for each of the two non-central vertices. The contribution of each vertex is $1$, each edge contributes to a factor $\tfrac{B_2}{2}$ and we get an extra factor of a $\tfrac{1}{2}$ from the automorphism of the graph (exchange of the two non-central vertices).
\item[$2b$--] There are three vertices connected by two edges and $\psi_1^{n-5}$ is carried by an extremal vertex. There are $n - 1$ choices for the leaf on the central vertex, and $\binom{n-2}{2}$ ways to pick the two leaves for the second extremal vertex. The contribution of each vertex is $1$, each edge contributes to a factor of $\tfrac{B_2}{2}$, and there are no automorphisms.
\end{enumerate}
Summing up all contributions we obtain:
\begin{align*}
H_{n}^*[2] &= \frac{(2 n - 9)!}{2^{n-7} (n-5)!} \, \pi^4 \bigg\{\frac{|B_4|}{4} \bigg( (n - 4) \binom{n - 1}{2} + \binom{n - 1}{3} \bigg) \\
& \qquad\qquad\qquad + \bigg(\frac{B_2}{2}\bigg)^2 \bigg( \frac{1}{2} \binom{n-1}{2, 2, n - 5} + (n - 1)\binom{n - 2}{2}\bigg)\bigg\} \\
&= \frac{(2 n - 9)!}{2^{n-7} (n-5)!} \frac{(n-1)(n-2)(5n^2 + 17n - 120)}{5760}\pi^4. \nonumber
\end{align*}

\begin{figure}[h!]
\begin{center}
\begin{tikzpicture}
  \draw (-1,0) -- (1.5,0);
  \filldraw[black] (0,0) circle (3pt);
  \filldraw[black] (1.5,0) circle (3pt);
  \draw (0,0) -- (0,.8);
  \draw (.08,0) -- (.08,.8);
  \draw (-.08,0) -- (-.08,.8);
  \node at (0,1.1) {$\overbrace{~}^{n-3}$};
  \draw (1.5,0) -- ($(1.5,0) + (35:.4)$);
  \draw (1.5,0) -- ($(1.5,0) + (-35:.4)$);
  \node at (-1,0) [above] {\footnotesize$\psi_1^{n-5}$};
  \node at (0,0) [below right] {\footnotesize$\psi$};

  \node [right] at (2,.6) {$\frac{|B_4|}{4}\binom{n-1}{2}(n-4)$};

  \begin{scope}[xshift=7.5cm,yshift=0cm]
    \draw (-1,0) -- (0,0);
    \draw (0,0) -- (30:1.2);
    \draw (0,0) -- (-30:1.2);
    \filldraw[black] (0,0) circle (3pt);
    \filldraw[black] (30:1.2) circle (3pt);
    \filldraw[black] (-30:1.2) circle (3pt);
    \draw (0,0) -- (0,.8);
    \draw (.08,0) -- (.08,.8);
    \draw (-.08,0) -- (-.08,.8);
    \node at (0,1.1) {$\overbrace{~}^{n-5}$};
    \draw (30:1.2) -- ($(30:1.2) + (35:.4)$);
    \draw (30:1.2) -- ($(30:1.2) + (-35:.4)$);
    \draw (-30:1.2) -- ($(-30:1.2) + (35:.4)$);
    \draw (-30:1.2) -- ($(-30:1.2) + (-35:.4)$);
    \node at (-1,0) [above] {\footnotesize$\psi_1^{n-5}$};

    \node [right] at (2.3,.6) {$\half \cdot \bigl(\frac{B_2}{2}\bigr)^2 \binom{n-1}{2,2,n-5}$};
  \end{scope}

  \begin{scope}[xshift=0,yshift=-3cm]
    \draw (-1,0) -- (1.5,0);
    \filldraw[black] (0,0) circle (3pt);
    \filldraw[black] (1.5,0) circle (3pt);
    \draw (0,0) -- (0,.8);
    \draw (.08,0) -- (.08,.8);
    \draw (-.08,0) -- (-.08,.8);
    \node at (0,1.1) {$\overbrace{~}^{n-4}$};
    \draw (1.5,0) -- ($(1.5,0) + (35:.4)$);
    \draw (1.5,0) -- ($(1.5,0) + (0:.4)$);
    \draw (1.5,0) -- ($(1.5,0) + (-35:.4)$);
    \node at (-1,0) [above] {\footnotesize$\psi_1^{n-5}$};
    \node at (1.5,0) [above left] {\footnotesize$\psi$};

    \node [right] at (2,.6) {$\frac{|B_4|}{4}\binom{n-1}{3}$};
  \end{scope}

  \begin{scope}[xshift=7.5cm,yshift=-3cm]
    \draw (-1,0) -- (1.8,0);
    \filldraw[black] (0,0) circle (3pt);
    \filldraw[black] (.9,0) circle (3pt);
    \filldraw[black] (1.8,0) circle (3pt);
    \draw (0,0) -- (0,.8);
    \draw (.9,0) -- (.9,.5);
    \draw (.08,0) -- (.08,.8);
    \draw (-.08,0) -- (-.08,.8);
    \node at (0,1.1) {$\overbrace{~}^{n-4}$};
    \draw (1.8,0) -- ($(1.8,0) + (35:.4)$);
    \draw (1.8,0) -- ($(1.8,0) + (-35:.4)$);
    \node at (-1,0) [above] {\footnotesize$\psi_1^{n-5}$};

    \node [right] at (2.3,.6) {$\bigl(\frac{B_2}{2}\bigr)^2 (n-1) \binom{n-2}{2}$};
  \end{scope}
\end{tikzpicture}
\caption{\label{frontg0d2} Stable graphs contributing to $H_{n}^*[2]$.}
\end{center}
\end{figure}

\subsubsection{Genus one}
\label{Sgenus1}

Here we will need the classical formula
\begin{equation}\label{1n1n}
  \int_{\overline{\mathfrak{M}}_{1,n}} \psi_1^{n} = \frac{1}{24},
\end{equation}
which is sufficient to compute $H_{1,n}^*[d]$ for $d = 0,1$. More general $\psi$ classes intersections in genus one would be necessary to push the stable graphs computations further. For instance, we will compute below $H_{1,n}^*[2]$ using
\begin{equation}\label{otherpsigenus1}
  \int_{\overline{\mathfrak{M}}_{1,n}} \psi_1^{n - 1}\psi_2 = \frac{n - 1}{24}.
\end{equation}
We present in Appendix~\ref{AppA} a closed formula for arbitrary genus one $\psi$ classes intersections (including \eqref{1n1n} and \eqref{otherpsigenus1}), which we prove in an elementary way using well-known facts, but for which we could not find a reference.

\medskip

$\bullet$ $\mathbf{d = 0}$. The only stable graph contributing has a single vertex, and with \eqref{1n1n} we obtain
$$
  H_{1,n}^*[0] = \frac{1}{24}\,\frac{(2n + 1)!}{2^{n} n!}.
$$

\medskip

$\bullet$ $\mathbf{d =1}$. We have to consider the stable graphs with a single edge, which is either separating or non-separating (Figure~\ref{frontg1d1}). This edge cannot carry $\psi$ classes and its contribution is $\tfrac{B_2}{2}$. In the separating case, there is a vertex of genus one which carries the $\psi_1^{n - 1}$, which is connected to a second vertex of genus zero. There are  $\binom{n-1}{2}$ ways to distribute the two leaves on the genus zero vertex. The contribution of the genus zero vertex is $1$ and the contribution of the genus one vertex is $\int_{\overline{\mathfrak{M}}_{1,n - 1}} \psi_1^{n - 1} = \tfrac{1}{24}$. In the non-separating case, there is a single vertex, which has genus zero; its contribution is $1$, and we have an automorphism factor of $\tfrac{1}{2}$ (exchange of the two ends of the edge). Hence
\begin{align*}
H_{1,n}^*[1] &= \frac{(2 n - 2)!}{2^{n-2} (n-1)!} \, \pi^2 \,  \frac{B_2}{2}\bigg(\frac{1}{2} + \frac{1}{24}\binom{n - 1}{2}\bigg) \\
&= \frac{(2 n - 2)!}{2^{n-2} (n-1)!} \frac{(n^2 - 3n + 26)}{576} \, \pi^2.
\end{align*}

\begin{figure}[h!]
\begin{center}
\begin{tikzpicture}
  \draw (-1,0) -- (0,0);
  \filldraw[black] (0,0) circle (3pt);
  \draw (0,0) -- (0,.8);
  \draw (.08,0) -- (.08,.8);
  \draw (-.08,0) -- (-.08,.8);
  \node at (0,1.1) {$\overbrace{~}^{n-1}$};
  \draw (0,0) to[out=30,in=90] (.5,0);
  \draw (0,0) to[out=-30,in=-90] (.5,0);
  \node at (-1,0) [above] {\footnotesize$\psi_1^{n-1}$};

  \node [right] at (1.5,.6) {\small$\displaystyle \half \cdot \frac{B_2}{2}$};

  \begin{scope}[xshift=6cm,yshift=0cm]
    \draw (-1,0) -- (1.5,0);
    \filldraw[black] (1.5,0) circle (3pt);
    \draw (0,0) -- (0,.8);
    \draw (.08,0) -- (.08,.8);
    \draw (-.08,0) -- (-.08,.8);
    \node at (0,1.1) {$\overbrace{~}^{n-3}$};
    \draw (1.5,0) -- ($(1.5,0) + (35:.4)$);
    \draw (1.5,0) -- ($(1.5,0) + (-35:.4)$);
    \node at (-1,0) [above] {\footnotesize$\psi_1^{n-1}$};

    \node [right] at (2.5,.6) {\small$\displaystyle \frac{B_2}{2} \binom{n-1}{2} \frac{1}{24}$};

    \draw[fill=Green] (0,0) circle (3pt);
  \end{scope}
\end{tikzpicture}
\caption{\label{frontg1d1} Stable graphs contributing to $H_{1,n}^*[1]$. The black vertices have genus zero and the green ones have genus one.}
\end{center}
\end{figure}

$\bullet$ $\mathbf{d = 2}.$ We have to consider stable graphs with one or two edges (Figure~\ref{frontg1d2}). When there is a single edge, its contribution is $\tfrac{|B_4|}{4}$ as we have an extra $\psi$ class to distribute at one of its ends. Four cases appear.
\begin{enumerate} 
\item[$1a$--] There is one non-separating edge on a single vertex of genus zero. The extra $\psi$ class is carried by one extremity of the edge, forbidding non-trivial automorphisms. The contribution of the vertex is $\int_{\overline{\mathfrak{M}}_{0,n + 2}} \psi_1^{n - 2}\psi = (n - 1)$, and the contribution of the edge is $\tfrac{|B_4|}{4}$.
\item[$1b$--] The vertex carrying $\psi_1^{n - 2}$ has genus one and also carries the extra $\psi$ class. It is connected to a genus zero vertex, which has two leaves and there are $\binom{n - 1}{2}$ ways to choose them. The contribution of the genus one vertex is $\int_{\overline{\mathfrak{M}}_{1,n - 1}} \psi_1^{n - 2}\psi = \frac{n - 2}{24}$ using \eqref{otherpsigenus1}, the contribution of the genus zero vertex is $1$.
\item[$1c$--] The vertex carrying $\psi_1^{n - 2}$ has genus one and is connected to a genus zero vertex carrying the extra $\psi$ class. We can pick the $3$ leaves incident to the genus zero vertex in $\binom{n - 1}{3}$ ways. The contribution of the genus one vertex is $\int_{\overline{\mathfrak{M}}_{1,n - 2}} \psi_1^{n - 2} = \tfrac{1}{24}$ while the contribution of the genus zero vertex is $1$. 
\item[$1d$ --] The vertex carrying $\psi_1^{n - 2}$ has genus zero and is connected to a genus one vertex carrying the extra $\psi$ class. The contribution of the genus zero vertex is $1$ and the contribution of the genus one vertex is $\int_{\overline{\mathfrak{M}}_{1,1}} \psi = \frac{1}{24}$.
\end{enumerate}
When there are two edges, each of them contributes by a factor of $\tfrac{B_2}{2}$ and there is no extra $\psi$ class.
\begin{enumerate} 
\item[$2a$--] The vertex carrying $\psi_1^{n - 2}$ has genus zero, is incident to a non-separating edge forming a loop, and the second edge connects it to another vertex of genus zero. There are $\binom{n - 1}{2}$ ways to choose the two leaves on the second vertex. The loop is responsible for a symmetry factor of a $\tfrac{1}{2}$, and the contribution of both vertices is $1$.
\item[$2b$--] The vertex carrying $\psi_1^{n - 2}$ has genus zero and is connected to another vertex of genus zero which carries a loop. The latter yields a symmetry factor of a $\tfrac{1}{2}$ and the contribution of both vertices is $1$.
\item[$2c$--] The vertex carrying $\psi_1^{n - 2}$ has genus zero and is connected to another vertex of genus zero by two edges. To the second vertex should be assigned a leaf and this can be done in $(n - 1)$ ways. There is a symmetry factor of a $\tfrac{1}{2}$ for the exchange of the two edges, and the contribution of both vertices is $1$.
\item[$2d$--] The vertex carrying $\psi_1^{n - 2}$ has genus one, it is connected to a vertex of genus zero with one leaf, which itself is connected to another vertex of genus zero with $2$ leaves. There are $(n - 1)\binom{n - 2}{2}$ ways to assign the leaves. The contribution from the genus one vertex is $\int_{\overline{\mathfrak{M}}_{1,n - 2}} \psi_1^{n - 2} = \tfrac{1}{24}$ and the contribution of the genus zero vertices is $1$. 
\item[$2e$--] There are three vertices, the central one has genus one and carries $\psi_1^{n - 2}$, the extremal ones have genus zero and carry two leaves each. There are $\binom{n - 1}{2,2,n- 5}$ ways to assign the leaves but there is a symmetry factor of a $\tfrac{1}{2}$ for the exchange of the two extremal vertices. The contribution of the genus one vertex is $\int_{\overline{\mathfrak{M}}_{1,n - 2}} \psi_1^{n - 2} = \tfrac{1}{24}$ and the contribution of the genus zero vertices is $1$. 
\end{enumerate}
Summing all contributions, we obtain
\begin{align*}
	H_{1,n}^*[2] & = \frac{(2n - 3)!}{2^{n - 4}(n - 2)!} \, \pi^4
    \bigg\{ \left(\frac{B_2}{2}\right)^2\bigg( \frac{1}{2}\,\binom{n - 1}{2} + \frac{1}{2} + \frac{n - 1}{2} +  \frac{n - 1}{24}\,\binom{n - 2}{2} + \frac{1}{24}\,\frac{1}{2}\,\binom{n - 1}{2,2,n - 5}\bigg) \\ 
	& \qquad\qquad\qquad\qquad\qquad
    + \frac{|B_4|}{4}\bigg(n - 1 + \frac{n - 2}{24}\binom{n - 1}{2} + \frac{1}{24}\binom{n - 1}{3} + \frac{1}{24}\bigg)\bigg\} \\ 
	& = \frac{5n^4 + 2n^3 + 127n^2 + 1162n - 768}{138240} \, \pi^4 .
\end{align*}
 
\begin{figure}[h!]
\begin{center}
\begin{tikzpicture}

  \draw (-1,0) -- (0,0);
  \draw (0,0) to[out=30,in=90] (.5,0);
  \draw (0,0) to[out=-30,in=-90] (.5,0);
  \filldraw[black] (0,0) circle (3pt);
  \draw (0,0) -- (0,.8);
  \draw (.08,0) -- (.08,.8);
  \draw (-.08,0) -- (-.08,.8);
  \node at (0,1.1) {$\overbrace{~}^{n-1}$};
  \node at (-1,0) [above] {\footnotesize$\psi_1^{n-2}$};
  \node at (0,0) [below right] {\footnotesize$\psi$};

  \node [right] at (2,.6) {$\frac{|B_4|}{4}(n-1)$};

  \begin{scope}[xshift=7.5cm,yshift=0cm]

    \draw (-1,0) -- (1.5,0);
    \filldraw[black] (0,0) circle (3pt);
    \filldraw[black] (1.5,0) circle (3pt);
    \draw (0,0) to[out=-60,in=0] (0,-.5);
    \draw (0,0) to[out=-120,in=180] (0,-.5);
    \draw (0,0) -- (0,.8);
    \draw (.08,0) -- (.08,.8);
    \draw (-.08,0) -- (-.08,.8);
    \node at (0,1.1) {$\overbrace{~}^{n-3}$};
    \draw (1.5,0) -- ($(1.5,0) + (35:.4)$);
    \draw (1.5,0) -- ($(1.5,0) + (-35:.4)$);
    \node at (-1,0) [above] {\footnotesize$\psi_1^{n-2}$};

    \node [right] at (2.3,.6) {$\half \cdot \bigl(\frac{B_2}{2}\bigr)^2 \binom{n-1}{2}$};
  \end{scope}
  \begin{scope}[xshift=0,yshift=-2.5cm]

    \draw (-1,0) -- (1.5,0);
    \filldraw[black] (1.5,0) circle (3pt);
    \draw (0,0) -- (0,.8);
    \draw (.08,0) -- (.08,.8);
    \draw (-.08,0) -- (-.08,.8);
    \node at (0,1.1) {$\overbrace{~}^{n-3}$};
    \draw (1.5,0) -- ($(1.5,0) + (35:.4)$);
    \draw (1.5,0) -- ($(1.5,0) + (-35:.4)$);
    \draw[fill=Green] (0,0) circle (3pt);
    \node at (-1,0) [above] {\footnotesize$\psi_1^{n-2}$};
    \node at (0,0) [below right] {\footnotesize$\psi$};

    \node [right] at (2,.6) {$\frac{|B_4|}{4}\binom{n-1}{2}\frac{n-2}{24}$};
  \end{scope}
  \begin{scope}[xshift=7.5cm,yshift=-2.5cm]

    \draw (-1,0) -- (1.5,0);
    \filldraw[black] (0,0) circle (3pt);
    \filldraw[black] (1.5,0) circle (3pt);
    \draw (0,0) -- (0,.8);
    \draw (.08,0) -- (.08,.8);
    \draw (-.08,0) -- (-.08,.8);
    \node at (0,1.1) {$\overbrace{~}^{n-1}$};
    \node at (-1,0) [above] {\footnotesize$\psi_1^{n-2}$};
    \draw (1.5,0) to[out=30,in=90] (2,0);
    \draw (1.5,0) to[out=-30,in=-90] (2,0);

    \node [right] at (2.3,.6) {$\half \cdot \bigl(\frac{B_2}{2}\bigr)^2$};
  \end{scope}
  \begin{scope}[xshift=0,yshift=-5cm]

    \draw (-1,0) -- (1.5,0);
    \filldraw[black] (1.5,0) circle (3pt);
    \draw (0,0) -- (0,.8);
    \draw (.08,0) -- (.08,.8);
    \draw (-.08,0) -- (-.08,.8);
    \node at (0,1.1) {$\overbrace{~}^{n-4}$};
    \draw (1.5,0) -- ($(1.5,0) + (35:.4)$);
    \draw (1.5,0) -- ($(1.5,0) + (0:.4)$);
    \draw (1.5,0) -- ($(1.5,0) + (-35:.4)$);
    \draw[fill=Green] (0,0) circle (3pt);
    \node at (-1,0) [above] {\footnotesize$\psi_1^{n-2}$};
    \node at (1.5,0) [below left] {\footnotesize$\psi$};

    \node [right] at (2.3,.6) {$\frac{|B_4|}{4}\binom{n-1}{3}\frac{1}{24}$};
  \end{scope}
  \begin{scope}[xshift=7.5cm,yshift=-5cm]

    \draw (-1,0) -- (0,0);
    \draw (0,0) to[out=30,in=150] (1.5,0);
    \draw (0,0) to[out=-30,in=-150] (1.5,0);
    \filldraw[black] (0,0) circle (3pt);
    \filldraw[black] (1.5,0) circle (3pt);
    \draw (0,0) -- (0,.8);
    \draw (.08,0) -- (.08,.8);
    \draw (-.08,0) -- (-.08,.8);
    \node at (0,1.1) {$\overbrace{~}^{n-2}$};
    \draw (1.5,0) -- ($(1.5,0) + (0:.4)$);
    \node at (-1,0) [above] {\footnotesize$\psi_1^{n-2}$};

    \node [right] at (2.3,.6) {$\half \cdot \bigl(\frac{B_2}{2}\bigr)^2 (n-1)$};
  \end{scope}
  \begin{scope}[xshift=0cm,yshift=-7.5cm]

    \draw (-1,0) -- (1.5,0);
    \filldraw[black] (0,0) circle (3pt);
    \draw (0,0) -- (0,.8);
    \draw (.08,0) -- (.08,.8);
    \draw (-.08,0) -- (-.08,.8);
    \node at (0,1.1) {$\overbrace{~}^{n-1}$};
    \draw[fill=Green] (1.5,0) circle (3pt);
    \node at (-1,0) [above] {\footnotesize$\psi_1^{n-2}$};
    \node at (1.5,0) [below left] {\footnotesize$\psi$};

    \node [right] at (2.3,.6) {$\frac{|B_4|}{4}\frac{1}{24}$};
  \end{scope}
  \begin{scope}[xshift=7.5cm,yshift=-7.5cm]

    \draw (-1,0) -- (1.8,0);
    \filldraw[black] (.9,0) circle (3pt);
    \filldraw[black] (1.8,0) circle (3pt);
    \draw (0,0) -- (0,.8);
    \draw (.9,0) -- (.9,.5);
    \draw (.08,0) -- (.08,.8);
    \draw (-.08,0) -- (-.08,.8);
    \node at (0,1.1) {$\overbrace{~}^{n-4}$};
    \draw (1.8,0) -- ($(1.8,0) + (35:.4)$);
    \draw (1.8,0) -- ($(1.8,0) + (-35:.4)$);
    \draw[fill=Green] (0,0) circle (3pt);
    \node at (-1,0) [above] {\footnotesize$\psi_1^{n-2}$};

    \node [right] at (2.3,.6) {$\bigl(\frac{B_2}{2}\bigr)^2 (n-1) \binom{n-2}{2} \frac{1}{24}$};
  \end{scope}
  \begin{scope}[xshift=7.5cm,yshift=-10cm]

    \draw (-1,0) -- (0,0);
    \draw (0,0) -- (30:1.2);
    \draw (0,0) -- (-30:1.2);
    \filldraw[black] (30:1.2) circle (3pt);
    \filldraw[black] (-30:1.2) circle (3pt);
    \draw (0,0) -- (0,.8);
    \draw (.08,0) -- (.08,.8);
    \draw (-.08,0) -- (-.08,.8);
    \node at (0,1.1) {$\overbrace{~}^{n-5}$};
    \draw (30:1.2) -- ($(30:1.2) + (35:.4)$);
    \draw (30:1.2) -- ($(30:1.2) + (-35:.4)$);
    \draw (-30:1.2) -- ($(-30:1.2) + (35:.4)$);
    \draw (-30:1.2) -- ($(-30:1.2) + (-35:.4)$);
    \draw[fill=Green] (0,0) circle (3pt);
    \node at (-1,0) [above] {\footnotesize$\psi_1^{n-2}$};

    \node [right] at (2.3,.6) {$\half \cdot \bigl(\frac{B_2}{2}\bigr)^2 \binom{n-1}{2,2,n-5}\frac{1}{24}$};
  \end{scope}
\end{tikzpicture}
\caption{\label{frontg1d2} Stable graphs contributing to $H_{1,n}^*[2]$. The black vertices have genus zero and the green ones have genus one.}
\end{center}
\end{figure}

\subsection{Virasoro constraints}
\label{Virrec}

In this paragraph, we write down explicitly the recursion of Theorem~\ref{thm:intro:2} for the coefficients of the Masur--Veech polynomials. It is obtained by inserting the Kontsevich initial data \eqref{K1}-\eqref{K2} and the twist $u_{a,b}$ given by (\ref{udd}) into the general formula \eqref{polytwist} and recursion \eqref{recfgb}. It is equivalent to Virasoro constraints, obtained (see e.g. \cite{CoursToulouse}) by conjugation of the Virasoro constraints for $\psi$ classes intersections, with the operator
$$
\mathcal{U} = \exp\bigg(\sum_{a,b \geq 0} \frac{\hslash}{2}\,u_{a,b}\,\partial_{x_{a}}\partial_{x_{b}}\bigg),\qquad u_{a,b} = \frac{(2a + 2b + 1)!\zeta(2a + 2b + 2)}{(2a + 1)!(2b + 1)!}.
$$

\medskip

\noindent \textsc{Base cases --} When  $2g - 2 + n = 1$ we have
$$
F_{0,3}[d_1,d_2,d_3] = \delta_{d_1,d_2,d_3,0},\qquad F_{1,1}[d] = \delta_{d,0}\,\frac{\zeta(2)}{2} + \delta_{d,1}\,\frac{1}{8}.
$$
We assume $2g + n -2\geq 2$ in what follows.

\medskip

\noindent \textsc{String equation --}
\begin{align*}
F_{g,n}[0,d_2,\ldots,d_n] & =\sum_{i = 2}^n \bigg(F_{g,n - 1}[d_2,\ldots,d_i - 1,\ldots,d_n] + \delta_{d_i,0}\sum_{a \geq 0} \zeta(2a + 2)F_{g,n - 1}(a,d_2,\ldots,\widehat{d_i},\ldots,d_n)\bigg)  \\
& + \frac{1}{2} \sum_{a,b \geq 0} \bigg(\frac{(2a + 2b + 4)!}{(2a + 2)!(2b + 2)!}\,\zeta(2a + 2b + 4) + \zeta(2a + 2)\zeta(2b + 2)\bigg)  \\
&\qquad \quad \,\,\times \bigg(F_{g - 1,n + 1}[a,b,d_2,\ldots,d_n] + \sum_{\substack{h + h' = g \\ J \sqcup J' = \{d_2,\ldots,d_n\}}} F_{h,1+|J|}[a,J]F_{h',1+|J'|}[b,J']\bigg) 
\end{align*}

\noindent \textsc{Dilaton equation --}
\begin{align*}
F_{g,n}[1,d_2,\ldots,d_n] & =\bigg(\sum_{i = 2}^n (2d_i + 1)\bigg) F_{g,n - 1}[d_2,\ldots,d_n] + \frac{1}{2}\sum_{a,b \geq 0} \frac{(2a + 2b + 2)!\zeta(2a + 2b + 2)}{(2a + 1)!(2b + 1)!}  \\
& \qquad \quad\,\,\times \bigg(F_{g - 1,n + 1}[a,b,d_2,\ldots,d_n] + \sum_{\substack{h + h' = g \\ J \sqcup J' = \{d_2,\ldots,d_n\}}} F_{h,1 + |J|}[a,J]F_{h',1+|J'|}[b,J']\bigg) 
\end{align*}

\medskip

\noindent \textsc{For $d_1 \geq 2$}
\begin{align*}
 F_{g,n}[d_1,\ldots,d_n] & =\sum_{i = 2}^n (2d_i + 1)F_{g,n - 1}[d_1 + d_i - 1,d_2,\ldots,\widehat{d_i},\ldots,d_n]  \\
& + \sum_{a,b \geq 0} \bigg(\frac{1}{2}\delta_{a + b,d_1 - 2} + \delta_{a \geq d_1 - 1}\,\frac{(2a + 2b + 3 - 2d_1)!\zeta(2a + 2b + 4 - 2d_1)}{(2b + 1)!(2a + 2 - 2d_1)!}\bigg) \\
& \qquad \quad\,\,\times \bigg(F_{g - 1,n + 1}[a,b,d_2,\ldots,d_n] + \sum_{\substack{h + h ' = g \\ J \sqcup J' = \{d_2,\ldots,d_n\}}} F_{h,1+|J|}[a,J]F_{h',1+|J'|}[b,J']\bigg)  
\end{align*}

In genus zero, the string equation (\textit{i.e.} the first member of the Virasoro constraints) gives a recursion which uniquely determines all $F_{0,n}[d_1,\ldots,d_n]$. Indeed, this number could be non-zero only when $d_1 + \cdots + d_n \leq n - 3$, which implies that at least $3$ of the $d_i$'s are zero. By symmetry we can take one of these zeroes to be $d_1$, and apply the string equation.

\subsection{Recursion for genus zero, one row}
\label{Genus01row}

\renewcommand{\figurename}{Table}

If we specialise the Virasoro constraints to $g = 0$ and $d_2 = \cdots = d_n = 0$, we obtain a recursion for the $H_n[d] = F_{0,n}[d,0,\ldots,0]$.
\begin{cor}
\label{Gn0rec} We have that
\begin{align*}
	H_{n}[0] & = \delta_{n,3} + (n - 1)\sum_{a \geq 0} \zeta(2a + 2)H_{n - 1}[a]  \\
	& + \frac{1}{2} \sum_{\substack{2 \leq j \leq n - 3 \\ a,b \geq 0}} \frac{(n - 1)!}{j!(n - 1 - j)!}\bigg(\frac{(2a + 2b + 4)!\zeta(2a + 2b + 4)}{(2a + 2)!(2b + 2)!} + \zeta(2a + 2)\zeta(2b + 2)\bigg)H_{1 + j}[a]H_{n - j}[b],  \\
	H_{n}[1] & = (n - 1)H_{n - 1}[0] + \frac{1}{2} \sum_{\substack{2 \leq j \leq n - 3 \\ a,b \geq 0}} \frac{(n - 1)!}{j!(n - 1 - j)!} \frac{(2a + 2b + 2)!\zeta(2a + 2b + 2)}{(2a + 1)!(2b + 1)!}\,H_{1 + j}[a] H_{n - j}[b] 
\end{align*}
and for $d \geq 2$
\begin{align*}
	H_{n}[d] & = (n - 1)H_{n - 1}[d - 1] + \sum_{\substack{2 \leq j \leq n - 3 \\ a,b \geq 0}} \frac{(n - 1)!}{j!(n - 1 - j)!}  \\
	& \times \bigg(\frac{1}{2}\delta_{a + b,d - 2} + \delta_{a\geq d - 1} \frac{(2a + 2b + 3 - 2d)!\zeta(2a + 2b + 4 - 2d)}{(2b + 1)!(2a + 2 - 2d)!}\bigg)H_{1 + j}[a]H_{n - j}[b]. 
\end{align*}
\hfill $\blacksquare$
\end{cor}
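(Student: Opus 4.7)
The result is a direct specialization of the Virasoro constraints of Theorem~\ref{thm:intro:2}, written out explicitly as the string equation, dilaton equation, and $d_1 \geq 2$ case in Section~\ref{Virrec}. The plan is to set $g = 0$ and $d_2 = \cdots = d_n = 0$ in each of those three equations and collect terms.

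Setting $g = 0$ kills the contribution $F_{g-1, n+1}[a, b, d_2, \ldots, d_n]$ since $F_{-1, n+1} = 0$ by convention, and forces $h = h' = 0$ in the remaining quadratic piece. Setting $d_i = 0$ for $i \geq 2$ then has several simplifying effects. In the string equation, the factor $\delta_{d_i, 0}$ is always satisfied and the first linear sum $\sum_{i=2}^n F_{g, n-1}[d_2, \ldots, d_i - 1, \ldots, d_n]$ vanishes because one argument becomes $-1$. In the dilaton equation, the prefactor $\sum_{i=2}^n (2d_i + 1)$ collapses to $n-1$. In the $d_1 \geq 2$ case, the first sum collapses similarly to a multiple of $H_{n-1}[d-1]$.

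The main bookkeeping step is the genus-zero quadratic term
\[
\sum_{J \sqcup J' = \{d_2, \ldots, d_n\}} F_{0, 1+|J|}[a, J]\,F_{0, 1+|J'|}[b, J'].
\]
With every $d_i = 0$, the factor $F_{0, 1+|J|}[a, J]$ depends only on $j := |J|$ and equals $H_{1+j}[a]$. Partitioning the labelled set $\{d_2,\ldots,d_n\}$ of size $n-1$ into two ordered subsets of sizes $j$ and $n-1-j$ yields $\binom{n-1}{j}$ identical summands, and the convention $F_{0,1} = F_{0,2} = 0$ restricts the nonzero range to $2 \leq j \leq n-3$. The overall $\tfrac12$ from the Virasoro recursion is left as written; it correctly counts the unordered splitting up to this symmetry.

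Putting these observations together term-by-term reproduces the three formulas of Corollary~\ref{Gn0rec}. The $\delta_{n,3}$ appearing in the $d = 0$ formula accounts for the base case $H_3[0] = F_{0,3}[0,0,0] = 1$: when $n = 3$ the sum over $j$ is empty and the first $\zeta$-sum also vanishes since $H_2[a] = 0$, so one needs this extra term to cover the $n=3$ case uniformly. There is no real obstacle; the proof is a direct specialization, the only subtlety being the correct handling of the stability range via the convention on $F_{0,1}$ and $F_{0,2}$.
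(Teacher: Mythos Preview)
Your proposal is correct and follows exactly the approach indicated in the paper: the corollary is stated without proof (the $\blacksquare$ is placed immediately after the statement) because the paper explicitly says just before it that one obtains the recursion by specialising the Virasoro constraints of Section~\ref{Virrec} to $g=0$ and $d_2=\cdots=d_n=0$. Your detailed unpacking of how each term simplifies---the vanishing of $F_{-1,n+1}$, the collapse of $\sum_{i=2}^n(2d_i+1)$ to $n-1$, the binomial count on the quadratic term with the stability range $2\leq j\leq n-3$, and the role of $\delta_{n,3}$ as the base case---is precisely what the paper leaves implicit.
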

The last equation could also be written so as to give symmetric roles to $a$ and $b$ in the last term, and it is then easy to see that it is also valid for $d = 1$. This recursion determines uniquely the $H_n[d]$, and a fortiori the genus zero Masur--Veech volumes
$$
MV_{0,n} = \frac{2^{n - 2}(n - 4)!}{(2n - 7)!}\,H_{n}[0].
$$
We have not been able -- even using generating series -- to solve this recursion. It can however be used to generate efficiently the numbers $H_n[d]$.

\medskip

From intersection theory on the moduli space of quadratic differentials, a closed formula is known for area Siegel--Veech constants in genus zero \cite{EKZ} and then the Masur--Veech volumes in genus zero \cite{AEZ}.
\begin{thm} 
\label{MV0n} We have that
$$
MV_{0,n} = \pi^{2(n - 3)}\,2^{5 - n}, \qquad SV_{0,n} = \frac{n + 5}{6\pi^2}.
$$
\hfill $\blacksquare$
\end{thm}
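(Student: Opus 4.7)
The plan is to extract both statements from the genus-zero specialisation of the machinery built in Sections~\ref{S3}--\ref{S4} of this paper. The volume formula should follow by induction on $n$ from the Virasoro recursion of Corollary~\ref{Gn0rec}, and the Siegel--Veech identity then from Goujard's formula as restated in Corollary~\ref{corSv}.

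For the volume, the claim is equivalent, via the normalisation in \eqref{MVnorm}, to
\[
	H_n[0] = \frac{(2n - 7)!\,\pi^{2(n-3)}}{2^{2n-7}(n - 4)!}
	\qquad (n \geq 4),
\]
with the base case $H_3[0] = 1$ and the easy verification $H_4[0] = 3\zeta(2) = \pi^2/2$ directly from the recursion. The natural approach is to package all the numbers $F_{0,n}[d_1,\ldots,d_n]$ into generating series and translate the full genus-zero Virasoro system of Section~\ref{Virrec} into a closed differential system. The string equation relates $F_{0,n}[0,d_2,\ldots,d_n]$ to values with lower $n$, weighted by even zeta values; the dilaton equation and the $d_1 \geq 2$ equations then close the system. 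Substituting the proposed closed form and using Euler's evaluation $\zeta(2k) = (-1)^{k+1}(2\pi)^{2k}B_{2k}/(2(2k)!)$ reduces the identity to a Bernoulli-number identity, verifiable from the generating series $\pi z\,\cot(\pi z) = 1 - 2\sum_{k \geq 1} \zeta(2k)\,z^{2k}$.

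With $MV_{0,n}$ in hand, the Siegel--Veech identity is nearly immediate. In genus zero, the unstable term $V\Omega^{{\rm MV}}_{g-1,n+2}(0)$ drops out of Corollary~\ref{corSv}, leaving
\[
	SV_{0,n}\,V\Omega^{{\rm MV}}_{0,n}(0)
	=
	\frac{1}{8}\sum_{\substack{n_1 + n_2 = n \\ n_1,n_2 \geq 2}} \binom{n}{n_1}\,V\Omega^{{\rm MV}}_{0,1 + n_1}(0)\,V\Omega^{{\rm MV}}_{0,1 + n_2}(0).
\]
Substituting the explicit values and tracking powers of $\pi$ via $2(n_1 - 2) + 2(n_2 - 2) + 2 = 2(n - 3)$, one reduces the claim to the purely combinatorial identity
\[
	\sum_{\substack{n_1 + n_2 = n \\ n_1,n_2 \geq 2}} \binom{n}{n_1}\,\frac{(2n_1 - 5)!\,(2n_2 - 5)!}{(n_1 - 3)!\,(n_2 - 3)!}
	=
	\frac{(n + 5)(2n - 7)!}{6(n - 4)!},
\]
(with the convention $\lim_{m \to 2}(2m-5)!/(m-3)! = 1/2$ at the endpoints $n_i = 2$). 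Small-$n$ checks $n = 4, 5, 6$ give $3/2, 10, 110$ in agreement, and in general the identity can be verified by recognising the summand in terms of central binomial coefficients and applying a standard generating-series calculation based on $(1-4x)^{3/2}$.

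The main obstacle is the first step: the Virasoro recursion couples $H_n[0]$ to $F_{0,n}[d_1,\ldots,d_n]$ for all $(d_i)$ via the infinite sums weighted by zeta values, so closing the recursion on the one-row quantities alone is delicate. A cleaner alternative exploits the Eynard--Orantin form (Proposition~\ref{EOMV}) on the genus-zero spectral curve $x = z^2/2$, $y = -z$, $\omega_{0,2}^{{\rm MV}}$. The splitting $\omega_{0,2}^{{\rm MV}} = \tfrac{1}{2}\omega_{0,2}^{{\rm K}} + \tfrac{\pi^2\,\dd z_1 \dd z_2}{2\sin^2(\pi(z_1 - z_2))}$ combined with Theorem~\ref{variationomega02} expresses $\omega_{0,n}^{{\rm MV}}$ as a sum over trees whose edges are decorated by insertions of the trigonometric kernel, and the factor $\pi^{2(n-3)}$ then appears transparently from counting these insertions.
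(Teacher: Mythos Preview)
The paper does not prove Theorem~\ref{MV0n}: it is quoted from \cite{AEZ} and \cite{EKZ}, whose proofs rely on intersection theory on the moduli space of quadratic differentials, not on the machinery of this paper. The paper is explicit about this: immediately after the statement it remarks that the two formulae are equivalent via Goujard's formula, and that \emph{``if one could guess a closed formula for the $H_n[d]$, it should be possible to check that it satisfies the recursion of Corollary~\ref{Gn0rec}, and by uniqueness deduce a new proof of Theorem~\ref{MV0n}''}. In other words, the paper itself flags that a proof along the lines you propose is not available within its framework.

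Your proposal has a genuine gap at exactly this point. The recursion of Corollary~\ref{Gn0rec} for $H_n[0]$ reads
\[
H_n[0] = \delta_{n,3} + (n-1)\sum_{a \geq 0}\zeta(2a+2)\,H_{n-1}[a] + \tfrac{1}{2}\sum_{j,a,b}(\cdots)\,H_{1+j}[a]\,H_{n-j}[b],
\]
so each step requires the full sequence $H_{m}[d]$ for all $d$, not just $d = 0$. Your induction hypothesis $H_m[0] = (2m-7)!\,\pi^{2(m-3)}/\bigl(2^{2m-7}(m-4)!\bigr)$ therefore cannot be fed back into the recursion without also controlling all $H_m[d]$. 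A closed form for these is precisely the content of Conjecture~\ref{Gndconj}, which remains open in the paper. You acknowledge the obstacle, but the workarounds you sketch---a ``Bernoulli-number identity'' from the full Virasoro system, or a tree expansion via Theorem~\ref{variationomega02}---are not carried out and do not obviously close: the first still requires a simultaneous ansatz for all $F_{0,n}[d_1,\ldots,d_n]$, and the second produces a sum over stable graphs that is nothing other than \eqref{defn:MVpoly}, which does not by itself collapse to the claimed closed form.

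Your treatment of $SV_{0,n}$ is fine and matches what the paper says: once $MV_{0,n}$ is known, Corollary~\ref{corSv} reduces the Siegel--Veech formula to the binomial identity you wrote, and that identity is routine. But this half of the argument is conditional on the first, which is external input here.
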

In fact, using Goujard's formula\footnote{The reader looking at Theorem~\ref{thGouj1} may think that to derive a formula for $MV_{0,n}$ from the knowledge of $SV_{0,n}$ one also needs the data of $MV_{0,3}$. Actually, in the literature $MV_{0,3}$ is ill-defined, while for us, in the context of statistics of length of multicurves, it makes perfect sense and is equal to $4$. In the formulation of her result \cite{Goujard}, Goujard wrote separately the terms that we included as contributions of $(0,3)$ in Theorem~\ref{thGouj1}. Therefore the extra value of $MV_{0,3} = 4$ can be seen as a convention and the two formulas for $SV_{0,n}$ and $MV_{0,n}$ are indeed equivalent.} (Theorem~\ref{thGouj1}), it is easy to see that the formula for $SV_{0,n}$ and the formula for $MV_{0,n}$ are equivalent. If one could guess a closed formula for the $H_n[d]$, it should be possible to check that it satisfies the recursion of Corollary~\ref{Gn0rec}, and by uniqueness deduce a new proof of Theorem~\ref{MV0n}.

\medskip

Based on numerical data, we can guess the shape of a formula for fixed $d$ but all $n$.

\begin{conj}\label{Gndconj}
   For each $d \geq 0$, there exists a polynomial $P_d$ of degree $d$ with rational coefficients such that
   \begin{equation}\label{conjGnd}
      H_n[d] = \frac{(2d + 1)}{(2d - 1)!!}\,\frac{P_d(n)}{2^{(n - 3 - d)}}\,\frac{(2(n - 3 - d))!}{(n - 3 - d)!}\,\pi^{2(n - 3 - d)}.
   \end{equation}
   The formula for $d = 0$ uses the convention $(-1)!! = 1$. Equivalently, there exists $\tilde{P}_{d} \in \mathbb{Q}[x]$ such that
   $$
      \mathscr{H}(x;d) = \sum_{n \geq d + 3} \frac{H_n[d]}{\pi^{2(n - 3 - d)}}\,\frac{x^{n - 1}}{(n - 1)!} = \bigg[\tilde{P}_{d}(x)(1 - x)^{3/2}\bigg]_{\geq d + 2}
   $$
   where $[\,\cdot\,]_{\geq d + 2}$ means that we only keep monomials of degree greater than $d + 2$.
\end{conj}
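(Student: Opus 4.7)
The plan is to prove Conjecture~\ref{Gndconj} by induction on $d$, taking the recursion of Corollary~\ref{Gn0rec} as the main engine and recasting it as a functional equation for the generating series $\mathscr{H}(x;d)$. The inductive ansatz is that $\mathscr{H}(x;d) = [\tilde P_d(x)(1-x)^{3/2}]_{\geq d+2}$ for some $\tilde P_d \in \mathbb{Q}[x]$, which is equivalent to $H_n[d]$ having the closed form \eqref{conjGnd} with $P_d$ a rational polynomial of degree $d$.

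For the base case $d=0$, I would combine Theorem~\ref{MV0n} with the normalisation \eqref{MVnorm} to get
\[
H_n[0] = \frac{(2n-7)!}{(n-4)!}\,2^{7-2n}\,\pi^{2(n-3)},
\]
and then check from the Taylor expansion of $(1-x)^{3/2}$ that $\mathscr{H}(x;0) = [\tfrac{4}{3}(1-x)^{3/2}]_{\geq 2}$. This fixes $\tilde P_0 = \tfrac{4}{3}$, a polynomial of degree $0$ as required. The coefficients $H_{1,n}^{\ast}[k]$ for small $k$ computed in Section~\ref{secfrontcoef} provide sanity checks of the analogous structural predictions in genus one.

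For the inductive step with $d\geq 2$, I would exploit the recursion
\[
H_n[d] = (n-1)\,H_{n-1}[d-1] + \sum_{j,a,b} \tbinom{n-1}{j,\,n-1-j}\,(\text{coeff})\,H_{1+j}[a]\,H_{n-j}[b]
\]
from Corollary~\ref{Gn0rec}. The linear term $(n-1)H_{n-1}[d-1]$ has the desired shape by the inductive hypothesis once one tracks the effect of $n\mapsto n-1$, $d\mapsto d-1$ on the factorial ratio $(2(n-3-d))!/(n-3-d)!$. The delta-summand $\tfrac{1}{2}\delta_{a+b,d-2}$ has both $a,b<d$, so the inductive hypothesis applies to both factors, and the resulting binomial convolution reduces to a Vandermonde-type identity on central binomial coefficients, preserving the polynomiality in $n$ of the reduced data $P_d(n)$.

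The main obstacle is the zeta-summand: it only constrains $a \geq d-1$ and leaves $b$ essentially free, so the induction on $d$ alone is not self-contained on that piece. To close the argument, I would work at the level of the generating series and recast the recursion as a functional equation
\[
\mathcal{L}_d\,\mathscr{H}(x;d) = \mathscr{R}_d\bigl(\mathscr{H}(x;d'),\,d'<d\bigr) + (\text{zeta correction}),
\]
and verify that the ansatz $[\tilde P_d(x)(1-x)^{3/2}]_{\geq d+2}$ is preserved on both sides. The zeta correction involves factors $\zeta(2m)$ which, via $\zeta(2m)=(-1)^{m-1}B_{2m}(2\pi)^{2m}/(2(2m)!)$, must combine with the factorial prefactors exactly so as to match the $\pi^{2(n-3-d)}$ normalisation; pinning down this cancellation in closed form is the technically demanding step. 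I expect that a cleaner route is to bypass the combinatorial recursion altogether and work with the tautological representation of $V\Omega^{{\rm MV}}_{0,n}$ given by Proposition~\ref{interMV}, reducing the problem to the evaluation of $\kappa$- and $\psi$-class integrals on $\overline{\mathfrak{M}}_{0,n}$, as indeed carried out by Chen--M\"oller--Sauvaget in the work mentioned in the remark following Theorem~\ref{thm:intro:2}.
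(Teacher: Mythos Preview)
The paper does not prove this statement; it is explicitly labeled a \emph{conjecture}. The text surrounding Conjecture~\ref{Gndconj} verifies only the case $d=0$ (via Theorem~\ref{MV0n}) and then, for each small $d$, fits a degree-$d$ polynomial $P_d$ to the first $d+1$ computed values $H_{d+3}[d],\ldots,H_{2d+3}[d]$ and checks numerically that the resulting formula reproduces the remaining entries of Table~\ref{TablGnd}. No argument is given that the ansatz holds for all $n$, and none is claimed. The later remark about Chen--M\"oller--Sauvaget concerns Conjectures~\ref{conjMV} and~\ref{conjSV}, not Conjecture~\ref{Gndconj}.

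Your proposal is therefore not a comparison target but an attempt to go beyond the paper, and as such it has a genuine gap that you yourself identify. The induction on $d$ via Corollary~\ref{Gn0rec} does not close: in the quadratic sum the zeta-term allows $a\geq d-1$ with $b$ unconstrained, so terms $H_{1+j}[a]$ with $a\geq d$ appear on the right-hand side, and the inductive hypothesis does not cover them. Your proposed fix --- rewriting the recursion as a functional equation for $\mathscr{H}(x;d)$ and asserting that the ansatz $[\tilde P_d(x)(1-x)^{3/2}]_{\geq d+2}$ is preserved --- is precisely the content of the conjecture, not a proof of it; you say explicitly that ``pinning down this cancellation in closed form is the technically demanding step'' without carrying it out. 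The base case you write is fine (and matches the paper's remark), but the inductive step is a restatement of the goal rather than an argument. Also, the step ``$d\geq 2$'' leaves $d=1$ unaddressed, and the aside about $H_{1,n}^*[k]$ from Section~\ref{secfrontcoef} concerns genus one, not the genus-zero quantities $H_n[d]$ at issue here.
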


The formula is true for $d = 0$ with $P_0(n) = 1$ according to Theorem~\ref{MV0n}. For low values of $d$, we can find polynomials $P_d(n)$ interpolating the values $H_{d + 3}[d],\ldots,H_{2d + 3}[d]$ (see Table~\ref{Tabl0}). Formula~\eqref{conjGnd} then gives the correct values $H_{n}[d]$ for the $n >  2d + 3$ that appear in Table~\ref{Tablall}.

\begin{figure}[h!]
\begin{center}
\begin{tabular}{|c|l|}
\hline
$d$ {\rule{0pt}{3.2ex}}{\rule[-1.8ex]{0pt}{0pt}}& $P_d(n)$  \\
\hline\hline
$0$ {\rule{0pt}{2.5ex}}{\rule[-1.8ex]{0pt}{0pt}}& $1$ \\
\hline
$1$ {\rule{0pt}{2.5ex}}{\rule[-1.8ex]{0pt}{0pt}}& $n - 3$\\
\hline
$2$ {\rule{0pt}{2.5ex}}{\rule[-1.8ex]{0pt}{0pt}}& $5n^2 - 34n + 52$ \\
\hline
$3$ {\rule{0pt}{2.5ex}}{\rule[-1.8ex]{0pt}{0pt}}& $\frac{3}{2}(32n^3 - 367n^2 + 1307n - 1392)$ \\
\hline
$4$ {\rule{0pt}{2.5ex}}{\rule[-1.8ex]{0pt}{0pt}}& $\frac{1}{6}(4138n^4 - 70496n^3 + 419969n^2 - 1002721n + 751506)$ \\
\hline
$5$ {\rule{0pt}{2.5ex}}{\rule[-1.8ex]{0pt}{0pt}}& $\frac{15}{2}(1766n^5 - 41536n^4 + 365383n^3 - 1459754n^2 + 2493951n - 1221210)$ \\
\hline
$6$ {\rule{0pt}{2.5ex}}{\rule[-1.8ex]{0pt}{0pt}}& $\frac{1}{20}(6377776n^6 - 197270496n^5 + 2385358645n^4 - 14079371820n^3 + 40768140229n^2$ \\ {\rule{0pt}{2.5ex}}{\rule[-1.8ex]{0pt}{0pt}}& \quad $- 48501218874n + 9190581840)$ \\
\hline
$7$ {\rule{0pt}{2.5ex}}{\rule[-1.8ex]{0pt}{0pt}}& $\frac{7}{40}(52783968n^7-2073237920n^6+32861488488n^5-266767548125n^4+1152274787382n^3$ \\ {\rule{0pt}{2.5ex}}{\rule[-1.8ex]{0pt}{0pt}}& \quad $-2422330473875n^2+1627352271762n+713960984880)$ \\
\hline
$8$ {\rule{0pt}{2.5ex}}{\rule[-1.8ex]{0pt}{0pt}}& $\frac{5}{56}(3504015400n^8-170178415232n^7+3416784683368n^6-36378043869776n^5+217683482202865n^4$ \\ {\rule{0pt}{2.5ex}}{\rule[-1.8ex]{0pt}{0pt}}& \quad $-701967732545618n^3+976060154881647n^2+86564417888466n-937368548035920)$ \\ 
\hline
\end{tabular}
\caption{\label{Tabl0} Polynomials appearing in Conjecture~\ref{Gndconj} for $H_n[d]$.}
\end{center}
\end{figure}

\subsection{Conjectures for Masur--Veech volumes with fixed \texorpdfstring{$g$}{g}}
\label{conjMVsecmnc}
\renewcommand{\figurename}{Table}

For fixed $g$, the number of a priori non-zero coefficients $F_{g,n}[d_1,\ldots,d_n]$ grows faster than any polynomial in $n$, and the Virasoro constraint determines them by induction on $2g - 2 + n$. If one is interested primarily in obtaining $F_{g,n}[0,\ldots,0]$, there is a more efficient way to use the Virasoro constraints.

\medskip

In genus zero, we already saw that it implies recursion for the values of $F_{0,n}$ on partitions with one row. More generally, if $k > 0$ and we specialise $d_{k + 1} = \cdots = d_{n} = 0$, we also get a recursion expressing the values of $F_{0,n}$ on partitions with at most $k$ rows, in terms of the values of $F_{0,n'}$ for $n' < n$ on partitions with at most $k$ rows. The same specialisation in genus $g > 0$ expresses the values of $F_{g,n}$ on partitions with at most $k$ rows in terms of the values of $F_{g',n'}$ on partitions with at most $k$ rows for $2g' - 2 + n' < 2g - 2 + n$, and the values of $F_{g - 1,n + 1}$ on partitions with at most $k + 1$ rows. In this way, reaching $F_{g,n}[0,\ldots,0]$ only requires the computation of a certain number of values of the $F_{g,n}$'s which grows polynomially with $n$.

\medskip

Based on numerical data, we could guess general formulas for $MV_{g,n}$ for low values of $g$ but all $n$. We start by defining the generating series
\begin{equation}
\label{Gseri}\mathscr{H}_{g}(x) = \sum_{n \geq 1} \frac{H_{g,n}[0]}{\pi^{6g - 6 + 2n}}\,\frac{x^{n}}{n!} + \delta_{g,0}\,\mathscr{A}(x)
\end{equation}
where we allow for a conventional choice of a quadratic polynomial $\mathscr{A}(x)$. Theorem~\ref{MV0n} implies that we can take
$$
\mathscr{H}_0(x) = -\frac{8}{15}(1 - x)^{5/2},\qquad \mathscr{A}(x) = \frac{8}{15} - \frac{4}{3}x + x^2 
$$
where the role of $\mathscr{A}(x)$ is to cancel the coefficients of $x^0,x^1,x^2$ in the expansion of $\mathscr{H}_{0}(x)$, since they do not correspond to Masur--Veech volumes. The \textsc{maple} command \textsf{guessgf} recognises that the values of $H_{1,n}[0]$ that we have computed for $n = 1,\ldots,20$ match with the expansion of 
$$
\mathscr{H}_{1}(x) = -\frac{\ln\sqrt{1 - x}}{12} - \frac{\sqrt{1 - x}}{12} + \frac{1}{12}.
$$
It suggests that, for $g \geq 2$, $\mathscr{H}_{g}(x)$ could be a polynomial of degree $5(g - 1)$ in the variable $(1 - x)^{-1/2}$ with rational coefficients. Although the command \textsf{guessgf} fails for $g \geq 2$, we are on good tracks. If we attempt to match this ansatz for $g = 2$ and $3$ with the data of Table~\ref{TablGnd}, we discover that this polynomial has valuation $4(g - 1)$. This leads us to guess that the generating series we look for may have the form
\begin{equation}
\label{GGx}\mathscr{H}_{g}(x) = -\frac{\ln y}{12}\,\delta_{g,1} + y^{5(1 - g)}\,Q_{g}(y)\qquad {\rm with}\qquad y = \sqrt{1 - x}
\end{equation}
where $Q_g$ is a polynomial of degree $g$ with rational coefficients. We then determine the polynomials $Q_g$ such that \eqref{GGx} reproduces correctly the values $H_{g,n}[0]$ for $n \leq g + 1$, and checked that they predict the correct values $H_{g,n}[0]$ for higher $n$ that we computed in Table~\ref{Tablall} with the recursion of Section~\ref{Virrec}.
\begin{figure}
\begin{center}
\begin{tabular}{|c|l|}
\hline {\rule{0pt}{3.2ex}}{\rule[-1.8ex]{0pt}{0pt}}
$g$ {\rule{0pt}{2.5ex}}{\rule[-1.8ex]{0pt}{0pt}}& $Q_g(y)$ \\
\hline\hline
$0$ {\rule{0pt}{2.5ex}}{\rule[-1.8ex]{0pt}{0pt}}& $-\frac{8}{15}$ \\
\hline
$1$ {\rule{0pt}{2.5ex}}{\rule[-1.8ex]{0pt}{0pt}}& $-\frac{1}{12}y + \frac{1}{12} $ \\
\hline
$2$ {\rule{0pt}{2.5ex}}{\rule[-1.8ex]{0pt}{0pt}}& $\frac{7}{11520}y^2 + \frac{5}{2304}y + \frac{7}{2880}$ \\
\hline
$3$ {\rule{0pt}{2.5ex}}{\rule[-1.8ex]{0pt}{0pt}}&  $\frac{31}{516096}y^3 + \frac{17}{36864}y^2 + \frac{223}{165888}y + \frac{245}{165888}$ \\
\hline
$4$ {\rule{0pt}{2.5ex}}{\rule[-1.8ex]{0pt}{0pt}}&  $\frac{127}{5242880}y^4 + \frac{2521}{8847360}y^3 + \frac{24551}{17694720}y^2 + \frac{8785}{2654208}y + \frac{259553}{79626240}$ \\
\hline
$5$ {\rule{0pt}{2.5ex}}{\rule[-1.8ex]{0pt}{0pt}}& $\frac{6643}{301989888}y^5 + \frac{10949}{31457280}y^4 + \frac{1352317}{566231040}y^3 + \frac{1132327}{127401984}y^2 + \frac{9147257}{509607936}y + \frac{1337455}{84934656}$ \\
\hline
$6$ {\rule{0pt}{2.5ex}}{\rule[-1.8ex]{0pt}{0pt}}& $\frac{24046109}{676457349120}y^6+ \frac{1332533}{1887436800}y^5 +\frac{9522007931}{1522029035520}y^4   + \frac{26920481}{849346560}y^3$ \\
{\rule{0pt}{2.5ex}}{\rule[-1.8ex]{0pt}{0pt}}& $+ \frac{15810556787}{163074539520}y^2 + \frac{2079231455}{12230590464}y + \frac{245229441961}{1834588569600}$ \\
\hline
\end{tabular}
\end{center}
\caption{\label{Tabl2} Conjectural generating series for Masur--Veech polynomials.}
\end{figure}

\medskip

Empirically, we recognise the top coefficients of these polynomials
\begin{equation}
\label{topcoeffQg}{\rm coeff}\,\,{\rm of}\,\,y^{g}\,\,{\rm in}\,\,Q_g(y) = 2^{3 - 2g}\,(4g - 7)!!\,b_g
\end{equation}
where $b_g = (2^{1 - 2g} - 1)(-1)^{g}\frac{B_{2g}}{2g!}$ are the coefficients of the expansion 
$$
\frac{z/2}{\sin(z/2)} = \sum_{g \geq 0} b_g\,z^{2g}.
$$
Equation~\eqref{topcoeffQg} is also valid for $g = 0$ and $g = 1$, if we use the values $(-7)!! = -\tfrac{7}{15}$ and $(-3)!! = -1$ given by the analytic continuation of the double factorial via the Gamma function, and if $g \geq 2$ we discard the coefficients of $x^0$, $x^1$ and $x^2$.

\medskip

Returning to the coefficients of the generating series and then to the Masur--Veech volumes \eqref{MVnorm}, Equation~\eqref{GGx} is equivalent to the following structure for the Masur--Veech volumes. Let us first define
$$
\gamma_{k} = \frac{1}{4^{k}} \binom{2k}{k}.
$$

\begin{conj}
\label{conjMV} For any $g \geq 0$, there exist polynomials $p_g,q_g \in \mathbb{Q}[n]$ of degrees
$$ 
\deg p_g = \left\{\begin{array}{lll} \lfloor (g - 1)/2 \rfloor & & {\rm if}\,\,g > 0 \\ -\infty & & {\rm if}\,\,g = 0 \end{array}\right. \qquad {\rm and}\qquad \deg q_g = \lfloor g/2 \rfloor
$$  
such that, for any $n \geq 0$,
\begin{equation}
\label{MVgnconj} \frac{MV_{g,n}}{\pi^{6g - 6 + 2n}} = 2^{n}\,\frac{(2g - 3 + n)!(4g - 4 + n)!}{(6g - 7 + 2n)!}\big(p_{g}(n) + \gamma_{2g - 3 + n}\,q_g(n)\big).
\end{equation}
\end{conj}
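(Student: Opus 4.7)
The approach is to use the intersection-theoretic formulation of Proposition~\ref{interMV}: setting $L_1 = \cdots = L_n = 0$ gives
\[
  H_{g,n}[0] = \int_{\overline{\mathfrak{M}}_{g,n}} \exp(\Xi_{g,n}),
\]
so the task reduces to computing this integral as a function of $n$ at fixed $g$. Equivalently, the conjecture asserts that the generating series $\mathscr{H}_g(x)$ of~\eqref{Gseri} takes the algebraic form $(1-x)^{5(1-g)/2}\, Q_g(\sqrt{1-x})$ (plus the logarithmic contribution for $g = 1$) with $Q_g \in \mathbb{Q}[y]$ of degree $g$, so it is enough to establish this closed form for $\mathscr{H}_g$.

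I would proceed by induction on $g$, with base case $g = 0$ supplied by Theorem~\ref{MV0n}. For the inductive step, I would study the behavior of the class $\Xi_{g,n}$ under the forgetful map $\pi \colon \overline{\mathfrak{M}}_{g,n+1} \to \overline{\mathfrak{M}}_{g,n}$: since $\Xi_{g,n} = \jmath_{*}\mathcal{E}(-\psi_{\bullet} - \psi_{\circ})$ is a pushforward from boundary divisors, standard excess-intersection formulas relate $\Xi_{g,n+1}$ to $\pi^{*}\Xi_{g,n}$ plus an explicit correction supported on boundary strata containing the extra marked point. Combining this with the string and dilaton equations for $\psi$-class integrals should produce a recursion expressing $H_{g,n+1}[0]$ in terms of $H_{g,n}[0]$, $H_{g-1,n+2}[0]$, and pairwise products $H_{g_1,n_1}[0]\,H_{g_2,n_2}[0]$ with $g_1 + g_2 = g$ and $n_1 + n_2 = n$. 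Passing to the generating series $\mathscr{H}_g$, this recursion becomes an ODE or algebraic equation whose coefficients are determined by the lower-genus data $\mathscr{H}_{h}$ for $h < g$; one then checks that its unique solution with the required branch-type singularity at $x = 1$ has the conjectured form, which also determines the degree of $Q_g$.

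The main obstacle is twofold: first, showing that the interaction between the Bernoulli numbers appearing in $\mathcal{E}$ and the zeta values produced by the edge weights $\ell\,\dd\ell/(e^{\ell} - 1)$ conspires to produce only a single $\sqrt{1 - x}$ singularity at $x = 1$ (which is precisely what the $\gamma_{2g-3+n}$ term in~\eqref{MVgnconj} encodes), with no worse branch points or essential singularities; and second, pinning down the precise degree $g$ of $Q_g$ in $y$. The first point is ultimately dictated by the simple branching of the spectral curve $x(z) = z^2/2$ at its unique ramification point (Proposition~\ref{EOMV}), so an alternative and possibly cleaner route would be to work directly with the Eynard--Orantin representation: the $\mathbb{Z}$-periodicity of $\omega_{0,2}^{{\rm MV}}$ in $z_1 - z_2$ constrains the Taylor expansion of $\omega_{g,n}^{{\rm MV}}$ at $z_i = 0$ to a trigonometric form which should translate directly into the conjectured algebraic shape for $\mathscr{H}_g$. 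As a consistency check, the leading coefficient~\eqref{topcoeffQg}, involving $\tfrac{z/2}{\sin(z/2)}$, could be verified by a targeted residue computation on the spectral curve; matching it would be strong evidence that the chosen recursion indeed captures the correct algebraic structure.
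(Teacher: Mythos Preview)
The paper does not prove this statement: it is explicitly labeled a \emph{Conjecture}, supported only by numerical data (the authors fit the ansatz \eqref{GGx} to the computed values of $H_{g,n}[0]$ for $g \leq 6$ and check consistency in the range of Table~\ref{Tablall}). The only case established in the paper is $g=0$, which follows from Theorem~\ref{MV0n}. The paper does note in a remark that the conjecture was subsequently proved by Chen, M\"oller and Sauvaget via intersection-theoretic methods, but no argument is given here. So there is no proof in the paper to compare your proposal against.

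Your proposal is a plausible strategy outline rather than a proof. The two central steps you defer --- deriving an explicit recursion for $\mathscr{H}_g$ from the behavior of $\Xi_{g,n}$ under the forgetful map, and then showing that this recursion forces the precise algebraic form $y^{5(1-g)}Q_g(y)$ with $\deg Q_g = g$ --- are exactly where all the content lies, and you have not indicated how either would actually go through. In particular, the string and dilaton equations by themselves do not close up to a recursion in $n$ for the constant term $H_{g,n}[0]$: applying them to $\exp(\Xi_{g,n})$ produces boundary contributions involving \emph{non-constant} coefficients $F_{g',n'}[d_1,\ldots]$ with $d_i > 0$, so you would need control over the full Masur--Veech polynomials, not just their constant terms. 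Your alternative route via the periodicity of $\omega_{0,2}^{{\rm MV}}$ is suggestive but equally unexecuted; the $\mathbb{Z}$-periodicity constrains each $\omega_{g,n}^{{\rm MV}}$ in its $z$-variables, but translating this into a statement about the generating series in $n$ is a separate (and nontrivial) step. In short, you have correctly identified that an intersection-theoretic approach is the natural one --- and indeed this is the route taken in the eventual proof --- but the proposal as written does not contain the mechanism that makes it work.
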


For $g = 0$, formula \eqref{MVgnconj} agrees with Theorem~\ref{MV0n} if we choose $p_0(n) = 0$ and $q_0(n) = \tfrac{1}{4}$. Up to genus $5$, the conjecture is numerically true in the range of Table~\ref{Tablall} for the following choice of polynomials (which can be deduced from Table~\ref{Tabl2}).
\begin{figure}[h!]
\begin{center}
\begin{tabular}{|c|l|l|}
\hline
$g$ {\rule{0pt}{3.2ex}}{\rule[-1.8ex]{0pt}{0pt}}& $p_g(n)$ & $q_g(n)$ \\
\hline\hline
$0$ {\rule{0pt}{2.5ex}}{\rule[-1.8ex]{0pt}{0pt}}& $0$ & $\tfrac{1}{4}$ \\
\hline
$1$ {\rule{0pt}{2.5ex}}{\rule[-1.8ex]{0pt}{0pt}}& $\frac{1}{6}$ & $\frac{1}{6}$ \\
\hline
$2$ {\rule{0pt}{2.5ex}}{\rule[-1.8ex]{0pt}{0pt}}& $\frac{5}{36}$ & $\frac{28}{135}n + \frac{7}{18}$  \\
\hline
$3$ {\rule{0pt}{2.5ex}}{\rule[-1.8ex]{0pt}{0pt}}& $\frac{245}{3888}n + \frac{643}{1944}$ & $\frac{1784}{8505}n + \frac{6523}{8505}$ \\
\hline
$4$ {\rule{0pt}{2.5ex}}{\rule[-1.8ex]{0pt}{0pt}}& $\frac{1757}{23328}n + \frac{95413}{194400}$ & $\frac{1186528}{23455575}n^2 + \frac{40882696}{54729675}n + \frac{5951381}{2296350}$  \\
\hline
$5$ {\rule{0pt}{2.5ex}}{\rule[-1.8ex]{0pt}{0pt}}& $\frac{38213}{3359232}n^2 + \frac{4218671}{16796160}n + \frac{63657059}{48988800}$ & $\frac{83632064}{1196234325}n^2 + \frac{50144427856}{41868201375}n + \frac{63849553}{12629925}$ \\
\hline
$6$ {\rule{0pt}{2.5ex}}{\rule[-1.8ex]{0pt}{0pt}}& $\frac{59406613}{3325639680}n^2 + \frac{11411443987}{27713664000}n + \frac{61888029881}{26453952000}$ & $\frac{2562397434368}{352859220016875}n^3 + \frac{185272285982144}{640374140030625}n^2 + \frac{9008283258227896}{2470014540118125}n + \frac{1636294928657}{110827591875}$ \\
\hline
\end{tabular}
\caption{\label{Tabl3} Polynomials conjecturally appearing in the Masur--Veech volumes.}
\end{center}
\end{figure}

\subsection{Conjectures for area Siegel--Veech with fixed \texorpdfstring{$g$}{g}}
\label{ConjSVsec}
Area Siegel--Veech constants $SV_{g,n}$ can be computed from Masur--Veech volumes thanks to Goujard's formula, see Section~\ref{SGouj}. The correspondence between the notations of Section~\ref{SGouj} and the present one is $\mathscr{F}_{g}(x) = \mathscr{H}_{g}(x) - \delta_{g,0}\mathscr{A}(x)$. If we insert the conjectural formulas for the Masur--Veech volumes, we can obtain conjectural formulas for the area Siegel--Veech constants.

\begin{cor}
\label{conjSV} Assuming Conjecture~\ref{conjMV}, for any $g \geq 0$, there exist polynomials $p_g^{*},q_g^{*} \in \mathbb{Q}[n]$ with degrees
$$
\deg p_g^{*} = \left\{\begin{array}{lll} \lfloor (g + 3)/2 \rfloor & & {\rm if}\,\,g > 0 \\ -\infty & & {\rm if}\,\,g = 0\end{array}\right. \qquad {\rm and}\qquad \deg q_g^{*} = 1 + \lfloor g/2 \rfloor
$$
such that, for any $n \geq 0$ with $2g - 2 + n \geq 2$
$$
\frac{SV_{g,n}\cdot MV_{g,n}}{\pi^{6g - 8 + 2n}} = 2^{n}\,\frac{(2g - 3 + n)!(4g - 4 + n)!}{(6g - 7 + 2n)!}\bigg(\frac{p_g^{*}(n)}{2g - 3 + n} + \gamma_{2g - 3 + n}\,q_g^{*}(n)\bigg),
$$
or equivalently
$$
SV_{g,n} = \frac{1}{\pi^2}\,\frac{\frac{p_g^{*}(n)}{2g - 3 + n} + \gamma_{2g - 3 + n}\,q_g^{*}(n)}{p_g(n) + \gamma_{2g - 3 + n}\,q_g(n)}.
$$
\end{cor}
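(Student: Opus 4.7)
The plan is to derive Corollary~\ref{conjSV} directly from Goujard's recursion in its generating-series form (Corollary~\ref{cor2Sv}), combined with the closed Laurent form of the Masur--Veech generating series implied by Conjecture~\ref{conjMV}. A direct expansion of $\sqrt{\mathscr{Z}_\hslash} = \exp(\tfrac{1}{2}\sum_g \hslash^{g-1}\mathscr{F}_g)$ followed by two $x$-derivatives gives
\[
\frac{\hslash^2 \partial_x^2 \sqrt{\mathscr{Z}_\hslash}}{2\sqrt{\mathscr{Z}_\hslash}} = \sum_{g \ge 0}\hslash^g\bigg(\frac{1}{8}\sum_{g_1+g_2=g}\mathscr{F}'_{g_1}\mathscr{F}'_{g_2} + \frac{1}{4}\mathscr{F}''_{g-1}\bigg),
\]
the two types of contributions corresponding respectively to non-separating and separating terms in Goujard's formula.

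Under Conjecture~\ref{conjMV} (equivalently, in the form observed in Section~\ref{conjMVsecmnc}), each $\mathscr{H}_g(x)$ admits the closed form $y^{5(1-g)}Q_g(y) - \tfrac{\delta_{g,1}}{12}\ln y$ with $y = \sqrt{1-x}$ and $Q_g \in \mathbb{Q}[y]$ of degree $g$. Since $\partial_x = -\tfrac{1}{2y}\partial_y$ preserves $\mathbb{Q}[y,y^{-1}]$, every $\mathscr{F}_g^{(k)}$ is a Laurent polynomial in $y$: for $g \ge 2$ one has $\mathscr{F}'_g \in y^{3-5g}\mathbb{Q}[y]_{\le g}$ and $\mathscr{F}''_g \in y^{1-5g}\mathbb{Q}[y]_{\le g}$, while $\mathscr{F}'_0$ is a polynomial in $y$ of degree $3$ and $\mathscr{F}'_1 \in y^{-2}\mathbb{Q}[y]_{\le 1}$ (the $\ln y$ from $\mathscr{H}_1$ differentiates away). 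Plugging into the display above yields a Laurent polynomial of the form $y^{3-5g}\mathcal{R}_g(y)$ with $\mathcal{R}_g \in \mathbb{Q}[y]$ of degree at most $g+3$; the minimum exponent $y^{3-5g}$ is saturated by the boundary cross-terms $(g_1,g_2)\in\{(0,g),(g,0)\}$, whereas the remaining contributions sit within $y^{6-5g}\mathbb{Q}[y]_{\le g}$.

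To read off the $x^n/n!$ coefficient one separates the exponents of $y$ by parity. Even exponents $y^{-2K}$ expand as $(1-x)^{-K}$ with $n!\binom{K+n-1}{n} = (K+n-1)!/(K-1)!$, which reassembles (after absorption into the factorial prefactor) to the $p_g^*(n)/(2g-3+n)$ piece; odd exponents $y^{-2K-1}$ expand via the identity $n!\binom{K+n-\tfrac{1}{2}}{n} = (\gamma_{K+n}/\gamma_K)\cdot(K+n)!/K!$ into the $\gamma_{2g-3+n}q_g^*(n)$ piece. The normalising factor $2^n(2g-3+n)!(4g-4+n)!/(6g-7+2n)!$ then emerges from the Vandermonde-type identity $1/\binom{2k+m}{k} = k!(k+m)!/(2k+m)!$ with $k = 2g-3+n$ and $m = 2g-1$, combined with the rescaling \eqref{MVnorm} between $MV_{g,n}$ and $H_{g,n}[0]$. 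The claimed degree bounds $\deg p^*_g = \lfloor(g+3)/2\rfloor$ and $\deg q^*_g = 1 + \lfloor g/2\rfloor$ are obtained by counting how many of the $g+4$ admissible exponents $3-5g, 4-5g, \ldots, 6-4g$ have each parity and determining the polynomial degree in $n$ each contributes after the factorial cancellation.

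The main obstacle will be this final bookkeeping step: verifying that the leading coefficients of $p_g^*$ and $q_g^*$ do not vanish at the claimed top degrees, and exhibiting the denominator $2g-3+n$ of the $p_g^*$ piece as a natural byproduct of the index shift between $K$ and $2g-3+n$. This parity-sensitive analysis is mechanical but delicate; it becomes substantially cleaner if one inducts on $g$ and propagates only the degree bounds through Corollary~\ref{corSv}, seeding the induction from the low-genus base cases tabulated in Section~\ref{conjMVsecmnc}.
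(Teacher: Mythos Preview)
Your approach is essentially the same as the paper's: both proofs feed the Laurent polynomial form $\mathscr{H}_g(x) = y^{5(1-g)}Q_g(y) - \tfrac{\delta_{g,1}}{12}\ln y$ (with $y=\sqrt{1-x}$) into the generating-series version of Goujard's recursion, observe that $\mathscr{S}_g(x) = y^{3-5g}\mathcal{R}_g(y)$ with $\mathcal{R}_g$ a rational polynomial of degree $g+3$, and then split the exponents of $y$ by parity to extract the $p_g^*$ and $q_g^*$ pieces via the standard binomial expansions of $(1-x)^{-b-1}$ and $(1-x)^{-b-1/2}$. Your honest flag about verifying that the top coefficients of $p_g^*$ and $q_g^*$ do not vanish is apt --- the paper simply asserts ``degrees as announced'' without checking this explicitly either, so you are not missing anything there.
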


The expression of the polynomials is displayed in Table~\ref{Tabl4}: it is deduced, after computation of the sums \eqref{eqGouj1}, from Table~\ref{Tabl3}.  For $g = 0$ the conjecture matches with Theorem~\ref{MV0n} with $p_0^{*} = 0$ and $q_1^* = \frac{n + 5}{24}$.

\begin{figure}[h!]
\begin{center}
\begin{tabular}{|c|l|l|}
\hline
$g$ {\rule{0pt}{3.2ex}}{\rule[-1.8ex]{0pt}{0pt}}& $p_g^{*}(n)$ & $q_g^{*}(n)$ \\
\hline
$0$ {\rule{0pt}{2.5ex}}{\rule[-1.8ex]{0pt}{0pt}}& $0$ & $\frac{n + 5}{24}$ \\
\hline
$1$ {\rule{0pt}{2.5ex}}{\rule[-1.8ex]{0pt}{0pt}}& $\frac{1}{36}n^2 - \frac{1}{36}n$ & $\frac{1}{36}n + 1$ \\
\hline
$2$ {\rule{0pt}{2.5ex}}{\rule[-1.8ex]{0pt}{0pt}}& $\frac{5}{216}n^2 + \frac{20}{27}n + \frac{811}{1080}$ & $\frac{14}{405}n^2 - \frac{35}{324}n + \frac{329}{540}$ \\
\hline
$3$ {\rule{0pt}{2.5ex}}{\rule[-1.8ex]{0pt}{0pt}}& $\frac{245}{23328}n^3 - \frac{143}{7776}n^2 + \frac{355}{1458}n + \frac{11861}{9720}$ & $\frac{892}{25515}n^2 + \frac{52907}{51030}n + \frac{69617}{18255}$ \\
\hline
$4$ {\rule{0pt}{2.5ex}}{\rule[-1.8ex]{0pt}{0pt}}& $\frac{1757}{139968}n^3 + \frac{1428289}{3499200}n^2 + \frac{514241}{129600}n + \frac{4368611}{388800}$ & $\frac{593264}{70366725}n^3 - \frac{322892}{164189025}n^2 + \frac{480686827}{1970268300}n + \frac{14820167}{4592700}$ \\
\hline
\multirow{2}{*}{$5$} {\rule{0pt}{2.5ex}}{\rule[-1.8ex]{0pt}{0pt}}& $\frac{38213}{20155392}n^4 + \frac{867413}{50388480}n^3 + \frac{353997223}{3527193600}n^2$ & \multirow{2}{*}{$\frac{41816032}{3588702975}n^3 + \frac{48489191848}{125604604125}n^2 + \frac{1269997838947}{251209208250}n + \frac{957632944}{44778825}$}  \\
{\rule{0pt}{2.5ex}}{\rule[-1.8ex]{0pt}{0pt}}& $+ \frac{124054303}{55112400}n + \frac{128194553}{10497600}$ &  \\
\hline
\multirow{2}{*}{$6$} {\rule{0pt}{2.5ex}}{\rule[-1.8ex]{0pt}{0pt}}& $\frac{59406613}{19953838080}n^4+ \frac{63937638461}{498845952000}n^3+ \frac{8797861897271}{3491921664000}n^2$ & $\frac{1281198717184}{1058577660050625}n^4+ \frac{931707432208544}{51870305342480625}n^3+ \frac{6702081021375716}{51870305342480625}n^2$ \\
{\rule{0pt}{2.5ex}}{\rule[-1.8ex]{0pt}{0pt}}& $\frac{7511464839971}{317447424000}n+\frac{10221213098113}{123451776000}$ & $+ \frac{302389725584289713}{103740610684961250}n+\frac{1719710639461433}{79130900598750}$ \\
\hline
\end{tabular}
\caption{\label{Tabl4} Polynomials conjecturally appearing in the numerator of $SV_{g,n}$.}
\end{center}
\end{figure}

\begin{proof} We already mentioned that an equivalent form of Conjecture~\ref{conjMV} is
$$
\mathscr{H}_{g}(x) = \sum_{n \geq 0} \frac{x^n}{n!}\,\frac{H_{g,n}[0]}{\pi^{6g - 6 + 2n}} = -\frac{\ln y}{12}\,\delta_{g,1} + y^{5(1 - g)}\,Q_g(y),\qquad y = \sqrt{1 - x}.
$$
We recall from Table~\ref{Tabl2} that $Q_0(y) = -\tfrac{8}{15}$ and $Q_1(y) = \tfrac{1 - y}{12}$. Therefore
\begin{equation}
\label{Sgenu} \partial_{x}\mathscr{H}_{g}(x) = y^{3 - 5g}\,Q_{g;1}(y),\qquad \partial_{x}^2 \mathscr{H}_{g}(x) = y^{1 - 5g}\,Q_{g;2}(y),\qquad \partial_{x}\mathscr{A}(x) \partial_{x}\mathscr{H}_{g}(x) = y^{3 - 5g}\,Q_{g + 2;3}(y)
\end{equation}
where $Q_{g;i}$ are polynomials of degree $g$ with rational coefficients:
\begin{align*}
	Q_{g;1}(y) & = \left\{\begin{array}{lll} \frac{16}{15} & & {\rm if}\,\,g = 0 \\[3pt]  \frac{1 + y}{24} & & {\rm if}\,\,g = 1 \\[3pt] \frac{5}{2}(g - 1)Q_g(y) -\frac{1}{2}Q_g'(y) & & {\rm if}\,\,g \geq 2 \end{array}\right. \\[2ex]
	Q_{g;2}(y) & =  \left\{\begin{array}{lll} -\frac{16}{10} & & {\rm if}\,\,g = 0 \\[3pt] \frac{1 + 2y}{48} & & {\rm if}\,\,g = 1 \\[3pt]  \frac{5}{4}(g - 1)(5g - 3)Q_g(y) + \frac{1}{4}(9 - 10g)Q_g'(y) + \frac{1}{4}y^2 Q_g''(y) & & {\rm if}\,\,g \geq 2 \end{array}\right. \\[2ex]
	Q_{g + 2;3}(y) & = 2\big(\tfrac{1}{3} - y^2\big)\,Q_{g;1}(y).
\end{align*}
With $\mathscr{F}_{g}(x) = \mathscr{H}_{g}(x) - \delta_{g,0}\mathscr{A}(x)$, we recall from the proof of Corollary~\ref{cor2Sv} that
\begin{align*}
	\mathscr{S}_{g}(x) & = \sum_{\substack{n \geq 1 \\ 2g + n > 0}} \frac{SV_{g,n}\cdot H_{g,n}[0]}{\pi^{6g - 4 + 2n}}\,\frac{x^{n}}{n!} \\
	& = \frac{1}{4}\bigg( \partial^2_{x} \mathscr{F}_{g - 1}(x) + \frac{1}{2} \sum_{g_1 + g_2 = g} \partial_{x}\mathscr{F}_{g_1}(x)\cdot \partial_{x}\mathscr{F}_{g_2}(x)\bigg).
\end{align*}
From \eqref{Sgenu} we deduce for any $g \geq 0$ the existence of $R_{g + 3} \in \mathbb{Q}_{g}[y]$ such that $\mathscr{S}_{g}(x) = y^{3 - 5g}\,R_{g + 3}(y)$, that is
\begin{equation}
\label{Sgggxxxx}S_{g}(x) = \sum_{k = 0}^{g + 3} \frac{r_{g,k}}{(1 - x)^{2g - 3 + k/2}}
\end{equation}
for some rational numbers $r_{g,k} \in \mathbb{Q}$. For $g \geq 2$, \eqref{Sgggxxxx} contains only negative powers of $y = \sqrt{1 - x}$. From the expansions
\begin{align*}
\frac{1}{(1 - x)^{b + 1}} & = \sum_{n \geq 0} \frac{(b + n)!}{b!}\,\frac{x^n}{n!}, \\
\frac{1}{(1 - x)^{b + 1/2}} & = \sum_{n \geq 0} \frac{b!}{2b!}\,\frac{(2b + 2n)!}{(b + n)!}\,\frac{(x/4)^{n}}{n!}, 
\end{align*}
it easily follows that
$$
SV_{g,n}\,H_{g,n}[0] = (2g - 4 + n)!\,\tilde{p}_{g}^*(n) + \frac{(4g - 6 + 2n)!}{4^{2g - 3 + n}(2g - 3 + n)!}\tilde{q}_{g}^*(n)
$$
for some polynomials $\tilde{p}_{g}^*$ and $\tilde{q}_{g}^*$ with rational coefficients and degrees as announced. Multiplying by the prefactor of Equation~\eqref{MVnorm} yields the claim, with polynomials $p_g^*$ and $q_g^*$ differing from $\tilde{p}_{g}^*$ and $\tilde{q}_{g}^*$ by prefactors that only depend on $g$. The cases $g = 0$ and $g = 1$ can be treated separately, with the same conclusion.
\end{proof}

\subsection{Conjectural asymptotics for fixed \texorpdfstring{$g$}{g} and large \texorpdfstring{$n$}{n}}
\label{Sasymss}
Let us examine the asymptotics when $n \rightarrow \infty$ assuming the conjectural formulas for Masur--Veech volumes and area Siegel--Veech constants. Since $\gamma_{k} \sim (\pi k)^{-1/2}$ when $k \rightarrow \infty$, we obtain when $n \rightarrow \infty$
\begin{equation}
\label{MVasym} MV_{g,n} \sim 2^{-n}\,\pi^{6g - 6 + 2n + \epsilon(g)/2}\,n^{g/2}\,m_g,\qquad \epsilon(g) = \left\{\begin{array}{lll} 0 & & {\rm if}\,\,g\,\,{\rm is}\,\,{\rm even} \\ 1 & & {\rm if}\,\,g\,\,{\rm is}\,\,{\rm odd} \end{array}\right.\,,
\end{equation}
where $2^{6g - 7}m_g \in \mathbb{Q}$ is the top coefficient of $q_g$ if $g$ is even and the top coefficient of $p_g$ if $g$ is odd, see Table~\ref{Tablasym}. We observe that the coefficients which we could recognise in \eqref{topcoeffQg} are not relevant in this leading asymptotics, as they rather appear proportional to the constant term in $p_g$ or $q_g$. For the area Siegel--Veech constants, we find when $n \rightarrow \infty$
\begin{equation}
\label{SVasym} SV_{g,n} = \frac{n + 5 - 5g}{6\pi^2} + \frac{s_{g}}{\pi^{3/2 + \epsilon(g)}n^{1/2}} + O(n^{-1}),
\end{equation}
where $s_{g} \in \mathbb{Q}$ are given in Table~\ref{Tablasym} for $g \leq 5$.

\medskip 

By~\cite[Theorem 2]{EKZ} we have that
\begin{equation}
\label{EKZform}
\frac{\pi^2}{3} SV_{g,n} = \frac{n + 5 - 5g}{18} + \Lambda^+_{g,n},
\end{equation}
where $\Lambda^+_{g,n}$ are the sum of the $g$ Lyapunov exponents of the
Hodge bundle along the Teichm\"uller flow on the moduli space of
area one quadratic differentials $Q^1\mathfrak{M}_{g,n}$. In
particular $\Lambda^+_{g,n} \in [0,g]$ and we can observe the coincidence
of the main term in~\eqref{SVasym} and~\eqref{EKZform}.
Based on extensive numerical
experiments, Fougeron~\cite{Fougeron}
conjectured that for each $g$ we have $\Lambda^+_{g,n} = O(n^{-1/2})$ as $n \to \infty$.
The conjectural asymptotics~\eqref{SVasym} provides a refined version of
Fougeron's conjecture.

\begin{figure}
\begin{center}
\begin{tabular}{|c|cc|} \hline $g$  {\rule{0pt}{3.2ex}}{\rule[-1.8ex]{0pt}{0pt}}& $m_g$ & $s_g$ \\
\hline\hline
$0$  {\rule{0pt}{2.5ex}}{\rule[-1.8ex]{0pt}{0pt}}& $32$ & $0$ \\
$1$  {\rule{0pt}{2.5ex}}{\rule[-1.8ex]{0pt}{0pt}}& $\frac{1}{3}$ &$6$ \\
$2$  {\rule{0pt}{2.5ex}}{\rule[-1.8ex]{0pt}{0pt}}& $\frac{7}{1080}$ & $\frac{225}{56}$ \\
$3$  {\rule{0pt}{2.5ex}}{\rule[-1.8ex]{0pt}{0pt}}& $\frac{245}{7962624}$ & $\frac{171264}{8575}$ \\
$4$  {\rule{0pt}{2.5ex}}{\rule[-1.8ex]{0pt}{0pt}}& $\frac{37079}{96074035200}$ & $\frac{24227775}{2712064}$ \\
$5$  {\rule{0pt}{2.5ex}}{\rule[-1.8ex]{0pt}{0pt}}& $\frac{38213}{28179280429056}$ & $\frac{85639233536}{2322395075}$ \\
$6$  {\rule{0pt}{2.5ex}}{\rule[-1.8ex]{0pt}{0pt}}& $\frac{5004682489}{369999709488414720000}$ & $\frac{19363429564990875}{1311947486396416}$ \\
\hline
\end{tabular}
\caption{\label{Tablasym} Constants in the conjectural asymptotics of $MV_{g,n}$ and $SV_{g,n}$.}
\end{center}
\end{figure}

\medskip

We notice that the power of $\pi$ appearing in the asymptotics depends on the parity of $g$.
Both for $MV_{g,n}$ and $SV_{g,n}$, we have an all-order asymptotic expansion in powers of $n^{-1/2}$ beyond the leading terms \eqref{MVasym}-\eqref{SVasym}.

\subsection{Conjectures \texorpdfstring{for $H_{1,n}[d]$}{in genus one}}

We can generate the numbers $H_{1,n}[d]$ in the following way (see Tables~\ref{TablG1nd}-\ref{TablG1nd2}).
\begin{itemize}
\item[(i)] We record the $H_{n}[d] = F_{0,n}[d,0,\ldots,0]$ computed in Section~\ref{Genus01row}.
\item[(ii)] The specialisation of the Virasoro constraints to genus zero and $d_3 = \cdots = d_n = 0$ gives a recursion (on the variable $n$) for $F_{0,n}[d_1,d_2,0,\ldots,0]$ using (i) as input.
\item[(iii)] The specialisation of the Virasoro constraints to genus one and $d_2 = \cdots = d_n = 0$ gives a recursion (on the variable $n$) for $H_{1,n}[d] = F_{1,n}[d,0,\ldots,0]$ using (i) and (ii) as input.
\end{itemize}
Notice that obtaining $H_{1,n}[d]$ requires from (ii) the knowledge of $F_{0,m}[d_1,d_2,0,\ldots,0]$ for arbitrary $d_1,d_2$ (they can be non-zero only for $d_1 + d_2 \leq m - 3$) and $m \leq n + 1$, and from (i) the knowledge of $H_{n'}[d]$ for arbitrary $d \leq m - 3$ and $n' \leq n$.

\medskip

The data we have generated leads us to propose the ansatz, for $n \geq d + 1$
\begin{equation}\label{G1ndansatz}
  H_{1,n}[d] = 2^{d - 2} \cdot (n - 1 - d)!\big(\rho_{d}(n - 1)\cdots(n - d) + \gamma_{n - 1 - d}\,r_{d}(n)\big)
\end{equation}
for some rational constant $\rho_{d}$ and some polynomial $r_{d}(n)$ of degree $d$ with rational coefficients. This formula makes sense even though the arguments of the factorials can be negative. Indeed, the first term yields $(n - 1)!\rho_{d}$, while for the second term, if $k$ is a negative integer, we use
$$
\lim_{m \rightarrow k} m!\,\gamma_{m} = \lim_{m \rightarrow k} \frac{\Gamma(2m + 1)}{\Gamma(m + 1)} = \frac{1}{2}\,\frac{(k + 1)!}{(2k + 1)!}.
$$
We wrote \eqref{G1ndansatz} in this form to stress the analogy with \eqref{MVgnconj}. Equation \eqref{G1ndansatz} for $d = 0$ indeed matches \eqref{MVgnconj} with the values $r_{0} = q_{1} = \tfrac{1}{6}$ and $\rho_0 = p_1 = \tfrac{1}{6}$ already found in Table~\ref{Tabl3}.

\medskip

For a fixed value of $d \in \{0,1,2,3,4,5\}$, we have determined $R_{d}$ and $\rho_{d}$ (Table~\ref{Tablunu}) by matching the values of $H_{1,d + 1}[d]$, $H_{1,d + 2}[d]$, \ldots, $H_{2d + 2}[d]$ and we checked that \eqref{G1ndansatz} predicts the correct values for $2d + 2 \leq n \leq 14$. We observed that formula \eqref{G1ndansatz} does not give the correct value for $n = d$. However, for this particular case, we prove in Section~\ref{Sgenus1} that $H_{1,n}[n] = \tfrac{1}{24}\,\frac{(2n + 1)!}{2^{n}n!}$ using stable graphs.

\begin{figure}
\begin{center}
\begin{tabular}{|c|c|l|}
\hline
$d$ {\rule{0pt}{3.2ex}}{\rule[-1.8ex]{0pt}{0pt}}& $\rho_{d}$ & $R_{d}(n)$ \\
\hline\hline
$0$ {\rule{0pt}{2.5ex}}{\rule[-1.8ex]{0pt}{0pt}}& $\frac{1}{6}$ & $\frac{1}{6}$ \\
\hline
$1$ {\rule{0pt}{2.5ex}}{\rule[-1.8ex]{0pt}{0pt}}& $\frac{1}{4}$ & $ \frac{1}{4}(n - 1)$ \\
\hline
$2$ {\rule{0pt}{2.5ex}}{\rule[-1.8ex]{0pt}{0pt}}& $\frac{25}{72}$ & $\frac{25}{72}n^2 - \frac{65}{72}n + \frac{35}{144}$ \\
\hline
$3$ {\rule{0pt}{2.5ex}}{\rule[-1.8ex]{0pt}{0pt}}& $\frac{7}{15}$ & $\frac{7}{15}n^3 - \frac{217}{96}n^2 + \frac{973}{480}n + \frac{413}{480}$ \\
\hline
$4$ {\rule{0pt}{2.5ex}}{\rule[-1.8ex]{0pt}{0pt}}& $\frac{2069}{3360}$ & $\frac{2069}{3360}n^4 - \frac{4009}{840}n^3 + \frac{60479}{6720}n^2 + \frac{1189}{840}n - \frac{12549}{2240}$ \\
\hline
$5$ {\rule{0pt}{2.5ex}}{\rule[-1.8ex]{0pt}{0pt}}& $\frac{9713}{12096}$ & $\frac{9713}{12096}n^5 - \frac{55033}{6048}n^4 + \frac{710501}{24192}n^3 - \frac{90299}{6048}n^2 - \frac{61105}{4032}n - \frac{259919}{4032}$ \\
\hline 
\end{tabular}
\caption{\label{Tablunu} Parameters of the conjectural formula \eqref{G1ndansatz} for $H_{1,n}[d]$ with $d \leq 5$.}
\end{center}
\end{figure}

\section{Topological recursion for counting of square-tiled surfaces}
\label{Sectinin}

In~\cite{Delecroix}, Formula~\eqref{MVgraphsss} in Theorem~\ref{thm:intro:1} for Masur--Veech volumes was derived using the asymptotics of the count of square-tiled surfaces. In this section, we introduce square-tiled surfaces with boundary and their associated generating series. We first show that Masur--Veech polynomials can be seen as certain asymptotics of square-tiled surface counting. Next, we show that the generating series of square-tiled surfaces satisfy a topological recursion, and that the topological recursion for the Masur--Veech polynomials derives from it by taking limits.

\subsection{Reminder on the number of ribbon graphs} \label{SecNorbury}

For $g,n \geq 0$ such that $2g - 2 + n > 0$ and $L_1,\ldots,L_n \in \mathbb{Z}_{+}$, let $P_{g,n}(L_1,\ldots,L_n)$ be the number of integral points in the combinatorial moduli space $\mathcal{M}_{g,n}^{{\rm comb}}(L_1,\ldots,L_n)$. The numbers $P_{g,n}(L_1,\ldots,L_n)$ have been extensively studied~\cite{CM92multi,CM92hint,Che93matrix,Norburylattice,ACNP1,ACNP2} and admits several equivalent definitions: it is the number of ribbon graphs of genus $g$ with $n$ labeled faces of perimeter $L_1,\ldots,L_n$; it is the number of maps without internal faces and $n$ labeled (unrooted) boundaries; it is the coefficient of $N^{2 - 2g - n}$ in the cumulant $\langle {\rm Tr}\,M^{L_1}\cdots {\rm Tr}\,M^{L_n} \rangle_{c}$ where $M$ is drawn from the Gaussian Unitary Ensemble of Hermitian matrices of size $N$. The function $P_{g,n}(L_1, \ldots, L_n)$ is a quasi-polynomial of $L_1,\ldots,L_n$. More precisely, it vanishes if the sum of the $L_i$ is odd and for each fixed even integer $k$ the function $P_{g,n}(L_1, \ldots, L_n)$ restricted to the set of integral $(L_1, \ldots, L_n)$ with exactly $k$ odd terms coincide with a polynomial. It can be obtained by topological recursion in the following way, either directly with respect to the lengths in the style of \eqref{TReqn}, or via generating series in the style of Eynard--Orantin.

\medskip

For a function $f \colon \mathbb{R}_{+}^{k} \rightarrow \mathbb{R}$, we denote
\begin{equation}\label{fzZ}
  f_{\mathbb{Z}}(x_1,\ldots,x_k)
  =
  \begin{cases}
    f(x) & \text{if }x_1,\ldots,x_k \in \mathbb{Z}_{+} \text{ and } x_1 + \cdots + x_k \in 2\mathbb{Z}, \\
    0 & \text{otherwise.}
  \end{cases}
\end{equation}

\begin{thm} \cite{Norburylattice0}
\label{discreteTRNor}
  For $X \in \{A,B,C\}$ we set $X^{{\rm P}} = X^{{\rm K}}_{\mathbb{Z}}$ in terms of the Kontsevich initial data of \eqref{WKini}, and
  \[
    VD^{{\rm P}}(L_1) = \frac{L_1^2 - 4}{48}.
  \]
  The topological recursion formula \eqref{TReqn}, with initial data $(A^{{\rm P}},B^{{\rm P}},C^{{\rm P}},VD^{{\rm P}})$ and integrals replaced by sums over positive integers, computes $V\Omega^{{\rm P}}(L_1,\ldots,L_n) = P_{g,n}(L_1,\ldots,L_n)$. \hfill $\blacksquare$
\end{thm}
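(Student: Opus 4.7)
The plan is to establish this via a combinatorial excision argument in the spirit of Mirzakhani's proof of her topological recursion, but adapted to lattice-point counting on ribbon graphs. First, one interprets $P_{g,n}(L_1,\ldots,L_n)$ as the cardinality of the set $\mathbf{R}_{g,n}(L_1,\ldots,L_n)$ of (isomorphism classes of) ribbon graphs of type $(g,n)$ whose $n$ labeled faces have integer perimeters $L_1,\ldots,L_n$ and whose edges carry positive integer lengths. The parity condition implicit in the operator $(\,\cdot\,)_{\mathbb{Z}}$ reflects the fact that each edge is traversed exactly twice, once along each of its two incident faces, so that $\sum_i L_i$ is forced to be even.

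Next, I would decorate each $\Gamma \in \mathbf{R}_{g,n}(L_1,\ldots,L_n)$ by the choice of a distinguished unit side on face~$1$; since face~$1$ has perimeter $L_1$, this overcounts by a factor of $L_1$, which precisely compensates the prefactor $1/L_1$ appearing in $B^{{\rm K}}$ and $C^{{\rm K}}$. A case analysis based on what the distinguished side meets on its other bank then produces the recursion: if it borders face $m \neq 1$, a local surgery removes the shared edge and merges faces $1$ and $m$ into a new face of perimeter $L_1 + L_m - 2\ell - 2$ for some integer $\ell \geq 0$, producing a graph counted by $P_{g,n-1}$; summing over $\ell$ with the admissibility constraints (triangle inequalities of integer type) matches exactly the indicator structure of $B^{{\rm K}}_{\mathbb{Z}}$. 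If instead the distinguished side borders face~$1$ itself, the surgery either preserves connectedness (yielding a graph of type $(g-1,n+1)$ with two new boundary components of perimeters $\ell,\ell'$) or separates the surface (yielding two graphs of types $(h,1+|J|)$ and $(g-h,1+|J'|)$ with $J \sqcup J' = \{L_2,\ldots,L_n\}$); the resulting sum matches $C^{{\rm K}}_{\mathbb{Z}}$, with the factor $1/2$ compensating for the labeling ambiguity of the two new boundaries.

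The base cases are direct: $P_{0,3}(L_1,L_2,L_3) = 1$ whenever $L_1 + L_2 + L_3$ is even (there is a unique ribbon graph, the theta graph with one trivalent vertex, and the edge lengths are determined by $\ell_i = (L_j + L_k - L_i)/2$), matching $A^{{\rm P}} = A^{{\rm K}}_{\mathbb{Z}} = 1$; and $P_{1,1}(L_1) = (L_1^2 - 4)/48$ for even $L_1 \geq 2$ (and zero otherwise) is Norbury's classical formula, matching $VD^{{\rm P}}$. The main technical obstacle is the detailed bookkeeping in the $C$-case: one must carefully handle loop edges, distinguish separating from non-separating surgeries, and verify that the associated multiplicities precisely reproduce the discretized kernel $C^{{\rm K}}_{\mathbb{Z}}$ including the factor $\tfrac{1}{2}$ for automorphisms of non-separating configurations. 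An alternative strategy is to invoke the known Eynard--Orantin topological recursion for $P_{g,n}$ on a spectral curve of type $x(z) = z + 1/z$, $y(z) = -z$ coming from the Hermitian one-matrix model, and use the equivalence between the Eynard--Orantin form and the $(A,B,C,D)$ form described in Section~\ref{SEO} to translate this into the discrete recursion stated here.
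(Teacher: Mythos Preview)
The paper does not contain a proof of this statement: it is cited from Norbury's work \cite{Norburylattice0} and marked with the black square indicating an external result used without argument. So there is no ``paper's own proof'' to compare against.

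That said, your sketch captures the correct spirit of a Mirzakhani-style excision argument on ribbon graphs, but several details are inaccurate and would not survive a careful write-up. First, the theta graph has \emph{two} trivalent vertices, not one, and the edge-length solution $\ell_i = (L_j + L_k - L_i)/2$ only applies when the triangle inequalities hold; when they fail the ribbon graph degenerates to a different combinatorial type, yet $P_{0,3}$ is still $1$. Second, your perimeter formula $L_1 + L_m - 2\ell - 2$ for the merged face after a $B$-type surgery is not right: removing an edge of length $\ell$ shared by faces $1$ and $m$ produces a face of perimeter $L_1 + L_m - 2\ell$, and matching this against $B^{{\rm K}}_{\mathbb{Z}}$ requires a change of variable to the new boundary length rather than the removed edge length. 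Third, the mechanism you describe (marking a unit side and doing surgery there) is closer to a Tutte root-edge deletion than to the pair-of-pants excision encoded by \eqref{TReqn}; reconciling the two requires an extra summation and some care, which is in fact where the piecewise-linear structure of $B^{{\rm K}}$ and $C^{{\rm K}}$ genuinely enters.

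Your alternative route --- invoking the Eynard--Orantin recursion for the spectral curve $x(z) = z + 1/z$, $y(z) = -z$ (this is Theorem~\ref{Norburythm} in the paper) and then translating via the dictionary of Section~\ref{SEO} --- is cleaner and is essentially how one would derive the $(A,B,C,D)$ form from what is already established in the literature.
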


In other words, we can use the functions $(A^{{\rm K}},B^{{\rm K}},C^{{\rm K}})$ in the recursion, but replace the integrals over $\mathbb{R}_{+}$ by summations over integers satisfying the parity condition coming from \eqref{fzZ}.

\begin{thm} \cite{Norburylattice}
  \label{Norburythm} Let $\omega_{g,n}^{{\rm P}}$ be the output of Eynard--Orantin topological recursion for the spectral curve
  \begin{equation}
  \label{spectri} \mathcal{C} = \mathbb{P}^1,\qquad x(z) = z + \frac{1}{z},\qquad y(z) = -z,\qquad \omega_{0,2}^{{\rm P}}(z_1,z_2) = \frac{\dd z_1 \otimes \dd z_2}{(z_1 - z_2)^2}.
  \end{equation}
  For $2g - 2 + n > 0$, we have $\omega_{g,n} \in \mathscr{V}^{\otimes n}$ and for any $L_1,\ldots,L_n > 0$
  \begin{equation}
  \label{expNobur} P_{g,n}(L_1,\ldots,L_n) = (-1)^n \Res_{z_1 = \infty} \cdots \Res_{z_n = \infty} \omega_{g,n}^{{\rm P}}(z_1,\ldots,z_n) \prod_{i = 1}^n \frac{z_i^{L_i}}{L_i}.
  \end{equation}
  \hfill $\blacksquare$
\end{thm}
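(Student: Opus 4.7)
The plan is to prove \eqref{expNobur} by induction on $2g-2+n$ by showing that the right-hand side, viewed as a function of $(L_1,\ldots,L_n) \in \mathbb{Z}_{+}^{n}$, satisfies the discrete topological recursion of Theorem~\ref{discreteTRNor}, with matching base cases. Since that recursion uniquely determines $P_{g,n}$ from the quasi-polynomial initial data $(A^{\rm P},B^{\rm P},C^{\rm P},VD^{\rm P})$, this identifies the two quantities.

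To set up the residues, I first observe that on the rational curve $\mathcal{C} = \mathbb{P}^{1}$ the EO amplitudes $\omega_{g,n}$ have poles only at the zeros of $\dd x$, namely $z = \pm 1$, so each $\omega_{g,n}$ is regular at $z_i = \infty$ and admits a convergent expansion
\[
  \omega_{g,n}(z_1,\ldots,z_n) = \sum_{L_1,\ldots,L_n \geq 1} \tilde{P}_{g,n}(L_1,\ldots,L_n)\,\prod_{i=1}^{n}\frac{L_i\,\dd z_i}{z_i^{L_i+1}},
\]
where by a direct residue computation $\tilde{P}_{g,n}(L_1,\ldots,L_n) = (-1)^{n}\prod_{i}\Res_{z_i = \infty} \omega_{g,n}\,\tfrac{z_i^{L_i}}{L_i}$. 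Thus the theorem reduces to proving $\tilde{P}_{g,n} = P_{g,n}$. The base cases are verified by direct calculation: for $(g,n) = (0,3)$, the standard formula $\omega_{0,3}(z_1,z_2,z_3) = \sum_{\alpha = \pm 1}\Res_{z=\alpha} K_{\alpha}(z_1,z)\,\omega_{0,2}(z_2,z)\omega_{0,2}(z_3,z)$, combined with the expansion at $\infty$, yields $\tilde{P}_{0,3}(L_1,L_2,L_3) = 1$ whenever $L_1 + L_2 + L_3 \in 2\mathbb{Z}$ and $0$ otherwise, matching $A^{\rm P}$; for $(g,n) = (1,1)$, a similar calculation of $\omega_{1,1}$ yields $\tilde{P}_{1,1}(L_1) = (L_1^{2} - 4)/48 = VD^{\rm P}(L_1)$.

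For the inductive step, I translate the EO recursion \eqref{TREOeqn} under residue extraction at $z_i = \infty$ into the discrete recursion of Theorem~\ref{discreteTRNor}. Using $y(z) - y(\tau_{\alpha}(z)) = -(z - 1/z)$ and $\dd x(z) = (1 - z^{-2})\dd z$, both of the kernels $K_{\pm 1}(z_0,z)$ have simple poles at $z = \pm 1$. The key computation is to show that
\[
   (-1)\Res_{z_0 = \infty}\frac{z_0^{L_1}}{L_1}\sum_{\alpha = \pm 1}\Res_{z = \alpha}K_{\alpha}(z_0,z)\,\varpi(z,\tau_{\alpha}(z))\,=\, \sum_{\ell,\ell' \geq 1}^{(\ast)} \mathsf{K}(L_1;\ell,\ell')\,\Bigl[-\Res_{z=\infty}\tfrac{z^{\ell}}{\ell}\Bigr]\otimes\Bigl[-\Res_{z=\infty}\tfrac{z^{\ell'}}{\ell'}\Bigr]\,\varpi,
\]
where $(\ast)$ imposes a parity constraint, and $\mathsf{K}$ specialises to $B^{\rm K}_{\mathbb{Z}}(L_1,L_m,\ell)$ in the $\omega_{0,2}(z_m,z)\omega_{g,n-1}(\ldots,\tau(z))$ sector and to $C^{\rm K}_{\mathbb{Z}}(L_1,\ell,\ell')$ in the $\omega_{g-1,n+1}(z,\tau(z),\ldots) + \sum \omega\otimes\omega$ sector. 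Since $B^{\rm P} = B^{\rm K}_{\mathbb{Z}}$ and $C^{\rm P} = C^{\rm K}_{\mathbb{Z}}$, this identifies the two recursions and completes the induction.

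The main obstacle is the residue calculation in the inductive step: one must show that the sum over the two branch points $\pm 1$, together with the Taylor expansion at $\infty$, produces precisely the piecewise-linear Kontsevich kernels together with the nonobvious parity constraint $L_1 + \ell + \ell' \in 2\mathbb{Z}$. The parity arises because the contributions at $z = +1$ and $z = -1$ add for modes of definite parity and cancel otherwise; concretely, writing $1/(z^{L+1})$ as a Laurent series at $z = \pm 1$ and taking residues against $K_{\pm 1}$, one sees terms involving $(\pm 1)^{L_1 + \ell + \ell' + 1}$ whose sum over both sheets vanishes unless $L_1 + \ell + \ell'$ is even. Once this combinatorial identity is in place, the piecewise-linear structure of $B^{\rm K}, C^{\rm K}$ emerges from the explicit residue at $z = \pm 1$ of $K_{\pm 1}(z_0,z) z^{k}$, expanded in powers of $1/z_0$ near $\infty$.
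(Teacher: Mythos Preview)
The paper does not prove this theorem: it is quoted from Norbury's work \cite{Norburylattice} and marked with $\blacksquare$, i.e.\ accepted as a black box. There is therefore no ``paper's own proof'' to compare against.

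Your strategy is the natural one and is essentially how such results are established: expand the EO amplitudes $\omega_{g,n}^{\rm P}$ at $z_i = \infty$, identify the coefficients with $\tilde{P}_{g,n}(L_1,\ldots,L_n)$, and show that the EO recursion \eqref{TREOeqn}, after residue extraction, becomes the discrete recursion of Theorem~\ref{discreteTRNor}. The regularity of $\omega_{g,n}^{\rm P}$ at $\infty$ (poles only at $\pm 1$) is standard for EO outputs, and your observation that the parity constraint arises from the sum over the two branch points $\pm 1$ via factors $(\pm 1)^{L_1+\ell+\ell'}$ is correct. The base cases $(0,3)$ and $(1,1)$ are indeed straightforward residue computations.

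What remains genuinely sketchy is the inductive step: you have not actually carried out the residue computation that produces the piecewise-linear kernels $B^{\rm K}_{\mathbb Z}$ and $C^{\rm K}_{\mathbb Z}$. In particular, the $B$-sector involves $\omega_{0,2}(z,z_m)$, which has its pole on the diagonal rather than at $\pm 1$, so one must first expand it at $z_m = \infty$ (yielding $z^{L_m-1}\dd z$) before taking $\Res_{z=\pm 1}$ against the kernel; you should also justify exchanging the order of $\Res_{z_0 = \infty}$ and $\Res_{z = \alpha}$. None of this is deep, but your write-up stops just short of the actual identity $\Res_{z = \pm 1}[\cdots] = B^{\rm K}(L_1,L_m,\ell)$ or $C^{\rm K}(L_1,\ell,\ell')$. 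You should also note that $\omega_{g,n}^{\rm P} \in \mathscr{V}^{\otimes n}$ requires the antisymmetry $\phi(z) + \phi(1/z) = 0$, which follows from the standard fact that EO amplitudes are odd under the local involution in each variable.
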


If $P_{g,n}(L_1,\ldots,L_n)$ are interpreted in terms of coefficient of expansion of cumulants in the GUE, this result dates back to \cite{E1MM}. 

\medskip

The asymptotics of $P_{g,n}(L)$ for large boundary lengths can be identified with the Kontsevich polynomial appearing in Theorem~\ref{konth}, up to a normalisation constant. To be precise, if $k \in \{0,\ldots,n\}$ is even, we let $P_{g,n}^{(k)}(L_1,\ldots,L_n)$ be the polynomial function coinciding with $P_{g,n}(L_1,\ldots,L_n)$ when $L_1,\ldots,L_k$ are odd and $L_{k + 1},\ldots,L_n$ are even. 

\begin{thm} \cite{Norburylattice0}
\label{Norasym} For $2g - 2 + n > 0$ and $k \in \{0,\ldots,n\}$ an even integer, we have for $T$ a positive even integer
\[
\frac{P_{g,n}^{(k)}(TL_1,\ldots,TL_n)}{T^{6g - 6 + 2n}} = 2^{-(2g - 3 + n)} V\Omega^{{\rm K}}_{g,n}(L_1,\ldots,L_n) + O(1/T),
\]
where the $O(1/T)$ is a polynomial in the $L_i$s. \hfill $\blacksquare$
\end{thm}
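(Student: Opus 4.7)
The plan is to induct on $2g-2+n$ using the discrete topological recursion of Theorem~\ref{discreteTRNor}, comparing term by term with the continuous topological recursion that produces $V\Omega^{{\rm K}}$ (Theorem~\ref{konth}). The key structural observation is that the functions $A^{{\rm K}}, B^{{\rm K}}, C^{{\rm K}}$ from \eqref{WKini} are homogeneous of degree $0$ in all their arguments simultaneously, so rescaling every boundary length by $T$ pulls no factors of $T$ out of the kernels themselves.

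For the base cases, $P_{0,3}^{(k)} = A^{{\rm P}} = 1 = V\Omega^{{\rm K}}_{0,3}$ gives the identity with $2g-3+n=0$, and $T^{-2}VD^{{\rm P}}(TL_1) = T^{-2}(T^{2}L_1^2-4)/48 = L_1^2/48 + O(T^{-2}) = V\Omega^{{\rm K}}_{1,1}(L_1) + O(T^{-2})$. For the inductive step, assume the result holds for all pairs $(g',n')$ with $2g'-2+n' < 2g-2+n$. Apply the discrete topological recursion of Theorem~\ref{discreteTRNor} to $P_{g,n}^{(k)}(TL_1,\ldots,TL_n)$. In the $B$-term, the parity constraint from \eqref{fzZ} forces $\ell \equiv TL_1 + TL_m \pmod 2$; since $T$ is even, $\ell$ runs over positive even integers. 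Substituting $\ell = T\ell'$ and using homogeneity $B^{{\rm K}}(TL_1,TL_m,T\ell') = B^{{\rm K}}(L_1,L_m,\ell')$, the sum becomes a Riemann sum with spacing $2/T$ in $\ell'$, which approximates $\tfrac{1}{2}\int_{\mathbb{R}_{+}} d\ell'$. Similarly, for the $C$-term, the constraint $\ell + \ell' \equiv TL_1 \pmod 2$ restricts to a sublattice of $\mathbb{Z}^2$ of index $2$, so after rescaling the double sum approximates $\tfrac{1}{2}\int_{\mathbb{R}_{+}^{2}} d\ell'\,d\ell''$.

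Counting powers of $T$: the inductive hypothesis contributes $T^{6g'-6+2n'} = T^{6g-8+2n}$ at each recursive call, the substitution $\ell = T\ell'$ (and $\ell' = T\ell''$) contributes another $T$ per variable, and the Riemann sum conversion contributes $T$ per integration variable. In the $B$-case this gives $T^{(6g-8+2n)+1+1} = T^{6g-6+2n}$ and collects a factor of $\tfrac{1}{2}$ (from the spacing). In the $C$-case the powers match identically and the parity defect contributes a factor of $\tfrac{1}{2}$ combining with the $\tfrac{1}{2}$ already in the recursion. In every case the inductive hypothesis supplies the factor $2^{-(2g-4+n)}$, so multiplying by the extra $\tfrac{1}{2}$ from the discrete-to-continuous conversion yields $2^{-(2g-3+n)}$. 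The resulting equation reproduces exactly the topological recursion \eqref{TReqn} for $V\Omega^{{\rm K}}$, completing the induction.

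The main obstacle is controlling the error terms. Since $P_{g,n}^{(k)}(L_1,\ldots,L_n)$ is a genuine polynomial in the $L_i$ (on each parity class), after rescaling by $T$ the left-hand side is itself a polynomial in $T$ of degree $6g-6+2n$, so the error is automatically a polynomial in $T^{-1}$; the precise rate $O(T^{-1})$ follows from Euler--Maclaurin applied to the piecewise polynomial kernels $B^{{\rm K}}, C^{{\rm K}}$, whose boundary contributions at $\ell = L_1\pm L_m$, etc., produce corrections of order one less in $T$ than the leading integral. One must track that these Euler--Maclaurin corrections do not get amplified through the recursion, but since each inductive call incurs only multiplication by $T^{-1}$ relative to its leading term, the overall error remains $O(T^{-1})$. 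Alternatively, one could use the Eynard--Orantin formulation of Theorem~\ref{Norburythm}: the spectral curve $x = z + 1/z$ degenerates, near $z = \infty$, to the Kontsevich spectral curve $x = z$, and the residues in \eqref{expNobur} localised by the large-$L_i$ behavior directly produce Kontsevich's residue formula with the appropriate $2^{-(2g-3+n)}$ factor.
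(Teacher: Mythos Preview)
The paper does not prove this theorem; the $\blacksquare$ here marks a result quoted from \cite{Norburylattice0} without proof. So there is no paper argument to compare against, only yours to assess.

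Your inductive strategy via Theorem~\ref{discreteTRNor} is the natural one, and the degree-zero homogeneity of $B^{{\rm K}},C^{{\rm K}}$ is indeed the engine that makes it work. But the parity bookkeeping has real gaps that prevent the induction from closing as written.

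Rescaling by an even $T$ makes every $TL_i$ even, so the discrete recursion you invoke computes $P_{g,n}^{(0)}$; the case $k>0$ is never addressed. Worse, even for $k=0$ the recursion calls lower strata in other parity classes. In the $C$-term the constraint is only $\ell+\ell'\equiv 0\pmod 2$, which allows $\ell,\ell'$ both odd; the call $P_{g-1,n+1}(\ell,\ell',TL_2,\ldots,TL_n)$ then sits in the class $k'=2$, and you need the hypothesis there. Separately, your blanket claim that ``in every case the inductive hypothesis supplies the factor $2^{-(2g-4+n)}$'' is false for the product piece: there the factor is $2^{-(2h-2+|J|)}\cdot 2^{-(2h'-2+|J'|)}=2^{-(2g-5+n)}$. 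What saves you is that each factor $P_{h,1+|J|}(\ell,TJ)$ vanishes unless $\ell$ is even, so the effective lattice for the product piece is $(2\mathbb{Z}_{+})^{2}$ and the Riemann sum carries $(T/2)^{2}$ rather than $T^{2}/2$. Your two errors cancel, so the final constant is right, but the argument as written is not.

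The clean fix is to induct over all even $k$ simultaneously, working directly with the top-degree part of the polynomial identity that the discrete recursion gives on each parity class. Since the top-degree part of $\sum_{\ell\equiv\epsilon\,(2)}(\cdots)$ equals $\tfrac12\int(\cdots)$ regardless of $\epsilon$, and by induction every $P_{g',n'}^{(k')}$ has the same top-degree part, the recursion on top parts collapses to \eqref{TReqn} for $V\Omega^{{\rm K}}$ with prefactor $2^{-(2g-3+n)}$, for all even $k$ at once.
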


\subsection{Square-tiled surfaces with boundaries and Masur--Veech polynomials}

Let us fix $g$ and $n$ so that $2g-2+n > 0$ and a tuple $(L_1, \ldots, L_n)$ of positive real numbers. We consider the moduli space $\mathcal{Q}_{g,n}(L_1, \ldots, L_n)$ of tuples $(X, p_1, \ldots, p_n, q)$ where
\begin{itemize}
\item $X$ is a compact Riemann surface of genus $g$,
\item $p_1$, \ldots, $p_n$ are distinct points on $X$,
\item $q$ is a meromorphic quadratic differential on $X$, holomorphic on $X \setminus \{p_1, \ldots, p_n\}$ and with double poles at $p_j$ with residues
\[
\frac{1}{2 {\rm i}\pi} \int_{\gamma_j} \sqrt{q} = \pm \frac{L_j}{2{\rm i}\pi},
\]
where $\gamma_j$ is a loop around $p_j$. The residue is only defined up to sign corresponding to the choice of a square root of $q$.
\end{itemize}

The quadratic differential $q$ induces a flat metric on $X \setminus \{p_1,\ldots, p_n\}$ with conical singularities at the zeros of $q$. The geometry of the flat metric in a neighbourhood of each pole is a semi-infinite cylinder with periodic horizontal trajectories (the point $p_i$ itself is at infinite distance from the rest of the surface). For each pole, there is a maximal such semi-infinite cylinder that avoids
the zeros of $q$. We call the \emph{convex-core} of $(X, p_1, \ldots, p_n, q)$ the surface obtained by removing the union of the maximal open semi-infinite cylinders around each pole. The convex-core with the metric induced from $q$ is still a flat metric and has $n$ horizontal boundaries of lengths respectively $L_1$, \ldots, $L_n$ which is the union of saddle connections (\emph{i.e.} straight line segments joining zeros
of $q$) bounding the maximal half-infinite cylinder around respectively $p_1$,  \ldots, $p_n$.  The \emph{core area} of $(X, p_1, \ldots, p_n, q)$ denoted $\operatorname{CoreArea}(X, p_1, \ldots, p_n, q)$ is the
area of the convex core of $(X, p_1, \ldots, p_n, q)$. It is a non-negative real number, which is in particular finite contrarily to the area of $X \setminus \{p_1, \ldots, p_n\}$.

\medskip

The space $\mathcal{Q}_{g,n}(L_1, \ldots, L_n)$ admits a stratification with respect to the degree of the zeros. On each stratum, the relative periods of $q$ with respect to its zeros provide coordinates. When all the $L_i$ are integral and all periods are Gauss integers, \emph{i.e.} in $\mathbb{Z} \oplus {\rm i} \mathbb{Z}$, we say that the surface is \emph{square-tiled}. Indeed, such surface can be obtained by gluing side by side as many squares as the core area (which is integral) and leaving open some of the horizontal sides forming $n$ circles of lengths $L_1$, \ldots, $L_n$. For integral $L_1$, \ldots, $L_n$ we define the generating function of square-tiled surfaces with boundary lengths $L_1$, \ldots, $L_n$ as follows
\begin{equation} \label{stscountdef}
P_{g,n}^{\Box,q}(L_1,\ldots,L_n) := \sum_{S} \frac{1}{|{\rm Aut}\,S|}\ \mathsf{q}^{\operatorname{CoreArea}(S)},
\end{equation}
where the sum is taken over square-tiled surfaces $S$ in $\mathcal{Q}_{g,n}(L_1, \ldots, L_n)$.

\begin{prop} \label{stscountassumstgraph}
Let $g$, $n$ be non-negative integers such that $2g-2+n > 0$ and let $(L_1, \ldots, L_n)$
be a tuple of positive integers. We have
\begin{equation}
\label{eq:stsgenfct}
P_{g,n}^{\Box,\mathsf{q}}(L_1, L_2, \ldots, L_n) =
\sum_{\Gamma \in \mathbf{G}_{g,n}}
\frac{1}{|{\rm Aut}\,\Gamma|}
\sum_{\ell \colon E_\Gamma \rightarrow \mathbb{Z}_+}
\prod_{v \in V_\Gamma}
P_{h(v),k(v)} \big( (\ell_e)_{e \in E(v)}, (L_{\lambda})_{\lambda \in \Lambda(v)}\big)
\prod_{e \in E_\Gamma}
\frac{\ell_e \mathsf{q}^{\ell_e}}{1 - \mathsf{q}^{\ell_e}},
\end{equation}
where $P_{h,k}(\ell_1, \ldots, \ell_k)$ is the Norbury quasi-polynomial
as in Section~\ref{SecNorbury}.
\end{prop}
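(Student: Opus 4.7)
The plan is to establish this by analyzing the horizontal cylinder decomposition of a square-tiled surface and showing it corresponds bijectively (up to the appropriate automorphism factors) to the data indexed on the right-hand side of~\eqref{eq:stsgenfct}.

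First I would set up the decomposition. Given $S = (X,p_1,\ldots,p_n,q) \in \mathcal{Q}_{g,n}(L_1,\ldots,L_n)$ square-tiled, the horizontal foliation of $q$ on the convex core decomposes it into a disjoint union of maximal horizontal cylinders (call these \emph{internal} cylinders, as opposed to the semi-infinite ones going to poles, which are already excised) and a complementary region foliated by saddle connections. Cutting along the core curves of all internal cylinders produces a collection of bordered flat surfaces $\{S_v\}_v$, each carrying a quadratic differential whose horizontal foliation has no closed leaves. The combinatorics of the decomposition defines a stable graph $\Gamma \in \mathbf{G}_{g,n}$: vertices are the pieces $S_v$ (assigned their genus $h(v)$ and valency $k(v)$), edges are the internal cylinders, and leaves are the original $n$ boundaries with their labels. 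By construction each $S_v$ is a square-tiled surface in $\mathcal{Q}_{h(v),k(v)}((\ell_e)_{e \in E(v)},(L_\lambda)_{\lambda \in \Lambda(v)})$ with \emph{no} internal horizontal cylinders, and its core area equals the number of its squares.

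Next I would identify each vertex contribution with the Norbury quasi-polynomial. A square-tiled surface with integer boundary lengths and no internal horizontal cylinders is equivalent to a metric ribbon graph with integer edge lengths and face perimeters equal to the boundary lengths: the non-closed horizontal trajectories form exactly the edges of the ribbon graph dual to the face structure given by the boundary circles. This is the content of Theorem~\ref{discreteTRNor} (via Kontsevich's combinatorial model), and the weighted count of such objects is precisely $P_{h(v),k(v)}\big((\ell_e)_{e \in E(v)},(L_\lambda)_{\lambda \in \Lambda(v)}\big)$, with the parity constraint built into \eqref{fzZ} respecting the fact that the total boundary length of each piece is even (this follows because each internal cylinder circumference appears on two sides of the decomposition).

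Then I would count the reconstruction data per edge. Given pieces $(S_v)_v$ of total core area $\sum_v \operatorname{CoreArea}(S_v)$, reassembling $S$ requires, for every edge $e \in E_{\Gamma}$ of circumference $\ell_e$, choosing a height $h_e \in \mathbb{Z}_{+}$ and a twist parameter in $\mathbb{Z}/\ell_e\mathbb{Z}$; the cylinder then contributes $\ell_e h_e$ additional squares. Summing yields the edge weight
\[
  \sum_{h_e \geq 1} \ell_e\,\mathsf{q}^{\ell_e h_e} = \frac{\ell_e\,\mathsf{q}^{\ell_e}}{1 - \mathsf{q}^{\ell_e}},
\]
so multiplying over edges and vertices and summing over widths $\ell\colon E_{\Gamma} \to \mathbb{Z}_{+}$ gives the summand of~\eqref{eq:stsgenfct}. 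Finally, to pass from a sum over labelled decomposition data to a sum over isomorphism classes of $\Gamma \in \mathbf{G}_{g,n}$, a standard orbit-counting argument replaces the overcount by the factor $\frac{1}{|\operatorname{Aut}\Gamma|}$, provided one is careful to interpret both sides as weighted orbifold counts: on the left each $S$ is counted with weight $\frac{1}{|\operatorname{Aut} S|}$, and the automorphisms of $S$ decompose compatibly as automorphisms of the decomposition data (automorphisms of $\Gamma$ together with automorphisms of each piece and of each cylinder), with the $\mathbb{Z}/\ell_e\mathbb{Z}$ cylinder rotations already absorbed in the twist count.

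The step requiring the most care will be the automorphism bookkeeping: one must verify that the canonical exact sequence relating $\operatorname{Aut} S$, the automorphisms of the pieces, and the stabiliser of $\Gamma$ yields exactly $\frac{1}{|\operatorname{Aut} \Gamma|}$ after summing over labellings, with no residual factor. This is analogous to the bookkeeping underlying Theorem~\ref{stablth}, and can be carried out by the same stable-graph combinatorics used there; modulo this combinatorial check, the remaining assertions follow directly from the cylinder decomposition and Theorem~\ref{discreteTRNor}.
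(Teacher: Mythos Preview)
Your proposal is correct and takes essentially the same approach as the paper's own proof: horizontal cylinder decomposition into a singular layer (ribbon graphs, giving the Norbury vertex factors) plus internal cylinders with height and twist parameters (giving the edge weights $\ell_e\mathsf{q}^{\ell_e}/(1-\mathsf{q}^{\ell_e})$), followed by stable-graph automorphism bookkeeping. One small terminological slip: cutting along cylinder \emph{core curves} would leave half-cylinders attached to each piece; the decomposition you actually want removes the open cylinders entirely, so that each $S_v$ is a zero-area component of the singular layer, i.e.\ a genuine metric ribbon graph---this is also how the paper phrases it.
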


\begin{rem}
The terms in the right hand side of~\eqref{eq:stsgenfct} are similar to \cite[Equation 1.12]{Delecroix} describing polynomials associated to a stable graph $\Gamma$.
\end{rem}

\begin{rem}
Surfaces with vanishing core area are exactly the Strebel differentials \cite{Strebel}, \textit{i.e.} differentials all of whose relative periods are purely real. Hence, one can already identify the constant coefficient of $P^{\Box,\mathsf{q}}_{g,n}(L_1, \ldots, L_n)$ (seen as a $\mathsf{q}$-series) as the Norbury quasi-polynomials of Section~\ref{SecNorbury}. This constant coefficient is also equal to the term associated to the stable graph with a single vertex of genus $g$ and no edge in Formula~\ref{eq:stsgenfct}.
\end{rem}

\begin{proof}
Each square-tiled surface admits a decomposition into horizontal cylinders and saddle connections between the zeros of the differential $q$. The union of all saddle connections forms a union of ribbon graphs that we call the \emph{singular layer} of the square-tiled surface. To such decomposition, we associate a stable graph $\Gamma$ by the following rule.
\begin{itemize}
\item A vertex in $\Gamma$ corresponds to a connected component of the singular layer, where the
genus and number of half edges are respectively the genus and the number of faces of the associated ribbon graph.
\item An edge of $\Gamma$ between two vertices correspond to a cylinder, whose extremities belong to the components of the singular layer corresponding to the two vertices. Note that each of these extremities is a face of the corresponding ribbon graph.
\end{itemize}
Let us now fix a stable graph $\Gamma$ in $\mathbf{G}_{g,n}$. We claim that the term
\[
 \frac{1}{|{\rm Aut}\,\Gamma|}
\sum_{\ell \colon E_\Gamma \rightarrow \mathbb{Z}_+}
\prod_{v \in V_\Gamma}
P_{h(v),k(v)} \big( (\ell_e)_{e \in E(v)}, (L_{\lambda})_{\lambda \in \Lambda(v)}\big)
\prod_{e \in E_\Gamma}
\frac{\ell_e \mathsf{q}^{\ell_e}}{1 - \mathsf{q}^{\ell_e}}.
\]
appearing in the right-hand side of~\eqref{eq:stsgenfct} is the generating series of square-tiled surfaces, whose associated stable graph is $\Gamma$. Indeed, to reconstruct the singular layer one needs to choose a ribbon graph for each vertex $v$ of $V_\Gamma$ and fix the lengths of each edge. This count corresponds to the term $P_{h(v),k(v)} \big( (\ell_e)_{e \in E(v)}, (L_{\lambda})_{\lambda \in \Lambda(v)}\big)$. Next, one needs to reconstruct the cylinders. The cylinders are glued on faces of ribbon graphs and have a height parameter $H$ (which is a positive integer) and a twist parameter $t$ (an non-negative integer strictly smaller than $\ell_i$). The generating series for this cylinder is just
\[
\sum_{H \geq 1}
\sum_{0 \leq t < \ell_e}
\mathsf{q}^{\ell_e H}
=
\frac{\ell_e \mathsf{q}^{\ell_e}}{1 - \mathsf{q}^{\ell_e}}.
\]
This concludes the proof.
\end{proof}

We now show how to retrieve the Masur--Veech polynomials $V \Omega^{{\rm MV}}_{g,n}(L_1,\ldots,L_n)$ by considering certain limits of square-tiled surface counting that are encoded in the generating series $P_{g,n}^{\Box,\mathsf{q}}(L_1,\ldots,L_n)$.

\begin{prop} \label{MVpolyasstscount}
Let $L_1,\ldots,L_n$ be positive integers with even sum. We have
\begin{equation}
\label{Pgnungunaaaa}\lim_{\substack{T \rightarrow \infty \\ T \in 2\mathbb{Z}_{+}}} \frac{P_{g,n}^{\Box,\mathsf{q} = e^{-1/T}}(TL_1,\ldots,TL_n)}{T^{6g - 6 + 2n}} = 2^{-(2g - 3 + n)}\,V\Omega^{{\rm MV}}_{g,n}(L_1,\ldots,L_n).
\end{equation}
\end{prop}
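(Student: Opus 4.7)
The strategy is to start from the stable-graph expansion of Proposition~\ref{stscountassumstgraph}, specialise $\mathsf{q} = e^{-1/T}$ and boundary lengths $TL_i$, and analyse the $T\to\infty$ asymptotics graph by graph. For each $\Gamma \in \mathbf{G}_{g,n}$ I will introduce the rescaling $\ell_e = T u_e$, so that the cylinder factor becomes
\[
\frac{\ell_e\mathsf{q}^{\ell_e}}{1-\mathsf{q}^{\ell_e}} = \frac{Tu_e}{e^{u_e}-1},
\]
and invoke Theorem~\ref{Norasym} to write the vertex factors as
\[
P_{h(v),k(v)}\bigl((Tu_e)_{e\in E(v)},(TL_\lambda)_{\lambda}\bigr) = 2^{-(2h(v)-3+k(v))}\,T^{6h(v)-6+2k(v)}\,V\Omega^{{\rm K}}_{h(v),k(v)}\bigl((u_e),(L_\lambda)\bigr) + O\bigl(T^{6h(v)-7+2k(v)}\bigr),
\]
whenever the parity condition $\sum_{e\in E(v)}\ell_e + T\sum_{\lambda}L_\lambda \equiv 0\pmod 2$ is satisfied, and $0$ otherwise. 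Since $T$ is even and each loop at $v$ contributes $2\ell_e$ to the parity at $v$, the condition simplifies to $\sum_{\text{non-loop }e\in E(v)}\ell_e \equiv 0\pmod 2$.

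The second step is the combinatorial handling of these parity constraints. Partitioning the sum over $\ell\in\mathbb{Z}_+^{E_\Gamma}$ by the parity pattern $\epsilon\in\{0,1\}^{E_\Gamma}$, only those $\epsilon$ lying in the kernel of the $\mathbb{F}_2$-incidence matrix of the non-loop subgraph of $\Gamma$ survive. Since removing loops preserves connectedness and the incidence matrix of a connected graph on $|V_\Gamma|$ vertices has $\mathbb{F}_2$-rank $|V_\Gamma|-1$, this kernel has dimension $|E_\Gamma|-(|V_\Gamma|-1)$, yielding $2^{|E_\Gamma|-(|V_\Gamma|-1)}$ valid patterns. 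For each fixed valid $\epsilon$, the sum over $\ell_e \equiv \epsilon_e\pmod 2$ is a Riemann sum with mesh $2/T$, producing asymptotically a factor $(T/2)^{|E_\Gamma|}$ times the continuous integral over $\mathbb{R}_+^{E_\Gamma}$. Dominated convergence is ensured by the uniform exponential decay of $(e^{u_e}-1)^{-1}$ against the polynomial growth of $V\Omega^{{\rm K}}_{h(v),k(v)}$, so the interchange of limit and sum is legitimate.

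Collecting all prefactors contributed by a single graph $\Gamma$: the Riemann sum yields $2^{|E_\Gamma|-(|V_\Gamma|-1)}(T/2)^{|E_\Gamma|}$, each edge factor contributes one power of $T$, and the vertex asymptotics contribute $2^{-(2g-2+n-|V_\Gamma|)}T^{6g-6+2n-2|E_\Gamma|}$ by Lemma~\ref{combstgraph}. The total $T$-power is $6g-6+2n$, and the total $2$-power reduces to $-(2g-3+n)$, uniformly in $\Gamma$, thanks to the exact cancellation of the $|V_\Gamma|$-dependent pieces between the parity count and the Norbury prefactor. Dividing by $T^{6g-6+2n}$ and summing over $\mathbf{G}_{g,n}$, the limit equals
\[
2^{-(2g-3+n)} \sum_{\Gamma\in\mathbf{G}_{g,n}} \frac{1}{|{\rm Aut}\,\Gamma|}\int_{\mathbb{R}_+^{E_\Gamma}} \prod_{v\in V_\Gamma} V\Omega^{{\rm K}}_{h(v),k(v)}\bigl((u_e)_{e\in E(v)},(L_\lambda)_{\lambda\in\Lambda(v)}\bigr) \prod_{e\in E_\Gamma} \frac{u_e\,du_e}{e^{u_e}-1},
\]
which is $2^{-(2g-3+n)}V\Omega^{{\rm MV}}_{g,n}(L_1,\ldots,L_n)$ by Definition~\ref{defn:MVpoly}. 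The main obstacle is precisely the parity bookkeeping: establishing that the $|V_\Gamma|$-dependence of the $\mathbb{F}_2$-kernel size matches the $|V_\Gamma|$-dependence of the Norbury prefactor so that a graph-independent factor $2^{-(2g-3+n)}$ survives in the limit.
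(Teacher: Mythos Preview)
Your proof is correct and follows essentially the same route as the paper's: both fix a stable graph, rescale edge lengths, invoke Theorem~\ref{Norasym} at each vertex, and convert the constrained lattice sum into a Riemann integral. The only difference is packaging: the paper phrases the parity constraint as summing over a sublattice $\mathbb{L}\subset\mathbb{Z}^{E_\Gamma}$ of index $2^{|V_\Gamma|-1}$ (citing \cite{Delecroix}), whereas you reprove this index by computing the kernel of the $\mathbb{F}_2$-incidence matrix; the two formulations are equivalent and lead to identical prefactor bookkeeping.
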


\begin{rem}
In absence of boundaries, the asymptotics of the number of square-tiled surfaces is related to the Masur--Veech volume in \cite[Theorem 1.6]{Delecroix}. The proof there is slightly different as it considers the $T \rightarrow \infty$ asymptotics of the number of square-tiled surface of core area $\leq T$, when $T \rightarrow \infty$, while here we analyse directly the $\mathsf{q}$-series when $\mathsf{q} \rightarrow 1$. It is possible to adapt the proof of \cite[Theorem 1.6]{Delecroix} in the presence of boundaries, \textit{i.e.} study the asymptotics of the number of square-tiled surfaces of core area $\leq T$ with boundaries of length $TL_1,\ldots,TL_n$. The result is then similar to the right-hand side of \eqref{Pgnungunaaaa} except that $V\Omega^{{\rm MV}}_{g,n}$ is replaced with $I_{g,n}[V\Omega^{{\rm MV}}]$, where $I_{g,n}$ is the linear map multiplying the monomial $L_1^{2d_1}\cdots L_n^{2d_n}$ by the factor $1/(6g - 6 + 2n - \sum_i 2d_i)!$. The latter factor is essentially the volume of a simplex and comes from the core area truncation.
\end{rem}

\begin{rem}
The scaling $T L_i$ of the boundary term $L_i$ is of strange nature. As $\mathsf{q}^{T}$ is of order $1$, suggesting that the typical contribution in $P_{g,n}^{\Box,\mathsf{q}}$ comes from surfaces with core area $O(T)$, but scaling the area with $T$ usually rescaled the boundary by $\sqrt{T}$. So the limit in Proposition~\ref{MVpolyasstscount} somehow reflects a blowup of the contribution coming from the boundaries of the square-tiled surface, that is necessary in order to obtain the Masur--Veech polynomials.
\end{rem}

\begin{proof}
Let $T$ be an even integer and set $\mathsf{q} = e^{-1/T}$. Fix a stable graph $\Gamma$ of type $(g,n)$. We want to compute the large $T$ behavior of
\begin{equation}
\label{tehtirngiun}\begin{split}
& \quad \sum_{\ell \colon E_\Gamma \rightarrow \mathbb{Z}_+} \prod_{v \in V_\Gamma}
P_{h(v),k(v)} \big( (\ell_e)_{e \in E(v)}, (T L_{\lambda})_{\lambda \in \Lambda(v)}\big)
\prod_{e \in E_\Gamma} \frac{\ell_e\,\mathsf{q}^{\ell_e}}{1 - \mathsf{q}^{\ell_e}} \\
& = \sum_{\hat{\ell} \colon E_{\Gamma} \rightarrow T^{-1}\mathbb{Z}_{+}} \prod_{v \in V_{\Gamma}} P_{h(v),k(v)}\big((T\hat{\ell}_e)_{e \in E(v)},(T L_{\lambda})_{\lambda \in \Lambda(v)}\big) \prod_{e \in E_{\Gamma}} \frac{T\, \hat{\ell}_{e}}{e^{\hat{\ell}_e} - 1}.
\end{split}
\end{equation}

Recall that $P_{h,k}(x_1,\ldots,x_k)$ vanishes when $x_1 + \cdots + x_k $ is odd.
Let $\mathbb{L} \subseteq \Z^{E_{\Gamma}}$ be the sublattice defined by the congruences
\[
\forall v \in V_{\Gamma},\qquad \bigg(\sum_{e \in E(v)} \ell_{e}\bigg) \in 2\mathbb{Z}.
\] 
By \cite[Corollary 2.2]{Delecroix}, $\mathbb{L}$ has index $2^{|V_{\Gamma}| - 1}$ in $\mathbb{Z}^{E_{\Gamma}}$. In the first line of \eqref{tehtirngiun} we are summing over $\ell \in \mathbb{L}_{+} = \mathbb{Z}_{+}^{E_{\Gamma}} \cap \mathbb{L}$. For $\hat{\ell} \in T^{-1}\mathbb{L}_{+}$, we use Theorem~\ref{Norasym} to make the substitution at each vertex $v \in V_{\Gamma}$
\[
P_{h(v),k(v)}\big((T \hat{\ell}_{e})_{e \in E(v)},(TL_{\lambda})_{\lambda \in \Lambda(v)}\big)
\longrightarrow
T^{6h(v) - 6 + 2k(v)}\,2^{-(2h(v) - 3 + k(v))} \, V\Omega^{{\rm K}}_{h(v),k(v)}\big((\ell_{e})_{e \in E(v)},(L_{\lambda})_{\lambda \in \Lambda(v)}\big),
\]
up to an error that will produce subleading terms when $T \rightarrow \infty$. We are left with analysing for large $T$
\begin{equation}
\label{beforeasym}\sum_{\hat{\ell} \in T^{-1}\mathbb{L}_{+}} \prod_{v \in V_{\Gamma}} T^{6h(v) - 6 + 2k(v)}\,2^{-(2h(v) - 3 + k(v))}\,V\Omega^{{\rm K}}_{h(v),k(v)}\big((\hat{\ell}_{e})_{e \in E(v)},(L_{\lambda})_{\lambda \in \Lambda(v)}\big) \prod_{e \in E_{\Gamma}} \frac{T\,\hat{\ell}_{e}}{e^{\hat{\ell}_{e}} - 1}.
\end{equation}
Now, since $V\Omega^{{\rm K}}_{h,k}(x_1,\ldots,x_k)$ are polynomial in $x_1,\ldots,x_k$, the function of $\hat{\ell}$ appearing in the summands is a continuous function of $\hat{\ell} \in \mathbb{R}_{+}^{E_{\Gamma}}$, which is Riemann-integrable due to the exponential decay in the edge weights. Taking into account the fact that $\mathbb{L}$ has index $2^{|V_{\Gamma}| - 1}$, \eqref{beforeasym} is therefore asymptotically equivalent to
\[
2^{1 - |V_{\Gamma}|}\,\int_{\mathbb{R}_{+}^{E_{\Gamma}}} \prod_{e \in E_{\Gamma}} \prod_{v \in V_{\Gamma}} 2^{-(2h(v) - 3 + k(v))}\,T^{(6h(v) - 6 + 2k(v))}\,V\Omega_{h(v),k(v)}^{{\rm K}}\big((\hat{\ell}_{e \in E(v)},(L_{\lambda})_{\lambda \in \Lambda(v)}\big)\,\frac{T^{2}\,\hat{\ell}_{e}\,\dd \hat{\ell}_{e}}{e^{\hat{\ell}_{e}} - 1}
\]
when $T$ is large. The overall powers of $2$ and $T$ can be computed with Lemma~\ref{combstgraph}, and we respectively find
\begin{equation}
\begin{split}
1 - |V_{\Gamma}| - \sum_{v \in V_{\Gamma}} (2h(v) - 3 + k(v)) & = -(2g - 3 + n), \\ 2|E_{\Gamma}| + \sum_{v \in V_{\Gamma}} (6h(v) - 6 + 2k(v)) & = 6g - 6 + 2n,
\end{split}
\end{equation}
which are independent of $\Gamma$. Performing the (finite) sum over all stable graphs of type $(g,n)$ weighted by automorphisms, and dividing the result by $T^{-(6g - 6 + 2n)}$, one finds exactly the sum over stable graphs defining the Masur--Veech polynomials in \eqref{defn:MVpoly}.
\end{proof}  

\subsection{Topological recursion for \texorpdfstring{$P_{g,n}^{\Box,\mathsf{q}}(L_1,\ldots,L_n)$}{square-tiled surfaces polynomials}}

The expression for the $q$-enumeration of square-tiled surfaces in Proposition~\ref{stscountassumstgraph} is another example of the twisting procedure presented in Section~\ref{Stwists}, with the function
\begin{equation}
\label{fqell} f_{\mathsf{q}}(\ell) = \frac{\mathsf{q}^{\ell}}{1 - \mathsf{q}^{\ell}},
\end{equation}
except that we allow only integer lengths. In fact, the discrete analog of Theorem~\ref{stablth} continues to hold, that is, knowing that the weight of the vertices satisfies the topological recursion (Theorem~\ref{discreteTRNor}) automatically implies that the sum over stable graphs in \eqref{eq:stsgenfct} is also computed by the topological recursion with twisted initial data -- see formula \eqref{initwist} for the twisting of $A,B,C$ and formula~\eqref{VDtwist} for the twisting of $VD$. This is summarised in the following corollary.

\begin{cor}
\label{cotrd} The topological recursion formula \eqref{TReqn}, for the initial data of Theorem~\ref{discreteTRNor} twisted by $f_{\mathsf{q}}$ from \eqref{fqell},  computes
\[
V\Omega^{{\rm P}}_{g,n}[f_{\mathsf{q}}](L_1,\ldots,L_n) = P_{g,n}^{\Box,\mathsf{q}}(L_1,\ldots,L_n).
\]
\hfill $\blacksquare$
\end{cor}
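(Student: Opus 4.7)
The strategy is to mimic the proof of Theorem~\ref{stablth} (which handles the continuous analog), adapting it to the discrete/lattice-point setting. Proposition~\ref{stscountassumstgraph} already expresses $P_{g,n}^{\Box,\mathsf{q}}$ as a sum over stable graphs whose vertex weights are the Norbury quasi-polynomials $P_{h,k}$ and whose edge weights are $\ell_e \mathsf{q}^{\ell_e}/(1-\mathsf{q}^{\ell_e}) = \ell_e f_{\mathsf{q}}(\ell_e)$. I want to show that this sum over stable graphs satisfies the discrete TR with the $(A^{{\rm P}},B^{{\rm P}},C^{{\rm P}},VD^{{\rm P}})$ initial data twisted by $f_{\mathsf{q}}$ according to formula~\eqref{initwist} and~\eqref{VDtwist}. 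The proof is by induction on the Euler characteristic $-(2g-2+n)$.

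First I would verify the base cases. For $(g,n) = (0,3)$ the only stable graph has a single vertex and no edges, so the sum-over-stable-graphs formula yields $P_{0,3}(L_1,L_2,L_3) = A^{{\rm P}}(L_1,L_2,L_3)$, which coincides with $A^{{\rm P}}[f_{\mathsf{q}}]$ since the twisting~\eqref{initwist} leaves $A$ unchanged. For $(g,n)=(1,1)$, the set $\mathbf{G}_{1,1}$ consists of the single-vertex graph (weight $VD^{{\rm P}}(L_1)$) and the genus-zero vertex with one non-separating loop (weight $\tfrac{1}{2}\sum_{\ell \in \mathbb{Z}_{+}} A^{{\rm P}}(L_1,\ell,\ell)\,\ell\, f_{\mathsf{q}}(\ell)$ after accounting for the loop's automorphism), so the sum matches $VD^{{\rm P}}[f_{\mathsf{q}}](L_1)$ as defined by the discrete analog of~\eqref{VDtwist}.

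For the inductive step, I would focus on the vertex $v_1 \in V_\Gamma$ that carries the first leaf and apply the discrete topological recursion for the Norbury polynomial $P_{h(v_1),k(v_1)}$ from Theorem~\ref{discreteTRNor} at this vertex. Each of its terms represents excision of a pair of pants from $v_1$, and the argument is then to reorganise the global sum over $\Gamma \in \mathbf{G}_{g,n}$ according to whether each of the two other boundaries of the excised pair of pants (i) remains internal to the singular layer carried by $v_1$, (ii) gets identified with a leaf $L_m$ of $\Gamma$, or (iii) coincides with one of the two half-edges incident to $v_1$ in $\Gamma$ (and then with an entire edge $e$ of $\Gamma$ after combination with the half-edge on the opposite end). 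In cases (i) and (ii) we reproduce directly the discrete $B^{{\rm P}}$ and $C^{{\rm P}}$ terms of the untwisted TR applied to smaller stable graphs; in case (iii) the edge weight $\ell_e f_{\mathsf{q}}(\ell_e)$ attached to $e$ is absorbed into the pair-of-pants weight, and summing $\ell_e$ against $B^{{\rm P}}$ or $C^{{\rm P}}$ yields exactly the additional terms appearing in the twisted initial data~\eqref{initwist}. Matching cases (i)--(iii) against the expansion of $(A^{{\rm P}}[f_{\mathsf{q}}],B^{{\rm P}}[f_{\mathsf{q}}],C^{{\rm P}}[f_{\mathsf{q}}])$ gives the twisted discrete TR for the left-hand side by the induction hypothesis.

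The main obstacle is not analytic (everything is a finite computation at fixed $(g,n)$) but combinatorial: one has to check that the fibers of the gluing/cutting map on stable graphs are counted with the correct automorphism factors, especially when the excised pair of pants creates or collapses a loop, or when the two ends of a new edge land on the same vertex. This is exactly the bookkeeping already performed in~\cite{GRpaper} for Theorem~\ref{stablth}, and since the arguments there are purely combinatorial in $\mathbf{G}_{g,n}$ and formal in the vertex/edge weights, they transpose verbatim to the present discrete setting once integrals over $\mathbb{R}_{+}$ are replaced by sums over $\mathbb{Z}_{+}$ with the parity constraint implicit in~\eqref{fzZ}. The parity constraint is automatically preserved because the sum of all $L_\lambda$ and $\ell_e$ at each vertex is forced to be even by the vanishing of $P_{h,k}$ on odd-sum arguments.
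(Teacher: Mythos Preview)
Your proposal is correct and follows essentially the same approach as the paper. The paper does not spell out a proof at all: it observes in the paragraph preceding the corollary that Proposition~\ref{stscountassumstgraph} exhibits $P_{g,n}^{\Box,\mathsf{q}}$ as a sum over stable graphs with vertex weights $P_{h,k}$ and edge weights $\ell f_{\mathsf{q}}(\ell)$, and then simply asserts that the discrete analog of Theorem~\ref{stablth} holds, so that the stable-graph sum is computed by TR with the twisted initial data. Your inductive argument (apply Theorem~\ref{discreteTRNor} at the vertex carrying the first leaf and reorganise the three cases) is exactly the content of that discrete analog, so you are just unpacking what the paper leaves implicit.
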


This result can also be brought in the form of Eynard--Orantin topological recursion. Twisting is implemented by a shift of $\omega_{0,2}$, but as the lengths are not continuous variables we cannot use~\eqref{02twist}. Instead we will resort to Theorem~\ref{variationomega02}.
 
\begin{prop}
\label{propsq} Let $\omega_{g,n}^{\rm P,\mathsf{q}}$ be the output of Eynard--Orantin topological recursion for the spectral curve differing from \eqref{spectri} only by the choice of 
$$
\omega_{0,2}^{{\rm P},\mathsf{q}}(z_1,z_2) = \frac{1}{2}\,\frac{\dd z_1 \otimes \dd z_2}{(z_1 - z_2)^2} + \frac{1}{2}\bigg(\wp(u_1 - u_2;\mathsf{q}) + \frac{\pi^2 E_2(\mathsf{q})}{3}\bigg)\dd u_1 \otimes \dd u_2,
$$
where $z_j = \exp(2{\rm i}\pi\,u_j)$, $\wp(u;\mathsf{q})$ is the Weierstra{\ss} function for the elliptic curve $\mathbb{C}/(\mathbb{Z} \oplus \tau\mathbb{Z})$ where $\mathsf{q} = e^{2{\rm i}\pi \tau}$, and $E_2(\mathsf{q})$ is the second Eisenstein series
$$
E_2(\mathsf{q}) = 1 -  24 \sum_{\ell > 0} \frac{\ell\,\mathsf{q}^{\ell}}{1 - \mathsf{q}^{\ell}}.
$$ 
Then, we have for $L_1,\ldots,L_n > 0$
$$
P_{g,n}^{\Box,\mathsf{q}}(L_1,\ldots,L_n) = (-1)^n \Res_{z_1 = \infty}  \cdots \Res_{z_n = \infty} \omega_{g,n}^{{\rm P},\mathsf{q}}(z_1,\ldots,z_n) \prod_{i = 1}^n \bigg(1 + \frac{\mathsf{q}^{L_i}}{2(1 - \mathsf{q}^{L_i})}\bigg)^{-1} \frac{z_i^{L_i}}{L_i}.
$$
\end{prop}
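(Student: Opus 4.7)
The plan is to reduce Proposition~\ref{propsq} to Corollary~\ref{cotrd} via the Eynard--Orantin residue machinery. By Corollary~\ref{cotrd}, the generating series $P_{g,n}^{\Box,\mathsf{q}}$ satisfy the $(A,B,C,VD)$ topological recursion~\eqref{TReqn} for the Norbury initial data of Theorem~\ref{discreteTRNor} twisted by $f_{\mathsf{q}}(\ell)=\mathsf{q}^{\ell}/(1-\mathsf{q}^{\ell})$, while by Theorem~\ref{Norburythm} the untwisted Norbury amplitudes are the Eynard--Orantin outputs on the spectral curve~\eqref{spectri}. The remaining task is therefore to translate the GR twist by $f_{\mathsf{q}}$ into a shift of $\omega_{0,2}^{\rm P}$ on that spectral curve, to identify the shifted bidifferential with the Weierstra\ss{} expression in the statement, and finally to derive the residue extraction formula with the correction factor.

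To determine the shift, I would compare the stable-graph expansion of Proposition~\ref{stscountassumstgraph} with the stable-graph formula of Theorem~\ref{variationomega02}. In both expansions the vertices carry untwisted Norbury amplitudes $\omega^{\rm P}_{h,k}$, so matching the two sums amounts to identifying the Eynard--Orantin edge kernel $\mathscr{O}$ with the cylinder weight $\ell_e\,\mathsf{q}^{\ell_e}/(1-\mathsf{q}^{\ell_e})$. Using Theorem~\ref{Norburythm} to represent each $P_{h(v),k(v)}$ as a residue at $z_e=\infty$ paired with $z_e^{\ell_e}/\ell_e$, performing the sum over $\ell_e\ge 1$ produces on each edge a bidifferential in the two attached variables proportional to
\[
  \sum_{k\ge 1}\frac{k\,\mathsf{q}^{k}}{1-\mathsf{q}^{k}}\bigl((z_1/z_2)^{k}+(z_2/z_1)^{k}\bigr)\,\frac{\dd z_1\otimes\dd z_2}{2\,z_1 z_2}.
\]
Passing to $u_i=(2\mathrm{i}\pi)^{-1}\log z_i$ and invoking the classical Fourier/Lambert expansion
\[
  \wp(u;\mathsf{q})+\frac{\pi^{2} E_2(\mathsf{q})}{3}=\frac{\pi^{2}}{\sin^{2}(\pi u)}+8\pi^{2}\sum_{k\ge 1}\frac{k\,\mathsf{q}^{k}}{1-\mathsf{q}^{k}}\bigl(1-\cos(2\pi k u)\bigr)
\]
(which follows from $\wp(u;\tau)=\sum_{m,n}{}'(u-m-n\tau)^{-2}$ and the identity $\sum_m(u-m)^{-2}=\pi^{2}/\sin^{2}(\pi u)$), one recognises the shifted propagator as $\tfrac{1}{2}\bigl(\wp(u_1-u_2;\mathsf{q})+\pi^{2}E_2(\mathsf{q})/3\bigr)\dd u_1\otimes\dd u_2$, up to the piece $\tfrac{1}{2}\omega_{0,2}^{\rm P}$ already present. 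Applying Theorem~\ref{variationomega02} then identifies $\omega_{g,n}^{{\rm P},\mathsf{q}}$ as the Eynard--Orantin amplitudes on the claimed spectral curve.

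The final step is to extract the polynomial $P_{g,n}^{\Box,\mathsf{q}}(L_1,\ldots,L_n)$ from $\omega_{g,n}^{{\rm P},\mathsf{q}}$. By~\eqref{initwist} the GR twist by $f_{\mathsf{q}}$ modifies only the internal edges of the recursion and leaves external leaves untouched, so the polynomial-to-residue dictionary at the leaves is the \emph{old} one attached to $\omega_{0,2}^{\rm P}$, namely the Norbury extraction $(-1)\Res_{z=\infty}z^{L}/L$. In the new spectral curve, however, the projector $\widetilde{\mathscr{P}}$ provided by Theorem~\ref{variationomega02} is built from $\omega_{0,2}^{{\rm P},\mathsf{q}}$, and the mismatch between these two projections at each leaf produces, after expansion against the Lambert series above, exactly the multiplicative factor $1+\mathsf{q}^{L_i}/(2(1-\mathsf{q}^{L_i}))$ whose inverse must be inserted in the extraction; the factor $\tfrac{1}{2}$ tracks the choice of symmetrisation in~\eqref{02twist}. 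I expect this last step---pinning down the precise form of the leaf correction by analysing how the dual basis~\eqref{xidform} (or its Norbury analog of Theorem~\ref{thmABCD}) transforms under the shift of $\omega_{0,2}$---to be the main technical obstacle.
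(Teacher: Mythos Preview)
Your strategy is essentially the paper's own proof: it too applies Theorem~\ref{variationomega02} with the edge operator $\mathscr{O}[\varpi]=\sum_{\ell>0}\frac{\ell\mathsf{q}^{\ell}}{1-\mathsf{q}^{\ell}}\Res_{z_1=\infty}\Res_{z_2=\infty}\tfrac{z_1^{\ell}z_2^{\ell}}{\ell^{2}}\varpi(z_1,z_2)$, identifies the resulting shift of $\omega_{0,2}^{\rm P}$ via the Lambert expansion of $\wp$, and then computes the leaf projector $\widetilde{\mathscr{P}}$ explicitly. One small slip: your Weierstra\ss{} identity carries a spurious constant---the correct form is $\wp(u;\mathsf{q})+\tfrac{\pi^{2}E_2(\mathsf{q})}{3}=\pi^{2}/\sin^{2}(\pi u)-8\pi^{2}\sum_{k\ge1}\tfrac{k\mathsf{q}^{k}}{1-\mathsf{q}^{k}}\cos(2\pi ku)$, with no ``$1-$'' inside the sum. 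The leaf correction you flag as the main obstacle is indeed the crux, and the paper handles it exactly as you anticipate: one computes $-\Res_{z_0=\infty}z_0^{L}\widetilde{\mathscr{P}}[\phi](z_0)$ by integrating $\omega_{0,2}^{{\rm P},\mathsf{q}}(\cdot,z_0)$ and extracting the coefficient of $z_0^{-L-1}$, whereupon the Lambert part of the primitive contributes precisely the factor $1+\tfrac{\mathsf{q}^{L}}{2(1-\mathsf{q}^{L})}$ multiplying the old Norbury extraction $-\Res_{z=\infty}z^{L}\phi(z)$.
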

\begin{proof} We first make some preliminary computations. Let us introduce the vector space $\mathscr{V}$ of meromorphic $1$-forms $\phi$ on $\mathbb{P}^1$ whose poles are located at $\pm 1$ and such that $\phi (z) + \phi (1/z) = 0$.  Let us consider the linear map $\mathscr{V}^{\otimes 2} \rightarrow \mathbb{C}\bbraket{\mathsf{q}}$ defined by
\begin{equation}
\label{operatorPO} \mathscr{O}[\varpi] = \sum_{\ell > 0} \frac{\ell\,\mathsf{q}^{\ell}}{1 - \mathsf{q}^{\ell}} \Res_{z_1 = \infty} \Res_{z_2 = \infty} \frac{z_1^{\ell}z_2^{\ell}}{\ell^2}\,\varpi(z_1,z_2).
\end{equation}
Since elements of $\mathscr{V}$ are odd under the involution $z \mapsto 1/z$, we can write
$$
\mathscr{O}[\varpi] = -\frac{1}{2} \bigg(
	\Res_{z_1 = \infty} \Res_{z_2 = 0}
		O_{\mathsf{q}}\Big(\frac{z_1}{z_2}\Big) \varpi(z_1,z_2)
		+
	\Res_{z_1 = 0} \Res_{z_2 = \infty}
		O_{\mathsf{q}}\Big(\frac{z_2}{z_1}\Big) \varpi(z_1,z_2)
	\bigg),
$$
where $O_{\mathsf{q}}(z) = \sum_{\ell > 0} \frac{\mathsf{q}^{\ell}\,z^{\ell}}{\ell \, (1 - q^{\ell})} \in \mathbb{C}[z]\bbraket{\mathsf{q}}$. Recall the expansion of the Weierstra{\ss} function when $u \rightarrow 0$
\begin{equation}
\label{Wpexp}\wp(u;\mathsf{q}) = \frac{1}{u^2} + \sum_{k > 0} 2(2k + 1)G_{2k + 2}(\mathsf{q})u^{2k},
\end{equation}
where for $m > 0$, $G_{2m}(\mathsf{q})$ is the $(2m)$-th Eisenstein series
$$
G_{2m}(\mathsf{q}) \coloneqq \zeta(2m) + \frac{(2{\rm i}\pi)^{2m}}{(2m - 1)!} \sum_{\ell > 0} \frac{\ell^{2m - 1}\mathsf{q}^{\ell}}{1 - \mathsf{q}^{\ell}}.
$$
From the identity
$$
\sum_{k \geq 0} \zeta(2k)u^{2k} = -\frac{\pi u}{2}\,{\rm cotan}(\pi u),
$$
we deduce that
$$
\sum_{k \geq 0} 2(2k + 1)\zeta(2k + 2)\,u^{2k} = \frac{\pi^2}{\sin^2\pi u} - \frac{1}{u^2}.
$$
Adding/subtracting the $k = 0$ term in \eqref{Wpexp}, and computing separately the contribution of the Riemann zeta values, yields
\begin{equation}
\begin{split}
\wp(u;\mathsf{q}) & = \frac{\pi^2}{\sin^2\pi u} - 2G_{2}(\mathsf{q}) + \sum_{\ell > 0}  \frac{\mathsf{q}^{\ell}}{1 - \mathsf{q}^{\ell}} \sum_{k \geq 0} \frac{2(2{\rm i}\pi)^{2k + 2}\ell^{2k + 1}}{(2k)!}\,u^{2k} \\
& = \frac{\pi^2}{\sin^2\pi u} - 2G_2(\mathsf{q}) + (2{\rm i}\pi)^2 \sum_{\ell > 0} \frac{\ell\, \mathsf{q}^{\ell}(z^{\ell} + z^{-\ell})}{1 - \mathsf{q}^{\ell}} 
\end{split}
\end{equation} 
where we have set $z = e^{2{\rm i}\pi u}$. Since $E_2(\mathsf{q}) = \frac{6}{\pi^2}G_2(\mathsf{q})$, setting $z_j = e^{2{\rm i}\pi u_j}$ yields
\[
\begin{split}
& \quad \bigg(\wp(u_1-u_2;\mathsf{q}) + \frac{\pi^2}{3}E_2(\mathsf{q})\bigg)\dd u_1 \otimes \dd u_2 \\
& = \bigg(\frac{1}{(z_1 - z_2)^2} + \frac{1}{z_1z_2} \sum_{\ell > 0} \frac{\ell\,\mathsf{q}^{\ell}\big((z_1/z_2)^{\ell} + (z_2/z_1)^{\ell}\big)}{1 - \mathsf{q}^{\ell}}\bigg) \dd z_1 \otimes \dd z_2 \\
& = \bigg(\frac{1}{(z_1 - z_2)^2} - \Res_{z_1' = \infty} \Res_{z_2' = 0} \frac{O_{\mathsf{q}}(z_1'/z_2')\,\dd z_1'\otimes \dd z_2'}{(z_1 - z_1')^2(z_2 - z_2')^2} - \Res_{z_1 = 0} \Res_{z_2' = \infty} \frac{O_{\mathsf{q}}(z_2'/z_1')\,\dd z_1' \otimes \dd z_2'}{(z_1 - z_1')^2(z_2 - z_2')^2}\,\bigg) \dd z_1 \otimes \dd z_2.
\end{split}
\]
Hence
\begin{equation*}
\begin{split}
	& \quad
	\omega_{0,2}^{{\rm P}}(z_1,z_2)
	- \frac{1}{2} \bigg(
		\Res_{z_1 = \infty} \Res_{z_2 = 0}
			O_{\mathsf{q}}\Big(\frac{z_1'}{z_2'}\Big)
			\omega_{0,2}^{{\rm P}}(z_1,z_1') \otimes \omega_{0,2}^{{\rm P}}(z_2,z_2') \\
	&\qquad\qquad\qquad\qquad\quad
		+
		\Res_{z_1 = 0} \Res_{z_2 = \infty}
			O_{\mathsf{q}}\Big(\frac{z_2'}{z_1'}\Big)
			\omega_{0,2}^{{\rm P}}(z_1,z_1') \otimes \omega_{0,2}^{{\rm P}}(z_2,z_2')
		\bigg) \\
& = \frac{1}{2}\,\frac{\dd z_1 \otimes \dd z_2}{(z_1 - z_2)^2} + \frac{1}{2}\bigg(\wp(u_1 -u_2 ;\mathsf{q}) + \frac{\pi^2E_2(\mathsf{q})}{3}\bigg)\dd u_1 \otimes \dd u_2 
\end{split}
\end{equation*}
which we took as definition for $\omega_{0,2}^{{\rm P},\mathsf{q}}$. We then apply Theorem~\ref{variationomega02}, which expresses $\omega_{g,n}^{{\rm P},\mathsf{q}}$ as a sum over stable graphs, with vertex weights given by $\omega_{h,k}^{{\rm P}}$, the operator $\mathscr{O}$ acting on each edge, and the operator
$$
\widetilde{\mathscr{P}}[\phi](z_0) = \sum_{\alpha \in \{-1,1\}} \Res_{z = \alpha} \bigg(\int^{z} \omega_{0,2}(\cdot,z_0)\bigg)\phi(z), \qquad \phi \in \mathscr{V}
$$
acting on each leaf. This expression should be considered an equality of $\mathsf{q}$-series. By construction of the operator $\mathscr{O}$ in \eqref{operatorPO}, its action on the (products of) $\omega^{{\rm P}}$s realises the summation over integral lengths in the (products of) $P$'s in \eqref{eq:stsgenfct}. It remains to compute the expansion of $\omega_{g,n}^{{\rm P},\mathsf{q}}$ near $z_i \rightarrow \infty$ -- more precisely, one should expand as a $\mathsf{q}$-series, and then expand each term when $z_i \rightarrow \infty$. For $\phi \in \mathscr{V}$, we find
\begin{equation*}
\begin{split}
- \Res_{z_0 = \infty} z_0^{L}\,\widetilde{\mathscr{P}}[\phi](z_0) & = -\sum_{\alpha \in \{-1,1\}} \Res_{z = \alpha} \Res_{z_0 = \infty} z_0^{L} \bigg(\int^{z} \omega_{0,2}^{{\rm P},\mathsf{q}}(\cdot,z_0)\bigg)\,\phi(z)  \\
& = -\sum_{\alpha \in \{-1,1\}} \Res_{z = \alpha} \phi(z) \Res_{z_0 = \infty} z_0^{L}\bigg(\frac{1}{z_0 - z} + \frac{1}{2} \sum_{\ell > 0} \frac{\mathsf{q}^{\ell}}{1 - \mathsf{q}^{\ell}}\big(-z_0^{\ell - 1}z^{-\ell} + z^{\ell}z_0^{-(\ell + 1)}\big)\bigg) \\
& = \sum_{\alpha \in \{-1,1\}} \Res_{z = \alpha} \phi(z)z^{L}\bigg(1 + \frac{\mathsf{q}^{L}}{2(1 - \mathsf{q}^{L})}\bigg) \\
& = - \Res_{z = \infty} \phi(z) z^{L} \bigg(1 + \frac{\mathsf{q}^{L}}{2(1 - \mathsf{q}^{L})}\bigg) .
\end{split}  
\end{equation*}
In the second line we chose a certain primitive of $\omega_{0,2}^{{\rm P},\mathsf{q}}(\cdot,z_0)$. The final result does not depend on this choice, since it is not changing the residue. Recalling \eqref{expNobur}, we deduce that
$$
(-1)^n \Res_{z_1 = \infty} \cdots \Res_{z_n = \infty} \omega_{g,n}^{{\rm P},\mathsf{q}}(z_1,\ldots,z_n) \prod_{i = 1}^n  \bigg(1 + \frac{\mathsf{q}^{L_i}}{2(1 - \mathsf{q}^{L_i})}\bigg)^{-1} \frac{z^{L_i}}{L_i}
$$
coincides with the right-hand side of \eqref{eq:stsgenfct} and this concludes the proof.
\end{proof}

\subsection{Second proof of topological recursion for Masur--Veech polynomials}

We observe that
\[
\mathscr{B}(u_1,u_2) = \bigg(\wp(u_1 - u_2;\mathsf{q}) + \frac{\pi^2 E_2(\mathsf{q})}{3}\bigg)\dd u_1\otimes \dd u_2
\]
is the unique fundamental bidifferential of the second kind on the elliptic curve $\mathbb{C}/(\mathbb{Z} \oplus \tau \mathbb{Z})$ with biresidue $1$ on the diagonal and such that it has zero period on the cycle $u \in [0,1]$. In a sense, $\omega_{0,2}^{{\rm P},\mathsf{q}}$ is the elliptic analog of $\omega_{0,2}^{{\rm MV}}$ from Proposition~\ref{EOMV}.

\medskip

Yet, it is not so easy to derive the Eynard--Orantin form of the topological recursion for the Masur--Veech polynomials (Proposition~\ref{EOMV}) by taking the $\mathsf{q} \rightarrow 1$ limit in Proposition~\ref{propsq}, due to the complicated ($\mathsf{q}$-dependent) way the $P_{g,n}^{\Box,\mathsf{q}}$'s are defined. Instead, we can take limit $\mathsf{q} \rightarrow 1$ in Corollary~\ref{cotrd} to give a second proof of the topological recursion with respect to lengths for Masur--Veech polynomials (the last statement in Proposition~\ref{MVtr1}) namely the topological recursion for Masur--Veech polynomials with respect to lengths.

\begin{prop}
	Corollary~\ref{cotrd} and Proposition~\ref{MVpolyasstscount} imply topological recursion for the Masur–Veech polynomials (see Proposition~\ref{MVtr1}).
\end{prop}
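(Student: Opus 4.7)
The plan is to take the large-$T$ limit of the discrete topological recursion for $P^{\Box,\mathsf{q}}_{g,n}$ (Corollary~\ref{cotrd}) along the slice $\mathsf{q} = e^{-1/T}$, after rescaling all boundary lengths by an even positive integer $T$. Since Proposition~\ref{MVpolyasstscount} identifies $T^{-(6g-6+2n)}P^{\Box,e^{-1/T}}_{g,n}(TL_1,\ldots,TL_n)$ with $2^{-(2g-3+n)}V\Omega^{{\rm MV}}_{g,n}(L_1,\ldots,L_n)$ in the limit, the limiting recursion must be \eqref{TReqn} with the Masur--Veech initial data \eqref{eq314}.

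Concretely, first I would fix $L_1,\ldots,L_n \in \mathbb{Z}_+$ of even sum and an even integer $T > 0$, evaluate the recursion at $(TL_1,\ldots,TL_n)$, divide by $T^{6g-6+2n}$, and rescale each summation variable via $\ell = T\hat\ell$, $\ell' = T\hat\ell'$. Three ingredients then drive the convergence: (i)~Proposition~\ref{MVpolyasstscount} applied to each sub-amplitude on the right-hand side; (ii)~the pointwise convergence $B^{{\rm P}}[f_{\mathsf{q}}](TL_1,TL_m,T\hat\ell) \to B^{{\rm MV}}(L_1,L_m,\hat\ell)$ and $C^{{\rm P}}[f_{\mathsf{q}}](TL_1,T\hat\ell,T\hat\ell') \to C^{{\rm MV}}(L_1,\hat\ell,\hat\ell')$ of the twisted Norbury initial data, which follows from the degree-zero homogeneity of $A^{{\rm K}},B^{{\rm K}},C^{{\rm K}}$ together with the identity $f_{\mathsf{q}}(T\hat\ell) = f^{{\rm MV}}(\hat\ell) = 1/(e^{\hat\ell}-1)$, then matches \eqref{eq314} term by term; (iii)~Riemann-sum approximation of the discrete sums by integrals over $\mathbb{R}_+$ or $\mathbb{R}_+^2$.

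The delicate point is a careful accounting of the parity constraints carried by the Norbury quasi-polynomials. With $T$ even, these restrict the summations as follows: in the $B$-term, $\ell \in 2\mathbb{Z}_+$, giving Riemann density $1/2$; in the non-separating $C$-term, $\ell + \ell' \in 2\mathbb{Z}$ (imposed by both $C^{\rm P}[f_{\mathsf q}]$ and $P^{\Box,\mathsf{q}}_{g-1,n+1}$), again density $1/2$; and in the separating $C$-term, $\ell \in 2\mathbb{Z}_+$ \emph{and} $\ell' \in 2\mathbb{Z}_+$ separately, since the parity of each summation variable is forced by the sub-amplitude to which it belongs, giving density $1/4$. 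Tracking the powers of $T$ via Lemma~\ref{combstgraph} shows they balance to $T^{6g-6+2n}$ on both sides, and combining the parity factors $\tfrac{1}{2},\tfrac{1}{2},\tfrac{1}{4}$ with the product normalisations $2^{-(2g+n-4)}$ (for the $B$-term and the non-separating $C$-term) and $2^{-(2g+n-5)}$ (for the separating $C$-term, where $n_1 + n_2 = n-1$ shifts the exponent) reproduces, on the right-hand side, the combinations $2^{-(2g-3+n)}$ and $\tfrac{1}{2}\cdot 2^{-(2g-3+n)}$ that match exactly the left-hand-side normalisation and the $\tfrac{1}{2}$ prefactor of the $C$-term in \eqref{TReqn}. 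The identity obtained holds for integer $L_i$ of even sum, and extends to all $L_i \in \mathbb{R}_+$ by polynomiality, which is guaranteed on both sides by the exponential decay of $f^{{\rm MV}}$ making the integrals in \eqref{TReqn} absolutely convergent.

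The main technical hurdle will be justifying the exchange of $\lim_{T\to\infty}$ with the infinite sums, via a uniform-in-$T$ dominated-convergence bound built from the exponential decay of $f_{\mathsf{q}}$ at large arguments and polynomial growth estimates $P^{\Box,\mathsf{q}}_{g',n'}(T\hat\ell,\ldots) = O(T^{6g'-6+2n'})$ extracted from Theorem~\ref{Norasym}. The base cases $(g,n)\in\{(0,3),(1,1)\}$ must be verified directly: the case $(0,3)$ is trivial since $A^{\rm P}\equiv 1$ under the parity condition; the case $(1,1)$ reduces to computing $T^{-2}VD^{\rm P}[f_{\mathsf q}](TL_1)$, where the sole $\ell$-sum in \eqref{VDtwist} carries no parity constraint ($A^{\rm P}(TL_1,\ell,\ell)$ only requires $TL_1$ even) and directly reproduces $VD^{\rm MV}(L_1) = L_1^2/48 + \zeta(2)/2$ via Lemma~\ref{Lzeta}.
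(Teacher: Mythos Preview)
Your formal computation is correct: the parity accounting (density $\tfrac12$ for the $B$-term and the non-separating $C$-term, $\tfrac14$ for the separating $C$-term), the power-of-$T$ bookkeeping, and the resulting match of normalisations are all right. Moreover, the relation $X^{{\rm K}}[f_{\mathsf q}](TL_1,TL_2,TL_3)=X^{{\rm MV}}(L_1,L_2,L_3)$ for $X\in\{B,C\}$ is in fact \emph{exact} when $\mathsf q=e^{-1/T}$, not merely a pointwise limit---this is the content of the paper's equation~\eqref{Xqa}.

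However, your route differs from the paper's at the key analytic step, and the difference is precisely the ``main technical hurdle'' you identify. You propose a single-step induction: apply Proposition~\ref{MVpolyasstscount} to each sub-amplitude $P^{\Box,\mathsf q}_{g',n'}(T\hat\ell,\ldots)$ inside the sum, then pass to the Riemann integral. The paper explicitly considers this route and sets it aside (``If we could replace the $P_{h,k}^{\Box,\mathsf q}$ directly by their limit \ldots\ Instead of justifying that such a replacement is allowed, we can proceed in a simpler way''). The problem is that Proposition~\ref{MVpolyasstscount} gives only a pointwise limit for fixed boundary data, whereas your sum runs over $\hat\ell\in T^{-1}\mathbb Z_+$, which varies with $T$. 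Your proposed dominating bound $P^{\Box,\mathsf q}_{g',n'}(T\hat\ell,\ldots)=O(T^{6g'-6+2n'})$ cannot be read off from Theorem~\ref{Norasym}, which concerns the Norbury quasi-polynomial $P_{g,n}$, not the $\mathsf q$-series $P^{\Box,\mathsf q}_{g,n}$. To get a uniform-in-$\hat\ell$ bound one would have to re-enter the stable-graph formula~\eqref{eq:stsgenfct} and redo the estimates in the proof of Proposition~\ref{MVpolyasstscount}---doable, but not what you cite.

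The paper avoids this entirely by \emph{unfolding} the recursion of Corollary~\ref{cotrd} all $2g-3+n$ steps down to the base cases $(0,3)$ and $(1,1)$. After unfolding, no $P^{\Box,\mathsf q}_{g',n'}$ with $2g'-2+n'\geq 2$ remains: the summand is a finite product of factors $B^{{\rm MV}}$, $C^{{\rm MV}}$, the constant $1$ (from $(0,3)$), and the explicit expression~\eqref{P11qq} for $P^{\Box,\mathsf q}_{1,1}$. This is a continuous Riemann-integrable function of the (finitely many) discretised variables, and a single Riemann-sum argument applies, with $2g-3+n$ index-two sublattices contributing the factor $2^{-(2g-3+n)}$. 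Recombining gives the one-step recursion~\eqref{TReqn} with initial data~\eqref{eq314}, and Proposition~\ref{MVpolyasstscount} is invoked only once, at the end, to identify the limit with $V\Omega^{{\rm MV}}_{g,n}$. What the paper's approach buys is that the interchange of limit and sum is immediate; what yours would buy, if the uniform bounds were supplied, is a cleaner inductive structure.
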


\begin{proof}
The only stable graph of type $(g,n) = (0,3)$ has one vertex with three leaves and no edge
\[
P_{0,3}^{\Box,\mathsf{q}}(L_1,L_2,L_3) = P_{0,3}(L_1,L_2,L_3) = A^{{\rm P}}(L_1,L_2,L_3) = 1.
\]
Rescaling $L_i$ by a positive even integer $T$ and sending $T$ to infinity is transparent here, and $2^{-(2g - 3 + n)} = 1$. Comparing with Proposition~\ref{MVpolyasstscount}, we get the claim for $(0,3)$.

\medskip

In type $(1,1)$, \eqref{eq:stsgenfct} yields for any positive even integer $L_1$
\begin{equation}\label{P11qq} 
P_{1,1}^{\Box,\mathsf{q}}(L_1) = P_{1,1}(L_1) + \frac{1}{2} \sum_{\ell > 0} \frac{\ell\,\mathsf{q}^{\ell}}{1 - \mathsf{q}^{\ell}} = \frac{L_1^2 - 4}{48} + \frac{1}{2} \sum_{\ell > 0} \frac{\ell\,\mathsf{q}^{\ell}}{1 - \mathsf{q}^{\ell}},
\end{equation}
where the $1/2$ comes from the automorphisms of the stable graph with a single vertex and one loop. Setting $\mathsf{q} = e^{-1/T}$, replacing $L_1$ with $TL_1$ and dividing by $T^2$, we get
\[
T^{-2}\,P_{1,1}^{\Box,\mathsf{q}}(L_1)
=
\frac{L_1^2}{48} + O(1/T^2) + \frac{1}{2T} \sum_{\hat{\ell} \in T^{-1}\mathbb{Z}_+} \frac{\hat{\ell}}{e^{\hat{\ell}} - 1} = \frac{L_1^2}{48} + \frac{1}{2} \int_{\mathbb{R}_{+}} \frac{\dd \hat{\ell}\,\hat{\ell}}{e^{\hat{\ell}} - 1} + o(1)
\]
by construction of the Riemann integral. The first term is indeed $V\Omega^{{\rm K}}_{1,1}(L_1)$, and the second term exactly matches the contribution of the stable graph with one vertex and one loop in \eqref{defn:MVpoly} for $V\Omega^{{\rm MV}}_{1,1}(L_1)$. This in fact is a check of Proposition~\ref{MVpolyasstscount} for $(g,n) = (1,1)$ as again $2^{-(2g - 3 + n)} = 1$, but also coincides with $VD^{{\rm MV}}(L_1)$ obtained by integrating $D^{{\rm MV}}$ from \eqref{eq314} over $\mathcal{M}_{1,1}(L_1)$, so proves the claim.

\medskip

For $2g - 2 + n > 0$, we reach $P_{g,n}^{\Box,\mathsf{q}}$ by applying $2g - 3 + n$ times the recursion formula from Corollary~\ref{cotrd}. One step of this formula is, for $L_1,\ldots,L_n$ positive integers such that $L_1 + \cdots + L_n$ is even,
\begin{equation*}
\begin{split}
& \quad P_{g,n}^{\Box,\mathsf{q}}(L_1,L_2,\ldots,L_n) \\
& = \sum_{m = 2}^{n} \sum_{\substack{\ell > 0 \\ L_1 + L_m + \ell = 0\,\,{\rm mod}\,\,2}} \ell\,B^{{\rm K}}[f_{\mathsf{q}}](L_1,L_m,\ell)\,P_{g,n - 1}^{\Box,\mathsf{q}}(\ell,L_2,\ldots,\widehat{L_m},\ldots,L_n) \\
& \quad + \frac{1}{2} \sum_{\substack{\ell,\ell' > 0 \\ L_1 + \ell + \ell' = 0\,\,{\rm mod}\,\,2}} \ell\,\ell'\,C^{{\rm K}}[f_{\mathsf{q}}](L_1,\ell,\ell')\bigg( P_{g-1,n+1}^{\Box,\mathsf{q}}(\ell,\ell',L_2,\ldots,L_n) \\
& \phantom{\qquad \quad + \frac{1}{2} \sum_{\substack{\ell,\ell' > 0 \\ L_1 + \ell + \ell' = 0\,\,{\rm mod}\,\,2}} \ell\,\ell'\,C^{{\rm K}}[f_{\mathsf{q}}](L_1,\ell,\ell') } + \sum_{\substack{h + h' = g \\ J \sqcup J' = \{L_2,\ldots,L_n\}}} P_{h,1 + |J|}^{\Box,\mathsf{q}}(\ell,J)\cdot P_{h',1 + |J'|}^{\Box,\mathsf{q}}(\ell',J')\bigg).
\end{split}
\end{equation*}
If $T$ is a positive even integer, this implies
\begin{equation}
\label{PGngufgn} \begin{split}
& \quad \frac{P_{g,n}^{\Box,\mathsf{q}}(TL_1,TL_2,\ldots,TL_n)}{T^{6g - 6 + 2n}} \\
& = \sum_{m = 2}^{n} \frac{1}{T} \sum_{\hat{\ell} \in 2T^{-1}\mathbb{Z}_{+}} \hat{\ell}\,B^{{\rm K}}[f_{\mathsf{q}}](TL_1,TL_m,T\hat{\ell})\,\frac{P_{g,n - 1}^{\Box,\mathsf{q}}(T\hat{\ell},TL_2,\ldots,\widehat{TL_m},\ldots,TL_n)}{T^{6g - 6 + 2(n - 1)}} \\
& \quad + \frac{1}{2T^2} \sum_{\substack{\hat{\ell},\hat{\ell}' \in T^{-1}\mathbb{Z}_{+} \\   \hat{\ell} + \hat{\ell}' \in 2T^{-1}\mathbb{Z}_{+}}} \hat{\ell}\,\hat{\ell}'\,C^{{\rm K}}[f_{\mathsf{q}}](TL_1,T\hat{\ell},T\hat{\ell}')\bigg( \frac{P_{g-1,n+1}^{\Box,\mathsf{q}}(T\hat{\ell},T\hat{\ell}',TL_2,\ldots,TL_n)}{T^{6(g - 1) - 6 + 2(n + 1)}} \\
& \phantom{\qquad \quad + \frac{1}{2} \sum_{\substack{\ell,\ell' > 0 \\ L_1 + \ell + \ell' = 0\,\,{\rm mod}\,\,2}} \ell\,\ell'\,C^{{\rm K}}[f_{\mathsf{q}}](TL_1,T\hat{\ell})} + \sum_{\substack{h + h' = g \\ J \sqcup J' = \{L_2,\ldots,L_n\}}} \frac{P_{h,1 + |J|}^{\Box,\mathsf{q}}(T\hat{\ell},TJ)}{T^{6h - 6 + 2(1 + |J|)}}\cdot \frac{P_{h',1 + |J'|}^{\Box,\mathsf{q}}(T\hat{\ell}',TJ')}{T^{6h' - 6 + 2(1 + |J'|)}} \bigg).
\end{split}
\end{equation}
We observe that for $X \in \{B,C\}$ and since $\mathsf{q} = e^{-1/T}$, we have the exact relation
\begin{equation}
\label{Xqa}X^{{\rm K}}[f_{\mathsf{q}}](TL_1,TL_2,TL_3) = X^{{\rm K}}[f_{{\rm MV}}](L_1,L_2,L_3),\qquad {\rm with}\,\,f_{{\rm MV}}(\ell) = \frac{1}{e^{\ell} - 1}.
\end{equation}
We observe that $\hat{\ell}$ in the first line (resp. $(\hat{\ell},\hat{\ell}')$ in the second line) is restricted to a sublattice of $\mathbb{Z}$ (resp. $\mathbb{Z}^2$) of index $2$. If we could replace the $P_{h,k}^{\Box,\mathsf{q}}$ directly by their limit provided by Proposition~\ref{MVpolyasstscount} for $2h - 2 + k < 2g - 2 + n$, the approximation of Riemann integrals by Riemann sums would imply Proposition~\ref{MVtr1} for $(g,n)$, and we could conclude by induction.

\medskip

Instead of justifying that such a replacement is allowed, we can proceed in a simpler way. Let us unfold all the $i = 1,\ldots,2g - 3 + n$ steps of the recursion. The result is a formula expressing the left-hand side of \eqref{PGngufgn} as a finite sum of terms of different topological origin and countable sums over discretised variables $\hat{\ell}_i$ (or $\hat{\ell}_i,\hat{\ell}_i'$) each belonging to a sublattice of index $2$. Taking into account \eqref{Xqa}, the summands are finite products of $B^{{\rm K}}[f_{\mathrm{MV}}]$, $C^{{\rm K}}[f_{{\rm MV}}]$, $P_{0,3}^{\Box,\mathsf{q}} \equiv 1$ and $P_{1,1}^{\Box,\mathsf{q}}$. The latter can be replaced by its expression \eqref{P11qq}. The outcome of this unfolding is a big sum over a finite set of discretised variables of specialisations of continuous functions and Riemann integrable functions on some $\mathbb{R}_{+}^{k}$ at those discretised variables. To this, we can apply the principle of approximation of Riemann integrals by Riemann sums, and there will be exactly $2g - 3 + n$ factors of $(1/2)$ coming from the index $2$ sublattices over which the discretised sums range. The factors of $T$, as already exhibited for the one step recursion, disappear in the $T \rightarrow \infty$ limit. We therefore obtain that
\begin{equation}
\label{222nd}2^{2g - 3 + n}\,\lim_{T \rightarrow \infty} \frac{P_{g,n}^{\Box,\mathsf{q} = e^{-1/T}}(TL_1,\ldots,TL_n)}{T^{6g - 6 + 2n}}
\end{equation}
exists and is computed by the unfolded topological recursion, with initial data already identified with
\begin{equation}
\label{ABCDMV}
(A^{{\rm MV}},B^{{\rm MV}},C^{{\rm MV}},VD^{{\rm MV}})
\end{equation}
thanks to \eqref{Xqa} and the cases of $(0,3)$ and $(1,1)$ treated at the beginning of the proof. By recombination, this automatically implies the one-step recursion formula \eqref{TReqn} with this initial data for \eqref{ABCDMV}. As Proposition~\ref{MVpolyasstscount} equates \eqref{222nd} with $V\Omega^{{\rm MV}}_{g,n}(L_1,\ldots,L_n)$, this implies Proposition~\ref{MVtr1}.
\end{proof} 

\newpage

\appendix

\section{Closed formulae for the intersection of \texorpdfstring{$\psi$}{cotangent} classes in genus one}
\label{AppA}

\begin{lem} For a fixed integer $n \geq 1$, we have
\begin{equation}\label{eq:g1aaa}
\int_{\overline{\mathfrak{M}}_{1,n}} \psi_1^{a_1} \cdots \psi_n^{a_n} 
=
 \frac{1}{24} \biggl( \binom{n}{a_1, \ldots, a_n} - \sum_{\substack{b_1, \ldots, b_n \\ b_i \in \{0,1\}}} \binom{n - (b_1 + \cdots + b_n)}{a_1 - b_1, \ldots, a_n - b_n} (b_1 + \cdots +b_n - 2)! 
 \biggr), 
\end{equation}
and the sum of all such integrals is
\begin{equation}\label{eq:g1all}
\int_{\overline{\mathfrak{M}}_{1,n}} \frac{1}{\prod_{i=1}^n(1 - \psi_i)} = \frac{1}{24}\bigg(n^n - \sum_{k=1}^{n-1} \frac{n^{n - k}}{k(k + 1)}\,\frac{(n - 1)!}{(n - k - 1)!}\bigg).
\end{equation}
We use the convention that summands involving negative factorials are excluded from the summation. In particular we retrieve \eqref{1n1n} and \eqref{otherpsigenus1} used in the text.

\begin{proof}
Let us recall the following result.
\begin{thm}[Conjecture of Goulden--Jackson--Vainshtein \cite{GJV}, theorem of Vakil \cite{Vakil}] 
Let $\mu$ be a partition of $d$ of length $n = \ell(\mu)$. The simple connected Hurwitz numbers $h_{g=1, \mu}$ of genus one and ramification profile $\mu$ over zero are given by
\begin{equation}
\frac{h_{g=1, \mu}}{(n + d)!} = \frac{1}{24} \prod_{i=1}^{n} \frac{\mu_i^{\mu_i}}{\mu_i!} 
\Big(
d^n - d^{n-1} - \sum_{k=2}^{n} (k-2)! d^{n - k} s_k(\mu_1, \ldots, \mu_{n})
\Big),
\end{equation}
\end{thm}
\noindent
where $s_k$ is the $k$-th elementary symmetric polynomial. \hfill $\blacksquare$

\medskip

On the other hand the ELSV formula \cite{ELSV} in genus one gives
\begin{equation}
\frac{h_{g=1, \mu}}{(n + d)!} = \prod_{i=1}^n \frac{\mu_i^{\mu_i}}{\mu_i!} \int_{\overline{\mathfrak{M}}_{1,n}} \frac{1 - \lambda_1}{\prod_{i=1}^{n} (1 - \mu_i \psi_i)}.
\end{equation}
Combining the two we obtain
\begin{equation}\label{eq:comb2}
\int_{\overline{\mathfrak{M}}_{1,n}} \frac{1 - \lambda_1}{\prod_{i=1}^n (1 - \mu_i \psi_i)}
=
\frac{1}{24} 
\bigg(
d^n - d^{n-1} - \sum_{k=2}^n (k-2)! d^{n - k} s_k(\mu_1, \ldots, \mu_{n})
\bigg).
\end{equation}
The contribution corresponding to the intersections of $\lambda_1$ is easily removed by erasing the second summand $d^{n - 1}$: indeed, considering $d$ as $\mu_1 + \ldots + \mu_n$, the right hand side is a polynomial in the $\mu_i$ involving only two degrees, which are $n$ and $n - 1$. On the other hand, the summands of degree $n - 1$ must correspond to all and only the monomials in $\psi$ classes intersecting $\lambda_1$. Now, to prove \eqref{eq:g1all} substitute $\mu_i = 1$ for each $i$ and observe that $s_k(1, \ldots, 1) = \binom{n}{k}$. To prove \eqref{eq:g1aaa}, collect the coefficient of $\mu_1^{a_1} \cdots \mu_n^{a_n}$ in the right hand side after the substitution $d = \mu_1 + \ldots + \mu_n$. This concludes the proof of the lemma.
\end{proof}
\end{lem}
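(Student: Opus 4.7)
The natural route is to exploit the link between $\psi$-class intersections and simple Hurwitz numbers, which in genus one are given by a very explicit closed formula. The plan is to compare two independent expressions for the same generating series, obtained respectively from geometry and combinatorics, and extract coefficients.

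First I would recall the ELSV formula \cite{ELSV}, which for any partition $\mu$ of $d$ with $\ell(\mu) = n$ parts expresses the simple connected genus one Hurwitz number as
\[
\frac{h_{1,\mu}}{(n+d)!} = \prod_{i=1}^{n} \frac{\mu_i^{\mu_i}}{\mu_i!} \int_{\overline{\mathfrak{M}}_{1,n}} \frac{1 - \lambda_1}{\prod_{i=1}^n (1 - \mu_i \psi_i)}.
\]
In parallel, the Goulden--Jackson--Vainshtein formula (proved by Vakil \cite{Vakil}) gives $h_{1,\mu}$ as a fully explicit polynomial expression in $d$ and in elementary symmetric functions of the parts $\mu_i$. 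Equating the two, and cancelling the common combinatorial prefactor $\prod_i \mu_i^{\mu_i}/\mu_i!$, yields an identity of the shape
\[
\int_{\overline{\mathfrak{M}}_{1,n}} \frac{1 - \lambda_1}{\prod_{i=1}^n (1 - \mu_i \psi_i)}
=
\frac{1}{24}\bigg( d^n - d^{n-1} - \sum_{k=2}^{n} (k-2)!\, d^{n-k}\, s_k(\mu_1,\ldots,\mu_n)\bigg),
\]
with $d = \mu_1 + \cdots + \mu_n$ treated as a polynomial in the $\mu_i$'s.

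The crucial observation, and what I expect to be the only mildly delicate point, is to separate the $\lambda_1$ contribution from the pure $\psi$-contribution on the left-hand side. Expanding $d = \mu_1 + \cdots + \mu_n$, the right-hand side is a polynomial in $\mu_1,\ldots,\mu_n$ with monomials of only two total degrees, $n$ and $n-1$. On the other hand, expanding the left-hand side, the degree-$n$ monomials collect precisely the integrals $\int \psi_1^{a_1}\cdots \psi_n^{a_n}$ (since $\dim \overline{\mathfrak{M}}_{1,n} = n$), while the degree-$(n-1)$ monomials collect exactly the terms involving $\lambda_1$. One can therefore discard the $-d^{n-1}$ summand to obtain a generating series purely for $\psi$-intersections.

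To conclude, I would extract the coefficient of $\mu_1^{a_1}\cdots \mu_n^{a_n}$ in the resulting identity. The $d^n = (\mu_1 + \cdots + \mu_n)^n$ term contributes the multinomial coefficient $\binom{n}{a_1,\ldots,a_n}$, while the terms $d^{n-k} s_k(\mu_1, \ldots, \mu_n)$ split through expanding the elementary symmetric polynomial and the multinomial theorem, producing contributions indexed exactly by the subset $\{i : b_i = 1\}$ chosen by $s_k$; this reproduces the summation over $b_i \in \{0,1\}$ with $k = b_1 + \cdots + b_n$ and yields \eqref{eq:g1aaa}. Finally, \eqref{eq:g1all} follows by the specialisation $\mu_i = 1$ for all $i$, using $s_k(1,\ldots,1) = \binom{n}{k}$ and a routine rewriting of $\binom{n}{k}(k-2)! = \frac{(n-1)!}{k(k-1)(n-k)!}$, compatible with a shift of summation index. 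No further ingredient is needed.
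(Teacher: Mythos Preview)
Your proposal is correct and follows essentially the same route as the paper: combine the ELSV formula in genus one with the Goulden--Jackson--Vainshtein/Vakil closed form for $h_{1,\mu}$, isolate the $\lambda_1$ contribution by degree considerations in the $\mu_i$, then extract coefficients (for \eqref{eq:g1aaa}) or specialise $\mu_i = 1$ (for \eqref{eq:g1all}). Your account is slightly more explicit about how the $b_i \in \{0,1\}$ parametrisation arises from expanding $s_k$ and about the index shift needed to match \eqref{eq:g1all}, but the argument is the same.
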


A longer but more detailed way to remove the contribution of $\lambda_1$ to prove \eqref{eq:g1all} can be obtained from the $\lambda_g$-theorem. 

\begin{thm}[$\lambda_g$-theorem \cite{FP}]
\begin{equation}
\int_{\overline{\mathfrak{M}}_{g,n}}\psi_1^{a_1} \cdots \psi_n^{a_n}\lambda_g = \binom{2g - 3 + n}{a_1, \ldots, a_n} \int_{\overline{\mathfrak{M}}_{g,1}} \psi_1^{2g - 2} \lambda_g.
\end{equation}
\hfill $\blacksquare$ 
\end{thm}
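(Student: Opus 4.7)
The plan is to reduce the multi-point Hodge integral on the left-hand side to the one-point integral on the right by deriving the string and dilaton equations for $\lambda_g$-Hodge integrals, and then checking that the asserted multinomial formula is the unique solution to the resulting recursion. Throughout, the workhorse is the forgetful morphism $\pi \colon \overline{\mathfrak{M}}_{g,n+1} \to \overline{\mathfrak{M}}_{g,n}$, together with the comparison relations $\pi^{*}\psi_{i} = \psi_{i} - D_{i,n+1}$ (for $i \leq n$) and $\pi^{*}\lambda_{g} = \lambda_{g}$ (since the Hodge bundle is pulled back from a moduli space without marked points).

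First I would derive the two \emph{Hodge string and dilaton equations}. Expanding $\pi^{*}\bigl(\prod_{i} \psi_{i}^{a_{i}}\bigr) = \prod_{i}(\psi_{i} - D_{i,n+1})^{a_{i}}$, together with the vanishings $\psi_{i}\cdot D_{i,n+1} = 0$, $\psi_{n+1}\cdot D_{i,n+1} = 0$, and $D_{i,n+1}\cdot D_{j,n+1} = 0$ for $i \neq j$, and the push-forwards $\pi_{*}(\psi_{n+1}^{0}) = 0$, $\pi_{*}(\psi_{n+1}) = 2g - 2 + n$ up to classes killed under integration, one obtains for $I_{g}(a_{1},\ldots,a_{n}) \coloneqq \int_{\overline{\mathfrak{M}}_{g,n}} \psi_{1}^{a_{1}}\cdots\psi_{n}^{a_{n}}\,\lambda_{g}$:
\[
I_{g}(a_{1},\ldots,a_{n},0) = \sum_{j=1}^{n} I_{g}(a_{1},\ldots,a_{j}-1,\ldots,a_{n}),
\qquad
I_{g}(a_{1},\ldots,a_{n},1) = (2g-2+n)\,I_{g}(a_{1},\ldots,a_{n}).
\]

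Next I would verify that the conjectural closed form $I_{g}(a_{1},\ldots,a_{n}) = \binom{2g-3+n}{a_{1},\ldots,a_{n}}\,A_{g}$, where $A_{g} = \int_{\overline{\mathfrak{M}}_{g,1}} \psi^{2g-2}\lambda_{g}$, satisfies both equations: the string identity reduces to the Pascal relation $\binom{2g-2+n}{a_{1},\ldots,a_{n},0} = \sum_{j} \binom{2g-3+n}{a_{1},\ldots,a_{j}-1,\ldots,a_{n}}$, and the dilaton identity reduces to $\binom{2g-2+n}{a_{1},\ldots,a_{n},1} = (2g-2+n)\binom{2g-3+n}{a_{1},\ldots,a_{n}}$. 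Both are immediate.

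The main obstacle—and this is the genuinely delicate step—is \emph{uniqueness}: string and dilaton alone do not pin down all of the $I_{g}(a_{1},\ldots,a_{n})$ once some entries satisfy $a_{i} \geq 2$ and none equals $0$ or $1$. To close the system one needs the extra relations supplied by the Getzler--Ionel vanishing $\lambda_{g}|_{\Delta_{\mathrm{irr}}} = 0$, which says that $\lambda_{g}$ is supported on the compact-type locus, together with the socle structure of the tautological ring $R^{2g-3+n}(\mathcal{M}_{g,n}^{\mathrm{ct}})$. Using the standard topological recursion relations at $\psi_{1}$ obtained from boundary expressions of $\psi_{1}$ in $A^{*}(\overline{\mathfrak{M}}_{g,n})$ and restricting them to the compact-type locus kills all boundary terms involving a non-separating node; what remains reduces any $I_{g}$ with $a_{1} \geq 2$ to a $\mathbb{Q}$-linear combination of $I_{g}$ with strictly smaller $a_{1}$. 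Iterating this drops all entries down to $0$ or $1$, where string and dilaton take over and a finite induction on $n$ terminates at the base case $I_{g}^{(1)}(2g-2) = A_{g}$, forcing the multinomial formula. The subtlety lies in ensuring that the boundary contributions of the relevant TRR produce only terms which themselves satisfy the conjectural multinomial pattern---this is where the Getzler--Ionel vanishing is essential, as otherwise contributions from $\Delta_{\mathrm{irr}}$ would pollute the recursion with integrals on $\overline{\mathfrak{M}}_{g-1,n+2}$ that do not match the combinatorics.
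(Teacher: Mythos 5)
There is a genuine gap here. First, a remark on context: the paper does not prove this statement at all --- it is quoted as an external theorem of Faber--Pandharipande \cite{FP}, whose actual proof proceeds by virtual localization on $\overline{\mathfrak{M}}_{g,n}(\mathbb{P}^1,1)$ and a delicate vanishing of auxiliary Hodge integrals, so there is no internal argument to compare against. Judged on its own terms, your proposal is sound in its first two steps (the string and dilaton equations for $\int \psi_1^{a_1}\cdots\psi_n^{a_n}\lambda_g$, using $\pi^*\lambda_g = \lambda_g$, are correct, and the multinomial ansatz does satisfy both), but the third step --- uniqueness --- is where the entire content of the theorem lives, and your sketch does not establish it. String and dilaton only remove insertions with exponent $0$ or $1$; already for $g = 3$, $n = 2$ the integral $\int\psi_1^{2}\psi_2^{3}\lambda_3$ is untouched by them, so the reduction you invoke is essential and not optional.

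The specific mechanism you propose for that reduction does not exist as stated: for $g \geq 2$ the class $\psi_1$ is \emph{not} a boundary class on $\overline{\mathfrak{M}}_{g,n}$, so there is no codimension-one ``standard topological recursion relation at $\psi_1$'' to restrict to the compact-type locus. What is true (Ionel, Graber--Vakil) is that monomials in $\psi$ and $\kappa$ of degree at least $g$ are supported on the boundary; combined with the splitting $\lambda_g|_{\Delta_{h,S}} = \lambda_h \otimes \lambda_{g-h}$ and the vanishing of $\lambda_g$ on $\Delta_{\mathrm{irr}}$, this does produce relations among $\lambda_g$-integrals. But those relations come with coefficients that are not under control in your argument, and your closing sentence --- that one must ``ensure that the boundary contributions produce only terms which themselves satisfy the conjectural multinomial pattern'' --- is precisely the statement to be proved, not a subtlety to be checked afterwards. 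Even the weaker fact that these relations determine all the integrals from the single value $\int_{\overline{\mathfrak{M}}_{g,1}}\psi_1^{2g-2}\lambda_g$ (one-dimensionality of the socle of $R^{\bullet}(\mathcal{M}_{g,n}^{\mathrm{ct}})$) is a nontrivial theorem requiring relative virtual localization, and it still does not by itself yield the multinomial proportionality constants. As written, the argument is circular at its crucial step.
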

\noindent
Its specialisation in genus one reads
\begin{equation}
\int_{\overline{\mathfrak{M}}_{1,n}}\psi_1^{a_1} \cdots \psi_d^{a_n}\lambda_1 = \binom{n-1}{a_1, \ldots, a_n} \int_{\overline{\mathfrak{M}}_{1,1}}\lambda_1 = \binom{n-1}{a_1, \ldots, a_n} \frac{1}{24}.
\end{equation}
This can also be seen for instance using that $\lambda_1$ is represented by the Poincar\'e dual of the divisor of curves with at least one non-separating node times $\tfrac{1}{24}$, then pulling-back the class via the attaching map and integrating over $\overline{\mathfrak{M}}_{0, n+2}$ gives the same result. In any case, summing over all $n$-tuples of non-negative integers $a_i$ such that $a_1 + \cdots + a_n = d-1$ gives, using the multinomial theorem,
\begin{align}
\int_{\overline{\mathfrak{M}}_{1,n}}\frac{\lambda_1}{\prod_{i=1}^n (1 - \psi_i)} &= \sum_{\substack{a_1, \ldots, a_n \geq 0 \\ a_1 + \cdots + a_n = n-1}}\int_{\overline{\mathfrak{M}}_{1,d}}\psi_1^{a_1} \cdots \psi_n^{a_n}\lambda_1 \\
&=
\frac{1}{24} \sum_{\substack{a_1, \ldots, a_n \geq 0 \\ a_1 + \cdots + a_n = n - 1}} \binom{n - 1}{a_1, \ldots, a_n} = \frac{1}{24} n^{n-1},
\end{align}
which equals the second summand in equation \eqref{eq:comb2} after the substitution $\mu_i = 1$ for all $i$, and therefore $n = d$. Removing it from \eqref{eq:comb2} and simplifying the expression proves again \eqref{eq:g1all}.

\section{Computing Masur--Veech polynomials and square-tiled surfaces with Eynard--Orantin topological recursion}
\label{AppB}
For readers who are unfamiliar with the topological recursion \`a la Eynard--Orantin, we compute a few Masur--Veech polynomials and square-tiled surfaces generating series via $\omega_{g,n}^{{\rm MV}}$ and $\omega_{g,n}^{{\rm P},\mathsf{q}}$ (resp. Proposition~\ref{EOMV} and Proposition~\ref{propsq}).

\subsubsection*{Masur--Veech polynomials via Eynard--Orantin topological recursion}

Let us apply the residue formula \eqref{TREOeqn} to the spectral curve given by $\mathcal{C} = \mathbb{C}$ and 
\begin{equation}
x(z) = \frac{z^2}{2}, \qquad  y(z) = -z, \qquad \omega_{0,2}^{{\rm MV}}(z_1, z_2) = \frac{\dd z_1 \otimes \dd z_2}{(z_1 - z_2)^2} + \frac{1}{2}\sum_{m \in \Z^*} \frac{\dd z_1 \otimes \dd z_2}{(z_1 - z_2 + m)^2}.
\end{equation}
In this section we drop the superscript ${}^{{\rm MV}}$ on the $\omega_{g,n}$'s as it will always refer to the Masur--Veech topological recursion amplitudes. Let us first compute the recursion kernel
\begin{align*}
  K(z_1,z)
  &=
  \frac{1}{2}\frac{\int_{-z}^{z} \omega_{0,2}(\cdot,z_1)}{(y(z) - y(-z))\,\dd x(z)} \\
  &=
  - \frac{\dd z_1}{4 z^2 \dd z} \int_{-z}^{z} \bigg(\frac{\dd z'}{(z_1 - z')^2} + \frac{1}{2}\sum_{m \in \Z^{*}} \frac{\dd z'}{(z_1 - z' + m)^2}\bigg) \\
  & =
  - \frac{\dd z_1}{2 z \dd z} \bigg(\frac{1}{z_1^2 - z^2}+ \frac{1}{2} \sum_{m \in \Z^{*}} \frac{1}{(z_1 + m)^2 - z^2} \bigg).
\end{align*}
It is handy, in order to compute residues at $z = 0$, to write down the expansion in power series near $z=0$ of the recursion kernel:
\begin{align*}
	\frac{1}{z_1^2 - z^2}  +  \frac{1}{2}  \sum_{m \in \Z^*} \frac{1}{(z_1 + m)^2 - z^2} 
	&= \sum_{d \geq 0} \bigg(\frac{1}{z_1^{2d + 2}} +
	\frac{1}{2} \sum_{m \in \Z^{*} }  \frac{1}{(z_1 + m)^{2d + 2}}\bigg)\,z^{2d}  \\
	&= 
	\sum_{d \geq 0} \zeta_{{\rm H}}(2d + 2; z_1) z^{2d}.
\end{align*}
In the same way we have that
\begin{align*}
	\zeta_{{\rm H}}(2; z - z_i) &= \sum_{d \geq 0} (2d+1)\,\zeta(2d + 2;z_i)\,z^{2d} + \text{odd part in }z,  \\
	\zeta_{{\rm H}}(2k;z) &= \frac{1}{z^{2k}} + \sum_{d \geq 0} \binom{2k - 1 + 2d}{2d}\,\zeta(2k + 2d) \,z^{2d}.  
\end{align*}
The topological recursion formula, specialised to our case and expressed in terms of
$$
W_{g,n}(z_1, \ldots, z_n) = \frac{\omega_{g,n}(z_1, \ldots, z_n)}{\dd z_1 \otimes \cdots \otimes \dd z_n},
$$
reads
\begin{align*} 
	W_{g,n}(z_1,z_2, \ldots, z_n) &= \frac{1}{2} [z^0]  \sum_{d \geq 0} \zeta_{{\rm H}}(2d + 2 ; z_1)\,z^{2d} \bigg\{ W_{g-1,n+1}(z,-z, z_2,\ldots,z_n) \\
	&\qquad\qquad\qquad + \sum_{\substack{h+h'=g \\ J \sqcup J' = \{z_2,\ldots,z_n\}}}^{\text{no $(0,1)$}} W_{h,1 + |J|}(z,J)\,W_{h', 1 + |J'|}(-z, J')
	\bigg\}. 
\end{align*}

$\bullet$ $\mathbf{(g,n) = (0,3)}$
\begin{align*}
	W_{0,3}(z_1,z_2,z_3)  &=  \frac{1}{2} [z^0] \sum_{d \geq 0} \zeta_{{\rm H}}(2d + 2 ; z_1) z^{2d}  \cdot \Big(  W_{0,2}(z,z_2)W_{0,2}(-z,z_3) + W_{0,2}(z,z_3)W_{0,2}(-z,z_2) \Big) \\
 	&=  [z^0]  \sum_{d \geq 0} \zeta_{{\rm H}}(2d + 2 ; z_1)\,z^{2d} \cdot W_{0,2}(z,z_2)W_{0,2}(z,z_3)   \\
 	&=  \zeta_{{\rm H}}(2;z_1)\zeta_{{\rm H}}(2;z_2)\zeta_{{\rm H}}(2;z_3).
\end{align*}
The inverse Laplace transform of the principal part near $z_1 = z_2 = z_3 = 0$ then reads
$$
V\Omega^{{\rm MV}}_{0,3}(L_1,L_2,L_3) = 1.
$$
Multiplying by the combinatorial factor $2^{4g - 2 + n}\,\frac{(4g - 4 + n)!}{(6g - 7 + 2n)!}$ whose value for $g = 0$ and $n \rightarrow 3$ is $4$, we get $MV_{0,3} = 4$.
 
\medskip
 
$\bullet$ $\mathbf{(g,n)=(0,4)}$

\begin{align*}
	W_{0,4}(z_1,z_2,z_3,z_4)
	& = [z^0] \sum_{d \geq 0} \zeta_{{\rm H}}(2d + 2 ; z_1)\,z^{2d} \cdot\Big(W_{0,2}(z,z_2)W_{0,3}(z,z_3,z_4) \\
	& \quad + W_{0,2}(z,z_3)W_{0,3}(z,z_2,z_4) + W_{0,2}(z,z_4) W_{0,3}(z,z_2,z_3)\Big) \\
	& =  [z^0] \sum_{d \geq 0} \zeta_{{\rm H}}(2d + 2 ; z_1)\,z^{2d} \cdot\Big(\zeta_{{\rm H}}(2;z-z_2)\zeta_{{\rm H}}(2 ; z) \zeta_{{\rm H}}(2 ; z_3)\zeta_{{\rm H}}(2 ; z_4)   \\
	& \quad + \zeta_{{\rm H}}(2;z-z_3)\zeta_{{\rm H}}(2 ; z) \zeta_{{\rm H}}(2 ; z_2)\zeta_{{\rm H}}(2 ; z_4) +\zeta_{{\rm H}}(2;z-z_4)\zeta_{{\rm H}}(2 ; z) \zeta_{{\rm H}}(2 ; z_2) \zeta_{{\rm H}}(2 ; z_3)  \Big)  \\ 
	& = 3 \sum_{i=0}^3 \zeta_{{\rm H}}(4;z_i) \prod_{j \in \{1,2,3,4\}\setminus \{ i \}} \zeta_{{\rm H}}(2 ; z_j) + 3 \zeta(2) \zeta_{{\rm H}}(2;z_1)\zeta_{{\rm H}}(2;z_2)\zeta_{{\rm H}}(2;z_3)\zeta_{{\rm H}}(2;z_4). 
\end{align*} 
The inverse Laplace transform of the principal part near $z_1 = z_2 = z_3 = z_4 = 0$ then reads
$$
V\Omega^{{\rm MV}}_{0,4}(L_1,L_2,L_3,L_4) = \frac{1}{2}\bigg(\pi^2 + \sum_{i = 1}^4 L_i^2\bigg)
$$
from which we deduce $MV_{0,4} = 2\pi^2$.

\medskip

$\bullet$ $\mathbf{(g,n)=(1,1)}$

\begin{align*}
	W_{1,1}(z_1) & = \frac{1}{2} [z^0] \sum_{d \geq 0} \zeta_{{\rm H}}(2d + 2 ; z_1)\,z^{2d}
	\,W_{0,2}(z,-z)  \\
	& = \frac{1}{2}\sum_{d \geq 0} \zeta_{{\rm H}}(2d + 2 ; z_1)\,z^{2d}\,\bigg(\frac{1}{4z^2} + \sum_{m \geq 1} \sum_{d' \geq 0} (2d'+1)\frac{(2z)^{2d'}}{m^{2d'+2}}\bigg)   \\
	& = \frac{1}{8}\zeta_{{\rm H}}(4;z_1) + \frac{\pi^2}{12}\zeta_{{\rm H}}(2;z_1).  
\end{align*}

The inverse Laplace transform of the principal part near $z_1 = 0$ then reads
$$
V\Omega^{{\rm MV}}_{1,1}(L_1) = \frac{\pi^2}{12} + \frac{L_1^2}{48}.
$$
Multiplying the constant term by the combinatorial factor $\tfrac{2^{4g - 2 + n}(4g - 4 + n)!}{(6g - 7 + 2n)!} = 8$ we deduce $MV_{1,1} = \tfrac{2\pi^2}{3}$.

\medskip

$\bullet$ $\mathbf{(g,n)=(1,2)}$

\begin{align*}
	& W_{1,2}(z_1, z_2) \\
	& =  [z^0] \sum_{d \geq 0}^{\infty} \zeta_{{\rm H}}(2d + 2 ; z_1)\,z^{2d} \Big(\frac{1}{2}\,W_{0,3}(z,z,z_2) + W_{0,2}(z,z_2)W_{1,1}(z) \Big)  \\
	& =   \frac{1}{2}[z^0] \sum_{d \geq 0} \zeta_{{\rm H}}(2d + 2; z_1)\,z^{2d}  \cdot \zeta_{{\rm H}}(2;z)^2 \zeta_{{\rm H}}(2;z_2) + \frac{1}{8}[z^0]  \sum_{d \geq 0} \zeta_{{\rm H}}(2d + 2; z_1)\,z^{2d}\cdot \zeta_{{\rm H}}(2;z-z_1)\zeta_{{\rm H}}(4;z) \\
	&\qquad\qquad + \frac{1}{2}[z^0] \sum_{d \geq 0} \zeta_{{\rm H}}(2d + 2 ; z_1)\,z^{2d} \cdot \zeta_{{\rm H}}(2;z-z_2)\zeta_{{\rm H}}(2;z)\zeta(2)  \\
	& = \frac{\zeta_{{\rm H}}(2;z_2)}{2}\biggl( \zeta_{{\rm H}}(6;z_1) +  2\zeta_{\rm H}(4;z_1)\zeta(2) + 6 \zeta_{\rm H}(2; z_1)\zeta(4) +  \zeta_{\rm H}(2;z_0)\zeta(2)^2 \biggr) \\
	&\qquad\qquad + \frac{1}{8} \biggl( \zeta_{\rm H}(2; z_1)\zeta_{{\rm H}}(2;z_2)\zeta(4) + \zeta_{{\rm H}}(6;z_1)\zeta_{{\rm H}}(2;z_2) + 3\zeta_{\rm H}(4;z_1)\zeta_{\rm H}(4;z_2) + 5\zeta_{\rm H}(2;z_1)\zeta_{\rm H}(6;z_2)\biggr) \\
	&\qquad\qquad + \frac{1}{2}\biggl( \zeta_{\rm H}(2; z_1)\zeta_{\rm H}(2;z_2)\,\zeta(2)^2 + \zeta_{\rm H}(4;z_1)\zeta_{\rm H}(2;z_2)\,\zeta(2) + 3\zeta_{\rm H}(2;z_1)\zeta_{\rm H}(4;z_2)\,\zeta(2)\biggr).
\end{align*}
Re-arranging the terms, we obtain 
\begin{align*}
	W_{1,2}(z_1, z_2) &= \frac{5}{8}\Big(\zeta_{\rm H}(6;z_1)\zeta_{{\rm H}}(2;z_2) + \zeta_{|\rm H}(2;z_1)\zeta_{\rm H}(6;z_2)\big) + \frac{3}{8} \zeta_{\rm H}(4;z_1)\zeta_{\rm H}(4;z_2) \\
	& + \frac{\pi^2}{4}\Big(\zeta_{\rm H}(4;z_1)\zeta_{\rm H}(2;z_2) + \zeta_{\rm H}(2;z_1)\zeta_{\rm H}(4;z_2)\Big) + \frac{\pi^4}{16} \zeta_{\rm H}(2;z_1)\zeta_{\rm H}(2;z_2).
\end{align*}

The inverse Laplace transform of the principal part near $z_1 = z_2 = 0$ then reads
$$
V\Omega^{{\rm MV}}_{1,2}(L_1,L_2) = \frac{1}{192}(L_1^4 + L_2^4) + \frac{1}{96}L_1^2L_2^2 + 
 \frac{\pi^2}{24}(L_1^2 + L_2^2)  + \frac{\pi^4}{16}.
$$
Multiplying the constant term by the combinatorial factor $\tfrac{2^{4g - 2 + n}(4g - 4 + n)!}{(6g - 7 + 2n)!} = \tfrac{16}{3}$ we obtain $MV_{1,2} = \tfrac{\pi^4}{3}$.

\subsubsection*{Square-tiled surfaces via Eynard--Orantin topological recursion}

Let us apply the residue formula \eqref{TREOeqn} to the spectral curve given by $\mathcal{C} = \mathbb{C}$ and 
\begin{equation}
  x(z) = z + \frac{1}{z},
  \qquad
  y(z) = -z,
  \qquad
  \omega_{0,2}^{{\rm P},\mathsf{q}}(z_1,z_2) = \frac{1}{2}\,\frac{\dd z_1 \otimes \dd z_2}{(z_1 - z_2)^2} + \frac{1}{2}\bigg(\wp(u_1 - u_2;\mathsf{q}) + \frac{\pi^2 E_2(\mathsf{q})}{3}\bigg)\dd u_1 \otimes \dd u_2,
  \end{equation}
where $z_j = \exp(2{\rm i}\pi\,u_j)$. In this section, we drop the superscript ${}^{{\rm P},\mathsf{q}}$ on the $\omega_{g,n}$'s as it will always refer to the square-tiled surfaces topological recursion amplitudes. We can rewrite $\omega_{0,2}$ as
\[
  \omega_{0,2}(z_1,z_2)
  =
  \frac{\dd z_1 \otimes \dd z_2}{(z_1 - z_2)^2}
  +
  \frac{\dd z_1 \otimes \dd z_2}{2 z_1 z_2} \sum_{\ell \ge 0} \frac{\ell \mathsf{q}^{\ell}}{1-\mathsf{q}^{\ell}} \left(
        \Bigl( \frac{z_1}{z_2} \Bigr)^{\ell} + \Bigl( \frac{z_1}{z_2} \Bigr)^{-\ell}
      \right).
\]
Let us first compute the recursion kernel
\begin{align*}
  K(z_1,z)
  &=
  \frac{1}{2}\frac{\int_{1/z}^{z} \omega_{0,2}(\cdot,z_1)}{(y(z) - y(1/z)) \, \dd x(z)} \\
  &=
  - \frac{z \, \dd z_1}{2 (z - z^{-1})^2 \, \dd z}
    \int_{1/z}^{z} \biggl(
      \frac{\dd z'}{(z_1 - z')^2}
      +
      \frac{\dd z'}{2 z_1 z'} \sum_{\ell \ge 0} \frac{\ell \mathsf{q}^{\ell}}{1-\mathsf{q}^{\ell}} \left(
        \Bigl( \frac{z'}{z_1} \Bigr)^{\ell} + \Bigl( \frac{z'}{z_1} \Bigr)^{-\ell}
      \right)
    \biggr) \\
  & =
  - \frac{z^2 \, \dd z_1}{2 (z^2 - 1) \dd z} \bigg(
    \frac{1}{(z_1-z)(z_1-z^{-1})}
    +
    \frac{1}{2z_1} \sum_{\ell >0} \frac{\mathsf{q}^{\ell}}{1-\mathsf{q}^{\ell}}
      (z_1^\ell + z_1^{-\ell}) \sum_{i=0}^{\ell-1} z^{2i-\ell+1}
  \bigg),
\end{align*}
and define $\hat{K}(z_1,z)$ via the formula $K(z_1,z) = - \frac{z^2 \, \dd z_1}{2 (z^2 - 1) \dd z} \hat{K}(z_1,z)$. Moreover, let us set
$$
W_{g,n}(z_1, \ldots, z_n) = \frac{\omega_{g,n}(z_1, \ldots, z_n)}{\dd z_1 \otimes \cdots \otimes \dd z_n}.
$$

$\bullet$ $\mathbf{(g,n) = (0,3)}$
\[
\begin{split}
  W_{0,3}(z_1,z_2,z_3)
  & =
  \sum_{\alpha \in \{-1,1\}} \Res_{z = \alpha} \frac{\hat{K}(z_1,z)}{2(z^2-1)}
    W_{0,2}(z,z_2) W_{0,2}(z^{-1},z_3)\dd z
    +
    (2 \leftrightarrow 3)
    \\
  & = \sum_{\alpha \in \{-1,1\}} \frac{\hat{K}(z_1,z)}{2(z + \alpha)}
    W_{0,2}(z,z_2) W_{0,2}(z^{-1},z_3)\Big|_{z = \alpha}
    +
    (2 \leftrightarrow 3) \\
  & = \sum_{\alpha \in \{-1,1\}} \frac{1}{2(z + \alpha)}
  \Bigg[
  \bigg(
    \frac{1}{(z_1-z)(z_1-z^{-1})}
    +
    \frac{1}{2z_1} \sum_{\ell >0} \frac{\mathsf{q}^{\ell}}{1-\mathsf{q}^{\ell}}
      (z_1^\ell + z_1^{-\ell}) \sum_{i=0}^{\ell-1} z^{2i-\ell+1}
  \bigg) \\
  &\hphantom{= \sum_{\alpha \in \{-1,1\}} \frac{1}{2(z + \alpha)}}
  \bigg(
    \frac{1}{(z - z_2)^2}
    +
    \frac{1}{2 z z_2} \sum_{\ell \ge 0} \frac{\ell \mathsf{q}^{\ell}}{1-\mathsf{q}^{\ell}} \left(
          \Bigl( \frac{z}{z_2} \Bigr)^{\ell} + \Bigl( \frac{z}{z_2} \Bigr)^{-\ell}
        \right) \\
  &\hphantom{= \sum_{\alpha \in \{-1,1\}} \frac{1}{2(z + \alpha)}}
  \bigg(
    \frac{1}{(z^{-1} - z_3)^2}
    +
    \frac{z}{2 z_3} \sum_{\ell \ge 0} \frac{\ell \mathsf{q}^{\ell}}{1-\mathsf{q}^{\ell}} \left(
          \Bigl( \frac{z^{-1}}{z_3} \Bigr)^{\ell} + \Bigl( \frac{z^{-1}}{z_3} \Bigr)^{-\ell}
        \right)
  \Bigg]\Bigg|_{z = \alpha}
  +
  (2 \leftrightarrow 3) \\
  & =
  \frac{1}{2} \bigl(
    X_{\mathsf{q}}(z_1)X_{\mathsf{q}}(z_2)X_{\mathsf{q}}(z_3)
    -
    X_{\mathsf{q}}(-z_1)X_{\mathsf{q}}(-z_2)X_{\mathsf{q}}(-z_3)
  \bigr),
\end{split}
\]
where we have set
\[
  X_{\mathsf{q}}(z)
  =
  \frac{1}{(1 - z)^2}
  +
  \frac{1}{2 z} \sum_{\ell \ge 0} \frac{\ell \mathsf{q}^{\ell}}{1-\mathsf{q}^{\ell}}
    \left( z^{\ell} + z^{-\ell} \right).
\]
It can be rewritten in terms of the Weierstra\ss{} function, as $X_{\mathsf{q}}(z) \dd z = \bigl( \wp(u;\mathsf{q}) + \frac{\pi^2 E_2(\mathsf{q})}{3}\bigr) \dd u$. We can then compute the generating series of square-tiled surfaces of type $(0,3)$ with boundaries as
\[
\begin{split}
  P_{0,3}^{\Box,\mathsf{q}}(L_1,L_2,L_3)
  & =
  - \Res_{z_1 = \infty}\Res_{z_2 = \infty}\Res_{z_3 = \infty} W_{0,3}^{{\rm P},\mathsf{q}}(z_1,z_2,z_3) \prod_{i = 1}^3 \bigg(1 + \frac{\mathsf{q}^{L_i}}{2(1 - \mathsf{q}^{L_i})}\bigg)^{-1} \frac{z_i^{L_i}}{L_i} \dd z_i \\
  & =
  \frac{1 + (-1)^{L_1+L_2+L_3}}{2}.
\end{split}
\]
This coincides with $P_{0,3}(L_1,L_2,L_3)$.

$\bullet$ $\mathbf{(g,n) = (1,1)}$
\[
\begin{split}
  W_{1,1}(z_1)
  & =
  \sum_{\alpha \in \{-1,1\}} \Res_{z = \alpha} \frac{\hat{K}(z_1,z)}{2(z^2-1)}
    W_{0,2}(z,z^{-1})\dd z \\
  & =
  \sum_{\alpha \in \{-1,1\}}
    \bigg[
      \frac{\hat{K}(z_1,z)}{2(z+\alpha)} \frac{1}{2} \sum_{\ell \ge 0} \frac{\ell \mathsf{q}^{\ell}}{1-\mathsf{q}^{\ell}} \left( z^{2\ell} + z^{-2\ell} \right)
    \bigg]\bigg|_{z = \alpha}
  +
  \sum_{\alpha \in \{-1,1\}} \frac{1}{2} \frac{d^2}{dz^2} \bigg[
    \frac{z^2 \, \hat{K}(z_1,z)}{2(z+\alpha)^3}
  \bigg]\bigg|_{z = \alpha} \\
  & =
  \frac{X_{\mathsf{q}}(z) - X_{\mathsf{q}}(-z)}{2} \frac{1}{2} \sum_{\ell \ge 0} \frac{\ell \mathsf{q}^{\ell}}{1-\mathsf{q}^{\ell}}
  +
  \frac{1}{32}\bigg( Y_{\mathsf{q}}(z) - Y_{\mathsf{q}}(-z)
  - X_{\mathsf{q}}(z) + X_{\mathsf{q}}(-z)\bigg),
\end{split}
\]
where $X_{\mathsf{q}}$ is defined as before, and
\[
  Y_{\mathsf{q}}(z) = \frac{2z}{(1-z)^4} + \frac{1}{2z} \sum_{\ell \ge 0} \frac{\ell^2-1}{3} \frac{\ell \mathsf{q}^{\ell}}{1-\mathsf{q}^{\ell}}
    \left( z^{\ell} + z^{-\ell} \right) = \frac{1}{3}\bigg(z \frac{\dd}{\dd z} z \frac{\dd}{\dd z}  - 1 \bigg)X_{\mathsf{q}}(z)
\]
We can then compute the generating series of square-tiled surfaces of type $(1,1)$ with boundaries as
\[
\begin{split}
  P_{1,1}^{\Box,\mathsf{q}}(L_1)
  & =
  - \Res_{z_1 = \infty} W_{1,1}^{{\rm P},\mathsf{q}}(z_1) \bigg(1 + \frac{\mathsf{q}^{L_1}}{2(1 - \mathsf{q}^{L_1})}\bigg)^{-1} \frac{z_1^{L_1}}{L_1} \dd z_1 \\
  & =
  \frac{1 + (-1)^{L_1}}{2} \bigg( \frac{1}{2} \sum_{\ell \ge 0} \frac{\ell \mathsf{q}^{\ell}}{1-\mathsf{q}^{\ell}} + \frac{L_1^2-4}{48}\bigg).
\end{split}
\]
Since $P_{1,1}(L_1) = \frac{1 + (-1)^{L_1}}{2} \frac{L_1^2-4}{48}$, we can write it as
\[
  P_{1,1}^{\Box,\mathsf{q}}(L_1)
  =
  P_{1,1}(L_1) + \frac{1}{2} \sum_{\ell > 0} P_{0,3}(L_1,\ell,\ell) \frac{\ell \mathsf{q}^{\ell}}{1-\mathsf{q}^{\ell}},
\]
which coincides with the sum over stable graphs of type $(1,1)$ of Proposition~\ref{stscountassumstgraph}.

\newpage
\section{Numerical data}
\label{AppC}
\vfill
\begin{figure}[h!]
\centering
\savebox{\mytable}{
\begin{tabular}{|c|ccccccc|}
\hline
$n$ \textbackslash \,\,$g$ {\rule{0pt}{3.2ex}}{\rule[-1.8ex]{0pt}{0pt}}& $0$ & $1$ & $2$ & $3$ & $4$ & $5$ & $6$ \\
\hline\hline
$0$ {\rule{0pt}{2.5ex}}{\rule[-1.8ex]{0pt}{0pt}}& - & - & $\frac{1}{15}$ & $\frac{115}{33264}$ & $\frac{2106241}{11548293120}$  & $\frac{7607231}{790778419200}$ & $\frac{51582017261473}{101735601235107840000}$ \\
$1$  {\rule{0pt}{2.5ex}}{\rule[-1.8ex]{0pt}{0pt}}& - & $\mathbf{\frac{2}{3}}$ & $\mathbf{\frac{29}{840}}$ & $\mathbf{\frac{4111}{2223936}}$ & $\mathbf{\frac{58091}{592220160}}$ & $\mathbf{\frac{35161328707}{6782087854080000}}$ & $\mathbf{\frac{1725192578138153}{6307607276576686080000}}$ \\
$2$  {\rule{0pt}{2.5ex}}{\rule[-1.8ex]{0pt}{0pt}}& - & $\mathbf{\frac{1}{3}}$ & $\mathbf{\frac{337}{18144}}$ & $\mathbf{\frac{77633}{77837760}}$ & $\mathbf{\frac{160909109}{3038089420800}}$ & $\mathbf{\frac{27431847097}{9796349122560000}}$ & $\mathbf{\frac{236687293214441}{1601932006749634560000}}$ \\
$3$  {\rule{0pt}{2.5ex}}{\rule[-1.8ex]{0pt}{0pt}}& $\mathbf{4}$ & $\frac{11}{60}$ & $\mathbf{\frac{29}{2880}}$ & $\mathbf{\frac{207719}{384943104}}$ & $\mathbf{\frac{14674841399}{512424415641600}}$ & $\mathbf{\frac{5703709895459}{3767985230929920000}}$ & $\mathbf{\frac{37679857842043}{471817281090355200000}}$ \\
$4$  {\rule{0pt}{2.5ex}}{\rule[-1.8ex]{0pt}{0pt}}& $\mathbf{2}$ & $\frac{1}{10}$ & $\frac{919}{168480}$ & $\mathbf{\frac{16011391}{54854392320}}$ & $\mathbf{\frac{9016171639}{582300472320000}}$ & $\mathbf{\frac{143368101519407}{175211313238241280000}}$ & $\mathbf{\frac{13237209152580169}{306665505466027868160000}}$  \\
$5$  {\rule{0pt}{2.5ex}}{\rule[-1.8ex]{0pt}{0pt}}& $\mathbf{1}$ & $\frac{163}{3024}$ & $\frac{653}{221760}$ & $\frac{6208093}{39382640640}$ & $\mathbf{\frac{442442475179}{52900285261824000}}$ & $\mathbf{\frac{259645860580231}{587375069141532672000}}$ & $\mathbf{\frac{6359219722433607397}{272686967460391980367872000}}$ \\
$6$  {\rule{0pt}{2.5ex}}{\rule[-1.8ex]{0pt}{0pt}}& $\mathbf{\frac{1}{2}}$ & $\frac{29}{1008}$ & $\frac{88663}{56010240}$ & $\frac{5757089}{67781007360}$ & $\frac{1537940628689}{340912949465088000}$ & $\mathbf{\frac{229686916047007}{962777317187911680000}}$ & $\mathbf{\frac{43310941179948284069}{3440050974115714213871616000}}$ \\
$7$  {\rule{0pt}{2.5ex}}{\rule[-1.8ex]{0pt}{0pt}}& $\mathbf{\frac{1}{4}}$ & $\frac{1255}{82368}$ & $\frac{295133}{348281856}$ & $\frac{2598992519}{56936046182400}$ & $\frac{643391778377}{264869710110720000}$ & $\frac{11267167909498433}{87618715847436533760000}$ & $\mathbf{\frac{74408487930504838727}{10957199399035237866405888000}}$ \\
$8$  {\rule{0pt}{2.5ex}}{\rule[-1.8ex]{0pt}{0pt}}& $\mathbf{\frac{1}{8}}$ & $\frac{2477}{308880}$ & $\frac{1835863}{4063288320}$ & $\frac{1769539}{720943441920}$ & $\frac{127802659622551}{97895844856922112000}$& $\frac{2762333771707}{39907473380632166400}$ & $\frac{76034947449385560773}{20780895411963382160424960000}$ \\
$9$  {\rule{0pt}{2.5ex}}{\rule[-1.8ex]{0pt}{0pt}}& $\mathbf{\frac{1}{16}}$ & $\frac{39203}{9335040}$ & $\frac{12653167}{52718561280}$ & $\frac{6756335603}{516534771916800}$& $\frac{76170641989903}{108773160952135680000}$ & $\frac{46331482996262911}{1245354014578231266508800}$ & \textcolor{lightgray}{$\frac{7583038108310022233611}{3850996789771271334071894016000}$} \\
$10$  {\rule{0pt}{2.5ex}}{\rule[-1.8ex]{0pt}{0pt}}& $\mathbf{\frac{1}{32}}$ & $\frac{1363}{622336}$ & $\frac{5219989}{41079398400}$ & $\frac{2863703603}{410578921267200}$ & $\frac{364975959330977}{973541287193739264000}$ & \textcolor{lightgray}{$\frac{110488317513510709}{5533939090421837306265600}$} & \textcolor{lightgray}{$\frac{1597788327762805352162251}{1509590741590338362956182454272000}$} \\
$11$  {\rule{0pt}{2.5ex}}{\rule[-1.8ex]{0pt}{0pt}}& $\mathbf{\frac{1}{64}}$ & $\frac{308333}{270885888}$ & $\frac{644710519}{9612579225600}$ & $\frac{28221517763}{7606514751897600}$ & \textcolor{lightgray}{$\frac{26274127922961227}{131162562511011053568000}$} & \textcolor{lightgray}{$\frac{39074093749702556551}{3652399799678412622135296000}$} & \textcolor{lightgray}{$\frac{32893791972666409219189}{57890914971883041214200545280000}$} \\
\hline
\end{tabular}}
\rotatebox{90}{
    \begin{minipage}{\wd\mytable}
      \usebox{\mytable}
      \caption{\label{Tablall} Masur--Veech volumes $\pi^{-(6g - 6 + 2n)}MV_{g,n}$. We display in black the values that were honestly computed from the recursion, in bold the values used to determine the polynomials appearing in Conjecture~\ref{conjMV}, and in grey the values that the conjecture predicts. The first column reproduces Theorem~\ref{MV0n}. The first row is computed by the relation $MV_{g,0} = \frac{2^{4g-2}(4g-4)!}{(6g-g)!} H_{g,1}[1]$ proved in Lemma~\ref{dilatonthm0}.}
    \end{minipage}}
\end{figure}
\vfill

\begin{figure}[h!]
\begin{center}
\begin{tabular}{|c|ccccccc|}
\hline
$n$ \textbackslash \,\,$g$ {\rule{0pt}{3.2ex}}{\rule[-1.8ex]{0pt}{0pt}}& $0$ & $1$ & $2$ & $3$ & $4$ & $5$ & $6$ \\
\hline\hline
$0$ {\rule{0pt}{2.5ex}}{\rule[-1.8ex]{0pt}{0pt}}& - & - & $\frac{19}{6}$ & $\frac{24199}{8625}$ & $\frac{283794163}{105312050}$ & $\frac{180693680}{68465079}$ & $\frac{806379495590975}{309492103568838}$ \\
$1$ {\rule{0pt}{2.5ex}}{\rule[-1.8ex]{0pt}{0pt}}& - & - & $\frac{230}{87}$ & $\frac{529239}{205550}$  & $\frac{14053063}{5518645}$ & $\frac{533759417507}{210967972242}$ & $\frac{4346055982466800}{1725192578138153}$ \\
$2$ {\rule{0pt}{2.5ex}}{\rule[-1.8ex]{0pt}{0pt}}& - & $\frac{7}{3}$ & $\frac{8131}{3370}$ & $\frac{2843354}{1164495}$  & $\frac{11842209371}{4827273270}$ & $\frac{606925117339}{246886623873}$ & $\frac{122318875814791931}{49704331575032610}$ \\
$3$ {\rule{0pt}{2.5ex}}{\rule[-1.8ex]{0pt}{0pt}}& - & $\frac{47}{22}$ & $\frac{11041}{4785}$ & $\frac{73870699}{31157850}$ & $\frac{35221419482}{14674841399}$ & $\frac{82681229028041}{34222259372754}$ & $\frac{5057811587495459887}{2085014933689449405}$  \\
$4$ {\rule{0pt}{2.5ex}}{\rule[-1.8ex]{0pt}{0pt}}& $\frac{3}{2}$ & $\frac{44}{21}$ & $\frac{688823}{303270}$ & $\frac{187549387}{80056955}$ & $\frac{1414826039249}{595067328174}$ & $\frac{1031120131654286}{430104304558221}$ & $\frac{1339844245835171101}{555962784408367098}$ \\
$5$ {\rule{0pt}{2.5ex}}{\rule[-1.8ex]{0pt}{0pt}}& $\frac{5}{3}$ & $\frac{2075}{978}$ & $\frac{96716}{42445}$ & $\frac{87365995}{37248558}$ & $\frac{15788133716389}{6636637127685}$ & $\frac{1245335246460801}{519291721160462}$ & $\frac{321899861240823487478}{133543614171105755337}$  \\
$6$ {\rule{0pt}{2.5ex}}{\rule[-1.8ex]{0pt}{0pt}}& $\frac{11}{6}$ & $\frac{697}{319}$ & $\frac{8622217}{3723846}$ & $\frac{1433623484}{604494345}$ & $\frac{7380284015613}{3075881257378}$ & $\frac{18305424406953487}{7579668229551231}$ & $\frac{3150765025310943712637}{1299328235398448522070}$ \\
$7$ {\rule{0pt}{2.5ex}}{\rule[-1.8ex]{0pt}{0pt}}& $2$ & $\frac{17101}{7530}$ & $\frac{10506949}{4426995}$ & $\frac{12557689333}{5197985038}$ & $\frac{32906433038620}{13511227345917}$ & $\frac{165332043184123111}{67603007456990598}$ & \textcolor{lightgray}{$\frac{1276869600669686371105}{520859415513533871089}$} \\
$8$ {\rule{0pt}{2.5ex}}{\rule[-1.8ex]{0pt}{0pt}}& $\frac{13}{6}$ & $\frac{17630}{7431}$ & $\frac{44927707}{18358630}$ & $\frac{3273823127}{1322965425}$ & $\frac{1905176709014543}{766815957735306}$ & \textcolor{lightgray}{$\frac{931701551880070892}{374503401099176525}$}  & \textcolor{lightgray}{$\frac{32923598627691820002839}{13230080856193087574502}$} \\
$9$ {\rule{0pt}{2.5ex}}{\rule[-1.8ex]{0pt}{0pt}}& $\frac{7}{3}$ & $\frac{194829}{78406}$ & $\frac{480821458}{189797505}$ & $\frac{515867741141}{202690068090}$ & \textcolor{lightgray}{$\frac{3294839869674121}{1294900913828351}$} & \textcolor{lightgray}{$\frac{13416096198217292533}{5281789061573971854}$} & \textcolor{lightgray}{$\frac{403660475951758341605956}{159243800274510466905831}$} \\
$10$ {\rule{0pt}{2.5ex}}{\rule[-1.8ex]{0pt}{0pt}}& $\frac{5}{2}$ & $\frac{202415}{77691}$ & $\frac{905804827}{344519274}$ & \textcolor{lightgray}{$\frac{488680850166}{186140734195}$} & \textcolor{lightgray}{$\frac{658216299971112017}{251833411938374130}$} & \textcolor{lightgray}{$\frac{2586449275763662283}{994394857621596381}$}& \textcolor{lightgray}{$\frac{57921215793035879725637191}{22369036588679274930271514}$} \\
$11$ {\rule{0pt}{2.5ex}}{\rule[-1.8ex]{0pt}{0pt}}& $\frac{8}{3}$ & $\frac{5054467}{1849998}$ & \textcolor{lightgray}{$\frac{1761936475}{644710519}$} & \textcolor{lightgray}{$\frac{2297552653219}{846645532890}$} & \textcolor{lightgray}{$\frac{212103557000574050}{78822383768883681}$} & \textcolor{lightgray}{$\frac{208627514502680586639}{78148187499405113102}$} & \textcolor{lightgray}{$\frac{9156519282251402538004459}{3453848157129972968014845}$} \\
\hline
\end{tabular}
\caption{\label{TablSVA} Area Siegel--Veech constants $\pi^2 SV_{g,n}$. They are computed from Table~\ref{Tablall} thanks to Theorem~\ref{thGouj1}. Theorem~\ref{MV0n} gives the first column.}
\end{center}
\end{figure}

\begin{landscape}
\begin{figure}[h!]
\begin{center}
\begin{tabular}{|c|ccccccc|}
\hline
$n$ $\backslash$\,\,$d$ {\rule{0pt}{3.2ex}}{\rule[-1.8ex]{0pt}{0pt}}& $0$ & $1$ & $2$ & $3$ & $4$ & $5$ & $6$\\
\hline\hline
$3$ {\rule{0pt}{2.5ex}}{\rule[-1.8ex]{0pt}{0pt}}&  $\mathbf{1}$ & & & & & &\\
\hline
$4$ {\rule{0pt}{2.5ex}}{\rule[-1.8ex]{0pt}{0pt}}& $\mathbf{\frac{1}{2}}$ & {\footnotesize $\mathbf{3}$} & & & & &\\
\hline
$5$ {\rule{0pt}{2.5ex}}{\rule[-1.8ex]{0pt}{0pt}}& $\mathbf{\frac{3}{4}}$ & {\footnotesize $\mathbf{3}$} & {\footnotesize $\mathbf{15}$} & & & & \\ \hline
$6$ {\rule{0pt}{2.5ex}}{\rule[-1.8ex]{0pt}{0pt}}& $\mathbf{\frac{15}{8}}$ & $\frac{27}{4}$ & {\footnotesize $\mathbf{25}$} & {\footnotesize $\mathbf{105}$} & & & \\
\hline
$7$ {\rule{0pt}{2.5ex}}{\rule[-1.8ex]{0pt}{0pt}}& $\mathbf{\frac{105}{16}}$ & $\frac{45}{2}$ & $\mathbf{\frac{305}{4}}$ & $\mathbf{\frac{525}{2}}$ & {\footnotesize $\mathbf{945}$} & & \\
\hline
$8$ {\rule{0pt}{2.5ex}}{\rule[-1.8ex]{0pt}{0pt}}& $\mathbf{\frac{945}{32}}$ & $\frac{1575}{16}$ & $\frac{1275}{4}$ & {\footnotesize $\mathbf{1029}$} & $\mathbf{\frac{6615}{2}}$ & {\footnotesize $\mathbf{10395}$} & \\
\hline
$9$ {\rule{0pt}{2.5ex}}{\rule[-1.8ex]{0pt}{0pt}}& $\mathbf{\frac{10395}{64}}$ & $\frac{8505}{16}$ & $\frac{26775}{16}$ & $\mathbf{\frac{20853}{4}}$ & $\mathbf{\frac{32193}{2}}$ & {\footnotesize $\mathbf{48510}$} & {\footnotesize $\mathbf{135135}$} \\
\hline
$10$ {\rule{0pt}{2.5ex}}{\rule[-1.8ex]{0pt}{0pt}}& $\mathbf{\frac{135135}{128}}$ & $\frac{218295}{64}$ & $\frac{168525}{16}$ & $\frac{512883}{16}$ & $\mathbf{\frac{386271}{4}}$ & $\mathbf{\frac{571725}{2}}$ & {\footnotesize $\mathbf{810810}$}\\
\hline
$11$ {\rule{0pt}{2.5ex}}{\rule[-1.8ex]{0pt}{0pt}}& $\mathbf{\frac{2027025}{256}}$ & $\frac{405405}{16}$ & $\frac{4937625}{64}$ & $\frac{7388955}{32}$ & $\mathbf{\frac{10938159}{16}}$ & $\mathbf{\frac{3992175}{2}}$ & {\footnotesize $\mathbf{5675670}$} \\
\hline
$12$ {\rule{0pt}{2.5ex}}{\rule[-1.8ex]{0pt}{0pt}} & $\frac{54729675}{256}$ & $\frac{41216175}{64}$ & $\frac{60904305}{32}$ & $\frac{177968745}{32}$ & $\mathbf{\frac{256944105}{16}}$ & $\mathbf{\frac{182035425}{4}}$ & $\mathbf{\frac{497972475}{4}}$ \\
\hline
$13$ {\rule{0pt}{2.5ex}}{\rule[-1.8ex]{0pt}{0pt}} & $\frac{516891375}{256}$ & $\frac{1543917375}{256}$ & $\frac{564729165}{32}$ & $\frac{816623775}{16}$ & {\footnotesize $\mathbf{146029950}$} & $\mathbf{\frac{6588578139}{16}}$ & $\mathbf{\frac{9070035975}{8}}$ \\
\hline
$14$ {\rule{0pt}{2.5ex}}{\rule[-1.8ex]{0pt}{0pt}} & $\frac{21606059475}{1024}$ & $\frac{16023632625}{256}$ & $\frac{46514953485}{256}$ & $\frac{16675523865}{32}$ & $\frac{23672927025}{16}$ &  $\mathbf{\frac{66411007767}{16}}$ & $\mathbf{\frac{45759368655}{4}}$ \\
\hline
$15$ {\rule{0pt}{2.5ex}}{\rule[-1.8ex]{0pt}{0pt}} & $\frac{123743795175}{512}$ & $\frac{730022918625}{1024}$ & $\frac{1052618271705}{512}$ & $\frac{1499012960445}{256}$ & $\frac{264272735725}{16}$ & {\footnotesize $\mathbf{46109143548}$} & $\mathbf{\frac{2030915622645}{16}}$ \\
\hline
$16$ {\rule{0pt}{2.5ex}}{\rule[-1.8ex]{0pt}{0pt}} & $\frac{12333131585775}{4096}$ & $\frac{9051629461875}{1024}$ & $\frac{811209323925}{32}$ & $\frac{36751659059115}{512}$ & $\frac{51544870752375}{256}$ & $\frac{17905106587095}{32}$ & $\mathbf{\frac{49185509458365}{32}}$ \\
\hline
$17$ {\rule{0pt}{2.5ex}}{\rule[-1.8ex]{0pt}{0pt}} & $\frac{166022925193125}{4096}$ & $\frac{485419409850375}{4096}$ & $\frac{346386381315975}{1024}$ & $\frac{487957139745075}{512}$ & $\frac{340526280419325}{128}$ & $\frac{1884818339393535}{256}$ & $\mathbf{\frac{1291176578473425}{64}}$ \\
\hline
$18$ {\rule{0pt}{2.5ex}}{\rule[-1.8ex]{0pt}{0pt}} & $\frac{9605612100459375}{16384}$ & $\frac{6996680418853125}{4096}$ & $\frac{19889546438136375}{4096}$ & $\frac{13948724683018125}{1024}$ & $\frac{19385928833646375}{512}$ & $\frac{1670327729904015}{16}$ & $\frac{73041994202191875}{256}$ \\
\hline
$19$ {\rule{0pt}{2.5ex}}{\rule[-1.8ex]{0pt}{0pt}} & $\frac{18570850060888125}{2048}$ & $\frac{431541017698415625}{16384}$ & $\frac{611196796805970375}{8192}$ & $\frac{854034802440694875}{4096}$ & $\frac{295622640130442625}{512}$ & $\frac{50770632935200815}{32}$ & $\frac{2214495362565119025}{512}$ \\
\hline
$20$ {\rule{0pt}{2.5ex}}{\rule[-1.8ex]{0pt}{0pt}} & $\frac{9786837982088041875}{65536}$ & $\frac{7087874439905634375}{16384}$ & $\frac{10007297452695918375}{8192}$ & $\frac{27873406624278340125}{8192}$ & $\frac{38464769339580709125}{4096}$ & $\frac{26342984670839932095}{1024}$ & $\frac{71637824483699384925}{1024}$ \\
\hline
\end{tabular}
\caption{\label{TablGnd} Values of $\pi^{-2(n - 3 - d)}H_{n}[d]$ computed from the recursion of Section~\ref{Genus01row}, for $n \leq 20$. We display in bold the values that were used to determine the polynomials $P_d(n)$ in Table~\ref{Tabl0}.}\end{center}
\end{figure}

\begin{figure}[h!]
\begin{center}
\begin{tabular}{|c|ccccccc|}
\hline
$n$ $\backslash$\,\,$d$ {\rule{0pt}{3.2ex}}{\rule[-1.8ex]{0pt}{0pt}}& $7$ & $8$ & $9$ & $10$ & $11$ & $12$ & $13$ \\
\hline\hline
$10$ {\rule{0pt}{2.5ex}}{\rule[-1.8ex]{0pt}{0pt}}
	& {\footnotesize $\mathbf{2027025}$} & & & & & & \\
\hline
$11$ {\rule{0pt}{2.5ex}}{\rule[-1.8ex]{0pt}{0pt}}
	& $\mathbf{\frac{30405375}{2}}$ & {\footnotesize $\mathbf{34459425}$} & & & & & \\
\hline
$12$ {\rule{0pt}{2.5ex}}{\rule[-1.8ex]{0pt}{0pt}}
	& $\mathbf{\frac{34459425}{512}}$ & $\mathbf{\frac{631756125}{2}}$ & {\footnotesize $654729075$} & & & & \\
\hline
$13$ {\rule{0pt}{2.5ex}}{\rule[-1.8ex]{0pt}{0pt}}
	& $\mathbf{\frac{654729075}{1024}}$ & $\mathbf{\frac{11952825885}{4}}$ & {\footnotesize $7202019825$} & {\footnotesize $13749310575$} & & & \\
\hline
$14$ {\rule{0pt}{2.5ex}}{\rule[-1.8ex]{0pt}{0pt}}
	& $\mathbf{\frac{13749310575}{2048}}$ & $\mathbf{\frac{245021061285}{8}}$ & $\frac{311520093885}{4}$ & {\footnotesize $178741037475$} & {\footnotesize $316234143225$} & & \\
\hline
$15$ {\rule{0pt}{2.5ex}}{\rule[-1.8ex]{0pt}{0pt}}
	& $\mathbf{\frac{316234143225}{4096}}$ & $\mathbf{\frac{2737188448995}{8}}$ & {\footnotesize $891435459915$} & $\frac{8758310836275}{4}$ & $\frac{9592435677825}{2}$ & {\footnotesize $7905853580625$}  & \\
\hline
$16$ {\rule{0pt}{2.5ex}}{\rule[-1.8ex]{0pt}{0pt}}
	& $\mathbf{\frac{7905853580625}{8192}}$ & $\mathbf{\frac{66469370924775}{16}}$ & $\frac{87660410042205}{8}$ & $\frac{111209007034125}{4}$ & $\frac{132080460486975}{2}$ & $\frac{276704875321875}{2}$ & {\footnotesize $213458046676875$} \\
\hline
$17$ {\rule{0pt}{2.5ex}}{\rule[-1.8ex]{0pt}{0pt}}
	& $\mathbf{\frac{213458046676875}{16384}}$ & $\mathbf{\frac{1746012465049635}{32}}$ & $\frac{1158795684753525}{8}$ & $\frac{2992184547677325}{8}$ & $\frac{1849593268648125}{2}$ & {\footnotesize $2126674613188125$} & {\footnotesize $4269160933537500$} \\
\hline
$18$ {\rule{0pt}{2.5ex}}{\rule[-1.8ex]{0pt}{0pt}}
	& $\mathbf{\frac{6190283353629375}{32768}}$ & $\mathbf{\frac{49363995431765475}{64}}$ & $\frac{65744175999773079}{32}$ & $\frac{42871352128175025}{8}$ & $\frac{108418387292179965}{8}$ & $\frac{65350162166664375}{2}$ & {\footnotesize $72817654989704625$} \\
\hline
$19$ {\rule{0pt}{2.5ex}}{\rule[-1.8ex]{0pt}{0pt}}
	& $\mathbf{\frac{191898783962510625}{65536}}$ & $\mathbf{\frac{2990161669353791775}{256}}$ & $\frac{498588953042078271}{16}$ & $\frac{2616690164139006165}{32}$ & $\frac{3354058884543031665}{16}$ & $\frac{4155321837251710125}{8}$ & {\footnotesize $1221898913059324500$} \\
\hline
$20$ {\rule{0pt}{2.5ex}}{\rule[-1.8ex]{0pt}{0pt}}
	& $\mathbf{\frac{6332659870762850625}{131072}}$ & $\frac{96601743007909225875}{512}$ & $\frac{128930498895120139989}{256}$ & $\frac{42442944415051043115}{32}$ & $\frac{27436266153939996045}{8}$ & $\frac{138292854856604823375}{16}$ & $\frac{167929854347224344375}{8}$ \\
\hline
\end{tabular}
\end{center}
\end{figure}

\begin{figure}[h!]
\begin{center}
\begin{tabular}{|c|cccc|}
\hline
$n$ $\backslash$\,\,$d$ {\rule{0pt}{3.2ex}}{\rule[-1.8ex]{0pt}{0pt}}& $14$ & $15$ & $16$ & $17$ \\
\hline\hline
$17$ {\rule{0pt}{2.5ex}}{\rule[-1.8ex]{0pt}{0pt}}
	& {\footnotesize $6190283353629375$} & & &  \\
\hline
$18$ {\rule{0pt}{2.5ex}}{\rule[-1.8ex]{0pt}{0pt}}
	& {\footnotesize $140313089348932500$} & {\footnotesize $191898783962510625$} & &  \\
\hline
$19$ {\rule{0pt}{2.5ex}}{\rule[-1.8ex]{0pt}{0pt}}
	& $\frac{5282787813987308625}{2}$ & $\frac{9786837982088041875}{2}$ & {\footnotesize $6332659870762850625$} & \\
\hline
$20$ {\rule{0pt}{2.5ex}}{\rule[-1.8ex]{0pt}{0pt}}
	& $\frac{192834342120022477875}{4}$ & $\frac{404714535376934908125}{4}$ & $\frac{360961612633482485625}{2}$ & {\footnotesize $221643095476699771875$} \\
\hline
\end{tabular}
\end{center}
\caption{\label{TablGnd2} (Continued) Values of $\pi^{-2(n - 3 - d)}H_{n}[d]$ computed from the recursion of Section~\ref{Genus01row}, for $n \leq 20$.}
\end{figure}

\begin{figure}[h!]
\begin{center}
\begin{tabular}{|c|ccccccccc|}
\hline
$n$ $\backslash$\,\,$d$ {\rule{0pt}{3.2ex}}{\rule[-1.8ex]{0pt}{0pt}}& $0$ & $1$ & $2$ & $3$ & $4$ & $5$ & $6$ & $7$ & $8$ \\
\hline\hline
$1$ {\rule{0pt}{2.5ex}}{\rule[-1.8ex]{0pt}{0pt}}& $\mathbf{\frac{1}{12}}$ & $\frac{1}{8}$ & & & & & & & \\
\hline
$2$  {\rule{0pt}{2.5ex}}{\rule[-1.8ex]{0pt}{0pt}}& $\mathbf{\frac{1}{16}}$ & $\mathbf{\frac{1}{4}}$ & $\frac{5}{8}$ & & & & & & \\
\hline 
$3$ {\rule{0pt}{2.5ex}}{\rule[-1.8ex]{0pt}{0pt}}& $\frac{11}{96}$ & $\mathbf{\frac{3}{8}}$ & $\mathbf{\frac{65}{48}}$ & $\frac{35}{8}$ & & & & & \\ 
\hline
$4$ {\rule{0pt}{2.5ex}}{\rule[-1.8ex]{0pt}{0pt}}& $\frac{21}{64}$ & $\mathbf{\frac{33}{32}}$ & $\mathbf{\frac{305}{96}}$ & $\mathbf{\frac{175}{16}}$ & $\frac{315}{8}$ & & & & \\
\hline
$5$ {\rule{0pt}{2.5ex}}{\rule[-1.8ex]{0pt}{0pt}}& $\frac{163}{128}$ & $\frac{63}{16}$ & $\mathbf{\frac{745}{64}}$ & $\mathbf{\frac{1127}{32}}$ & $\frac{945}{8}$ & $\frac{3465}{8}$ & & & \\
\hline
$6$ {\rule{0pt}{2.5ex}}{\rule[-1.8ex]{0pt}{0pt}}& $\frac{1595}{256}$ & $\frac{2445}{128}$ & $\mathbf{\frac{21275}{384}}$ & $\mathbf{\frac{10283}{64}}$ & $\mathbf{\frac{15477}{32}}$ & $\mathbf{\frac{12705}{8}}$ & $\frac{45045}{8}$ & & \\
\hline
$7$ {\rule{0pt}{2.5ex}}{\rule[-1.8ex]{0pt}{0pt}}& $\frac{18825}{512}$ & $\frac{14355}{128}$ & $\frac{82375}{256}$ & $\mathbf{\frac{116907}{128}}$ & $\mathbf{\frac{84279}{32}}$ & $\mathbf{\frac{252945}{32}}$ & $\frac{405405}{16}$ & $\frac{675675}{8}$ & \\
\hline
$8$ {\rule{0pt}{2.5ex}}{\rule[-1.8ex]{0pt}{0pt}}& $\frac{260085}{1024}$ & $\frac{395325}{512}$ & $\frac{1126475}{512}$ & $\mathbf{\frac{1578339}{256}}$ & $\mathbf{\frac{2222919}{128}}$ & $\mathbf{\frac{1600005}{32}}$  & $\frac{2387385}{16}$ & $\frac{7432425}{16}$ & $\frac{11486475}{8}$ \\
\hline
$9$ {\rule{0pt}{2.5ex}}{\rule[-1.8ex]{0pt}{0pt}}& $\frac{4116315}{2048}$ & $\frac{780255}{128}$ & $\frac{17702825}{1024}$ & $\frac{24596187}{512}$ & $\mathbf{\frac{17069781}{128}}$ & $\mathbf{\frac{47966325}{128}}$ & $\frac{34462285}{32}$ & $\frac{25450425}{8}$ & $\frac{19144125}{2}$ \\
\hline
$10$ {\rule{0pt}{2.5ex}}{\rule[-1.8ex]{0pt}{0pt}}& $\frac{73417995}{4096}$ & $\frac{11140505}{2048}$ & $\frac{314129025}{2048}$ & $\frac{433887111}{1024}$ & $\mathbf{\frac{596350053}{512}}$ & $\mathbf{\frac{412393245}{128}}$ & $\frac{1157510783}{128}$ & $\frac{103528425}{4}$ & $\frac{602657055}{8}$ \\
\hline
$11$ {\rule{0pt}{2.5ex}}{\rule[-1.8ex]{0pt}{0pt}}& $\frac{1456873425}{8192}$ & $\frac{1101269925}{2048}$ & $\frac{6209382375}{4096}$ & $\frac{8539707015}{2048}$ & $\frac{5827734675}{512}$ & $\mathbf{\frac{15945353325}{512}}$ & $\frac{22022084487}{256}$ & $\frac{30846831015}{128}$ & $\frac{21914407515}{32}$ \\
\hline
$12$ {\rule{0pt}{2.5ex}}{\rule[-1.8ex]{0pt}{0pt}}& $\frac{31832972325}{16384}$ & $\frac{48076823025}{8192}$ & $\frac{135269701875}{8192}$ & $\frac{185426394615}{4096}$ & $\frac{251769660615}{2048}$ & $\mathbf{\frac{170859190425}{512}}$ & $\frac{7290122177}{8}$ & $\frac{643590359685}{256}$ & $\frac{898172490215}{128}$ \\
\hline
$13$ {\rule{0pt}{2.5ex}}{\rule[-1.8ex]{0pt}{0pt}}& $\frac{759408232275}{32768}$ & $\frac{286496750925}{4096}$ & $\frac{3219307441625}{16384}$ & $\frac{4401709330095}{8192}$ & $\frac{2976421242465}{2048}$ & $\frac{8031549267225}{2048}$ & $\frac{339777246393}{32}$ & $\frac{3707081523495}{128}$ & $\frac{10205899285855}{128}$ \\
\hline
$14$ {\rule{0pt}{2.5ex}}{\rule[-1.8ex]{0pt}{0pt}}& $\frac{19639202658075}{65536}$ & $\frac{29616921058725}{32768}$ & $\frac{83094183797625}{32768}$ & $\frac{113381114321475}{16384}$ & $\frac{152856379904085}{8192}$ & $\frac{102641114315025}{2048}$ & $\frac{276095298454833}{2048}$ & $\frac{46658673053055}{128}$ & $\frac{127090011696915}{128}$ \\
\hline
\end{tabular}
\caption{\label{TablG1nd} Values of $\pi^{-2(n - d)}H_{1,n}[d]$ computed from the Virasoro constraints. We display in bold the values that were used to determine $\rho_d$ and $R_d(n)$ in Table~\ref{Tablunu}.}
\end{center}
\end{figure}

\begin{figure}[h!]
\begin{center}
\begin{tabular}{|c|cccccc|}
\hline
$n$ $\backslash$\,\,$d$ {\rule{0pt}{3.2ex}}{\rule[-1.8ex]{0pt}{0pt}}& $9$ & $10$ & $11$ & $12$ & $13$ & $14$ \\
\hline\hline
$9$ {\rule{0pt}{2.5ex}}{\rule[-1.8ex]{0pt}{0pt}}& $\frac{218243025}{8}$ & & & & & \\
\hline
$10$ {\rule{0pt}{2.5ex}}{\rule[-1.8ex]{0pt}{0pt}}& {\footnotesize $218243025$} & $\frac{4583103525}{8}$ & & & & \\
\hline
$11$ {\rule{0pt}{2.5ex}}{\rule[-1.8ex]{0pt}{0pt}}& $\frac{31296049785}{16}$ & $\frac{87078966975}{16}$ & $\frac{105411381075}{8}$ & & & \\
\hline
$12$ {\rule{0pt}{2.5ex}}{\rule[-1.8ex]{0pt}{0pt}}& $\frac{631580764815}{32}$ & $\frac{1764494857125}{32}$ & $\frac{2354187510675}{16}$ & $\frac{2635284526875}{8}$ & & \\
\hline
$13$ {\rule{0pt}{2.5ex}}{\rule[-1.8ex]{0pt}{0pt}}& $\frac{28319986049355}{128}$ & $\frac{39312334336275}{64}$ & $\frac{53584118713125}{32}$ & $\frac{34258698849375}{8}$ & $\frac{71152682225625}{8}$ & \\
\hline
$14$ {\rule{0pt}{2.5ex}}{\rule[-1.8ex]{0pt}{0pt}}& $\frac{348671089506285}{128}$ & $\frac{479892186999225}{64}$ & $\frac{1311975146807325}{64}$ & $\frac{1741923072264575}{32}$ & $\frac{1067290233384375}{8}$ & $\frac{2063427784543125}{8}$ \\
\hline
\end{tabular}
\caption{\label{TablG1nd2} Values of $\pi^{-2(n - d)}H_{1,n}[d]$ computed from the Virasoro constraints (continued).}
\end{center}
\end{figure}

\begin{figure}[h!]
\begin{center}
\begin{tabular}{|c|cccccccccccc|}
\hline
$n$ $\backslash$\,\,$d$ {\rule{0pt}{3.2ex}}{\rule[-1.8ex]{0pt}{0pt}}& $0$ & $1$ & $2$ & $3$ & $4$ & $5$ & $6$ & $7$ & $8$ & $9$ & $10$ & $11$ \\
\hline\hline
$1$ {\rule{0pt}{2.5ex}}{\rule[-1.8ex]{0pt}{0pt}}& $\frac{29}{2560}$ & $\frac{1}{32}$ & $\frac{119}{1152}$ & $\frac{35}{96}$ & $\frac{105}{128}$ & & & & & & & \\
$2$ {\rule{0pt}{2.5ex}}{\rule[-1.8ex]{0pt}{0pt}}& $\frac{337}{9216}$ & $\frac{261}{2560}$ & $\frac{75}{256}$ & $\frac{119}{128}$ & $\frac{105}{32}$ & $\frac{1155}{128}$ & & & & & & \\
$3$ {\rule{0pt}{2.5ex}}{\rule[-1.8ex]{0pt}{0pt}}& $\frac{319}{2048}$ & $\frac{337}{768}$ & $\frac{1399}{1152}$ & $\frac{2695}{768}$ & $\frac{2779}{256}$ & $\frac{9625}{256}$ & $\frac{15015}{128}$ & & & & & \\
$4$ {\rule{0pt}{2.5ex}}{\rule[-1.8ex]{0pt}{0pt}}& $\frac{10109}{12288}$ & $\frac{4785}{2048}$ & $\frac{19583}{3072}$ & $\frac{567}{32}$ & $\frac{26161}{512}$ & $\frac{79695}{512}$ & $\frac{135135}{256}$ & $\frac{225225}{128}$ & & & & \\
$5$ {\rule{0pt}{2.5ex}}{\rule[-1.8ex]{0pt}{0pt}}& $\frac{42445}{8192}$ & $\frac{30327}{2048}$ & $\frac{185063}{4608}$ & $\frac{105007}{960}$ & $\frac{1559847}{5120}$ & $\frac{897039}{1024}$ & $\frac{1354353}{512}$ & $\frac{1126125}{128}$ & $\frac{3828825}{128}$ & & & \\
$6$ {\rule{0pt}{2.5ex}}{\rule[-1.8ex]{0pt}{0pt}}& $\frac{620641}{16384}$ & $\frac{891345}{8192}$ & $\frac{1205735}{4096}$ & $\frac{24343627}{30720}$ & $\frac{4426961}{2048}$ & $\frac{12338073}{2048}$ & $\frac{53113775}{3072}$ & $\frac{26591565}{512}$ & $\frac{21696675}{128}$ & $\frac{72747675}{128}$ & & \\
$7$ {\rule{0pt}{2.5ex}}{\rule[-1.8ex]{0pt}{0pt}}& $\frac{10329655}{32768}$ & $\frac{1861923}{2048}$ & $\frac{15099635}{6144}$ & $\frac{26907839}{4096}$ & $\frac{362089239}{20480}$ & $\frac{197828785}{4096}$ & $\frac{827349809}{6144}$ & $\frac{49424375}{128}$ & $\frac{591425835}{512}$ & $\frac{945719775}{256}$ & $\frac{1527701175}{128}$ & \\
$8$ {\rule{0pt}{2.5ex}}{\rule[-1.8ex]{0pt}{0pt}}& $\frac{192765615}{65536}$ & $\frac{278900685}{32768}$ & $\frac{1130917375}{49152}$ & $\frac{250801845}{4096}$ & $\frac{1337766877}{8192}$ & $\frac{3604505311}{8192}$ & $\frac{4929397655}{4096}$ & $\frac{6875894025}{2048}$ & $\frac{4926846925}{512}$ & $\frac{916620705}{32}$ & $\frac{22915517625}{256}$ & $\frac{35137127025}{128}$ \\
\hline
\end{tabular}
\caption{\label{TablG2nd} Values of $\pi^{-2(n + 3 - d)}\,H_{2,n}[d]$ computed from the Virasoro constraints.}
\end{center}
\end{figure}

\begin{figure}[h!]
\begin{center}
{\tabcolsep=0.11cm
\begin{tabular}{|c|ccccccccccccc|}
\hline
$n$ $\backslash$\,\,$d$ {\rule{0pt}{3.2ex}}{\rule[-1.8ex]{0pt}{0pt}}& $0$ & $1$ & $2$ & $3$ & $4$ & $5$ & $6$ & $7$ & $8$ & $9$ & $10$ & $11$ & $12$ \\
\hline\hline
$1$ {\rule{0pt}{2.5ex}}{\rule[-1.8ex]{0pt}{0pt}}& $\frac{20555}{1327104}$ & $\frac{575}{14336}$ & $\frac{8099}{73728}$ & $\frac{56749}{184320}$ & $\frac{8203}{9218}$ & $\frac{17479}{6144}$ & $\frac{5005}{512}$ & $\frac{25025}{1024}$ & & & & & \\
$2$ {\rule{0pt}{2.5ex}}{\rule[-1.8ex]{0pt}{0pt}}& $\frac{77633}{884736}$ & $\frac{102775}{442368}$ & $\frac{1920563}{3096576}$ & $\frac{624463}{368640}$ & $\frac{48189}{10240}$ & $\frac{500489}{36864}$ & $\frac{87087}{2048}$ & $\frac{75075}{512}$ & $\frac{425425}{1024}$ & & & & \\
$3$ {\rule{0pt}{2.5ex}}{\rule[-1.8ex]{0pt}{0pt}}& $\frac{1038595}{1769472}$ & $\frac{77633}{49152}$ & $\frac{11069909}{2654208}$ & $\frac{8245679}{737280}$ & $\frac{3737107}{122880}$ & $\frac{6218927}{73728}$ & $\frac{26863837}{110592}$ & $\frac{575575}{768}$ & $\frac{15740725}{6144}$ & $\frac{8083075}{1024}$ & & & \\
$4$ {\rule{0pt}{2.5ex}}{\rule[-1.8ex]{0pt}{0pt}}& $\frac{16011391}{3538944}$ & $\frac{7270165}{589824}$ & $\frac{172014797}{5308416}$ & $\frac{379718161}{4423680}$ & $\frac{2354953}{10240}$ & $\frac{92087039}{147456}$ & $\frac{383277895}{221184}$ & $\frac{183688505}{36864}$ & $\frac{186931745}{12288}$ & $\frac{105079975}{2048}$ & $\frac{169744575}{1024}$ & & \\
$5$ {\rule{0pt}{2.5ex}}{\rule[-1.8ex]{0pt}{0pt}}& $\frac{31040465}{786432}$ & $\frac{16011391}{147456}$ & $\frac{1008891097}{3538944}$ & $\frac{6627865109}{8847360}$ & $\frac{2277007409}{1146880}$ & $\frac{3658297225}{688128}$ & $\frac{2129633311}{147456}$ & $\frac{739407955}{18432}$ & $\frac{8496138365}{73728}$ & $\frac{1430704275}{4096}$ & $\frac{1188212025}{1024}$ & $\frac{3904125225}{1024}$ & \\
$6$ {\rule{0pt}{2.5ex}}{\rule[-1.8ex]{0pt}{0pt}}& $\frac{201498115}{524288}$ & $\frac{279364185}{262144}$ & $\frac{2200898005}{786432}$ & $\frac{8629787107}{1179648}$ & $\frac{9925860397}{516096}$ & $\frac{210466671811}{4128768}$ & $\frac{156612960491}{1146880}$ & $\frac{18256755985}{49152}$ & $\frac{456931637591}{442368}$ & $\frac{72883701605}{24576}$ & $\frac{36630879285}{4096}$ & $\frac{29931626725}{1024}$ & $\frac{97603130625}{1024}$ \\
\hline
\end{tabular}
}
\caption{\label{TablG3nd} Values of $\pi^{-2(n + 6 - d)}\,H_{3,n}[d]$ computed from the Virasoro constraints.}
\end{center}
\end{figure}

\begin{figure}[h!]
\begin{center}
\begin{tabular}{|c|cccccccc|}
\hline
$n$ $\backslash$\,\,$d$ {\rule{0pt}{3.2ex}}{\rule[-1.8ex]{0pt}{0pt}}& $0$ & $1$ & $2$ & $3$ & $4$ & $5$ & $6$ & $7$ \\
\hline\hline
$1$ {\rule{0pt}{2.5ex}}{\rule[-1.8ex]{0pt}{0pt}}& $\frac{1103729}{18874368}$ & $\frac{2106241}{14155776}$ & $\frac{249375997}{637009920}$ & $\frac{55140311}{53084160}$ & $\frac{7636607}{2752512}$ & $\frac{26723675}{3538944}$ & $\frac{111772375}{5308416}$ & $\frac{3379805}{55296}$ \\
$2$ {\rule{0pt}{2.5ex}}{\rule[-1.8ex]{0pt}{0pt}}& $\frac{160909109}{339738624}$ & $\frac{7726103}{6291456}$ & $\frac{209435}{65536}$ & $\frac{8920693349}{1061683200}$ & $\frac{1834236191}{82575360}$ & $\frac{4903282417}{82575360}$ & $\frac{571236653}{3538944}$ & $\frac{793954733}{1769472}$ \\
$3$ {\rule{0pt}{2.5ex}}{\rule[-1.8ex]{0pt}{0pt}}& $\frac{14674841399}{3397386240}$ & $\frac{160909109}{14155776}$ & $\frac{14996838119}{509607936}$ & $\frac{2173908251}{28311552}$ & $\frac{62374893397}{309657600}$ & $\frac{17585088257}{33030144}$ & $\frac{1058028040511}{743178240}$ & $\frac{1140294935}{294912}$ \\
$4$ {\rule{0pt}{2.5ex}}{\rule[-1.8ex]{0pt}{0pt}}& $\frac{99177888029}{2264924160}$ & $\frac{14674841399}{125829120}$ & $\frac{399887731}{1327104}$ & $\frac{110651102527}{141557760}$ & $\frac{2524091828537}{1238630400}$ & $\frac{756362323453}{141557760}$ & $\frac{466656238555}{33030144}$ & $\frac{29716909709}{786432}$ \\
$5$ {\rule{0pt}{2.5ex}}{\rule[-1.8ex]{0pt}{0pt}}& $\frac{442442475179}{905969664}$ & $\frac{99177888029}{75497472}$ & $\frac{2305616474459}{679477248}$ & $\frac{1240791326569}{141557760}$ & $\frac{3575909389403}{157286400}$ & $\frac{117418444025993}{1981808640}$ & $\frac{153980456785597}{990904320}$ & $\frac{969929385535}{2359296}$ \\
\hline
\end{tabular}
\caption{\label{TablG4ndcon} Values of $\pi^{-2n + 9 - d)}\,H_{4,n}[d]$ computed from the Virasoro constraints.}
\end{center}
\end{figure}

\begin{figure}[h!]
\begin{center}
\begin{tabular}{|c|ccccccc|}
\hline
$n$ $\backslash$\,\,$d$ {\rule{0pt}{3.2ex}}{\rule[-1.8ex]{0pt}{0pt}}& $8$ & $9$ & $10$ & $11$ & $12$ & $13$ & $14$ \\
\hline\hline
$1$ {\rule{0pt}{2.5ex}}{\rule[-1.8ex]{0pt}{0pt}}& $\frac{28503475}{147456}$ & $\frac{8083075}{12288}$ & $\frac{56581525}{32768}$ & & & & \\
$2$ {\rule{0pt}{2.5ex}}{\rule[-1.8ex]{0pt}{0pt}}& $\frac{1147322605}{884736}$ & $\frac{66281215}{16384}$ & $\frac{56581525}{4096}$ & $\frac{1301375075}{32768}$ & & & \\
$3$ {\rule{0pt}{2.5ex}}{\rule[-1.8ex]{0pt}{0pt}}& $\frac{56993061697}{5308416}$ & $\frac{4563242255}{147456}$ & $\frac{6257916665}{65536}$ & $\frac{63767378675}{196608}$ & $\frac{32534376875}{32768}$ & & \\
$4$ {\rule{0pt}{2.5ex}}{\rule[-1.8ex]{0pt}{0pt}}& $\frac{272361575941}{2654208}$ & $\frac{42015685283}{147456}$ & $\frac{322445178055}{393216}$ & $\frac{986702581865}{393216}$ & $\frac{553084406875}{65536}$ & $\frac{878428175625}{32768}$ & \\
$5$ {\rule{0pt}{2.5ex}}{\rule[-1.8ex]{0pt}{0pt}}& $\frac{7785790643045}{7077888}$ & $\frac{3525813704023}{1179648}$ & $\frac{2177058238355}{262144}$ & $\frac{18781333535965}{786432}$ & $\frac{9519558673625}{131072}$ & $\frac{7905853580625}{32768}$ & $\frac{25474417093125}{32768}$ \\
\hline
\end{tabular}
\caption{\label{TablG4nd} Values of $\pi^{-2(n + 9 - d)}\,H_{4,n}[d]$ computed from the Virasoro constraints (continued).\vspace{-.3cm}}
\end{center}
\end{figure}

\begin{figure}[h!]
\begin{center}
\begin{tabular}{|c|ccccccccc|}
\hline
$n$ $\backslash$\,\,$d$ {\rule{0pt}{3.2ex}}{\rule[-1.8ex]{0pt}{0pt}}& $0$ & $1$ & $2$ & $3$ & $4$ & $5$ & $6$ & $7$ & $8$ \\
\hline\hline
$1$ {\rule{0pt}{2.5ex}}{\rule[-1.8ex]{0pt}{0pt}}& $\frac{35161328707}{81537269760}$ & $\frac{22821693}{20971520}$ & $\frac{7644175835}{2717908992}$ & $\frac{109134615047}{14948499456}$ & $\frac{35307225403}{1857945600}$ & $\frac{296352439757}{5945425920}$ & $\frac{87114140087}{660602880}$ & $\frac{4992748537}{14155776}$ & $\frac{244417472273}{254803968}$ \\
$2$ {\rule{0pt}{2.5ex}}{\rule[-1.8ex]{0pt}{0pt}}& $\frac{27431847097}{6039797760}$ & $\frac{35161328707}{3019898880}$ & $\frac{1463192937811}{48922361856}$ & $\frac{210033343633}{2717908992}$ & $\frac{174554274549551}{871995801600}$ & $\frac{12396787304077}{23781703680}$ & $\frac{2438015036977}{1783627776}$ & $\frac{68208841817}{18874368}$ & $\frac{547141537559}{56623104}$ \\
$3$ {\rule{0pt}{2.5ex}}{\rule[-1.8ex]{0pt}{0pt}}& $\frac{5703709895459}{108716359680}$ & $\frac{27431847097}{201326592}$ & $\frac{3793692043765}{10871635968}$ & $\frac{14642582778043}{16307453952}$ & $\frac{68867984889821}{29727129600}$ & $\frac{8918977503973}{1486356480}$ & $\frac{2787426440647199}{178362777600}$ & $\frac{23198438807111}{566231040}$ & $\frac{551941078807889}{5096079360}$ \\
$4$ {\rule{0pt}{2.5ex}}{\rule[-1.8ex]{0pt}{0pt}}& $\frac{143368101519407}{217432719360}$ & $\frac{62740808850049}{36238786560}$ & $\frac{288900738269347}{65229815808}$ & $\frac{617405851484437}{54358179840}$ & $\frac{3086856384144917}{105696460800}$ & $\frac{7165942204049803}{95126814720}$ & $\frac{19890284967197827}{101921587200}$ & $\frac{1499002879859}{2949120}$ & $\frac{4527331166325601}{3397386240}$ \\
\hline
\end{tabular}
\caption{\label{TablG5nd} Values of $\pi^{-2n + 12 - d)}\,H_{5,n}[d]$ computed from the Virasoro constraints.
\vspace{-.3cm}}
\end{center}
\end{figure}

\begin{figure}[h!]
\begin{center}
\begin{tabular}{|c|cccccccc|}
\hline
$n$ $\backslash$\,\,$d$ {\rule{0pt}{3.2ex}}{\rule[-1.8ex]{0pt}{0pt}}& $9$ & $10$ & $11$ & $12$ & $13$ & $14$ & $15$ & $16$ \\
\hline\hline
$1$ {\rule{0pt}{2.5ex}}{\rule[-1.8ex]{0pt}{0pt}}& $\frac{4730354057}{1769472}$ & $\frac{18324331025}{2359296}$ & $\frac{115301831645}{4718592}$ & $\frac{32534376875}{393216}$ & $\frac{58561878375}{262144}$ & & & \\
$2$ {\rule{0pt}{2.5ex}}{\rule[-1.8ex]{0pt}{0pt}}& $\frac{247779861745}{9437184}$ & $\frac{689116092665}{9437184}$ & $\frac{1993595068465}{9437184}$ & $\frac{344864394875}{524288}$ & $\frac{292809391875}{131072}$ & $\frac{1698294472875}{262144}$ & & \\
$3$ {\rule{0pt}{2.5ex}}{\rule[-1.8ex]{0pt}{0pt}}& $\frac{81986849479019}{283115520}$ & $\frac{44535267213073}{56623104}$ & $\frac{123749786026867}{56623104}$ & $\frac{59509093518875}{9437184}$ & $\frac{5102691669075}{262144}$ & $\frac{34531987615125}{524288}$ & $\frac{52647128659125}{262144}$ & \\
$4$ {\rule{0pt}{2.5ex}}{\rule[-1.8ex]{0pt}{0pt}}& $\frac{221681265259901}{62914560}$ & $\frac{1067126014204999}{113246208}$ & $\frac{966187786983419}{37748736}$ & $\frac{447428553196625}{6291456}$ & $\frac{214834529684775}{1048576}$ & $\frac{658598596580925}{1048576}$ & $\frac{1105589701841625}{524288}$ & $\frac{1737355245751125}{262144}$ \\
\hline
\end{tabular}
\caption{\label{TablG5ndcon} Values of $\pi^{-2(n + 12 - d)}\,H_{5,n}[d]$ computed from the Virasoro constraints (continued).\vspace{-.3cm}}
\end{center}
\end{figure}

\begin{figure}
\begin{center}
\begin{tabular}{|c|ccccccc|}
\hline
$n$ $\backslash$\,\,$d$ {\rule{0pt}{3.2ex}}{\rule[-1.8ex]{0pt}{0pt}}& $0$ & $1$ & $2$ & $3$ & $4$ & $5$ & $6$\\
\hline\hline
$1$ {\rule{0pt}{2.5ex}}{\rule[-1.8ex]{0pt}{0pt}}& $\frac{1725192578138153}{328758271672320}$ & $\frac{51582017261473}{3913788948480}$ & $\frac{118778241943205}{3522410053632}$ & $\frac{28213838434057}{326149079040}$ & $\frac{769565168808731549}{3462616055808000}$ & $\frac{7853909947536887}{13698261319680}$ & $\frac{1678887318135645397}{1130106558873600}$ \\
$2$ {\rule{0pt}{2.5ex}}{\rule[-1.8ex]{0pt}{0pt}}& $\frac{236687293214441}{3478923509760}$ & $\frac{18977118359519683}{109586090557440}$ & $\frac{2072797311487703}{4696546738176}$ & $\frac{13256652596923381}{11741366845440}$ & $\frac{18867920053695301}{6522981580800}$ & $\frac{10295189499821403871}{1385046422323200}$ & $\frac{14445165541157676091}{753404372582400}$ \\
$3$ {\rule{0pt}{2.5ex}}{\rule[-1.8ex]{0pt}{0pt}}& $\frac{19857285082756661}{20873541058560}$ & $\frac{236687293214441}{96636764160}$ & $\frac{341353246695400481}{54793045278720}$ & $\frac{4608517054909483}{289910292480}$ & $\frac{11130008225501778383}{273965226393600}$ & $\frac{271549970945534501}{2609192632320}$ & $\frac{940355203755005824369}{3515887072051200}$ \\
\hline
\end{tabular}
\end{center}
\end{figure}
\begin{figure}[h!]
\begin{center}
\begin{tabular}{|c|cccccc|}
\hline
$n$ $\backslash$\,\,$d$ {\rule{0pt}{3.2ex}}{\rule[-1.8ex]{0pt}{0pt}}& $7$ & $8$ & $9$ & $10$ & $11$ & $12$\\
\hline\hline
$1$ {\rule{0pt}{2.5ex}}{\rule[-1.8ex]{0pt}{0pt}} & $\frac{2208394994566309}{570760888320}$ & $\frac{18384224416373}{1811939328}$ & $\frac{15195072225949}{566231040}$ & $\frac{585329319725327}{8153726976}$ & $\frac{530536293549353}{2717908992}$ & $\frac{246412953611825}{452984832}$ \\
$2$ {\rule{0pt}{2.5ex}}{\rule[-1.8ex]{0pt}{0pt}}& $\frac{534554804193487589}{10762919608320}$ & $\frac{221467745692382411}{171228266496}$ & $\frac{9216122383558207}{27179089920}$ & $\frac{1624463784480019}{1811939328}$ & $\frac{39095700824091245}{16307453952}$ & $\frac{1967183255694475}{301989888}$ \\
$3$ {\rule{0pt}{2.5ex}}{\rule[-1.8ex]{0pt}{0pt}}& $\frac{112491864394018121}{163074539520}$ & $\frac{57694593556278486017}{32288758824960}$ & $\frac{196729681979823383}{42278584320}$ & $\frac{132613775029696513}{10871635968}$ & $\frac{1051799750173130861}{32614907904}$ & $\frac{4218609560944475125}{48922361856}$ \\
\hline
\end{tabular}
\end{center}
\end{figure}
\begin{figure}[h!]
\begin{center}
\begin{tabular}{|c|cccccc|}
\hline
$n$ $\backslash$\,\,$d$ {\rule{0pt}{3.2ex}}{\rule[-1.8ex]{0pt}{0pt}}& $13$ & $14$ & $15$ & $16$ & $17$ & $18$\\
\hline\hline
$1$ {\rule{0pt}{2.5ex}}{\rule[-1.8ex]{0pt}{0pt}}& $\frac{4972554161575}{3145728}$ & $\frac{20794672323425}{4194304}$ & $\frac{17549042886375}{1048576}$ & $\frac{193039471750125}{4194304}$ & &\\
$2$ {\rule{0pt}{2.5ex}}{\rule[-1.8ex]{0pt}{0pt}}& $\frac{304054573898075}{16777216}$ & $\frac{1320243340390975}{25165824}$ & $\frac{684412672568625}{4194304}$ & $\frac{579118415250375}{1048576}$ & $\frac{6756381511254375}{4194304}$ & \\
$3$ {\rule{0pt}{2.5ex}}{\rule[-1.8ex]{0pt}{0pt}}& $\frac{17683899568619375}{75497472}$ & $\frac{24580850021252525}{37748736}$ & $\frac{141960893257558375}{75497472}$ & $\frac{48710293371614875}{8388608}$ & $\frac{164405283440523125}{8388608}$ & $\frac{249986115916411875}{4194304}$ \\
\hline
\end{tabular}
\end{center}
\caption{\label{TablG6nd} Values of $\pi^{-2(n + 15 - d)}\,H_{6,n}[d]$ computed from the Virasoro constraints.}
\end{figure}

\end{landscape}

\begin{figure}[h!]
\begin{center}
\caption{\label{TablFmulti1} Values of $\pi^{-2(3g - 3 + n) + 2d_1 + \cdots + 2d_k}\,F_{g,n}[d_1,\ldots,d_k,0,\ldots,0]$ for $k \geq 2$, computed from the Virasoro constraints.} 
\begin{tabular}{ccc}
\begin{tabular}[t]{|c|c|c|}
\hline 
$(g,n)$ {\rule{0pt}{3.2ex}}{\rule[-1.8ex]{0pt}{0pt}}& $(d_1,\ldots,d_k)$ & $*$ \\
\hline\hline
$(0,5)$ {\rule{0pt}{2.5ex}}{\rule[-1.8ex]{0pt}{0pt}}& $(1,1)$ & $18$ \\
\hline
\hline
\multirow{3}{*}{$(0,6)$} {\rule{0pt}{2.5ex}}{\rule[-1.8ex]{0pt}{0pt}}& $(1,1)$ & $27$ \\
{\rule{0pt}{2.5ex}}{\rule[-1.8ex]{0pt}{0pt}}& $(2,1)$ & $135$ \\
{\rule{0pt}{2.5ex}}{\rule[-1.8ex]{0pt}{0pt}}& $(1,1,1)$ & $162$ \\
\hline\hline
\multirow{7}{*}{$(0,7)$} {\rule{0pt}{2.5ex}}{\rule[-1.8ex]{0pt}{0pt}}& $(1,1)$ & $81$ \\
{\rule{0pt}{2.5ex}}{\rule[-1.8ex]{0pt}{0pt}}& $(2,1)$ &  $300$ \\
{\rule{0pt}{2.5ex}}{\rule[-1.8ex]{0pt}{0pt}}& $(3,1)$ & $1260$ \\
{\rule{0pt}{2.5ex}}{\rule[-1.8ex]{0pt}{0pt}}& $(2,2)$ & $1350$ \\
{\rule{0pt}{2.5ex}}{\rule[-1.8ex]{0pt}{0pt}}& $(1,1,1)$ & $324$ \\
{\rule{0pt}{2.5ex}}{\rule[-1.8ex]{0pt}{0pt}}& $(2,1,1)$ & $1620$ \\
{\rule{0pt}{2.5ex}}{\rule[-1.8ex]{0pt}{0pt}}& $(1,1,1,1)$ & $1944$ \\
\hline\hline
\multirow{13}{*}{$(0,8)$} {\rule{0pt}{2.5ex}}{\rule[-1.8ex]{0pt}{0pt}}& $(1,1)$ & $\frac{675}{2}$ \\
{\rule{0pt}{2.5ex}}{\rule[-1.8ex]{0pt}{0pt}}& $(2,1)$ & $\frac{4575}{4}$ \\
{\rule{0pt}{2.5ex}}{\rule[-1.8ex]{0pt}{0pt}}& $(3,1)$ & $\frac{7875}{2}$ \\
{\rule{0pt}{2.5ex}}{\rule[-1.8ex]{0pt}{0pt}}& $(4,1)$ & $14175$ \\ 
{\rule{0pt}{2.5ex}}{\rule[-1.8ex]{0pt}{0pt}}& $(2,2)$ & $4125$ \\
{\rule{0pt}{2.5ex}}{\rule[-1.8ex]{0pt}{0pt}}& $(3,2)$ & $15750$ \\
{\rule{0pt}{2.5ex}}{\rule[-1.8ex]{0pt}{0pt}}& $(1,1,1)$ & $1215$ \\
{\rule{0pt}{2.5ex}}{\rule[-1.8ex]{0pt}{0pt}}& $(2,1,1)$ & $4500$ \\
{\rule{0pt}{2.5ex}}{\rule[-1.8ex]{0pt}{0pt}}& $(3,1,1)$ & $18900$ \\
{\rule{0pt}{2.5ex}}{\rule[-1.8ex]{0pt}{0pt}}& $(2,2,1)$ & $20250$ \\
{\rule{0pt}{2.5ex}}{\rule[-1.8ex]{0pt}{0pt}}& $(1,1,1,1)$ & $4860$ \\
{\rule{0pt}{2.5ex}}{\rule[-1.8ex]{0pt}{0pt}}& $(2,1,1,1)$ & $24300$ \\
{\rule{0pt}{2.5ex}}{\rule[-1.8ex]{0pt}{0pt}}& $(1,1,1,1,1)$ & $29160$ \\
\hline
\end{tabular} 
&
\begin{tabular}[t]{|c|c|c|}
\hline 
$(g,n)$ {\rule{0pt}{3.2ex}}{\rule[-1.8ex]{0pt}{0pt}}& $(d_1,\ldots,d_k)$ & $*$ \\
\hline\hline
$(1,2)$ {\rule{0pt}{2.5ex}}{\rule[-1.8ex]{0pt}{0pt}}& $(1,1)$ & $\frac{3}{8}$ \\
\hline
\hline
\multirow{3}{*}{$(1,3)$} {\rule{0pt}{2.5ex}}{\rule[-1.8ex]{0pt}{0pt}}& $(1,1)$ & $\frac{3}{2}$ \\
{\rule{0pt}{2.5ex}}{\rule[-1.8ex]{0pt}{0pt}}& $(2,1)$ & $\frac{15}{4}$ \\
{\rule{0pt}{2.5ex}}{\rule[-1.8ex]{0pt}{0pt}}& $(1,1,1)$ & $\frac{9}{4}$ \\
\hline\hline
\multirow{7}{*}{$(1,4)$} {\rule{0pt}{2.5ex}}{\rule[-1.8ex]{0pt}{0pt}}& $(1,1)$ & $\frac{27}{8}$ \\
{\rule{0pt}{2.5ex}}{\rule[-1.8ex]{0pt}{0pt}}& $(2,1)$ &  $\frac{195}{16}$ \\
{\rule{0pt}{2.5ex}}{\rule[-1.8ex]{0pt}{0pt}}& $(3,1)$ & $\frac{315}{8}$ \\
{\rule{0pt}{2.5ex}}{\rule[-1.8ex]{0pt}{0pt}}& $(2,2)$ & $\frac{75}{2}$ \\
{\rule{0pt}{2.5ex}}{\rule[-1.8ex]{0pt}{0pt}}& $(1,1,1)$ & $\frac{27}{2}$ \\
{\rule{0pt}{2.5ex}}{\rule[-1.8ex]{0pt}{0pt}}& $(2,1,1)$ & $\frac{135}{4}$ \\
{\rule{0pt}{2.5ex}}{\rule[-1.8ex]{0pt}{0pt}}& $(1,1,1,1)$ & $\frac{81}{4}$ \\
\hline\hline
\multirow{13}{*}{$(1,5)$} {\rule{0pt}{2.5ex}}{\rule[-1.8ex]{0pt}{0pt}}& $(1,1)$ & $\frac{99}{8}$ \\
{\rule{0pt}{2.5ex}}{\rule[-1.8ex]{0pt}{0pt}}& $(2,1)$ & $\frac{305}{8}$ \\
{\rule{0pt}{2.5ex}}{\rule[-1.8ex]{0pt}{0pt}}& $(3,1)$ & $\frac{525}{4}$ \\
{\rule{0pt}{2.5ex}}{\rule[-1.8ex]{0pt}{0pt}}& $(4,1)$ & $\frac{945}{2}$ \\ 
{\rule{0pt}{2.5ex}}{\rule[-1.8ex]{0pt}{0pt}}& $(2,2)$ & $\frac{1075}{8}$ \\
{\rule{0pt}{2.5ex}}{\rule[-1.8ex]{0pt}{0pt}}& $(3,2)$ & $\frac{3675}{8}$ \\
{\rule{0pt}{2.5ex}}{\rule[-1.8ex]{0pt}{0pt}}& $(1,1,1)$ & $\frac{81}{2}$ \\
{\rule{0pt}{2.5ex}}{\rule[-1.8ex]{0pt}{0pt}}& $(2,1,1)$ & $\frac{585}{4}$ \\
{\rule{0pt}{2.5ex}}{\rule[-1.8ex]{0pt}{0pt}}& $(3,1,1)$ & $\frac{945}{2}$ \\
{\rule{0pt}{2.5ex}}{\rule[-1.8ex]{0pt}{0pt}}& $(2,2,1)$ & $450$ \\
{\rule{0pt}{2.5ex}}{\rule[-1.8ex]{0pt}{0pt}}& $(1,1,1,1)$ & $162$ \\
{\rule{0pt}{2.5ex}}{\rule[-1.8ex]{0pt}{0pt}}& $(2,1,1,1)$ & $405$ \\
{\rule{0pt}{2.5ex}}{\rule[-1.8ex]{0pt}{0pt}}& $(1,1,1,1,1)$ & $243$ \\
\hline
\end{tabular}
& 
\begin{tabular}[t]{|c|c|c|}
\hline 
$(g,n)$ {\rule{0pt}{3.2ex}}{\rule[-1.8ex]{0pt}{0pt}}& $(d_1,\ldots,d_k)$ & $*$ \\
\hline\hline
\multirow{23}{*}{$(1,6)$} {\rule{0pt}{2.5ex}}{\rule[-1.8ex]{0pt}{0pt}}& $(1,1)$ & $\frac{945}{16}$ \\
{\rule{0pt}{2.5ex}}{\rule[-1.8ex]{0pt}{0pt}}& $(2,1)$ & $\frac{11175}{64}$ \\
{\rule{0pt}{2.5ex}}{\rule[-1.8ex]{0pt}{0pt}}& $(3,1)$ & $\frac{16905}{32}$ \\
{\rule{0pt}{2.5ex}}{\rule[-1.8ex]{0pt}{0pt}}& $(4,1)$ & $\frac{14175}{8}$ \\
{\rule{0pt}{2.5ex}}{\rule[-1.8ex]{0pt}{0pt}}& $(5,1)$ & $\frac{51975}{8}$ \\
{\rule{0pt}{2.5ex}}{\rule[-1.8ex]{0pt}{0pt}}& $(2,2)$ & $\frac{6475}{12}$ \\
{\rule{0pt}{2.5ex}}{\rule[-1.8ex]{0pt}{0pt}}& $(3,2)$ & $\frac{29225}{16}$ \\
{\rule{0pt}{2.5ex}}{\rule[-1.8ex]{0pt}{0pt}}& $(4,2)$ & $\frac{51975}{8}$ \\
{\rule{0pt}{2.5ex}}{\rule[-1.8ex]{0pt}{0pt}}& $(3,3)$ & $\frac{25725}{4}$ \\
{\rule{0pt}{2.5ex}}{\rule[-1.8ex]{0pt}{0pt}}& $(1,1,1)$ & $\frac{1485}{8}$ \\
{\rule{0pt}{2.5ex}}{\rule[-1.8ex]{0pt}{0pt}}& $(2,1,1)$ & $\frac{4575}{8}$ \\
{\rule{0pt}{2.5ex}}{\rule[-1.8ex]{0pt}{0pt}}& $(3,1,1)$ & $\frac{7875}{4}$ \\
{\rule{0pt}{2.5ex}}{\rule[-1.8ex]{0pt}{0pt}}& $(4,1,1)$ & $\frac{14175}{2}$ \\
{\rule{0pt}{2.5ex}}{\rule[-1.8ex]{0pt}{0pt}}& $(2,2,1)$ & $\frac{16125}{8}$ \\
{\rule{0pt}{2.5ex}}{\rule[-1.8ex]{0pt}{0pt}}& $(3,2,1)$ & $\frac{55125}{8}$ \\
{\rule{0pt}{2.5ex}}{\rule[-1.8ex]{0pt}{0pt}}& $(2,2,2)$ & $6750$ \\
{\rule{0pt}{2.5ex}}{\rule[-1.8ex]{0pt}{0pt}}& $(1,1,1,1)$ & $\frac{1215}{2}$ \\
{\rule{0pt}{2.5ex}}{\rule[-1.8ex]{0pt}{0pt}}& $(2,1,1,1)$ & $\frac{8775}{4}$ \\
{\rule{0pt}{2.5ex}}{\rule[-1.8ex]{0pt}{0pt}}& $(3,1,1,1)$ & $\frac{14175}{2}$ \\
{\rule{0pt}{2.5ex}}{\rule[-1.8ex]{0pt}{0pt}}& $(2,2,1,1)$ & $6750$ \\
{\rule{0pt}{2.5ex}}{\rule[-1.8ex]{0pt}{0pt}}& $(1,1,1,1,1)$ & $2430$ \\
{\rule{0pt}{2.5ex}}{\rule[-1.8ex]{0pt}{0pt}}& $(2,1,1,1,1)$ & $6075$ \\
{\rule{0pt}{2.5ex}}{\rule[-1.8ex]{0pt}{0pt}}& $(1,1,1,1,1,1)$ & $3645$  \\
\hline
\end{tabular}
\end{tabular}
\end{center}
\end{figure}

\begin{figure}[h!]
\begin{center}
\caption{\label{TablFmulti2} Values of $\pi^{-2(3g - 3 + n) + 2d_1 + \cdots + 2d_k}\,F_{g,n}[d_1,\ldots,d_k,0,\ldots,0]$ for $k \geq 2$ (continued).}

\begin{tabular}{ccc}
\begin{tabular}[t]{|c|c|c|}
\hline
$(g,n)$ {\rule{0pt}{3.2ex}}{\rule[-1.8ex]{0pt}{0pt}}& $(d_1,\ldots,d_k)$ & $*$ \\
\hline\hline
\multirow{6}{*}{$(2,2)$} {\rule{0pt}{2.5ex}}{\rule[-1.8ex]{0pt}{0pt}}& $(1,1)$ & $\frac{9}{32}$ \\
{\rule{0pt}{2.5ex}}{\rule[-1.8ex]{0pt}{0pt}}& $(2,1)$ & $\frac{119}{128}$ \\
{\rule{0pt}{2.5ex}}{\rule[-1.8ex]{0pt}{0pt}}& $(3,1)$ & $\frac{105}{32}$ \\
{\rule{0pt}{2.5ex}}{\rule[-1.8ex]{0pt}{0pt}}& $(4,1)$ & $\frac{945}{128}$ \\
{\rule{0pt}{2.5ex}}{\rule[-1.8ex]{0pt}{0pt}}& $(2,2)$ & $\frac{1225}{384}$ \\
{\rule{0pt}{2.5ex}}{\rule[-1.8ex]{0pt}{0pt}}& $(3,2)$ & $\frac{1015}{128}$ \\
\hline\hline
\multirow{16}{*}{$(2,3)$} {\rule{0pt}{2.5ex}}{\rule[-1.8ex]{0pt}{0pt}}& $(1,1)$ & $\frac{783}{640}$ \\
{\rule{0pt}{2.5ex}}{\rule[-1.8ex]{0pt}{0pt}}& $(2,1)$ & $\frac{225}{64}$ \\
{\rule{0pt}{2.5ex}}{\rule[-1.8ex]{0pt}{0pt}}& $(3,1)$ & $\frac{357}{32}$ \\
{\rule{0pt}{2.5ex}}{\rule[-1.8ex]{0pt}{0pt}}& $(4,1)$ & $\frac{315}{8}$ \\
{\rule{0pt}{2.5ex}}{\rule[-1.8ex]{0pt}{0pt}}& $(5,1)$ & $\frac{3465}{32}$ \\
{\rule{0pt}{2.5ex}}{\rule[-1.8ex]{0pt}{0pt}}& $(2,2)$ & $\frac{25585}{2304}$ \\
{\rule{0pt}{2.5ex}}{\rule[-1.8ex]{0pt}{0pt}}& $(3,2)$ & $\frac{14875}{384}$ \\
{\rule{0pt}{2.5ex}}{\rule[-1.8ex]{0pt}{0pt}}& $(4,2)$ & $\frac{3465}{32}$ \\
{\rule{0pt}{2.5ex}}{\rule[-1.8ex]{0pt}{0pt}}& $(3,3)$ & $\frac{7105}{64}$ \\
{\rule{0pt}{2.5ex}}{\rule[-1.8ex]{0pt}{0pt}}& $(1,1,1)$ & $\frac{27}{8}$ \\
{\rule{0pt}{2.5ex}}{\rule[-1.8ex]{0pt}{0pt}}& $(2,1,1)$ & $\frac{357}{32}$ \\
{\rule{0pt}{2.5ex}}{\rule[-1.8ex]{0pt}{0pt}}& $(3,1,1)$ & $\frac{315}{8}$ \\
{\rule{0pt}{2.5ex}}{\rule[-1.8ex]{0pt}{0pt}}& $(4,1,1)$ & $\frac{2835}{32}$ \\
{\rule{0pt}{2.5ex}}{\rule[-1.8ex]{0pt}{0pt}}& $(2,2,1)$ & $\frac{1225}{32}$ \\
{\rule{0pt}{2.5ex}}{\rule[-1.8ex]{0pt}{0pt}}& $(3,2,1)$ & $\frac{3045}{32}$ \\
{\rule{0pt}{2.5ex}}{\rule[-1.8ex]{0pt}{0pt}}& $(2,2,2)$ & $\frac{1575}{16}$ \\
\hline
\end{tabular}
&
\begin{tabular}[t]{|c|c|c|}
\hline
$(g,n)$ {\rule{0pt}{3.2ex}}{\rule[-1.8ex]{0pt}{0pt}}& $(d_1,\ldots,d_k)$ & $*$ \\
\hline\hline
\multirow{29}{*}{$(2,4)$} {\rule{0pt}{2.5ex}}{\rule[-1.8ex]{0pt}{0pt}}& $(1,1)$ & $\frac{1685}{256}$ \\
{\rule{0pt}{2.5ex}}{\rule[-1.8ex]{0pt}{0pt}}& $(2,1)$ & $\frac{6995}{384}$ \\
{\rule{0pt}{2.5ex}}{\rule[-1.8ex]{0pt}{0pt}}& $(3,1)$ & $\frac{13475}{256}$ \\
{\rule{0pt}{2.5ex}}{\rule[-1.8ex]{0pt}{0pt}}& $(4,1)$ & $\frac{41685}{256}$ \\
{\rule{0pt}{2.5ex}}{\rule[-1.8ex]{0pt}{0pt}}& $(5,1)$ & $\frac{144375}{256}$ \\
{\rule{0pt}{2.5ex}}{\rule[-1.8ex]{0pt}{0pt}}& $(6,1)$ & $\frac{225225}{128}$ \\
{\rule{0pt}{2.5ex}}{\rule[-1.8ex]{0pt}{0pt}}& $(2,2)$ & $\frac{241825}{4608}$ \\
{\rule{0pt}{2.5ex}}{\rule[-1.8ex]{0pt}{0pt}}& $(3,2)$ & $\frac{125167}{768}$ \\
{\rule{0pt}{2.5ex}}{\rule[-1.8ex]{0pt}{0pt}}& $(4,2)$ & $\frac{72135}{128}$ \\
{\rule{0pt}{2.5ex}}{\rule[-1.8ex]{0pt}{0pt}}& $(5,2)$ & $\frac{3465}{2}$ \\
{\rule{0pt}{2.5ex}}{\rule[-1.8ex]{0pt}{0pt}}& $(3,3)$ & $\frac{71785}{128}$ \\
{\rule{0pt}{2.5ex}}{\rule[-1.8ex]{0pt}{0pt}}& $(4,3)$ & $\frac{112455}{64}$ \\
{\rule{0pt}{2.5ex}}{\rule[-1.8ex]{0pt}{0pt}}& $(1,1,1)$ & $\frac{2349}{128}$ \\
{\rule{0pt}{2.5ex}}{\rule[-1.8ex]{0pt}{0pt}}& $(2,1,1)$ & $\frac{3375}{64}$ \\
{\rule{0pt}{2.5ex}}{\rule[-1.8ex]{0pt}{0pt}}& $(3,1,1)$ & $\frac{5355}{32}$ \\
{\rule{0pt}{2.5ex}}{\rule[-1.8ex]{0pt}{0pt}}& $(4,1,1)$ & $\frac{4725}{8}$ \\
{\rule{0pt}{2.5ex}}{\rule[-1.8ex]{0pt}{0pt}}& $(5,1,1)$ & $\frac{51975}{32}$ \\
{\rule{0pt}{2.5ex}}{\rule[-1.8ex]{0pt}{0pt}}& $(2,2,1)$ & $\frac{127925}{768}$ \\
{\rule{0pt}{2.5ex}}{\rule[-1.8ex]{0pt}{0pt}}& $(3,2,1)$ & $\frac{74375}{128}$ \\
{\rule{0pt}{2.5ex}}{\rule[-1.8ex]{0pt}{0pt}}& $(3,3,1)$ & $\frac{106575}{64}$ \\
{\rule{0pt}{2.5ex}}{\rule[-1.8ex]{0pt}{0pt}}& $(2,2,2)$ & $\frac{18375}{32}$ \\
{\rule{0pt}{2.5ex}}{\rule[-1.8ex]{0pt}{0pt}}& $(3,2,2)$ & $\frac{13125}{8}$ \\
{\rule{0pt}{2.5ex}}{\rule[-1.8ex]{0pt}{0pt}}& $(1,1,1,1)$ & $\frac{405}{8}$ \\
{\rule{0pt}{2.5ex}}{\rule[-1.8ex]{0pt}{0pt}}& $(2,1,1,1)$ & $\frac{5355}{32}$ \\
{\rule{0pt}{2.5ex}}{\rule[-1.8ex]{0pt}{0pt}}& $(3,1,1,1)$ & $\frac{4725}{8}$ \\
{\rule{0pt}{2.5ex}}{\rule[-1.8ex]{0pt}{0pt}}& $(4,1,1,1)$ & $\frac{42525}{32}$ \\
{\rule{0pt}{2.5ex}}{\rule[-1.8ex]{0pt}{0pt}}& $(2,2,1,1)$ & $\frac{18375}{32}$ \\
{\rule{0pt}{2.5ex}}{\rule[-1.8ex]{0pt}{0pt}}& $(3,2,1,1)$ & $\frac{45675}{32}$ \\
{\rule{0pt}{2.5ex}}{\rule[-1.8ex]{0pt}{0pt}}& $(2,2,2,1)$ & $\frac{23625}{16}$    \\
\hline 
\end{tabular}
&
\begin{tabular}[t]{|c|c|c|}
\hline
$(g,n)$ {\rule{0pt}{3.2ex}}{\rule[-1.8ex]{0pt}{0pt}}& $(d_1,\ldots,d_k)$ & $*$ \\
\hline\hline
\multirow{16}{*}{$(3,2)$} {\rule{0pt}{2.5ex}}{\rule[-1.8ex]{0pt}{0pt}}& $(1,1)$ & $\frac{8625}{14336}$ \\
{\rule{0pt}{2.5ex}}{\rule[-1.8ex]{0pt}{0pt}}& $(2,1)$ & $\frac{40495}{24576}$ \\
{\rule{0pt}{2.5ex}}{\rule[-1.8ex]{0pt}{0pt}}& $(3,1)$ & $\frac{56749}{12288}$ \\
{\rule{0pt}{2.5ex}}{\rule[-1.8ex]{0pt}{0pt}}& $(4,1)$ & $\frac{41015}{3072}$ \\
{\rule{0pt}{2.5ex}}{\rule[-1.8ex]{0pt}{0pt}}& $(5,1)$ & $\frac{87395}{2048}$ \\
{\rule{0pt}{2.5ex}}{\rule[-1.8ex]{0pt}{0pt}}& $(6,1)$ & $\frac{75075}{512}$ \\
{\rule{0pt}{2.5ex}}{\rule[-1.8ex]{0pt}{0pt}}& $(7,1)$ & $\frac{375375}{1024}$ \\
{\rule{0pt}{2.5ex}}{\rule[-1.8ex]{0pt}{0pt}}& $(2,2)$ & $\frac{56765}{12288}$ \\
{\rule{0pt}{2.5ex}}{\rule[-1.8ex]{0pt}{0pt}}& $(3,2)$ & $\frac{743917}{55296}$ \\
{\rule{0pt}{2.5ex}}{\rule[-1.8ex]{0pt}{0pt}}& $(4,2)$ & $\frac{87353}{2048}$ \\
{\rule{0pt}{2.5ex}}{\rule[-1.8ex]{0pt}{0pt}}& $(5,2)$ & $\frac{297605}{2048}$ \\
{\rule{0pt}{2.5ex}}{\rule[-1.8ex]{0pt}{0pt}}& $(6,2)$ & $\frac{385385}{1024}$ \\
{\rule{0pt}{2.5ex}}{\rule[-1.8ex]{0pt}{0pt}}& $(3,3)$ & $\frac{131467}{3072}$ \\
{\rule{0pt}{2.5ex}}{\rule[-1.8ex]{0pt}{0pt}}& $(4,3)$ & $\frac{37485}{256}$ \\
{\rule{0pt}{2.5ex}}{\rule[-1.8ex]{0pt}{0pt}}& $(5,3)$ & $\frac{193655}{512}$ \\
{\rule{0pt}{2.5ex}}{\rule[-1.8ex]{0pt}{0pt}}& $(4,4)$ & $\frac{191205}{512}$ \\
\hline
\end{tabular}
\end{tabular}
\end{center}
\end{figure}

\clearpage 
\bibliographystyle{plain}
\bibliography{BibliMV}

\end{document}